\renewcommand{\@secnumfont}{\bfseries}
\numberwithin{figure}{section}
\def\HH{\mathbb{H}^2}
\def\L{\mathcal{L}}
\theoremstyle{definition}
 \newtheorem{definition}{Definition}[section]
 \newtheorem{remark}[definition]{Remark}
 \newtheorem{example}[definition]{Example}
\newcommand{\g}{\gamma}
\newcommand{\m}{\mathrm{mod}}
\newcommand*\Bell{\ensuremath{\boldsymbol\ell}}
\theoremstyle{plain}
 \newtheorem{proposition}[definition]{Proposition}
 \newtheorem{theorem}[definition]{Theorem}
 \newtheorem{corollary}[definition]{Corollary}
 \newtheorem{lemma}[definition]{Lemma}
 \newtheorem{question}[definition]{Question}
\author{A. Basmajian, H. Hakobyan and D. \v Sari\' c}
\date{\today}
\address[Ara Basmajian]{PhD Program in Mathematics, The Graduate Center, CUNY \\ 365 Fifth Ave., N.Y., N.Y., 10016 and\newline Department of Mathematics, Hunter College, CUNY \\ 695 Park Ave., N.Y., N.Y., 10065, USA.}
\email{abasmajian@gc.cuny.edu}
\address[Hrant Hakobyan]{Department of Mathematics, Kansas State University\\ Manhattan, KS, 66506-2602, USA.}
\email{hakobyan@math.ksu.edu}
\address[Dragomir \v Sari\' c]{PhD Program in Mathematics, The Graduate Center, CUNY \\ 365 Fifth Ave., N.Y., N.Y., 10016 and\newline Department of Mathematics, Queens College, CUNY\\ 65--30 Kissena Blvd., Flushing, NY 11367, USA.}
\email{Dragomir.Saric@qc.cuny.edu}
 \thanks{The first author was partially supported by the Simons Foundation Collaboration Grant (359956, A.B.). The second author was partially supported by the Simons Foundation Collaboration Grant, (638572, H.H.). The third author was partially supported by the Simons Foundation Collaboration Grant, (346391, D.\v S.) and by PSCCUNY grants (61582 and 63477, D.\v S.).}
\begin{document}

\subjclass[2010]{30F20, 30F25, 30F45, 57K20}

\title{The type problem for Riemann surfaces  via Fenchel-Nielsen parameters}

\maketitle


\begin{abstract}
A Riemann surface $X$ is said to be of \emph{parabolic type} if it does not support  a Green's function. Equivalently, the geodesic flow on the unit tangent  bundle of $X$ (equipped with the hyperbolic metric) is ergodic. Given a Riemann surface $X$ of arbitrary topological type and a hyperbolic pants decomposition of $X$ we obtain sufficient conditions for parabolicity of $X$ in terms of the Fenchel-Nielsen parameters of the decomposition. In particular, we initiate the study of the effect of twist parameters on parabolicity.

A key ingredient in our work is the notion of \textit{nonstandard half-collar} about a hyperbolic geodesic. We show that the modulus of such a half-collar is much larger than the modulus of a standard half-collar as the hyperbolic length of the core geodesic tends to infinity. 
Moreover, the modulus of the annulus obtained by gluing two nonstandard half-collars depends on the twist parameter, unlike in the case of standard collars.

Our results are sharp in many cases. For instance, for zero-twist flute surfaces as well as for half-twist flute surfaces with concave sequences of lengths our results provide a complete characterization of parabolicity in terms of the length parameters. It follows that parabolicity is equivalent to completeness in these cases. Applications to other topological types such as surfaces with infinite genus and one end (a.k.a. the infinite Loch-Ness monster), the ladder surface, and Abelian covers of compact surfaces are also studied.
\end{abstract}

\tableofcontents


\newpage
\section{Introduction and results}

\subsection{The type problem}
A fundamental question in the classification theory of Riemann surfaces, also known as the \emph{type problem}, is whether a Riemann surface $X$ supports a Green's function. A Riemann surface is said to be of \emph{parabolic type} if it does not support a Green's function (equivalently, Brownian motion on $X$ is recurrent). Classically the class of parabolic surfaces has been denoted by $O_{G}$ , see \cite{Ahlfors-Sario}.

There are numerous known characterizations of parabolic surfaces coming from function theory, dynamics, and geometry. Specifically, if the Riemann surface $X$ is the quotient of the hyperbolic plane by a Fuchsian group, i.e. $X=\mathbb{H} / \Gamma$ then $X$ is parabolic if and only if one of the following conditions holds, see e.g. \cite{Agard,Ahlfors-Sario,Astala-Zinsmeister,Bishop,Fernandez-Melian, Nevanlinna:criterion,Nicholls1,Sullivan,Tukia}:
\begin{enumerate}
\item Harmonic measure of the ideal boundary $\partial_{\infty}X$ vanishes;
\item Geodesic flow on the unit tangent bundle of $X$ (equipped with the hyperbolic metric) is ergodic;
\item Poincare series of $\Gamma$ diverges;
\item $\Gamma$ has the Mostow rigidity property;
\item $X$ has the Bowen  property.
\item Almost every geodesic ray is recurrent. Equivalently, the set of escaping geodesic rays from a point $p\in X$ has zero (visual) measure.
\end{enumerate}

Various sufficient conditions for being of parabolic  type in terms of explicit constructions were classically studied by Myrberg, Ahlfors, Nakai, S. Mori, Ohtsuka, Sario, Nevanlinna and many others (see \cite{SarioNakai} and \cite{Ahlfors-Sario} for references).

The main goal of the present work is to make  transparent the relationship between the  \emph{hyperbolic geometry} of a Riemann surface and its type. Our main results  give  sufficient conditions on the Fenchel-Nielsen parameters of a surface (length and twist parameters on a pants decomposition) to guarantee that it is of parabolic type, see Theorems \ref{thm:general_parabolic-intro} and \ref{thm-intro:general_parabolicwithtwist}. Some of the important aspects of   these sufficient conditions are described next. 
\begin{itemize}
\item[$1.$] \emph{Twists}. For the first time in the literature we explicitly identify the effect of the \emph{twist parameters} on parabolicity. For instance, we show that the intuitive heuristic \emph{``increasing twists preserves parabolicity"}  holds in wide generality.
\item[$2.$] \emph{Sharpness}. Our sufficient conditions are often sharp.  This allows us to obtain a characterization of parabolicity in geometric terms in many cases. For instance, we prove that $X$ is parabolic if and only if it is complete,  provided $X$ is a zero-twist flute surface, or a half-twist flute surface with a concave sequence of lengths  of the pants decomposition, see Theorems \ref{thm:zero-twists-intro} and \ref{thm:half-twists-intro}. 
\item[$3.$] \emph{Generality}. We do not impose any restrictions on the topology of the Riemann surface and thus our results are valid in the general context.
\end{itemize}   

The study of the relationship between the geometry of a Riemann surface and the type problem has a long history. Besides the works mentioned above, Nicholls \cite{Nicholls} and Fern\' andez-Rodr\' iguez \cite{Fernandez-Rodriguez1} obtained sufficient conditions for parabolicity in terms of the growth of the fundamental domains of the corresponding Fuchsian groups. However, a complete characterization of parabolicity in terms of the growth of the fundamental domain is impossible, see \cite{Nicholls}. More recently, Matsuzaki-Rodr\' iguez \cite{Matsuzaki-Rodriguez} considered the  type problem for tight flute surfaces with uniformly distributed cusps. 
The use of twists/shears has also played a crucial role in the celebrated work of Kahn-Markovic \cite{Kahn-Markovic1,Kahn-Markovic2} on  the surface subgroup  and  Ehrenpreis conjectures.

\subsection{General results}
Let $X$ be an infinite type Riemann surface, and $\{X_n\}$  an   {\it exhaustion of ${X}$ by finite area geodesic subsurfaces so that no boundary component of $X_n$ is a boundary component of
 $X_{n+1}$}.  All  exhaustions  in this paper are assumed to be of  this type. 

{We denote by $\partial_0 X_n$ the collection of boundary components of $X_n$. Thus the elements of $\partial_0 X_n$ are pairwise disjoint simple closed geodesics. }By adding additional simple closed geodesics we complete $\partial_0 X_n$ to a pants decomposition of $X_n$. Hence the  Riemann surface $X$ endowed with a conformal hyperbolic metric can  be viewed   as  infinitely many  geodesic pairs of pants glued along their boundary geodesics. In this paper we are not concerned with marked hyperbolic structures. Thus, the choice of   geodesic pairs of pants is given by the {\it lengths} of the boundary geodesics, while the choice in the gluing is given by an angular parameter in the interval 
$(-\frac{1}{2},\frac{1}{2}]$, called the  {\it twist}, see Sections \ref{sec: Contents, notation, and convention} and \ref{Section:gluing}.
The lengths and twists,  called the {\it Fenchel-Nielsen} parameters  (relative to the pants decomposition), determine the conformal hyperbolic metric on $X$ 
(see Section \ref{sec: Contents, notation, and convention} for details).  

Let $P$ be a  pair of pants in the pants decomposition as above that is contained in $X_{n+1}-X_n$. We will denote by
$\alpha$  one of the pants curves of $P$, and by $\gamma$ a simple     orthogeodesic in $P$  from $\alpha$ to one of the other pants curves on the boundary of $P$, see Figure \ref{fig:collars}. The twist along $\alpha$ will be denoted by $t(\alpha)$. We denote by {$\ell (\g)$} the length of a geodesic {$\g$} on a hyperbolic Riemann surface.

For each topological type we give  a sufficient condition for parabolicity.

\begin{theorem}
\label{thm:general_parabolic-intro}
Let $X$ be an infinite type hyperbolic surface with an exhaustion $\{X_n\}$. Suppose there are constants $\alpha_0, \g_0>0$ such that for every pair of pants $P$, and curves $\alpha$ and $\g$ in $P$ as above we have $\ell(\g) \geq \g_0$ and $\ell(\alpha)\geq \alpha_0$. If 
\begin{align}\label{sum:lengths}
\sum_{n=1}^{\infty}\frac{1}{\sum_{\alpha\in \partial_0 X_n} e^{\ell(\alpha)/2}}=\infty
\end{align}
then $X$ is parabolic.
\end{theorem}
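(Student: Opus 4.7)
The plan is to invoke the classical extremal length characterization of parabolicity: $X$ is of parabolic type if and only if the extremal length of the family $\Gamma_\infty$ of curves in $X$ joining a fixed compact set to the ideal boundary is infinite. With the given exhaustion $\{X_n\}$, set $L_n := X_{n+1}\setminus X_n$ and let $\Gamma_n$ denote the family of arcs in $L_n$ joining $\partial_0 X_n$ to $\partial_0 X_{n+1}$; the hypothesis that no boundary component of $X_n$ lies on $X_{n+1}$ keeps these two boundary sets disjoint. Any curve from $\partial_0 X_1$ to infinity contains, consecutively, a subarc in each $\Gamma_n$, so the series composition law for modulus gives
$$\frac{1}{M(\Gamma_\infty)}\ \geq\ \sum_{n=1}^\infty \frac{1}{M(\Gamma_n)}.$$
Hence it suffices to show
$$\sum_{n=1}^\infty \frac{1}{M(\Gamma_n)}\ =\ \infty. \qquad (\ast)$$

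To extract $(\ast)$ from the hypothesis \eqref{sum:lengths}, I would aim for the geometric estimate
$$M(\Gamma_n)\ \leq\ C\sum_{\alpha\in\partial_0 X_n} e^{\ell(\alpha)/2},\qquad (\dagger)$$
with a constant $C=C(\alpha_0,\gamma_0)$ independent of $n$; combined with \eqref{sum:lengths}, $(\dagger)$ yields $(\ast)$ at once. For $(\dagger)$, I would partition $\Gamma_n$ by the inner endpoint, $\Gamma_n = \bigcup_{\alpha\in\partial_0 X_n}\Gamma_{n,\alpha}$, and on the $L_n$-side of each boundary geodesic $\alpha$ place a \emph{nonstandard half-collar} $C_\alpha$ of the kind developed earlier in the paper. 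The hypotheses $\ell(\alpha)\geq \alpha_0$ and $\ell(\gamma)\geq \gamma_0$ ensure that these half-collars sit pairwise disjointly inside the adjacent pants. Each arc in $\Gamma_{n,\alpha}$ must fully traverse $C_\alpha$, so the parallel (disjoint-subdomain) composition law delivers
$$M(\Gamma_n)\ \leq\ \sum_{\alpha\in\partial_0 X_n} M(\Gamma_\alpha^{\mathrm{cross}}),$$
where $\Gamma_\alpha^{\mathrm{cross}}$ is the family of arcs crossing $C_\alpha$. Combined with the per-collar modulus bound $M(\Gamma_\alpha^{\mathrm{cross}})\leq C\,e^{\ell(\alpha)/2}$ for the nonstandard half-collar, this yields $(\dagger)$.

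The real difficulty is the per-collar estimate $M(\Gamma_\alpha^{\mathrm{cross}})\leq C\,e^{\ell(\alpha)/2}$, and this is the main obstacle. A \emph{standard} collar around a geodesic of length $\ell$ has conformal width $\mathrm{gd}(\mathrm{arcsinh}(1/\sinh(\ell/2)))\asymp e^{-\ell/2}$, forcing its crossing-family modulus to be of order $\ell\,e^{\ell/2}$ — larger than what $(\dagger)$ permits by a spurious factor of $\ell(\alpha)$, and therefore too crude to yield the sharp exponential condition in the theorem. The remedy is to enlarge the half-collar in a controlled way tailored to the ambient pants, using $\ell(\gamma)\geq\gamma_0$ to secure enough room to extend beyond the standard width while remaining embedded; this is exactly what the paper's nonstandard half-collar construction accomplishes, and its sharp modulus bound is the substantive geometric input that makes the theorem go through. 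Once $(\dagger)$ is available, the theorem drops out of the chain of inequalities above.
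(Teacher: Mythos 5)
Your proposal follows essentially the same route as the paper: reduce to the Nevanlinna modular test, apply the serial rule along the exhaustion, cover each boundary geodesic of $X_n$ by a nonstandard half-collar, use subadditivity plus overflowing across disjoint half-collars to bound $M(\Gamma_n)$ by $\sum_\alpha M(\Gamma_\alpha^{\mathrm{cross}})$, and then invoke the sharp per-collar estimate $\lambda(R_{\alpha,\gamma})\gtrsim e^{-\ell(\alpha)/2}$. You also correctly identify the heart of the matter: the standard collar would introduce a spurious factor of $\ell(\alpha)$, and the nonstandard half-collar is precisely what removes it. This is exactly what the paper proves via Proposition~\ref{abstract theorem on collars}, Theorem~\ref{thm:general_parabolic}, and Corollary~\ref{cor:asymp}.

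One small mix-up worth flagging: you write that ``the hypotheses $\ell(\alpha)\geq\alpha_0$ and $\ell(\gamma)\geq\gamma_0$ ensure that these half-collars sit pairwise disjointly,'' but disjointness has nothing to do with those lower bounds. Disjointness is a topological fact about geodesic pairs of pants (Lemma~\ref{lem:disjoint half-collars}): once one chooses, for each $\alpha$, a pair of pants $P$ with a third boundary component interior to the relevant subsurface and uses the orthogeodesic from $\alpha$ to that interior component, any two resulting half-collars sharing a pair of pants are automatically disjoint, regardless of the lengths. The lower bounds on $\ell(\alpha)$ and $\ell(\gamma)$ are used only to simplify the modulus estimate: without them, Corollary~\ref{cor:asymp} gives $\lambda(R_{\alpha,\gamma})\asymp \ell(\gamma)e^{-\ell(\alpha)/2}$ for short $\gamma$ and $\lambda(R_{\alpha,\gamma})\gtrsim 1/\ell(\alpha)$ for short $\alpha$, leading to the more complicated quantity $\sigma(R_{\alpha,\gamma})$ in Theorem~\ref{thm:general_parabolic}; the lower bounds collapse $\sigma(R_{\alpha,\gamma})$ to a constant multiple of $e^{\ell(\alpha)/2}$, which is exactly what the sum in~\eqref{sum:lengths} needs. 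Finally, the paper also handles a technicality you did not address: the exhaustion $\{X_n\}$ may contain cusps, so one needs Proposition~\ref{prop:finite area parabolicity char.} to pass from compact exhaustions to exhaustions by finite-area geodesic subsurfaces; this is a routine but necessary adjustment to Nevanlinna's test.
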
  

Theorem  \ref{thm:general_parabolic-intro} is a consequence of a more general result, cf., Theorem \ref{thm:general_parabolic}, which is valid without the lower bound assumptions for $\ell(\g)$ and $\ell(\alpha)$.

Theorem \ref{thm:general_parabolic-intro} is a twist free result in the sense that it holds for any choice of twist parameters. We obtain a stronger result by bringing in the twist parameters into the sufficient condition for parabolicity. In order to do this, we make the mild technical assumption that the boundaries of 
$X_{n+1}$ and $X_{n}$  are not too close and  the  connected components of  $X_{n+1}-X_n$ are not too small (that is, not a pair of pants), see Theorem \ref{thm:general_parabolicwithtwist} for the precise hypotheses.

\begin{theorem}\label{thm-intro:general_parabolicwithtwist}
Let $X$ be an infinite type  hyperbolic surface with exhaustion $\{X_n\}$ as in Theorem \ref{thm:general_parabolic-intro} with the assumptions mentioned above.  If
\begin{equation}\label{eq:general_parabolicwithtwist-intro}
\sum_{n=1}^{\infty}\frac{1}{\sum_{\alpha\in \partial_0 X_n} 
e^{\left(1- |t (\alpha)|\right)\frac{\ell (\alpha)}{2}}}=\infty
\end{equation}
then $X$ is of parabolic  type (see Theorem \ref{thm:general_parabolicwithtwist} for the precise formulation).
\end{theorem}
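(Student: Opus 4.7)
The plan is to invoke the extremal-length characterization of parabolicity: letting $\Gamma_n$ denote the family of curves in the ``shell'' $X_{n+1}\setminus X_n$ joining $\partial_0 X_n$ to $\partial_0 X_{n+1}$, Grötzsch's series rule says that any curve escaping to the ideal boundary of $X$ must cross each $\Gamma_n$, so the extremal length of the escape family is bounded below by $\sum_n \lambda(\Gamma_n)$. If this sum diverges, the extremal length of escape is infinite, which is equivalent to $X$ being of parabolic type. The task therefore reduces to proving the lower bound
\[
\lambda(\Gamma_n) \gtrsim \frac{1}{\sum_{\alpha \in \partial_0 X_n} e^{(1-|t(\alpha)|)\ell(\alpha)/2}},
\]
after which the hypothesis \eqref{eq:general_parabolicwithtwist-intro} closes the argument.

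To prove this bound I employ the nonstandard half-collar construction advertised in the abstract. For each $\alpha \in \partial_0 X_n$ I build an embedded half-collar $C_\alpha \subset X_{n+1}\setminus X_n$ whose inner boundary is $\alpha$ and whose conformal modulus satisfies $\mathrm{mod}(C_\alpha) \gtrsim e^{-(1-|t(\alpha)|)\ell(\alpha)/2}$ (up to factors controlled by the uniform constants $\alpha_0$, $\gamma_0$). Unlike the standard axially symmetric collar, whose modulus is comparable to $e^{-\ell(\alpha)/2}$ independent of the twist, the nonstandard half-collar exploits the asymmetric geometry produced by a nonzero twist of the gluing along $\alpha$ to extend substantially farther into the adjacent pants, picking up an extra factor of $e^{|t(\alpha)|\ell(\alpha)/2}$ in the modulus. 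The mild technical hypotheses on $X_{n+1}\setminus X_n$ (no component is a single pair of pants, and consecutive boundaries are separated) are precisely what is needed to guarantee that the nonstandard half-collars about distinct boundary geodesics of $\partial_0 X_n$ remain pairwise disjoint and sit inside the shell.

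With the disjoint collars in hand, the extremal-length bound is a routine test-metric calculation. Choose the Borel metric $\rho_n$ on $X_{n+1}\setminus X_n$ that vanishes outside $\bigsqcup_{\alpha} C_\alpha$ and equals, on each $C_\alpha$, the extremal metric of that annulus normalized so that the transverse $\rho_n$-length is $1$. Any $\gamma \in \Gamma_n$ starts on $\partial_0 X_n$ and must fully traverse at least one of the $C_\alpha$ before reaching $\partial_0 X_{n+1}$, so $\int_\gamma \rho_n \geq 1$ and therefore $L(\Gamma_n,\rho_n) \geq 1$. Meanwhile,
\[
A(\rho_n) \;=\; \sum_{\alpha \in \partial_0 X_n} \frac{1}{\mathrm{mod}(C_\alpha)} \;\lesssim\; \sum_{\alpha \in \partial_0 X_n} e^{(1-|t(\alpha)|)\ell(\alpha)/2},
\]
and $\lambda(\Gamma_n) \geq L(\Gamma_n,\rho_n)^2/A(\rho_n)$ gives the required estimate.

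The principal obstacle is the twist-enhanced half-collar modulus estimate itself. One must geometrically construct an embedded half-collar inside a glued pair of pants whose shape reflects the twist parameter, prove its embeddedness under the stated technical hypotheses, and estimate its modulus sharply enough to extract the factor $e^{|t(\alpha)|\ell(\alpha)/2}$ over the standard collar. This is exactly the ``nonstandard half-collar'' theory that the paper develops as a separate building block; once that tool is available, the parabolicity argument above is only a few lines of extremal-length bookkeeping.
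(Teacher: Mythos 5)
Your overall architecture — serial rule plus a test metric supported on disjoint collars about the $\alpha\in\partial_0 X_n$ — matches the paper's (Proposition \ref{abstract theorem on collars} combined with Theorem \ref{thm:gluings}). But the central estimate you lean on cannot be obtained from the object you describe, and this is a genuine gap, not a cosmetic one.

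You propose to build, for each $\alpha\in\partial_0 X_n$, a \emph{half}-collar $C_\alpha\subset X_{n+1}\setminus X_n$ with inner boundary $\alpha$ and extremal distance $\lambda(C_\alpha)\gtrsim e^{-(1-|t(\alpha)|)\ell(\alpha)/2}$. A half-collar on the outer side of $\alpha$ lives entirely inside a single pair of pants $P'\subset X_{n+1}\setminus X_n$, and the hyperbolic geometry of $P'$ is determined by its three boundary lengths alone. The twist $t(\alpha)$ records the relative gluing of $P'$ to the pants on the \emph{inner} side of $\alpha$, i.e.\ inside $X_n$; no object contained in one side of $\alpha$ can depend on $t(\alpha)$. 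This is precisely what the paper finds: the one-sided nonstandard half-collar satisfies $\lambda(R_{\alpha,\gamma})\asymp e^{-\ell(\alpha)/2}$ when $\ell(\gamma)\geq 1$ (Theorem \ref{thm:half-collar}, Corollary \ref{cor:asymp}), with no twist factor. Plugging that into your area computation gives $A(\rho_n)\lesssim\sum_\alpha e^{\ell(\alpha)/2}$, which proves only the twist-free Theorem \ref{thm:general_parabolic-intro}, not the present statement.

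The twist gain $e^{|t(\alpha)|\ell(\alpha)/2}$ appears only when two nonstandard half-collars are glued \emph{across} $\alpha$ with the given twist, producing the two-sided collar $R^{t(\alpha)}_{\alpha,\gamma,\gamma'}$ which straddles $\alpha$: Theorem \ref{thm:gluings} and (\ref{eq: lower bound without min}) yield $\lambda(R^{t(\alpha)}_{\alpha,\gamma,\gamma'})\geq C\,e^{-(1-|t(\alpha)|)\ell(\alpha)/2}$. This is a nontrivial phenomenon (for standard collars the glued modulus is exactly the sum of the two half-collar moduli, by reflection symmetry, and does \emph{not} improve with twist). Because these two-sided collars extend on both sides of the curves $\alpha\in\partial_0 X_n$, the disjointness you need for the serial rule is now nontrivial across consecutive levels $n$; this is exactly what the paper's technical hypotheses (no component of $X_{n+1}-X_n$ is a single pair of pants, boundaries are uniformly separated, Lemma \ref{lem:disjoint half-collars}) are engineered to guarantee. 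To repair your argument, replace the one-sided $C_\alpha\subset X_{n+1}\setminus X_n$ by the two-sided $R^{t(\alpha)}_{\alpha,\gamma,\gamma'}$ and invoke Proposition \ref{abstract theorem on collars} directly; the rest of your bookkeeping then goes through as the paper's proof of Theorem \ref{thm:general_parabolicwithtwist}.
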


In the above theorem the twist parameter, $t(\alpha)$, is measured with respect to a  pants decomposition which includes the boundary components of the $X_n$. Clearly, condition (\ref{sum:lengths}) implies (\ref{eq:general_parabolicwithtwist-intro}). Therefore, if $X$ satisfies (\ref{sum:lengths}) then not only  $X$ is parabolic but so are all the hyperbolic surfaces obtained by deforming $X$ by twisting along the boundary curves of the exhaustion $\{X_n\}$.  

\begin{remark}[\textsc{Increasing twists preserves parabolicity}]
Since $e^{(1-|t|)\ell/2}$ is decreasing in $|t|$, Theorem  \ref{thm-intro:general_parabolicwithtwist} implies that if $X$ satisfies (\ref{eq:general_parabolicwithtwist-intro}) and $X'$ is a surface obtained from $X$ by increasing the absolute value of twist parameters $t(\alpha)$ then $X'$ is also parabolic. 
\end{remark}

\subsection{Tight flute surfaces}

Arguably the simplest infinite type (hyperbolic) Riemann  surface $X$ is a tight flute surface, see \cite{Basmajian}.  It is obtained by starting with a geodesic pair of pants $P_0$ with two punctures and then consecutively gluing geodesic pairs of pants $P_n$, $n\geq 1$, with one puncture and two boundary geodesics in an infinite chain. Let $\ell_n$ and $t_n$ be the length and twist parameters of the closed geodesic $\alpha_n$ on the boundary after gluing $n$ pairs of pants. We denote the resulting surface by $X=X(\{ \ell_n,t_n\})$, see Figure \ref{fig:tight-flute}. It is relatively simple to see that if an infinite subsequence  of $\{ \ell_n\}$ is bounded above by a positive constant then $X$ is of parabolic  type. When $\ell_n\to\infty$, applying Theorem \ref{thm-intro:general_parabolicwithtwist} we obtain the following.  

\begin{figure}[t]
\begin{center}
\includegraphics[width=5.5in]{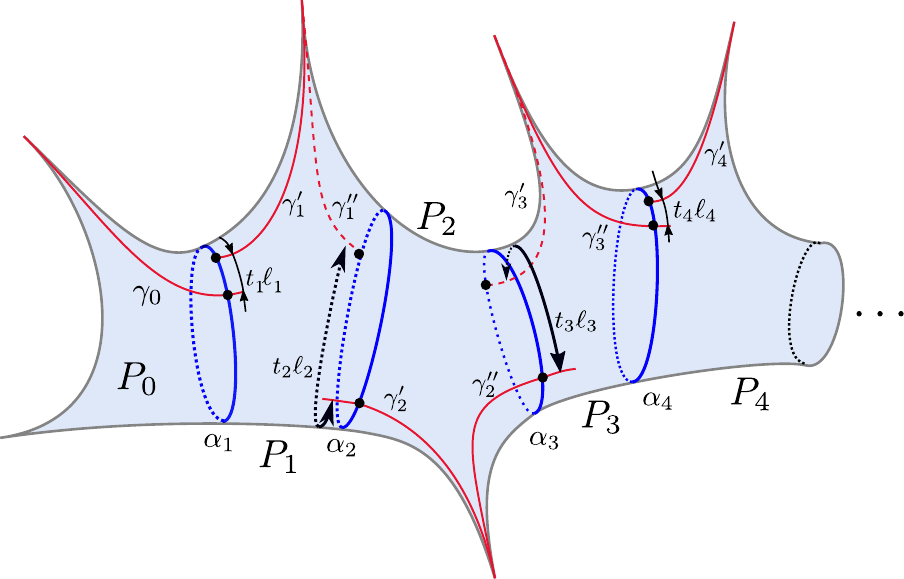}
\caption{A tight flute surface.}
\label{fig:tight-flute}
\end{center}
\end{figure}

\begin{theorem}
\label{thm:intro2}
Let $X=X(\{\ell_n,t_n\})$ be a  tight flute surface such that $\ell_n\to\infty$. Then $X$ is of parabolic  type if
\begin{equation}\label{eq: tight flute parabolic}
\sum_{n=1}^{\infty}e^{-( 1-|t_n|) \frac{\ell_n}{2}}=\infty .
\end{equation}
\end{theorem}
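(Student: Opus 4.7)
My plan is to derive Theorem \ref{thm:intro2} from the precise version of Theorem \ref{thm-intro:general_parabolicwithtwist}, namely Theorem \ref{thm:general_parabolicwithtwist}, applied to a suitable exhaustion of the flute. For any flute exhaustion of the form $X_n := P_0 \cup P_1 \cup \cdots \cup P_{k_n}$, the boundary $\partial_0 X_n$ consists of the single closed geodesic $\alpha_{k_n}$, so the sum in \eqref{eq:general_parabolicwithtwist-intro} collapses to
$$\sum_{n=1}^{\infty} e^{-(1-|t_{k_n}|)\ell_{k_n}/2}.$$
The proof reduces to choosing the subsequence $\{k_n\}$ so that this sum still diverges and so that $\{X_n\}$ satisfies the technical constraints of the theorem.

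To rule out the possibility that $X_{n+1}\setminus X_n$ is a single pair of pants, I would require $k_{n+1}-k_n\geq 2$. The key elementary observation is that if $\sum_n a_n = \infty$ with $a_n \geq 0$, then for every fixed integer $m\geq 2$ at least one of the $m$ residue classes modulo $m$ supports a divergent subseries. Applying this to $a_n := e^{-(1-|t_n|)\ell_n/2}$ with $m = 2$ already suffices to produce an arithmetic subsequence $\{k_n\}$ of constant gap $2$ along which the sum diverges. If the distance hypothesis of Theorem \ref{thm:general_parabolicwithtwist} requires a larger fixed gap, the same observation supplies one with $m>2$.

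The hypothesis $\ell_n\to\infty$ ensures that all but finitely many $\ell_{k_n}$ exceed any prescribed positive bound, so uniform lower bounds on the pants-curve lengths are harmless. Note however that the naive length hypothesis in the intro version Theorem \ref{thm-intro:general_parabolicwithtwist} would also demand a uniform lower bound on the orthogeodesic $\gamma$ inside each $P_k$; but in a cusped pair of pants with boundary lengths $\ell_{k-1},\ell_k$, the hexagon formula gives $\ell(\gamma)\asymp e^{-(\ell_{k-1}+\ell_k)/4}$ as the boundary lengths grow, so such a uniform lower bound fails. This is exactly why one must invoke the precise Theorem \ref{thm:general_parabolicwithtwist}, whose formulation (as the authors remark following Theorem \ref{thm:general_parabolic-intro}) is designed to accommodate the absence of such bounds.

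The main expected obstacle is verifying the ``boundaries of $X_{n+1}$ and $X_n$ not too close'' condition, since the orthogeodesic distance between $\alpha_{k_n}$ and $\alpha_{k_{n+1}}$ shrinks exponentially with $\ell_n$. The flexibility of enlarging the gap $m$, at the cost of passing to a sparser residue-class subsequence while preserving divergence of the sum, should accommodate whatever quantitative form of this hypothesis Theorem \ref{thm:general_parabolicwithtwist} ultimately requires; this is a routine check given the geometric machinery of the paper. With all the hypotheses in place, Theorem \ref{thm:general_parabolicwithtwist} applies directly and yields that $X$ is of parabolic type.
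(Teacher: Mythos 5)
Your strategy—pass to a sparse subsequence $\{k_n\}$ so that $X_{n+1}-X_n$ is not a pair of pants, use the pigeonhole observation to keep divergence along a residue class, and quote Theorem \ref{thm:general_parabolicwithtwist}—does in the end produce a valid proof, but it is a different and more roundabout route than the paper's, and the discussion of the technical hypotheses contains a genuine misreading that is worth flagging.

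In a \emph{tight} pair of pants $P_k$ the orthogeodesic $\gamma$ used to build the nonstandard half-collar about $\alpha_k$ runs from $\alpha_k$ out the cusp, so $\ell(\gamma)=\infty$; this is precisely the $\ell(\gamma)=\ell(\gamma')=\infty$ case isolated in Corollary \ref{cor:twist1/2}. The quantity you computed, $\asymp e^{-(\ell_{k-1}+\ell_k)/4}$, is the length of the orthogeodesic $\delta_k$ \emph{between the two geodesic boundaries} of $P_k$, which is not the defining arc of the nonstandard half-collar, so the ``uniform lower bound on $\ell(\gamma)$'' hypothesis (and the ``distance bounded below'' hypothesis of Theorem \ref{thm:general_parabolicwithtwist}, where the relevant distance is from a boundary geodesic of $Y$ to the cusp) is in fact trivially satisfied. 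This matters because the remedy you propose—enlarging the gap $m=k_{n+1}-k_n$—would \emph{not} help if the distance between $\alpha_{k_n}$ and $\alpha_{k_{n+1}}$ were genuinely the obstacle: since $\ell_j\to\infty$, the intermediate orthogeodesics $\delta_j$ are exponentially short, so the cumulative distance between consecutive boundary curves of the exhaustion tends to zero for \emph{any} fixed $m$, and no residue-class trick can repair that. Your argument survives only because the condition you are worried about was never in danger.

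The paper's proof of this statement (Theorem \ref{thm:flute-twist}) is more direct and avoids the subsequence entirely: it takes the natural exhaustion $X_n=P_0\cup\cdots\cup P_{n-1}$ (so $X_{n+1}-X_n$ \emph{is} a single pair of pants), observes that the chain structure of a flute already forces the nonstandard collars $R_{\alpha_n,\gamma''_{n-1},\gamma'_n}^{t_n}$ to have pairwise disjoint interiors (see the Remark following the proof of Theorem \ref{thm:flute-twist}, which explains why Lemma \ref{lem:disjoint half-collars} is not needed), and then applies Corollary \ref{cor:twist1/2} and Proposition \ref{prop:parabolic} directly. This bypasses Theorem \ref{thm:general_parabolicwithtwist} completely and loses no terms of the series. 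Your route is a legitimate alternative once the orthogeodesic confusion is cleared up, but the direct argument is cleaner, uses every $\alpha_n$, and exposes the structural reason (the chain of tight pants) that the general topological hypotheses are unnecessary for flutes.
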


If we set the twists $t_n$ equal to zero in (\ref{eq: tight flute parabolic}) we have that a flute surface $X=X(\{\ell_n,0\})$ is parabolic if $\sum e^{-\ell_n/2}=\infty$. It turns out that this condition is not only sufficient but also necessary.  
Moreover, we prove the following, see Theorem \ref{thm:zero-twists}. 

\begin{theorem}[Parabolicity of zero-twist flutes]\label{thm:zero-twists-intro}
A zero-twist tight flute surface $X=X(\{\ell_n,0\})$ is parabolic if and only if one of the following holds: 
 \begin{enumerate}
\item $X$ is complete, 
\item $\sum_{n=1}^{\infty}e^{-\ell_{n}/2}=\infty$.
 \end{enumerate}
 \end{theorem}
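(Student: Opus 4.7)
The proof naturally splits into a preliminary reduction, reconciling conditions (1) and (2), and establishing the single biconditional that remains. First, if some subsequence of $\{\ell_n\}$ is bounded above, then $\sum e^{-\ell_n/2}$ trivially diverges, $X$ is easily seen to be complete, and parabolicity follows from Theorem~\ref{thm:general_parabolic-intro} applied to that subsequence; so both (1) and (2) hold and the statement is immediate. We may therefore assume $\ell_n\to\infty$. In this regime, the zero-twist hypothesis lets me define a canonical geodesic ray $\rho$ escaping to the end of $X$ by concatenating, in each pair of pants $P_n$, the orthogeodesic arc $\sigma_n$ from $\alpha_{n-1}$ to $\alpha_n$; these arcs glue smoothly at each $\alpha_n$ precisely because $t_n=0$. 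In a tight flute every escape curve must traverse each $P_n$, so $X$ is complete if and only if $\sum_n\ell(\sigma_n)=\infty$. The hyperbolic law of cosines in a pair of pants with a cusp and boundary lengths $\ell_{n-1},\ell_n$ gives
\[
\cosh\ell(\sigma_n)=\frac{1+\cosh(\ell_{n-1}/2)\cosh(\ell_n/2)}{\sinh(\ell_{n-1}/2)\sinh(\ell_n/2)}\,,
\]
whence $\ell(\sigma_n)\asymp e^{-(\ell_{n-1}+\ell_n)/4}$ as $\ell_{n-1},\ell_n\to\infty$; AM-GM then yields $\ell(\sigma_n)\le C\bigl(e^{-\ell_{n-1}/2}+e^{-\ell_n/2}\bigr)$, so $\sum e^{-\ell_n/2}<\infty$ forces $\sum\ell(\sigma_n)<\infty$. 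This proves the implication (1)$\,\Rightarrow\,$(2), and the biconditional in Theorem~\ref{thm:zero-twists-intro} reduces to: $X$ is parabolic if and only if $\sum e^{-\ell_n/2}=\infty$.

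The sufficient direction, $\sum e^{-\ell_n/2}=\infty\Rightarrow X$ parabolic, is the direct specialization of Theorem~\ref{thm:intro2} to $t_n\equiv 0$. For the converse I would assume $\sum e^{-\ell_n/2}<\infty$ and exhibit a Green's function on $X$. The plan is to construct pairwise disjoint topological annuli $A_n\subset X$, each with core $\alpha_n$ and separating $\alpha_{n-1}$ from $\alpha_{n+1}$, obtained by gluing two \emph{nonstandard} half-collars along $\alpha_n$ with zero twist, such that
\[
\mathrm{Mod}(A_n)\le C\,e^{-\ell_n/2}
\]
for some absolute constant $C$. The required modulus bound is the sharp zero-twist case of the nonstandard half-collar calculations that drive the paper (cf.\ the abstract and the section preparing Theorem~\ref{thm:intro2}); it matches from above the lower modulus bound that yielded parabolicity under condition (2). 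Together with a uniform bound on the moduli of the complementary pieces inside each pair of pants, this produces a finite total modulus for the end of $X$; the end is then conformally equivalent to a neighborhood of a boundary circle of a compact Riemann surface, so $X$ carries a Green's function and is of hyperbolic type.

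The main obstacle is the converse direction, and specifically the organization of the annuli $A_n$: one must choose the two nonstandard half-collars abutting each $\alpha_n$ compatibly at zero twist so that (a) the $A_n$ are pairwise disjoint, and (b) the zero-twist modulus bound $\mathrm{Mod}(A_n)\le Ce^{-\ell_n/2}$ matches the lower bound that powers Theorem~\ref{thm:intro2}. This matching upper/lower bound pair is what makes Theorem~\ref{thm:zero-twists-intro} sharp, and is the technical crux of the argument.
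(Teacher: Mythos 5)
Your opening reduction and the argument through the implication complete~$\Rightarrow\sum e^{-\ell_n/2}=\infty$ essentially reconstruct the paper's proof (Theorem~\ref{thm:zero-twists}), and the forward direction via Theorem~\ref{thm:intro2} is also the paper's. The difficulty is that at this point you are \emph{done}, and you do not seem to realize it. Having shown $\sum e^{-\ell_n/2}<\infty\Rightarrow X$ incomplete (via the finite-length escape ray), the implication ``incomplete~$\Rightarrow$~not parabolic'' is a standard one-liner — the paper records it as ``every incomplete surface carries a Green's function.'' Composing gives $\sum e^{-\ell_n/2}<\infty\Rightarrow X$ not parabolic, which is exactly the converse you set out to prove. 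Everything after ``For the converse I would assume $\sum e^{-\ell_n/2}<\infty$ and exhibit a Green's function'' is therefore superfluous. Worse, you identify that unnecessary step — the sharp upper modulus bound $\mathrm{Mod}(A_n)\leq Ce^{-\ell_n/2}$ on glued nonstandard half-collars — as ``the main obstacle'' and ``the technical crux,'' and you do not prove it. That is a genuine gap in the write-up as it stands, even though the missing piece is simply not needed; the proof closes by the cyclic argument parabolic~$\Rightarrow$~complete~$\Rightarrow\sum\ell(\sigma_n)=\infty\Rightarrow\sum e^{-\ell_n/2}=\infty\Rightarrow$~parabolic, which is precisely what the paper does.

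There is also a computational error that happens not to break the direction you actually use. Your hexagon formula for $\cosh\ell(\sigma_n)$ is correct, but the stated asymptotic $\ell(\sigma_n)\asymp e^{-(\ell_{n-1}+\ell_n)/4}$ is not. Subtracting $1$ and using $1+\cosh a\cosh b-\sinh a\sinh b=1+\cosh(a-b)=2\cosh^2\frac{a-b}{2}$ with $a=\ell_{n-1}/2$, $b=\ell_n/2$ gives
\begin{align*}
\cosh\ell(\sigma_n)-1\asymp e^{-\ell_{n-1}}+e^{-\ell_n},\qquad\text{hence}\qquad \ell(\sigma_n)\asymp e^{-\ell_{n-1}/2}+e^{-\ell_n/2},
\end{align*}
which is the estimate the paper derives (in the proof of Theorem~\ref{thm:half-twist-incomplete}) and which matches condition (2) term by term. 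Your version understates $\ell(\sigma_n)$ whenever $\ell_{n-1}$ and $\ell_n$ are very unequal: for $\ell_{n-1}\gg\ell_n$ you get $e^{-(\ell_{n-1}+\ell_n)/4}\ll e^{-\ell_n/2}\asymp\ell(\sigma_n)$. Since you only invoke the upper bound via AM-GM, your conclusion survives, but the displayed $\asymp$ is false and would be fatal if you ever needed the lower bound. Finally, the initial observation ``$X$ is easily seen to be complete'' when $\{\ell_n\}$ has a bounded subsequence is not direct; the clean route, as the paper takes it, is to deduce parabolicity first (standard collars with width bounded below, modular test) and then completeness as a consequence.
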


If  $t_n={1}/{2}$ for all $n\in\mathbb{N}$ then we obtain a {\it half-twist tight flute} 
$X(\{ \ell_n, 1/2\})$. In this case equation  (\ref{eq: tight flute parabolic}) becomes $\sum_ne^{-\ell_n/4}=\infty$.
%
Unlike the zero-twist case we do not know if this condition is necessary and sufficient for parabolicity for an arbitrary sequence $\{\ell_n\}$. However, we show that the condition is sharp in many cases. To do this we first obtain a sufficient condition for non-completeness and hence non-parabolicity for half-twist tight flutes. 

\begin{theorem}
\label{thm:half-twist-incomplete-intro}
A half-twist tight flute surface $X(\{ \ell_n,\frac{1}{2}\})$ is incomplete if   
\begin{align}\label{eqn:half-twist-incomplete-intro}
\sum_n e^{-\frac{\sigma_n}{2}}<\infty,
\end{align}
where $\sigma_n = \ell_n-\ell_{n-1}+\cdots +(-1)^{n-1}\ell_1$.
\end{theorem}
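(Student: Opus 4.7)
The plan is to establish incompleteness by constructing a piecewise-geodesic curve $\beta \subset X$ of finite hyperbolic length that eventually leaves every compact subset. Concretely, $\beta$ will be the concatenation of the shortest geodesic arcs $\beta_n \subset P_n$ joining carefully chosen points $p_{n-1} \in \alpha_{n-1}$ and $p_n \in \alpha_n$.

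The key geometric observation is that, under the half-twist gluing, the foot on $\alpha_n$ of the common perpendicular in $P_n$ is separated by Fermi distance $\ell_n/2$ from the foot of the common perpendicular in $P_{n+1}$. Selecting $p_n$ at signed Fermi coordinate $v_n$ from one of these feet, the condition that two adjacent arcs $\beta_n, \beta_{n+1}$ be simultaneously short (combined with the orientation reversal on the two copies of $\alpha_n$ inherited from the adjacent pants) leads to the recurrence
\[
v_n + v_{n-1} = \ell_n/2.
\]
Starting from $v_0 = 0$, the unique solution is exactly $v_n = \sigma_n/2$.

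The length of $\beta_n$ is then estimated via the standard hyperbolic trigonometric identity for the distance between two points on disjoint geodesics:
\[
\cosh(\ell(\beta_n)) = \cosh(d_n)\cosh(s)\cosh(t) - \sinh(s)\sinh(t),
\]
where $d_n$ is the length of the common perpendicular of $\alpha_{n-1}$ and $\alpha_n$ inside $P_n$, and $s, t$ are the Fermi coordinates of $p_{n-1}, p_n$ from the relevant feet. Substituting $s = t = \sigma_{n-1}/2$ reduces this to
\[
\cosh(\ell(\beta_n)) - 1 = (\cosh(d_n) - 1)\cosh^2(\sigma_{n-1}/2).
\]
Combining this with the asymptotic $\cosh(d_n) - 1 \asymp e^{-\min(\ell_{n-1},\ell_n)}$ (a direct calculation in a pair of pants with a cusp and two long boundary geodesics) together with the identity $\sigma_n + \sigma_{n-1} = \ell_n$, one obtains
\[
\ell(\beta_n) \lesssim e^{-\sigma_n/2} + e^{-\sigma_{n-2}/2}.
\]
Summing over $n$ yields $\ell(\beta) \lesssim \sum_n e^{-\sigma_n/2} < \infty$ by hypothesis. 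Since $\beta$ meets every $\alpha_n$, it exits every compact subset of $X$, proving incompleteness.

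The main technical obstacle is verifying the precise form of the recurrence $v_n + v_{n-1} = \ell_n/2$, which requires careful tracking of orientation conventions at each gluing: the two copies of $\alpha_n$ inherit opposite orientations from $P_n$ and $P_{n+1}$, and this is what produces the alternating signs characteristic of $\sigma_n$ rather than a cumulative sum. A secondary concern is ensuring that $|v_n| \leq \ell_n/2$ (so that one is working with the shortest representative of the Fermi coordinate) and that the asymptotic estimates apply uniformly across the regimes $\ell_n \leq \ell_{n-1}$ and $\ell_n > \ell_{n-1}$; both are automatic in the regime $\sigma_n \to +\infty$ which is forced by the hypothesis $\sum e^{-\sigma_n/2} < \infty$.
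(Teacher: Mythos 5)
Your proposal is correct and takes essentially the same approach as the paper: the Fermi coordinate $v_n = \sigma_n/2$ you derive from the recurrence $v_n + v_{n-1} = \ell_n/2$ is exactly the side length $\sigma_n/2$ of the Saccheri quadrilateral the paper builds in the front pentagon $\Sigma_n$, and your distance formula $\cosh \ell(\beta_n) - 1 = (\cosh d_n - 1)\cosh^2(\sigma_{n-1}/2)$ is the same (after $\cosh x - 1 = 2\sinh^2(x/2)$) as the paper's identity $\sinh(s_n/2) = \sinh(\ell(\delta_n)/2)\cosh(\sigma_n/2)$, up to a one-unit index shift. The imposed symmetry $s=t$ in your Saccheri-type arc is equivalent to the paper's use of the front-to-back reflection symmetry, so the two proofs produce the same finite-length escaping path.
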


Using  Theorem  \ref{thm:half-twist-incomplete-intro}
we  identify a class of half-twist tight flute surfaces for which we have a characterization of parabolicity. 

We say that $\{\ell_n\}$ is a \emph{concave sequence} if  there is a non-decreasing concave function $f:[0,\infty)\to[0,\infty)$ such that $\ell_n=f(n)$ for $n\geq 0$. Equivalently, $\ell_n$ is concave if it is non-decreasing and for $n\geq 1$ the following holds:
\begin{align}\label{ineq:concave}
2 \ell_{n}\geq \ell_{n+1}+\ell_{n-1}.
\end{align}

For half-twist surfaces corresponding to concave sequences we show that $e^{-\sigma_n/2} \asymp e^{-\ell_n/4}$. Theorems \ref{thm:intro2} and  \ref{thm:half-twist-incomplete-intro} then give the following characterization, see Theorem \ref{thm:half-twists}.

\begin{theorem}[Parabolicity of half-twist flutes]\label{thm:half-twists-intro} 
Let $X=X(\{\ell_n,1/2\})$, where $\{\ell_{n}\}$ is a concave sequence. Then $X$ is parabolic if and only if one of the following conditions holds
\begin{enumerate}
\item $X$ is complete,
\item $\sum_n e^{-\ell_n/4} =\infty$.
\end{enumerate}
\end{theorem}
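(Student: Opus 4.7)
The plan is to establish the three-way equivalence
\[
X \text{ is complete} \iff \sum_{n=1}^\infty e^{-\ell_n/4} = \infty \iff X \text{ is parabolic},
\]
from which the stated characterization follows, since then the disjunction (1) or (2) is merely a (slightly redundant) way of stating parabolicity. The main inputs are Theorem \ref{thm:intro2} specialized to $t_n = 1/2$, Theorem \ref{thm:half-twist-incomplete-intro}, and the standard fact that in the pants-gluing setting, parabolicity forces completeness. The bridge between Theorems \ref{thm:intro2} and \ref{thm:half-twist-incomplete-intro} will be a comparison between $\sigma_n/2$ and $\ell_n/4$ coming from concavity.

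The core calculation is to establish the two-sided estimate
\[
\tfrac{1}{2}(\ell_n - \ell_1) \leq \sigma_n \leq \tfrac{1}{2}(\ell_n + \ell_1),
\]
for a concave, non-decreasing sequence $\{\ell_n\}$ with $\ell_0 \geq 0$, whence $e^{-\sigma_n/2} \asymp e^{-\ell_n/4}$ with implicit constants depending only on $\ell_1$. To prove the sandwich, introduce the increments $\Delta_k := \ell_k - \ell_{k-1}$; the concavity condition \eqref{ineq:concave} is precisely that $\Delta_k$ is non-increasing in $k$, while monotonicity gives $\Delta_k \geq 0$. Pairing consecutive terms of $\sigma_n = \sum_{k=1}^n (-1)^{n-k}\ell_k$ expresses $\sigma_n$ as a sum of alternating $\Delta_k$'s (with an extra $\ell_1$ when $n$ is odd), and comparing the ``odd-index'' and ``even-index'' half-sums of the $\Delta_k$'s via the pointwise inequality $\Delta_{2j-1} \geq \Delta_{2j}$ yields both sides. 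Consequently
\[
\sum_{n=1}^\infty e^{-\ell_n/4} = \infty \iff \sum_{n=1}^\infty e^{-\sigma_n/2} = \infty.
\]

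With the equivalence of series in hand, the theorem assembles in three short moves. First, the implication (2) $\Rightarrow$ parabolic is immediate from Theorem \ref{thm:intro2} applied with $t_n \equiv \tfrac{1}{2}$. Second, parabolic $\Rightarrow$ (1) follows from the general fact that a parabolic surface in this setting must be complete, since any incompleteness in the gluing metric contributes boundary of positive harmonic measure to the ideal boundary. Third, for (1) $\Rightarrow$ (2): if $X$ were complete with $\sum e^{-\ell_n/4} < \infty$, then by the series equivalence $\sum e^{-\sigma_n/2} < \infty$, and Theorem \ref{thm:half-twist-incomplete-intro} would force $X$ to be incomplete, contradicting (1). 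Hence (1) implies (2), which in turn implies parabolicity, closing the loop.

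The main obstacle is the concavity sandwich for $\sigma_n$: everything else is a packaging of the two previously established sufficient conditions. The calculation is elementary once one reformulates $\sigma_n$ in terms of $\Delta_k$, but requires careful bookkeeping with the parity of $n$ and the boundary terms $\ell_0, \ell_1$ to obtain a bound of the form $\sigma_n = \ell_n/2 + O(\ell_1)$ that is sharp enough to preserve the comparison of exponential series.
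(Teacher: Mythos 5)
Your proposal is correct and follows essentially the same approach as the paper: both reduce the theorem to the comparison $e^{-\sigma_n/2}\asymp e^{-\ell_n/4}$ (you via the increments $\Delta_k$ and the pointwise inequality $\Delta_{2j-1}\ge\Delta_{2j}$, the paper via the recurrences $\sigma_n+\sigma_{n-1}=\ell_n$ and $\sigma_{n+2}-\sigma_n=\ell_{n+2}-\ell_{n+1}$, arriving at $|2\sigma_n-\ell_n|\le 2\ell_2$), and then close the cycle of implications using Theorem~\ref{thm:intro2} for sufficiency, Theorem~\ref{thm:half-twist-incomplete-intro} for incompleteness, and the general fact that parabolic implies complete.
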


Given Theorems \ref{thm:zero-twists-intro} and \ref{thm:half-twists-intro} one may think that a tight flute surface is parabolic if and only if it is complete. This is not the case. Indeed, let $X$ be obtained by taking out a sequence of points from the unit disk that converge to every point of the unit circle. Then $X$ is the tight flute surface that is the union of countably many pairs of pants, complete and not of parabolic type (see also  \cite{Kinjo} and \cite{HaasSusskind}). It is not known if there are such examples among half-twist tight flute surfaces. In Section \ref{sec:flute} we construct examples of half-twist tight flutes (necessarily with not concave $\{\ell_n\}$) for which Theorems \ref{thm:half-twists-intro} and \ref{thm:half-twist-incomplete-intro} do not apply, see Example \ref{example:parameters}. A particular case of that is the following.

\begin{example}\label{example-intro}
Let $X_s=X(\{s\ell_n,1/2\})$ be a tight flute surface, where for $n\geq1$ we have
\begin{align*}
\ell_{2n} &= \ln(n+1)+2\ln n,\\
\ell_{2n+1} &=3\ln(n+1).
\end{align*}
Applying the above results we obtain that $X_s$ is parabolic if $s\in(0,4/3]$, and $X_s$ is incomplete if $s>2$ (see Example \ref{example:parameters} for the proofs). For $s\in(4/3,2]$ the results of this paper are inconclusive. It would be interesting to know whether $X_s$ is complete and non-parabolic if $s\in(4/3,2]$.
\end{example}

Motivated by the discussion above we ask the following.

\begin{question}
Suppose $\ell_n\to\infty$. 
\begin{enumerate}
\item Is $X=X(\{\ell_n,1/2\})$ incomplete if and only if (\ref{eqn:half-twist-incomplete-intro}) holds?
\item Is $X'=X(\{\ell_n,t_n'\})$ parabolic if $X=X(\{\ell_n,t_n\})$ is parabolic and $t_n<t_n', \forall n\geq 1$?
\item Given  $\{\ell_n\}$ is there a sequence of twists  $\{t_n\}$  such that  $X=X(\{\ell_n,t_n\})$ is parabolic.
\end{enumerate}
\end{question}


\begin{figure}[h]
\begin{tabular}{c c}
\includegraphics[width=4in]{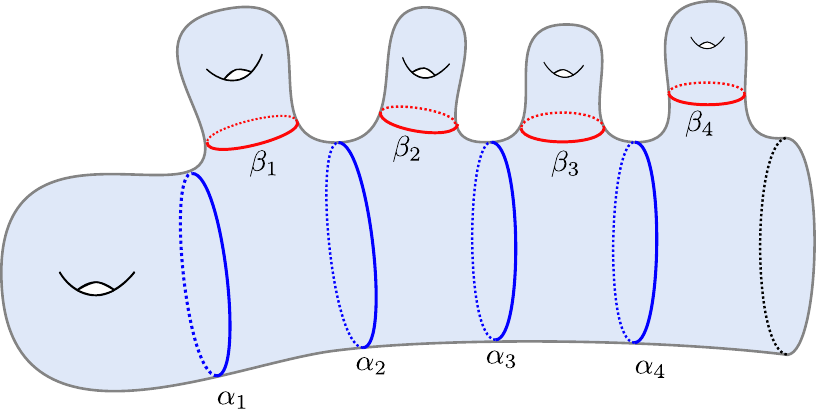}
\end{tabular}
%
%
%
%
\caption{Surface $X^{\infty}_1$ (the Loch-Ness monster) of infinite genus with one non-planar  end.}
\label{fig:loch-ness}
\end{figure}

\subsection{Applications to various surfaces and regular covers} 
Besides considering flute surfaces in this paper we apply the sufficient conditions for parabolicity (e.g. Theorems \ref{thm:general_parabolic-intro} and \ref{thm-intro:general_parabolicwithtwist}) to other topological types as well. Here we mention three such examples: (1) the Loch-Ness monster (surface of infinite genus and one non-planar topological end) first studied in \cite{PS}; (2) the complement of the Cantor set (uncountably many ends); (3) topological Abelian covers of compact surfaces.

\subsubsection{Loch-Ness monster}

Let  $X^{\infty}_1$ be a hyperbolic Loch-Ness monster  as in Figure \ref{fig:loch-ness}. Suppose that the lengths of geodesics which cut off the genus, denoted by $\beta_n$, are uniformly bounded above. We show, see Theorem \ref{thm:infinitegenus}, that $X^{\infty}_1$ is of parabolic  type if
\begin{align*}
\sum_{n=1}^{\infty}  e^{-(1-|t(\alpha_n)|) 
\frac{\ell(\alpha_{n})}{2}} =\infty.
\end{align*}

In the above theorem the twist parameter, $t(\alpha_n)$,  is measured relative to the endpoints in $\alpha_n$  of the orthogeodesic  from $\beta_{n-1}$ to $\alpha_{n}$, and the orthogeodesic from 
$\alpha_{n}$ to $\beta_n$.

\subsubsection{Complement of a Cantor set}
Let $X_{\infty}$ be a genus zero surface whose space of topological ends is a Cantor set as in Figure  \ref{fig:cantor}. The surface 
$X_{\infty}$ is homeomorphic to the complement of a Cantor set on the Riemann sphere and has an exhaustion $\{X_n\}$, where $X_n$ is a genus zero surface with $2^n$ geodesic boundary curves for every $n\geq 1$, see the discussion before Theorem \ref{thm:cantor}. As before we denote by $\partial_0 X_n$ the collection of boundary components of $X_n$. 

It is well-known that if the lengths of the boundary geodesics of $X_n$'s are uniformly bounded from below then the surface $X_{\infty}$ is not of parabolic  type, \cite{McM}. In the opposite direction we show (see Theorem \ref{thm:cantor}) that if there is a constant $C\geq 1$ such that for every $n\geq 1$ and all $\alpha \in \partial_0 X_n$ we have
\begin{align}\label{ineq:cantor}
\ell (\alpha) \leq C {n}/{2^n}.
\end{align}  
then $X_{\infty}$ is of parabolic type.

It is an open problem whether $X_{\infty}$ can be parabolic if the lengths of the boundary geodesics decay slower than in (\ref{ineq:cantor}) (e.g., if there is a constant $k>0$ such that $\ell(\alpha)\lesssim n^{-k}$ for $\alpha\in \partial_0 X_n$).

\subsubsection{Abelian covers of closed surfaces}
In  \cite{Mo} and \cite{Rees} it was shown that a hyperbolic Riemann surface $X$ is of parabolic type if it is  a
$\mathbb{Z}$ or $\mathbb{Z}^2$ \emph{geometric} cover 
$\pi : X \rightarrow Y$  over  a closed Riemann surface.  Our methods give an alternative proof of this result along with a generalization   to hyperbolic Riemann surfaces $X$ which are \emph{topological} covers of a closed Riemann surface, see Theorem \ref{thm:top. covering group}.  In fact, the  hyperbolic structure on $X$ can be chosen so that it is quasiconformally distinct from the hyperbolic structure on the geometric cover  but in a suitable sense has  Fenchel-Nielsen parameters
  that agree with the parameters of the regular cover for almost all pants curves.
See  Example \ref{ex: generalization of geometric cover}  for details.

\subsection{Tools of the trade: Extremal distance in nonstandard and standard collars}\label{subsection:Ahlfors-criterion}

A key ingredient in the proofs of our results is the characterization of parabolicity in terms of the extremal distance. See Section \ref{sec:modulus} for the definition and properties of extremal distance. The method of extremal length (or the length-area principle) was initiated by Ahlfors in 1935 for the study of the type problem for simply connected Riemann surfaces, see  \cite{Ahlfors:length-area}. He showed that a simply connected Riemann surface $X$ is parabolic if and only if there is a conformal metric $\rho(z)|dz|$ on $X$ and $r_0>0$ such that 
\begin{eqnarray}\label{Ahlfors}
\int_{r_0}^{\infty} \frac{dr}{L(r)} = \infty,
\end{eqnarray}
where $L(r)$ is the $\rho$-length of the circle of radius $r$ centered at some point $z_0\in X$.
Ahlfors' criterion (\ref{Ahlfors}) was generalized and reformulated by several authors and was later often referred to as the modular test. 

 Let $\{X_n\}$ be an exhaustion of $X$ by a family of relatively compact regions with piecewise analytic boundary such that $\overline{X}_n\subset X_{n+1}$. Denote by $\beta_n$ the boundary of $X_n$. 
Let $\lambda_{X_n-X_1}(\beta_1,\beta_n)$ be the  extremal length of the family of curves contained in $X_n - X_1$ which connect $\beta_1$ and $\beta_n$.  The following  characterization of parabolicity is due to Nevanlinna \cite{Nevanlinna:criterion}, see also \cite[page 328]{SarioNakai}. 

\noindent \textsc{Modular test}.
\textit{The Riemann surface $X$ is parabolic if and only if}
\begin{align}\label{Ahlfors-Sario}
 \lambda_{X_n-X_1}(\beta_1,\beta_n)\to\infty\quad \mathrm{as }\quad n\to\infty.
\end{align} 

Informally, $X$ is parabolic if the extremal distance between any compact subset of $X$ and its ideal boundary is infinite, i.e. $\partial_{\infty}X$ cannot be reached in finite time. Equivalently, $X$ is parabolic if and only if the capacity of $\partial_{\infty} X$ vanishes, see  \cite{Sario}. 

Since $\lambda_{X_n-X_1}(\beta_1,\beta_n)$ is difficult to compute or estimate, one usually uses condition (\ref{Ahlfors-Sario}) in conjunction with the so-called \emph{serial rule} for the extremal length. For that we suppose that the connected components $\beta_{k,j}$ of $\beta_k$ are contained in pairwise disjoint collars (topological annuli), denoted by $A_{k,j}$. Let $\lambda_{k,j}$ be the extremal length of the path family in $A_{k,j}$ connecting its boundary components, and denote $\lambda_k = \sum_j \lambda_{k,j}$. From the serial rule and the fact that $A_{k,j}$'s are disjoint it follows that $\lambda_{X_n-X_1}(\beta_1,\beta_n)\geq  \sum_{k=1}^{n-1} \lambda_k.$ Therefore, by (\ref{Ahlfors-Sario}) $X$ is parabolic, provided
\begin{align*}
\sum_{k=1}^{\infty}{\lambda_k} = \sum_{k=1}^{\infty} \sum_{j=1}^{|\partial_0(X_n)|} \lambda_{k,j}= \infty,
\end{align*}
where $|\partial_0(X_n)|$ denotes the number of boundary components of $X_n$.

Thus, if $X$ is a Riemann surface with an exhaustion $\{X_n\}$, where the boundary components of $X_n$ are geodesics then we would like to construct disjoint collars around these boundary geodesics and calculate  or estimate the extremal distance between their boundaries. 


\begin{figure}[t]
\begin{center}
\includegraphics[height = 2in]{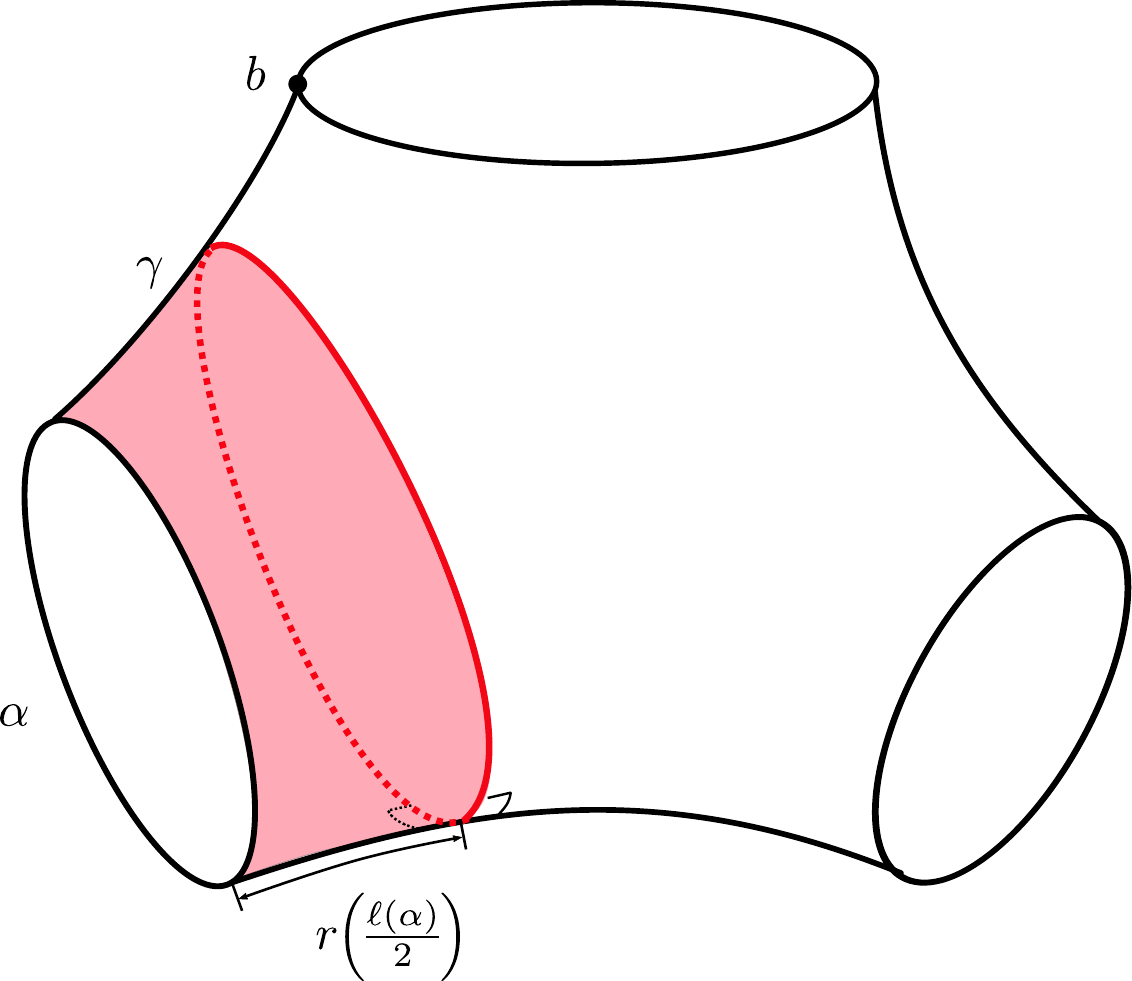} \qquad \qquad  \includegraphics[height = 2in]{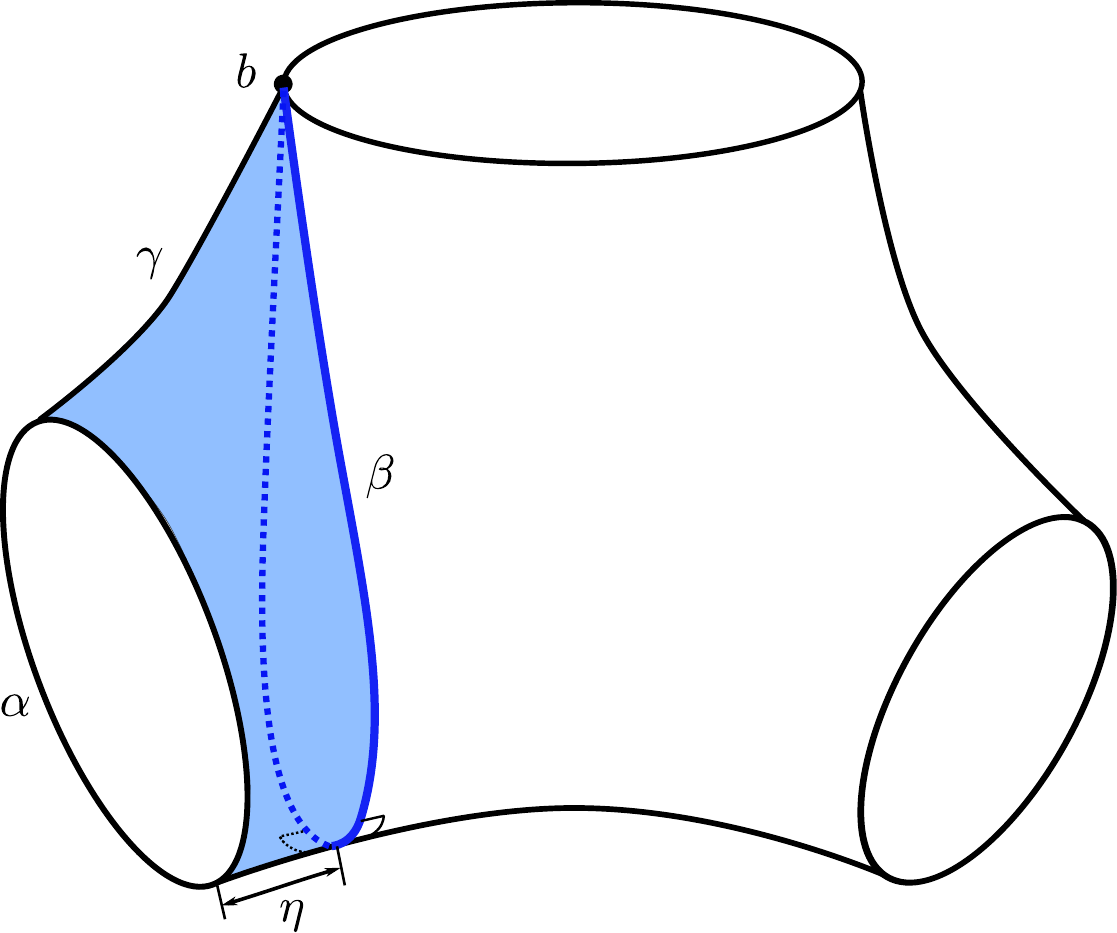}
\end{center}
\caption{Standard (left) and nonstandard (right) half-collars about a geodesic $\alpha$ of length $\ell$.} \label{fig:collars}
\end{figure}

The well-known collar lemma  (see \cite{Buser}) tells us that a  simple closed geodesic of length $\ell$ on a hyperbolic Riemann surface is guaranteed to have a  collar (annular neighborhood)  of width 
$\text{arcsinh}\left( \frac{1}{\sinh \frac{\ell}{2}}\right)$, see Figure \ref{fig:collars} for the picture of a half-collar.  We call this a {\it standard collar}. The important point is that the width only depends on  the length of the geodesic and not the ambient hyperbolic structure of the surface.   The extremal distance between the boundary components of the one-sided  standard collar up to a constant multiple is bounded below by $\frac{e^{-\ell/2}}{\ell}$  (see \cite{Maskit} and Lemma \ref{lem:collar}).  This is a good asymptotic estimate when $\ell\to 0$ (or in the thin parts of the surface)  but not for large $\ell$. 

To deal with large $\ell$ (or thick parts of the surface), we introduce what we call a nonstandard half-collar about a geodesic in a pair of pants. This half-collar will depend  on local data of the pair of pants as opposed to the standard half-collar which depends only on the length of the closed geodesic. 
Most of the sufficient conditions for parabolicity we obtain follow from the extremal distance bounds on the nonstandard half-collars and  collars, which we describe next.

Let $\alpha, \alpha_1$, and $\alpha_2$ be the  boundary geodesics (we allow $\alpha_1$ or $\alpha_2$   to be a puncture) of  a pair of pants $P$, and $\gamma$ the unique simple  orthogeodesic between  $\alpha$ and $\alpha_1$, see Figure \ref{fig:collars}.  Letting  $b$ be the endpoint  of $\gamma$ on $\alpha_1$,  there exist exactly two shortest  geodesic segments from $b$ to the   simple orthogeodesic from $\alpha$ to $\alpha_2$. These segments have equal length and the  union of the two  connect to make a geodesic loop $\beta$ with non-smooth point $b$. The {\it nonstandard half-collar} around 
$\alpha$ is the region in $P$ between $\alpha$ and the geodesic loop $\beta$. It is topologically an annulus which we denote by $R_{\alpha,\gamma}$, see Figure \ref{fig:collars}.
A more general type of collar has been considered by Parlier in a different context \cite{Parlier, Parlier1}.

In order to simplify the notation, we say that two positive quantities $a$ and $b$ satisfy $a\gtrsim b$ if $a/b$ is greater than or equal to a positive constant; $a\lesssim b$ if $a/b$ is less than or equal to a positive constant; and $a\asymp b$ if $a/b$ is between two positive constants or equivalently, if $a\gtrsim b$ and $a\lesssim b$.

Neither   type of half-collar (standard or nonstandard) contains the other except when  $\alpha_1$ is a puncture (that is, $\ell (\g )=\infty$) in which case  the nonstandard half-collar contains the standard half-collar  (see Figure \ref{fig:comparing-collars}). Nevertheless, for $\ell=\ell(\alpha)$ large, the nonstandard half-collar produces a  larger extremal distance between the boundary components than the standard half-collar, see Theorem \ref{thm:half-collar} and Corollary \ref{cor:asymp}. For example, when  $\ell (\gamma )\geq \g_0>0$, we have  the following asymptotic behavior for $\lambda(R_{\alpha,\gamma})$ as $\ell(\alpha)\to\infty$:
\begin{align}\label{modest:no-twists}
\lambda(R_{\alpha,\gamma}) \gtrsim e^{-\ell(\alpha)/2},
\end{align}   
while the extremal distance between the boundary components of the standard half-collar, as noted above, is comparable to $e^{-\ell(\alpha)/2}/\ell(\alpha)$.

\begin{figure}[t]
\begin{center}
\includegraphics[height = 2in]{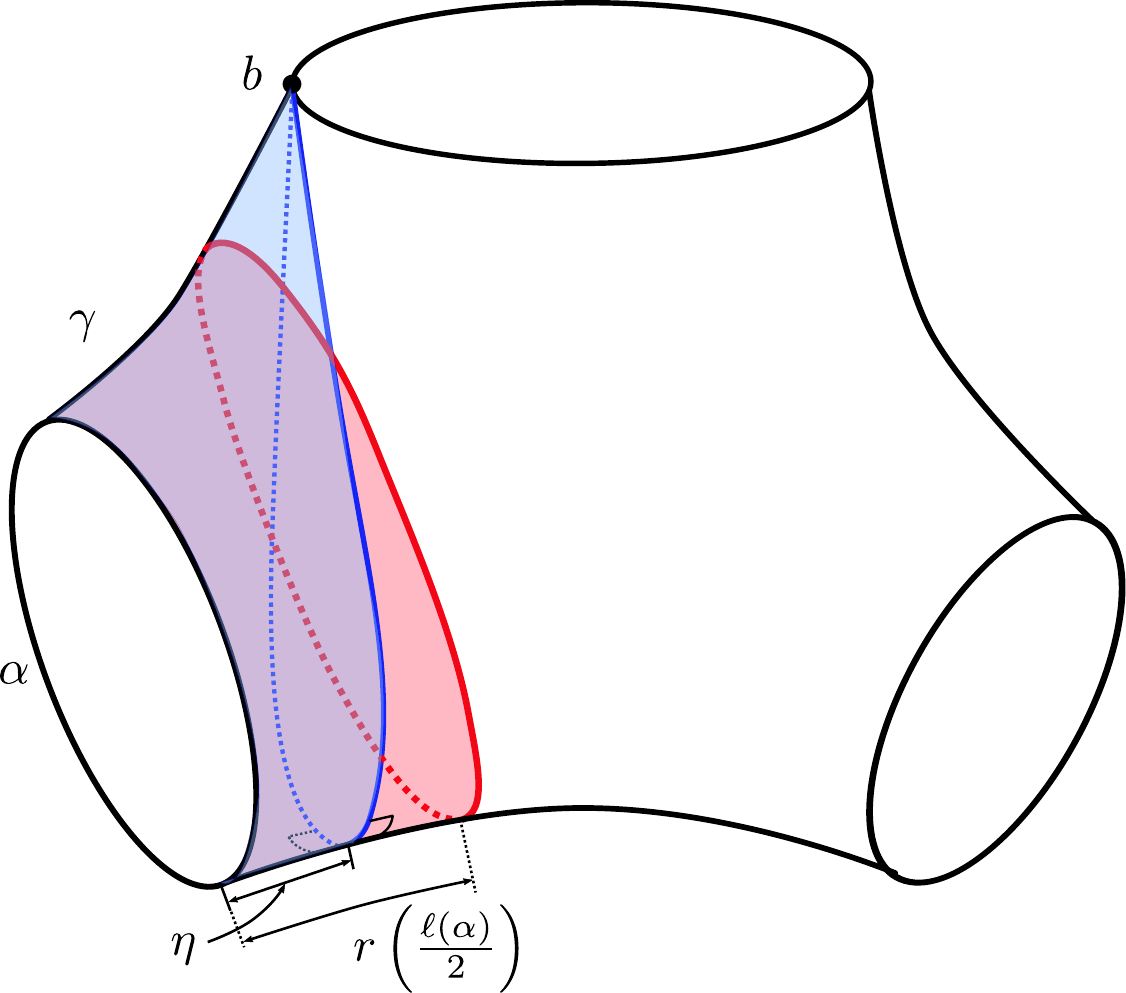}
\qquad\qquad
\includegraphics[height = 2.5in]{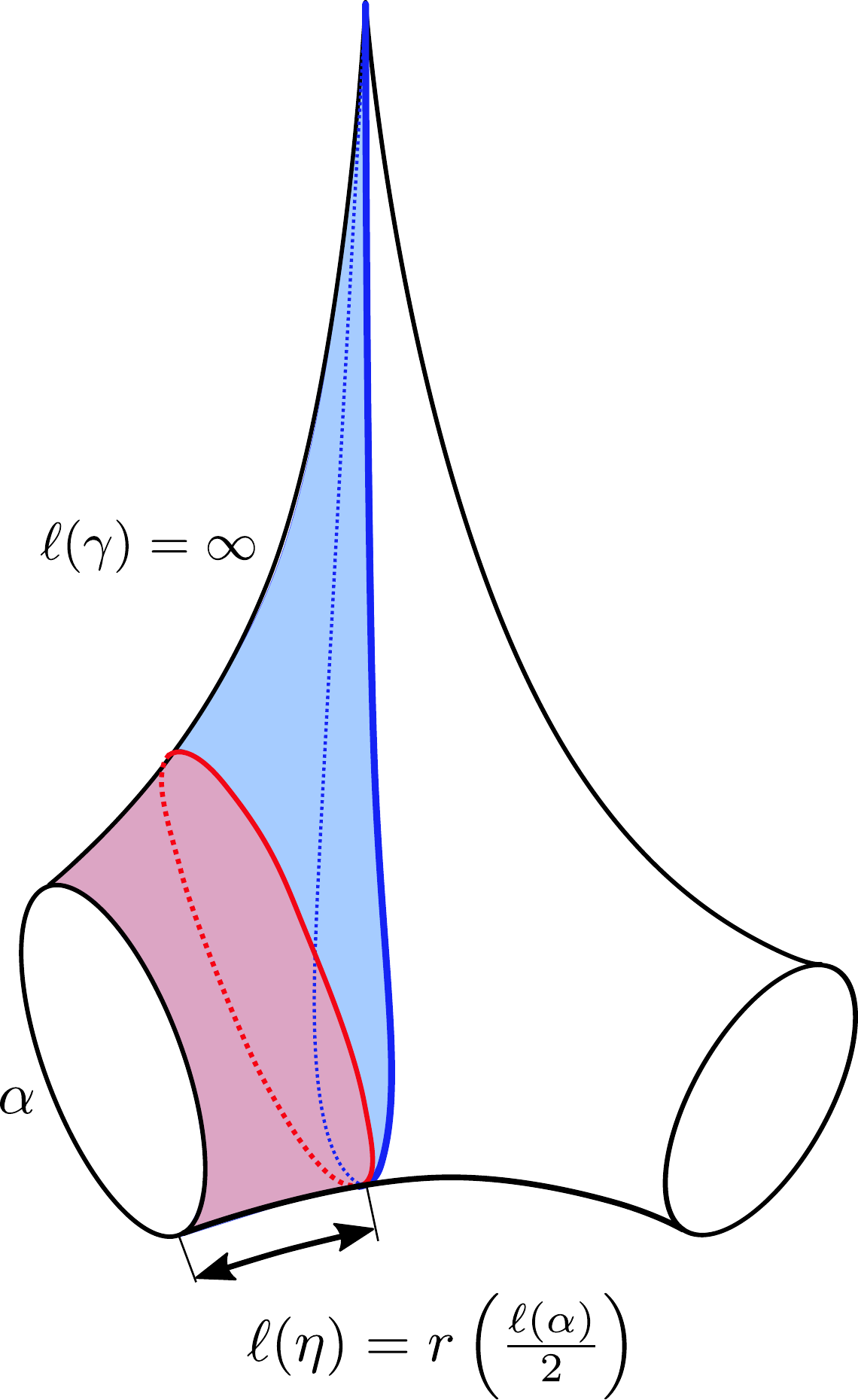}
\end{center}
\caption{Pink and blue regions above are the standard and nonstandard half-collars about the hyperbolic geodesic $\alpha$, respectively. When the length of the orthogeodesic $\gamma$ from $\alpha$ to another boundary component of the pants is infinite (on the right) then the standard collar is completely contained in the non-standard collar. As $\ell=\ell(\alpha)\to\infty$ the modulus of the standard collar is comparable to $e^{-\ell/2}/\ell$, while for  the nonstandard collar the lower bound is of the order $e^{-\ell/2}$.
}\label{fig:comparing-collars}
\end{figure}

When two standard half-collars $R$ and $R'$ around geodesics of the same length  are glued by an isometry along the geodesics,  the obtained surface $\widehat{R}$ is invariant under the isometric reflection in the geodesic regardless of the twist. Consequently, the extremal distance between the boundary curves of  the collar $\widehat{R}$ equals the sum of the extremal distances between the boundary components of the two half-collars, i.e.,
\begin{align}\label{asymp:standard-collar}
\lambda(\widehat{R}) = \lambda(R) + \lambda (R') \asymp {e^{-\ell/2}}/{\ell}
\end{align}
On the other hand, when two  nonstandard half-collars are glued, see Figure \ref{fig:glued-ns-collars-topview}, the extremal distance between the boundary components of the glued surface may significantly increase  depending on the twist of the gluing (Theorem \ref{thm:gluings}). For instance, we show that if $R^t_{\alpha,\gamma,\gamma'}$ is the annular region obtained by gluing two nonstandard half-collars $R_{\alpha,\gamma}$ and $R_{\alpha,\gamma'}$ with twist $t \in(-\frac{1}{2},\frac{1}{2}]$ and $\ell (\gamma ) =\ell (\gamma') =\infty$ then 
\begin{align}\label{modest-with-twist}
\lambda (R^t_{\alpha,\gamma,\gamma'})\gtrsim e^{-(1-|t|)\frac{\ell(\alpha)}{2}},
\end{align}
provided $\ell (\alpha ) \geq \ell_0\geq 2$. 
Therefore, comparing the nonstandard and standard two-sided collars we have from (\ref{asymp:standard-collar}) and (\ref{modest-with-twist}) the following asymptotic inequality, as $\ell(\alpha)\to\infty$, 
\begin{align}\label{comparing-collars}
\frac{\lambda(R^t_{\alpha,\gamma,\gamma'})}{\lambda(\widehat{R})}\gtrsim \ell(\alpha) {e^{|t|\frac{\ell(\alpha)}{2}}}.
\end{align}
In particular, as $\ell(\alpha)\to\infty$  the ratio between the non-standard and standard collars grows linearly if $t=0$ and exponentially as soon as there is a non-trivial twist $t\neq0$ involved.



\begin{figure}
\includegraphics[height = 3in]{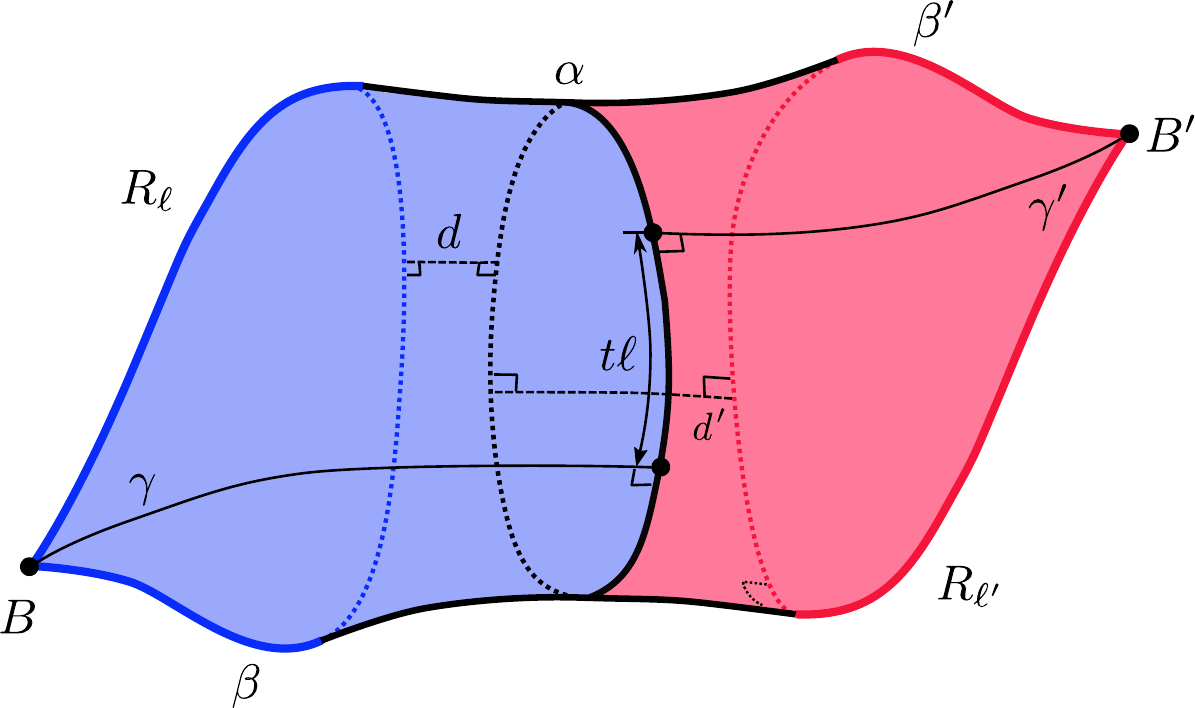}
\caption{{Let $R^t_{\alpha,\gamma,\gamma'}$ be the nonstandard collar (as above) obtained by gluing  nonstandard half-collars $R_{\alpha,\gamma}$ and $R_{\alpha,\gamma'}$ along the common geodesic $\alpha$ of length $\ell$ with a twist $t$.  We write $R^t_{\alpha}$ if $\ell(\g)=\ell(\g')=\infty$.  Denoting by  $\hat{R}_{\alpha}$ the standard collar around $\alpha$, we have $\lambda (R^t_{\alpha}) \gtrsim \ell e^{t\ell/2} \lambda (\hat{R}_{\alpha})$ as $\ell\to\infty$, see (\ref{comparing-collars}). 
}}\label{fig:glued-ns-collars-topview}
\end{figure}

The extremal distance estimates for nonstandard collars follow from our   
  technical tool 
 on  general collars about a simple closed geodesic, see Corollary \ref{cor:collar_ext}. 
We  show that the extremal distance between the boundary components of  a general collar is comparable to the extremal length of the curve family of geodesic orthorays based at the core curve
$\alpha$, see Section \ref{sec:modulus} for the definition of the extremal length of curve families. To achieve this we use logarithmic coordinates and express the universal cover of the collar as a region bounded by two graphs in the plane. Then the extremal distance between the  boundary components of the collar in the Riemann surface is related to the extremal length of curves connecting the top graph to the bottom graph in the universal cover. These curve families degenerate as the length of the core curve $\ell(\alpha )\to\infty$ and our key result is an estimate of the extremal length of such degenerating families of curves. A similar approach was also used in the context of the Teichm\"uller theory, see \cite{HS,HS:PLMS}. However the degeneration of the families in our setting are much more involved and the estimates do not follow from any previous work. 

\subsection{Outline of the paper} The rest of this paper is organized as follows. In Section \ref{sec: Contents, notation, and convention}  we introduce geodesic pairs of pants, the Fenchel-Nielsen parameters and the construction of infinite type Riemann surfaces from the geodesic pairs of pants. In Section  \ref{sec:modulus} we recall the  definition and basic properties of  modulus of curve families. Sections \ref{sec:between_graphs} - \ref{Section:gluing} are the technical core of the paper. In Section \ref{sec:between_graphs} we obtain estimates for the moduli of degenerating curve families connecting two graphs of real functions over a compact interval. In Sections \ref{sec:non-standard-collar}  and \ref{Section:gluing} we apply the results of Section \ref{sec:between_graphs} and prove the main modulus bounds for the collars around geodesics,  in particular here we prove the estimates (\ref{modest:no-twists}) and  (\ref{modest-with-twist}). In Section \ref{sec:type_problem}  we recall Nevanlinna's modular test of parabolicity (\ref{Ahlfors-Sario}) and prove a slight generalization which is used in our applications. In Section \ref{sec:general-theorems} we combine the previous results obtaining our most general sufficient conditions on the Fenchel-Nielsen parameters which guarantee parabolicity of an arbitrary infinite type Riemann surface. In particular, Theorems  \ref{thm:general_parabolic-intro} and \ref{thm-intro:general_parabolicwithtwist} follow from the results in Section \ref{sec:general-theorems}. In Section \ref{sec:flute} we consider the applications to flute surfaces which imply Theorems \ref{thm:intro2} - \ref{thm:half-twists-intro}. Section \ref{sec: A trip to the Managerie: Applications to various topological types} describes sufficient conditions for parabolicity for various topological types of infinite type surfaces including the infinite Loch-Ness monster, surfaces with finitely many ends, surfaces with a Cantor set of ends, and topological $\mathbb{Z}^r$ covers of compact surfaces. In particular, we recover the results of Mori and Rees on conformal $\mathbb{Z}$ and $\mathbb{Z}^2$ covers of compact surfaces.


We conclude the introduction by listing some of the notation used in the text together with the sections where the corresponding quantities are defined.

\vskip 0.5cm
 
\begin{center}
 \begin{tabular}{| l | c | c |}
\hline \hline 
 {\bf Definition} & {\bf Section} &{\bf Notation} \\
 \hline
\hline 

\hline
 Length and twist parameters& \ref{sec: Contents, notation, and convention} & $\ell (\alpha ), t(\alpha )$  \\
\hline
Modulus and extremal length & \ref{sec:modulus} & $\mathrm{mod}\Gamma$ \\
\hline
 Extremal distance & \ref{sec:modulus} and \ref{sec:type_problem} & $\lambda (R)$, $\lambda_{X_n-X_1} (\beta_1,\beta_n )$  \\
\hline
 Simply degenerating families of functions & \ref{sec:deg-fam}
  &  $\{(f_\ell, g_\ell)\}_{\ell \geq \ell_0}$   \\
\hline
Standard half-collar & \ref{sec:non-standard-collar} & $R_{\alpha}$ \\
\hline
nonstandard half-collar & \ref{sec:non-standard-collar} & $R_{\alpha ,\gamma}$  \\
\hline
 nonstandard collar& \ref{Section:gluing}  & $R_{\alpha ,\gamma ,\gamma'}^{t(\alpha )}$ \\
\hline

Geodesic subsurface& \ref{sec:type_problem} & $X_n$ \\
\hline
 Boundary components of $X$& \ref{sec:type_problem} &$\partial_0 X $ \\
\hline
 Tight flute surface&\ref{sec:flute}
  &$X(\{ \ell_n,t_n\})$  \\
\hline

\hline

\end{tabular} 
\end{center}
\label{Table:notation}
\vskip .5cm
\noindent {\it Acknowledgements.}  We are grateful to the referees for  the  careful reading of the paper that led to specific suggestions that improved the presentation and  simplified  the statement of Lemma \ref{lem:lift} as well as  the proof of Corollary \ref{cor:collar_ext}. We would also like to thank Mario Bonk, Misha Lyubich and Dennis Sullivan for helpful comments.

\section{Riemann surfaces of infinite topological type}
\label{sec: Contents, notation, and convention}


Every Riemann surface $X$ in this paper is assumed to admit a {\it hyperbolic metric}, that is a conformal metric of constant curvature equal to $-1$. Thus, $X$ is not  conformal to the Riemann sphere $\bar{\mathbb{C}}$, the complex plane $\mathbb{C}$, the punctured complex plane $\mathbb{C}\setminus\{ 0\}$ or the torus. See  \cite{Buser} for background on hyperbolic geometry.

We will interchangeably use the terms Riemann surface and hyperbolic surface for the same object. 
A Riemann surface $X$ is of {\it infinite topological type } if its fundamental group $\pi_1(X)$ is infinitely generated. 

 A {\it geodesic pair of pants} is a complete hyperbolic surface (homeomorphic to a sphere minus three disks) whose boundary components are either closed geodesics or punctures with at least one boundary a closed geodesic. A {\it tight}  pair of pants is a geodesic pair of pants that has at least one puncture. In Figure \ref{fig:comparing-collars} we illustrated two geodesic pairs of pants, on the left with three boundary geodesics and on the right with two boundary geodesics and a puncture. The geodesic pair of pants on the right is a tight pair of pants.

Consider a geodesic pair of pants $P$ which is not tight and fix a boundary geodesic $\alpha$ of $P$. Let $\alpha_1$ be another closed geodesic on the boundary of $P$. 
Let $\gamma$ be the orthogeodesic from $\alpha$ to $\alpha_1$. The foot $x\in\alpha$  of $\gamma$ on $\alpha$ is called a {\it marked point}. If $P$ is a tight pair of pants with one boundary puncture, we choose $\gamma$ to be the simple orthogeodesic in $P$ from $\alpha$ to the puncture. If $P$ has two punctures then we choose one puncture and repeat the construction above.

Let $P'$ be another geodesic pair of pants with boundary geodesic $\alpha'$. Assume $\ell (\alpha )=\ell (\alpha')$.  
We identify $\alpha$ and $\alpha'$ by an isometry to obtain a bordered hyperbolic surface from the two pairs of pants. The isometric identification $\alpha\equiv \alpha'$ is determined by the relative position  of the marked points $x\in\alpha$ and $x'\in\alpha'$ which is recorded by the {\it twist parameter} $t(\alpha )\in (-\frac{1}{2},\frac{1}{2}]$. 
Namely, if $x=x'$ then $t(\alpha )=0$. If $x\neq x'$ then $\alpha - \{ x,x'\}$  consists of two arcs and $|t(\alpha )|$ is the length of the shorter arc divided by $\ell (\alpha )$. If $|t(\alpha )|=\frac{1}{2}$ then we have $t(\alpha )=\frac{1}{2}$. If $|t(\alpha )|<\frac{1}{2}$ then we orient $\alpha$ as a part of the boundary of $P$. If the shorter of the two arcs of $\alpha -\{ x,x'\}$ is $(x,x')$ for the orientation of $\alpha$ then $t(\alpha )=|t(\alpha )|$; otherwise $t(\alpha )=-|t(\alpha )|$. 

By glueing countably many geodesic pairs of pants in this manner, we  obtain a not necessarily complete surface $X$ with hyperbolic metric induced by the hyperbolic metric on the geodesic pairs of pants. The choices in the gluings are given by the twist parameters and the geodesic pairs of pants are uniquely determined by the lengths of the boundary geodesics called the {\it length parameters}. When the boundary geodesic is a puncture then by convention the length is zero. Therefore the hyperbolic metric on $X$ is uniquely determined by the length and twist parameters on the boundary geodesics of the pairs of pants called the {\it Fenchel-Nielsen parameters}. Since we do not consider the space of  Riemann surfaces to have a base point surface and need not consider marked Riemann surfaces, we are content to use the twist parameters in $(-\frac{1}{2},\frac{1}{2}]$ in order to describe all hyperbolic metrics.

Finally, the surface $X$ obtained by gluing countably many geodesic pairs of pants might not be complete in the induced hyperbolic metric. The boundary of the metric completion of $X$ consists of simple closed geodesics and bi-infinite simple geodesics (see \cite{Alvarez-Rodriguez},\cite{Basmajian},\cite{Basmajian-Saric}).  By attaching funnels to the closed geodesics and attaching geodesic half-planes to the bi-infinite geodesics of the boundary of the metric completion of $X$, we obtain a hyperbolic surface $\widehat{X}$ homeomorphic to $X$ with a geodesically complete hyperbolic metric such that the inclusion $X\to \widehat{X}$ is an isometric embedding. Any infinite type  hyperbolic surface can be obtained as the above by gluing of countably many geodesic pairs of pants and by attaching funnels and half-planes (see \cite{Alvarez-Rodriguez} and \cite{Basmajian-Saric}). The hyperbolic surface structure is completely determined by the length and twist parameters called {\it Fenchel-Nielsen} parameters.

We are mainly interested in determining whether a hyperbolic surface is or is not of parabolic type. The geodesic flow on the unit tangent bundle of a hyperbolic surface with a funnel preserves two disjoint open subsets and hence cannot be ergodic. Therefore a hyperbolic surface with a funnel supports a Green's function and thus it is not of parabolic type. In our  constructive approach to hyperbolic surfaces, a funnel appears only if a boundary geodesic of a pair of pants is not glued to another boundary geodesic. For this reason, we always assume that a {\it boundary component} of a pair of pants which is {\it not glued} to another boundary component is a {\it puncture}. Thus we are not considering surfaces with funnels because {they are known to not be of parabolic type}. Under our assumption, a hyperbolic surface obtained by gluing countably many geodesic pairs of pants could still be incomplete due to a possible accumulations of boundary geodesics of the pairs of pants (see \cite{Basmajian}). However,  determining for which Fenchel-Nielsen parameters precisely $X$ is incomplete appears to be a difficult problem.




\section{Modulus of a curve family}
\label{sec:modulus}

Let $X$ be an arbitrary Riemann surface which supports a conformal hyperbolic metric. Denote by $\Gamma$ a family of curves in $X$ that are locally rectifiable in the charts. A {\it metric} $\rho$ on $X$ is an assignment in each local chart $z=x+iy$ of a metric $\rho (z)|dz|$  invariant under transition maps.  We require that $\rho$ is non-negative and Borel measurable. 

A metric $\rho$ on $X$ is {\it allowable} for $\Gamma$ if the $\rho$-length satisfies
$$
\ell_{\rho}(\gamma)=\int_{\gamma}\rho (z)|dz|\geq 1
$$
for each $\gamma\in \Gamma$. If a curve $\gamma$ is not rectifiable then we set $\ell_{\rho}(\gamma )=\infty$.

\begin{definition}
The {\it modulus} $\mathrm{mod}\Gamma$ of the family $\Gamma$ is defined by
$$
\mathrm{mod}\Gamma =\inf_{\rho}\iint_X\rho ^2(z)dxdy
$$
where the infimum is over all allowable metrics $\rho$ for $\Gamma$ (see \cite{Ahlfors:QClectures,Fletcher-Markovic}).

The {\it extremal length}  $\lambda (\Gamma )$ of the curve family $\Gamma$ is defined by
$$
\lambda (\Gamma )=\frac{1}{\mathrm{mod}\Gamma} .
$$ 
\end{definition}

It is clear that any information about the modulus gives equivalent information about the extremal distance. We slightly favor the modulus for the simplicity of the subadditivity formula (compare inequality (\ref{eq:subadd-extremal-lenght}) and Lemma \ref{lemma:mod-properties}, Property 2.).  Additionally, the definition of an allowable metric, as being a metric where all curves have length at least one, makes geometric arguments more streamlined.

The modulus and the extremal length of a family of curves is invariant under conformal mappings and quasi-invariant under quasiconformal mappings (for example, see \cite{Ahlfors:QClectures}).

Let $R=\{ z:r_1<|z|<r_2\}$ be an annulus with  inner radius $r_1\geq 0$ and  outer radius $r_2\leq \infty$. Let $\Gamma$ be the family of all curves in $R$ with one endpoint on $|z|=r_1$ and the other  on $|z|=r_2$. It is well-known that the  modulus of $\Gamma$  is (see \cite{LV})
$$
\mathrm{mod}\Gamma =\frac{1}{\lambda (\Gamma )}=\frac{2\pi}{\log\frac{r_2}{r_1}}.
$$

Consider a radial segment $\{ z=re^{i\theta}:r_1<r<r_2\}$ and a conformal map $z\mapsto \log z$ defined on $R-\{ z=re^{i\theta}:r_1<r<r_2\}$. The image of $R-\{ z=re^{i\theta}:r_1<r<r_2\}$ under $z\mapsto\log z$ is the rectangle $Q=\{ z=x+iy:\log r_1<x<\log r_2,\theta <y<\theta +2\pi\}$. Let $\Gamma_Q$ be the family of all curves connecting the left and right sides of $Q$. A direct computation shows that (see \cite{LV})
\begin{equation}
\label{eq:slit-mod}
\mathrm{mod}\Gamma =\mathrm{mod}\Gamma_Q.
\end{equation}

Our goal is to recognize the conditions under which the equality (\ref{eq:slit-mod}) holds in a general doubly connected region $R$. 
Any doubly connected region $R$ on a Riemann surface is conformally equivalent to an annulus $\{r_1<|z|<r_2\}$ in the complex plane $\mathbb{C}$. Let $\Gamma$ be the family of curves connecting one components $\partial_1R$ to the other component $\partial_2R$ of the boundary of $R$. Since the modulus of a family of curves is a conformal invariant, it follows that $\mathrm{mod}\Gamma =\frac{2\pi}{\log r_2/r_1}$. 

Fix a Jordan arc $\tau$ connecting two boundary components of $R$ with  endpoints $z_i\in\partial_iR$ for $i=1,2$. 
Let $\Gamma_{\tau}$ be the family of curves in $R-\tau$ connecting  $\partial_1R$ to $\partial_2R$. Then
$
\mathrm{mod}\Gamma\geq\mathrm{mod}\Gamma_{\tau}
$
and the strict inequality is possible. However, we observe

\begin{lemma}
\label{lem:cut}
Let $R$ be a doubly connected domain and $\tau$ a Jordan arc connecting the two boundary components of $R$. If there exists an anticonformal map $c:R\to R$ which pointwise fixes $\tau$ then
$
\mathrm{mod}\Gamma =\mathrm{mod}\Gamma_{\tau}.
$
\end{lemma}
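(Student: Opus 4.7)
The easy half, $\mathrm{mod}\Gamma_\tau\le\mathrm{mod}\Gamma$, is automatic from $\Gamma_\tau\subseteq\Gamma$: any metric allowable for the larger family is allowable for the subfamily. For the reverse inequality my plan is to use the anticonformal symmetry $c$ to put $(R,\tau)$ in a normal form in which the claim reduces to the model computation (\ref{eq:slit-mod}) already recorded in the text. (Observe that $c^2$ is a conformal self-map of $R$ pointwise fixing the arc $\tau$, so by the identity principle $c^2=\mathrm{id}$; in particular $c$ is automatically an involution.)

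The first step is to uniformize: choose a conformal equivalence $\phi:R\to A_r=\{1<|z|<r\}$ and conjugate $c$ to an anticonformal involution $\tilde c=\phi c\phi^{-1}$ of $A_r$ whose pointwise fixed set contains the Jordan arc $\tilde\tau=\phi(\tau)$ joining the two boundary circles. A direct inspection of $\mathrm{Aut}(A_r)$ (generated by the rotations and by $z\mapsto r/z$) shows that the anticonformal involutions of $A_r$ fall into exactly two families: the maps $z\mapsto e^{i\theta}\bar z$, whose fixed set is a pair of antipodal radial slits, and the maps $z\mapsto\pm r/\bar z$, whose fixed set is either the core circle $|z|=\sqrt{r}$ or empty. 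Since $\tilde\tau$ is a connected Jordan arc joining $|z|=1$ to $|z|=r$, it can lie in neither the core circle nor the empty set, so $\tilde c$ must be of the first type and $\tilde\tau$ must coincide with one of its two fixed slits. Replacing $\phi$ by its composition with a suitable rotation of $A_r$ (itself a conformal automorphism, and therefore modulus-preserving) I normalize so that $\tilde c(z)=\bar z$ and $\tilde\tau=(1,r)$.

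In this normal form the branch of $\log$ with $\arg z\in(0,2\pi)$ is a conformal equivalence $A_r\setminus(1,r)\to Q=(0,\log r)\times(0,2\pi)$ that sends the two boundary circles of $A_r$ onto the two vertical sides of $Q$. Under this equivalence $\Gamma_{\tilde\tau}$ corresponds to the family of arcs in $Q$ joining the two vertical sides, whose modulus equals $2\pi/\log r$ with the Euclidean metric extremal; combined with the standard computation $\mathrm{mod}\Gamma=2\pi/\log r$ in the annulus, the conformal invariance of modulus gives $\mathrm{mod}\Gamma_\tau=\mathrm{mod}\Gamma$. The main obstacle in this outline is the classification step of the second paragraph: one has to verify that $\mathrm{Aut}(A_r)$ is exactly as claimed and that squaring each candidate anticonformal map yields only the two families described. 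This is routine, and it is precisely where the hypothesis that $\tau$ joins the two boundary components is essentially used, since that is what rules out the core-circle involution $z\mapsto r/\bar z$.
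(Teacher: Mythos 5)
Your proof is correct and follows essentially the same route as the paper: uniformize $R$ to an annulus, observe that the image of $\tau$ lies in the fixed-point set of an anticonformal automorphism and hence must be a radial slit, and then invoke the slit-annulus computation (\ref{eq:slit-mod}). The paper states the conclusion "the image of $\tau$ is a radial segment" without detail, whereas you fill in the classification of anticonformal automorphisms of $A_r$ and the normalization; this is a welcome elaboration but not a different argument.
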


\begin{proof}
Upon conformally mapping $R$ onto an annulus, the image of $\tau$ is pointwise fixed by an anticonformal map of the annulus. Thus the image of $\tau$ is a radial segment and we obtain
$
\mathrm{mod}\Gamma =\mathrm{mod}\Gamma_{\tau}.
$
\end{proof}

Next, we list some important properties of the modulus, which will be used repeatedly throughout the paper, see \cite{LV} for the proofs of these results.

\begin{lemma}\label{lemma:mod-properties} Let $\Gamma_1,\Gamma_2,\ldots$ be curve families in $X$. Then
\begin{itemize}
  \item[1.]\textsc{Monotonicity:} If $\Gamma_1\subset\Gamma_2$ then $\m(\Gamma_1)\leq
      \m(\Gamma_2)$.
  \item[2.] \textsc{Subadditivity:} $\m(\bigcup_{i=1}^{\infty} \Gamma_i) \leq
      \sum_{i=1}^{\infty}\m(\Gamma_i).$
  \item[3.] \textsc{Overflowing:} If $\Gamma_1<\Gamma_2$ then $\m (\Gamma_1) \geq \m
      (\Gamma_2)$.
\end{itemize}
\end{lemma}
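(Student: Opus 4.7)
The plan is to prove each of the three properties directly from the definition of modulus as an infimum of $\iint_X \rho^2\, dxdy$ over the collection of allowable metrics. In each case the strategy is to compare the collections of allowable metrics for the families in question, and then translate the inclusion between these collections into an inequality between the corresponding infima.

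For monotonicity, I would observe that when $\Gamma_1 \subset \Gamma_2$, any $\rho$ satisfying $\ell_\rho(\gamma) \geq 1$ for every $\gamma \in \Gamma_2$ a fortiori does so for every $\gamma \in \Gamma_1$, so every metric allowable for $\Gamma_2$ is allowable for $\Gamma_1$. The set of admissible metrics for $\Gamma_1$ is therefore at least as large as that for $\Gamma_2$, and the infimum over a larger set is smaller, yielding $\m(\Gamma_1) \leq \m(\Gamma_2)$. For overflowing, where $\Gamma_1 < \Gamma_2$ means every $\gamma_2 \in \Gamma_2$ contains a subcurve $\gamma_1 \in \Gamma_1$, I would argue analogously but in the opposite direction: any $\rho$ allowable for $\Gamma_1$ satisfies $\ell_\rho(\gamma_2) \geq \ell_\rho(\gamma_1) \geq 1$ for every $\gamma_2 \in \Gamma_2$, so the admissible set for $\Gamma_1$ sits inside that for $\Gamma_2$, and $\m(\Gamma_1) \geq \m(\Gamma_2)$.

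Subadditivity is the one step requiring a small construction rather than a direct set inclusion. Fix $\varepsilon>0$ and, for each $i$, choose a metric $\rho_i$ allowable for $\Gamma_i$ with $\iint_X \rho_i^2\, dxdy < \m(\Gamma_i) + \varepsilon/2^i$ (the inequality is trivial when $\sum_i \m(\Gamma_i)=\infty$). I would then aggregate these into the single metric $\rho = (\sum_i \rho_i^2)^{1/2}$, which is Borel measurable as a countable combination of Borel functions. Since $\rho \geq \rho_j$ pointwise, every curve $\gamma \in \Gamma_j$ satisfies $\ell_\rho(\gamma) \geq \ell_{\rho_j}(\gamma) \geq 1$, so $\rho$ is admissible for the union $\bigcup_i \Gamma_i$. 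By monotone convergence $\iint_X \rho^2\, dxdy = \sum_i \iint_X \rho_i^2\, dxdy \leq \sum_i \m(\Gamma_i) + \varepsilon$, and letting $\varepsilon \to 0$ delivers the claim.

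None of these three arguments involves a serious obstacle; the only minor point of care is confirming Borel measurability of the aggregate metric in the subadditivity argument and handling the trivial divergent case separately. The uniformity of the approach, namely reducing each property to a statement about inclusions of admissibility classes, is what makes the lemma essentially formal once the definition of modulus is in hand, which is why it is usually cited directly from \cite{LV}.
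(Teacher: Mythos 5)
Your proof is correct and complete. Note, however, that the paper does not prove this lemma; it simply cites Lehto--Virtanen \cite{LV}, so there is no internal argument to compare against. What you give is precisely the standard textbook proof: monotonicity and overflowing follow from inclusions between admissibility classes, and subadditivity uses the aggregate metric $\rho = (\sum_i \rho_i^2)^{1/2}$ together with monotone convergence, with care taken for the trivially divergent case and Borel measurability of the combined metric. This matches what one finds in \cite{LV}.
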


The notation $\Gamma_1<\Gamma_2$ above denotes the fact that for every curve $\g_2\in\Gamma_2$ there is a curve $\g_1\in \Gamma_1$ such that $\g_1\subset\g_2$. If this is the case we say $\Gamma_1$ \textit{minorizes} $\Gamma_2$.

The subadditivity property for the extremal length is given by
\begin{equation}
\label{eq:subadd-extremal-lenght}
\lambda (\bigcup_{i=1}^{\infty} \Gamma_i) \geq
     \frac{1}{ \sum_{i=1}^{\infty}\frac{1}{\lambda (\Gamma_i)}}.
\end{equation}

When the curve families $\Gamma_1, \Gamma_2, etc. $ have  disjoint supports (i.e. are contained in disjoint domains) the inequality in the subadditivity property turns into equality. 

\begin{lemma}
\label{lem:disjoint}
Let $\Gamma_n$, for $n=1,2,\ldots$, be at most a countable set of families of curves such that the support of any two families are disjoint. If $\Gamma =\cup_n\Gamma_n$ then
$$
\mathrm{mod}\Gamma =\sum_n\mathrm{mod}\Gamma_n.
$$
\end{lemma}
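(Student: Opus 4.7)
The plan is to establish the two inequalities $\mathrm{mod}\,\Gamma \leq \sum_n \mathrm{mod}\,\Gamma_n$ and $\mathrm{mod}\,\Gamma \geq \sum_n \mathrm{mod}\,\Gamma_n$ separately; only the second requires any real use of the disjointness hypothesis, since the first is already available from general subadditivity (Property 2 of Lemma \ref{lemma:mod-properties}). Throughout, I write $S_n$ for the support of $\Gamma_n$ (the union of all curves in $\Gamma_n$); by hypothesis the $S_n$ are pairwise disjoint Borel subsets of $X$.

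For the lower bound $\mathrm{mod}\,\Gamma \geq \sum_n \mathrm{mod}\,\Gamma_n$, I would fix $\eps > 0$ and choose an allowable metric $\rho$ for $\Gamma$ with
$$
\iint_X \rho^2\, dxdy \;\leq\; \mathrm{mod}\,\Gamma + \eps.
$$
Since $\Gamma_n \subset \Gamma$, the same $\rho$ is allowable for each $\Gamma_n$, and so is its restriction to $S_n$ (curves of $\Gamma_n$ lie entirely in $S_n$, so their $\rho$-length is unaffected by zeroing $\rho$ outside $S_n$). Hence $\iint_{S_n} \rho^2 \geq \mathrm{mod}\,\Gamma_n$. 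Disjointness of the supports gives $\sum_n \iint_{S_n} \rho^2 \leq \iint_X \rho^2$, so
$$
\sum_n \mathrm{mod}\,\Gamma_n \;\leq\; \iint_X \rho^2 \;\leq\; \mathrm{mod}\,\Gamma + \eps,
$$
and letting $\eps \to 0$ yields the desired inequality.

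For the upper bound $\mathrm{mod}\,\Gamma \leq \sum_n \mathrm{mod}\,\Gamma_n$, although it follows from general subadditivity, I would also record a direct argument using the disjointness, since it illustrates why equality holds. Given $\eps > 0$, for each $n$ choose an allowable metric $\rho_n$ for $\Gamma_n$ supported in $S_n$ with $\iint \rho_n^2 \leq \mathrm{mod}\,\Gamma_n + \eps/2^n$. Define $\rho = \sum_n \rho_n$ on $X$; this is unambiguous because the $S_n$ are disjoint. Any $\gamma \in \Gamma$ lies in some $\Gamma_k$, and then $\ell_\rho(\gamma) \geq \ell_{\rho_k}(\gamma) \geq 1$, so $\rho$ is allowable for $\Gamma$. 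Again by disjointness of supports,
$$
\mathrm{mod}\,\Gamma \;\leq\; \iint_X \rho^2 \;=\; \sum_n \iint_{S_n} \rho_n^2 \;\leq\; \sum_n \mathrm{mod}\,\Gamma_n + \eps.
$$

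I do not anticipate any serious obstacle: the only mild subtlety is the case $\sum_n \mathrm{mod}\,\Gamma_n = \infty$, which is absorbed into the lower-bound argument (if $\mathrm{mod}\,\Gamma$ were finite, the displayed inequality with $\rho$ near-extremal for $\Gamma$ would force $\sum_n \mathrm{mod}\,\Gamma_n < \infty$, a contradiction). The conceptual point is simply that the area integral behaves additively across disjoint Borel pieces, which upgrades the countable subadditivity of modulus to countable additivity when the curve families are geometrically separated.
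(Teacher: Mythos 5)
Your proof is correct and is the standard argument for this lemma; the paper does not actually include a proof, implicitly deferring to the references cited for the surrounding modulus properties. One small point worth flagging: you define the support $S_n$ to be the union of all curves in $\Gamma_n$ and assert that these are Borel, but a union of uncountably many curves need not be Borel (or even Lebesgue measurable), which would make the zeroed-out restriction of $\rho$ to $S_n$ ill-defined as an admissible metric. The paper sidesteps this by interpreting ``disjoint supports'' to mean the families are \emph{contained in pairwise disjoint domains}; taking $S_n$ to be such domains, your entire argument works verbatim and the measurability concern disappears. Both halves --- restricting a near-extremal $\rho$ for $\Gamma$ to each $S_n$ for the lower bound, and summing near-extremal $\rho_n$ supported in $S_n$ for the upper bound --- are exactly the standard proof, and your handling of the infinite cases at the end is also correct.
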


The following property is most conveniently expressed in terms of extremal length (see \cite[Section IV.3, page 135]{GM}).

\begin{lemma}[Serial rule]
Assume that $\{R_n\}_{n=1}^{\infty}$ are mutually disjoint doubly connected domains separating boundaries of a doubly connected  domain $R$. Let $\Gamma$ be the curve family connecting the two boundary components of $R$ and let $\Gamma_n$ be the curve family that connects the two boundary components of $R_n$. Then
$$
\lambda (\Gamma )\geq\sum_n\lambda (\Gamma_n).
$$
\end{lemma}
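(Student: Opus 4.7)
The plan is to construct an admissible test metric for $\Gamma$ by stitching together suitably scaled admissible metrics from each $R_n$, then optimize over the scaling weights. The key geometric input is that because every $R_n$ separates the two boundary components of $R$, any $\gamma \in \Gamma$ must cross $R_n$ and hence contains a subarc $\gamma_n \subset R_n$ joining its two boundary components; equivalently, each $\Gamma_n$ minorizes $\Gamma$ in the sense of Lemma~\ref{lemma:mod-properties}(3). This subarc structure, combined with the mutual disjointness of the $R_n$, is precisely what makes a weighted sum of admissible metrics inherit admissibility on $\Gamma$.

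Concretely, I would fix $\varepsilon > 0$ and a truncation $N \in \mathbb{N}$. For each $n \leq N$ choose $\rho_n$ admissible for $\Gamma_n$ with $\iint_{R_n} \rho_n^2\,dA \leq \mathrm{mod}(\Gamma_n) + \varepsilon/N$, and for weights $c_n > 0$ satisfying $\sum_{n=1}^N c_n = 1$ define the Borel metric $\rho := \sum_{n=1}^N c_n \rho_n \chi_{R_n}$ on $R$. Disjointness of the $R_n$ and the existence of the subarcs $\gamma_n$ give
$$\ell_\rho(\gamma) \geq \sum_{n=1}^N c_n \int_{\gamma_n} \rho_n\,|dz| \geq \sum_{n=1}^N c_n = 1,$$
so $\rho$ is admissible for $\Gamma$, and disjointness makes the area integral split, yielding
$$\mathrm{mod}(\Gamma) \leq \iint_R \rho^2\,dA = \sum_{n=1}^N c_n^2 \iint_{R_n} \rho_n^2\,dA \leq \sum_{n=1}^N \frac{c_n^2}{\lambda(\Gamma_n)} + \varepsilon.$$
By Cauchy--Schwarz, $1 = \bigl(\sum c_n\bigr)^2 \leq \bigl(\sum c_n^2/\lambda(\Gamma_n)\bigr)\bigl(\sum \lambda(\Gamma_n)\bigr)$, so the minimum of $\sum_{n=1}^N c_n^2/\lambda(\Gamma_n)$ over weight vectors with $\sum c_n = 1$ equals $1/S_N$, where $S_N := \sum_{n=1}^N \lambda(\Gamma_n)$, attained at $c_n = \lambda(\Gamma_n)/S_N$. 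Using this choice and then sending $\varepsilon \to 0$ yields $\lambda(\Gamma) \geq S_N$ for every $N$, and $N \to \infty$ gives the claim.

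The main obstacle is not deep but requires care: the countable nature of the family and the possibility that $\sum \lambda(\Gamma_n) = \infty$. Working with finite truncations bypasses both issues at once, since the bound $\lambda(\Gamma) \geq S_N$ for every $N$ simultaneously handles the divergent case (forcing $\lambda(\Gamma) = \infty$) and the convergent case (passing to the supremum). Minor bookkeeping is needed when some $\lambda(\Gamma_n) = 0$ (discard those $n$ using monotonicity from Lemma~\ref{lemma:mod-properties}(1)) or $\lambda(\Gamma_n) = \infty$ (again absorbed by truncation, or by splitting off those indices). Apart from these routine technicalities, the whole argument rests on the two ingredients already at hand: the separation hypothesis furnishing subarcs in each $R_n$, and the elementary Cauchy--Schwarz optimization of the weights.
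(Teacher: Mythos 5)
Your proof is correct. Note, however, that the paper does not prove this lemma at all: it is stated as a standard fact with a citation to Garnett--Marshall, \emph{Harmonic Measure}, Section IV.3. So there is no ``paper's own proof'' to compare against. Your argument is the classical one: (i) separation forces each $\gamma \in \Gamma$ to contain a subarc in each $R_n$ joining its boundary components, so $\Gamma_n$ minorizes $\Gamma$; (ii) disjointness lets a weighted sum $\rho = \sum c_n \rho_n \chi_{R_n}$ with $\sum c_n = 1$ remain admissible while the area integral decouples into $\sum c_n^2 \iint_{R_n} \rho_n^2\,dA$; (iii) Cauchy--Schwarz optimizes the weights to $c_n = \lambda(\Gamma_n)/S_N$, giving $\mathrm{mod}(\Gamma) \leq 1/S_N + O(\varepsilon)$, hence $\lambda(\Gamma) \geq S_N$ for every truncation $N$. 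The truncation cleanly handles both the divergent-sum case and the degenerate cases $\lambda(\Gamma_n) \in \{0, \infty\}$, as you observe. This is essentially the argument given in the standard references (e.g., Garnett--Marshall, V\"ais\"al\"a, Lehto--Virtanen), and everything you write goes through.
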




An allowable metric $\rho_1$ for a family of Jordan curves $\Gamma$ is {\it extremal} if $$\mathrm{mod}\Gamma=\int_X\rho(z)^2dxdy.$$

The following sufficient condition for extremality of a metric is known as  Beurling's criterion,  {\cite{Ahlfors:Confinvariants}.

\begin{lemma}[Beurling's criterion]
  The metric $\rho_1$ is extremal for $\Gamma$ if there is a subfamily $\Gamma_1\subset\Gamma$ such that
  \begin{itemize}
    \item $\int_{\gamma}\rho_1 |dz| = 1, \mathrm{for\ all}\ \gamma\in\Gamma_1$
    \item for any real valued $h$ on $X$ satisfying $\int_{\gamma}h |dz| \geq 0,
        \mathrm{for\ all}\ \gamma\in\Gamma_1$ the following holds
    $$ \iint_{X} h\rho_1 dxdy \geq 0.$$
  \end{itemize}
\end{lemma}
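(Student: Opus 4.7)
The plan is to show that $\iint_X \rho_1^2 \, dxdy \leq \iint_X \rho^2 \, dxdy$ for every allowable metric $\rho$ for $\Gamma$, using the two hypotheses together with the Cauchy--Schwarz inequality. (We take for granted that $\rho_1$ itself is admissible; this is the implicit meaning of the criterion.)

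First I would set $h := \rho - \rho_1$, which is real-valued and Borel measurable on $X$. For any $\gamma \in \Gamma_1 \subset \Gamma$, admissibility of $\rho$ gives $\int_\gamma \rho\,|dz| \geq 1$, while the first bullet yields $\int_\gamma \rho_1\,|dz| = 1$. Subtracting,
\[
\int_\gamma h\,|dz| \;=\; \int_\gamma \rho\,|dz| - \int_\gamma \rho_1\,|dz| \;\geq\; 0
\]
for every $\gamma \in \Gamma_1$. The second bullet of the hypothesis then applies to $h$, yielding
\[
\iint_X (\rho - \rho_1)\,\rho_1 \, dxdy \;\geq\; 0, \qquad\text{i.e.,}\qquad \iint_X \rho\,\rho_1 \, dxdy \;\geq\; \iint_X \rho_1^2 \, dxdy.
\]

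Next I would invoke the Cauchy--Schwarz inequality on the left-hand side:
\[
\iint_X \rho_1^2 \, dxdy \;\leq\; \iint_X \rho\,\rho_1 \, dxdy \;\leq\; \Bigl(\iint_X \rho^2 \, dxdy\Bigr)^{1/2}\Bigl(\iint_X \rho_1^2 \, dxdy\Bigr)^{1/2}.
\]
Provided $\iint_X \rho_1^2 \, dxdy$ is finite and positive (the degenerate cases $0$ or $\infty$ are handled separately and are trivial), dividing both sides by $\bigl(\iint_X \rho_1^2\bigr)^{1/2}$ and squaring gives $\iint_X \rho_1^2 \, dxdy \leq \iint_X \rho^2 \, dxdy$. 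Since $\rho$ was an arbitrary allowable metric for $\Gamma$, taking infimum over such $\rho$ yields $\iint_X \rho_1^2 \, dxdy \leq \mathrm{mod}\,\Gamma$; the reverse inequality $\mathrm{mod}\,\Gamma \leq \iint_X \rho_1^2\, dxdy$ holds because $\rho_1$ is itself admissible. Hence $\rho_1$ realizes the infimum and is extremal.

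There is no serious obstacle in this argument — the core idea is simply that the first-order optimality condition embodied in the second hypothesis, combined with the Cauchy--Schwarz inequality, pins $\rho_1$ as the $L^2$-minimizer among admissible metrics. The only care needed is to verify the boundary cases where $\iint_X \rho_1^2 \, dxdy$ is $0$ or $\infty$: in the former the extremality is vacuous, and in the latter the hypothesis together with admissibility forces $\iint_X \rho^2\, dxdy = \infty$ for every admissible $\rho$, so again $\rho_1$ is extremal trivially.
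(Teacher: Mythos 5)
Your proof is correct and is precisely the classical argument for Beurling's criterion; the paper itself gives no proof but simply cites Ahlfors' \emph{Conformal invariants}, and your argument is the one given there. The central steps --- setting $h = \rho - \rho_1$, checking $\int_\gamma h\,|dz|\ge 0$ on $\Gamma_1$ via admissibility of $\rho$ and equality for $\rho_1$, invoking the hypothesis to get $\iint \rho\rho_1 \ge \iint \rho_1^2$, and then applying Cauchy--Schwarz --- are exactly right.

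One small caveat on your treatment of the edge case $\iint_X \rho_1^2\,dxdy = \infty$: you assert that the hypotheses force $\iint_X \rho^2\,dxdy = \infty$ for every admissible $\rho$, but this does not quite follow from the chain of inequalities you wrote. From $\iint \rho\rho_1 \ge \iint \rho_1^2 = \infty$ and Cauchy--Schwarz you get $\infty \le \bigl(\iint \rho^2\bigr)^{1/2}\cdot\infty$, which yields no contradiction even when $\iint \rho^2 < \infty$. In practice this is harmless: Beurling's criterion is applied (both in Ahlfors and in this paper, e.g.\ in Lemma~\ref{lem:vertical-modulus}) only to metrics $\rho_1$ of finite total square-integral, so the case is never reached. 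But as a statement about arbitrary $\rho_1$ satisfying the two bullets, the $\infty$ case would need a separate argument or an added hypothesis such as $\rho_1 \in L^2(X)$.
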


Beurling's criterion  can be applied to a family of curves consisting of vertical segments in the complex plane to find an explicit expression for the modulus of this family (note the similarity to Ahlfors' integral (\ref{Ahlfors})).

\begin{lemma}[see Lemma 4.1 in \cite{HS}]
\label{lem:vertical-modulus}
Given a measurable set $E\subset\mathbb{R}$,
let $\Gamma =\{\gamma_x\}_{x\in E}$ be a family of curves such that $\gamma_x$ is contained in a vertical line through $x$ .
Then
$$
\mathrm{mod}\Gamma =\int_E\frac{dx}{\ell_E(x)},
$$  
where $\ell_E(x)$ is the Euclidean length of $\gamma_x$.
\end{lemma}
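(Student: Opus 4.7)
The plan is to prove the equality by the standard length--area double inequality: I would first exhibit an admissible metric achieving $\int_E dx/\ell_E(x)$, and then show that no admissible metric does better.

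\emph{Upper bound.} First I would set $S = \bigcup_{x \in E}\gamma_x$ and define the candidate metric $\rho_0(x+iy) = \chi_S(x+iy)/\ell_E(x)$, with the convention $1/0 = 1/\infty = 0$ (exceptional $x$'s to be handled separately). For each $x$ with $0 < \ell_E(x) < \infty$ one has $\int_{\gamma_x}\rho_0\,|dz| = \ell_E(x)/\ell_E(x) = 1$, so $\rho_0$ would be admissible for $\Gamma$. By Fubini, $\iint \rho_0^2\,dx\,dy = \int_E \ell_E(x)^{-1}\,dx$, yielding $\mathrm{mod}\,\Gamma \le \int_E dx/\ell_E(x)$.

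\emph{Lower bound.} Next, for any admissible metric $\rho$ for $\Gamma$, I would apply Cauchy--Schwarz on $\gamma_x$: from $1 \le \int_{\gamma_x}\rho\,dy$ one gets $1 \le (\int_{\gamma_x}\rho\,dy)^2 \le \ell_E(x)\int_{\gamma_x}\rho^2\,dy$, so $\ell_E(x)^{-1} \le \int_{\gamma_x}\rho^2\,dy$ for each relevant $x\in E$. Integrating in $x$ and using Fubini then gives $\int_E dx/\ell_E(x) \le \int_E\int_{\gamma_x}\rho^2\,dy\,dx \le \iint \rho^2\,dx\,dy$; taking the infimum over admissible $\rho$ yields $\int_E dx/\ell_E(x) \le \mathrm{mod}\,\Gamma$.

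\emph{Main obstacle.} The argument is essentially mechanical once the set-up is in place; the only subtlety I anticipate is handling the exceptional set $E_0 = \{x\in E : \ell_E(x)=0\}$. If $|E_0|>0$, then $\int_E dx/\ell_E(x) = \infty$ on one side, while on the other $\Gamma$ contains a positive-measure subfamily of degenerate ``curves'' of zero length on which no metric can be admissible, forcing $\mathrm{mod}\,\Gamma = \infty$ as well; hence both sides agree and $E_0$ can be removed without loss. A cleaner alternative would be to apply Beurling's criterion directly to $\rho_0$ with $\Gamma_1 = \Gamma$: admissibility was just verified, and the functional inequality $\iint h\rho_0\,dx\,dy \ge 0$ for every $h$ satisfying $\int_{\gamma_x} h\,dy \ge 0$ would follow at once from Fubini, giving both extremality of $\rho_0$ and the claimed equality in one step.
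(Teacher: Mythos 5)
The paper does not supply a proof of this lemma; it simply cites Lemma 4.1 of \cite{HS}, though the preceding sentence explicitly flags Beurling's criterion as the intended tool (``Beurling's criterion can be applied to a family of curves consisting of vertical segments\ldots''). Your primary argument takes the more elementary length--area route: exhibit the candidate metric $\rho_0(x+iy)=\chi_S(x+iy)/\ell_E(x)$ to get the upper bound, then use Cauchy--Schwarz along each vertical fiber together with Fubini to get the matching lower bound. That is a correct and self-contained proof of the identity, and it is a genuinely different (more hands-on) path than the one the paper points to. The Beurling variant you mention at the end is precisely the argument the paper has in mind: $\rho_0$ satisfies the first Beurling condition with equality on all of $\Gamma$, and the second condition $\iint h\rho_0\,dx\,dy\ge0$ reduces by Fubini to the hypothesis $\int_{\gamma_x}h\,dy\ge0$, so extremality follows in one step. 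The trade-off is the usual one: the double-inequality argument is elementary and avoids invoking Beurling, while Beurling's criterion packages the Cauchy--Schwarz step into a clean optimality statement. Your handling of the exceptional set $E_0=\{x\in E:\ell_E(x)=0\}$ is fine; the only remaining housekeeping item (common to both approaches and routinely suppressed) is the measurability of $\ell_E$ and of $S$, which you would want to assume or note.
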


We will also need to use the notion of  extremal distance between two boundary components of an annulus.

\begin{definition}\label{def:extremal-distance}
Let $R$ be a doubly connected region in a Riemann surface $X$. The {\it extremal distance}  between boundary components of $R$  is
$$
\lambda (R):=\lambda (\Gamma_R)=\frac{1}{\mathrm{mod}\Gamma_R} ,
$$
where $\Gamma_R$ is the curve family in $R$  connecting the boundary components.
\end{definition}



\section{Modulus of curve families between graphs}
\label{sec:between_graphs}

 Let $x_1<x_2$ and  $f,g:\mathbb{R}\to\mathbb{R}$ be two continuous periodic functions  with period $x_2-x_1$. We estimate the modulus of curves connecting the graph of $f$ to the graph of $g$. For simplicity, we assume that $f(x)>g(x)$ for all $x\in\mathbb{R}$. Define $\Pi (x+iy)=x$, and let $\mathcal{R}$ be the region bounded by $f$ and $g$. 

Let $\Gamma$ be the family of curves in $\mathcal{R}$ connecting the graphs of $f$ and $g$ such that $\Pi (\gamma (0))\in [x_1,x_2]$ for every $\gamma\in\Gamma$.  Let $0<\delta <x_2-x_1$ be fixed  and denote by $\Gamma^{\leq \delta}$ the family   of all $\gamma :[0,1]\to\mathcal{R}$ in $\Gamma$  such that $\{\Pi (\gamma (t));t\in [0,1]\}$ is an interval of length at most $\delta$. Let $\Gamma^{>\delta}$ be the family of all $\gamma\in\Gamma$ such that $\{\Pi (\gamma (t));t\in [0,1]\}$ is an interval of length  greater than $\delta$.
Then by subadditivity  (see Lemma \ref{lemma:mod-properties}, property 2)  we have,

$$\mathrm{mod}\Gamma \leq\mathrm{mod}\Gamma^{\leq\delta}+\mathrm{mod}\Gamma^{>\delta}.
$$

We first observe that $\mathrm{mod}\Gamma^{>\delta}$ can be easily estimated in terms of $\delta$ and is bounded even if the curves in $\Gamma$ degenerate. 

\begin{lemma}
\label{lem:>d}
Under the above assumptions,
$$
\mathrm{mod}\Gamma^{>\delta}\leq A/\delta^2,
$$
where $A$ is  the Euclidean area between the graphs and above $[x_1-\delta ,x_2+\delta]$.
\end{lemma}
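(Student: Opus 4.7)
The plan is to produce an explicit admissible metric for the family $\Gamma^{>\delta}$ by exploiting the fact that every curve in this family must traverse a horizontal extent of at least $\delta$, starting from a point whose $x$-coordinate lies in $[x_1,x_2]$. This forces an initial portion of each such curve to stay within the horizontal strip over $[x_1-\delta,x_2+\delta]$ while still having arclength at least $\delta$, so a small constant weight on that strip will do the job.

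Concretely, I would set $\rho\equiv 1/\delta$ on the set $A$ (the region between the graphs of $f$ and $g$ lying over $[x_1-\delta,x_2+\delta]$) and $\rho\equiv 0$ elsewhere in $\mathcal{R}$. To verify $\rho$ is allowable for $\Gamma^{>\delta}$, I would fix $\gamma\in\Gamma^{>\delta}$ with $\Pi(\gamma(0))\in[x_1,x_2]$ and, using continuity of $t\mapsto\mathrm{length}(\Pi(\gamma([0,t])))$ together with the hypothesis that the total projected length exceeds $\delta$, pick the first time $t^\ast>0$ at which this projected interval has length exactly $\delta$. Because $\Pi(\gamma([0,t^\ast]))$ is an interval containing $\Pi(\gamma(0))\in[x_1,x_2]$ of length $\delta$, it sits inside $[x_1-\delta,x_2+\delta]$, so $\gamma([0,t^\ast])\subset A$. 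The Euclidean length of $\gamma|_{[0,t^\ast]}$ is at least the length of its projection, namely $\delta$, so
\[
\ell_\rho(\gamma)\;\geq\;\int_{\gamma|_{[0,t^\ast]}}\rho\,|dz|\;\geq\;\frac{1}{\delta}\cdot\delta\;=\;1.
\]

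Having established admissibility, the modulus bound is immediate:
\[
\mathrm{mod}\,\Gamma^{>\delta}\;\leq\;\iint_{\mathcal{R}}\rho^2\,dx\,dy\;=\;\frac{1}{\delta^2}\,\mathrm{Area}(A)\;=\;\frac{A}{\delta^2}.
\]
I do not expect a serious obstacle here; the only subtlety is the selection of the stopping time $t^\ast$, which is a clean continuity argument. Periodicity of $f$ and $g$ plays no active role in this particular bound (it will be essential in the complementary estimate for $\Gamma^{\leq\delta}$), and the argument does not require any regularity of $f,g$ beyond continuity.
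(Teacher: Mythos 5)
Your proof is correct and takes essentially the same approach as the paper: both define the constant metric $\rho=1/\delta$ on the region between the graphs over $[x_1-\delta,x_2+\delta]$ and zero elsewhere, verify admissibility, and compute the area integral. The only difference is that you spell out the admissibility verification via the stopping-time argument, which the paper leaves implicit.
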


\begin{proof}
Let $D_{\delta}$ be the region bounded by the graphs of $f$ and $g$ such that $\Pi(z) \in [x_1 - \delta, x_2 + \delta ]$. Let $\rho (z)=1/\delta$ for all $z\in D_{\delta}$, and set  $\rho (z)=0$ for $z\notin D_{\delta}$. Then $\rho$ is an allowable metric for $\Gamma^{>\delta }$ and the lemma follows. 
\end{proof}

We next estimate $\mathrm{mod}\Gamma^{\leq \delta}$. 
In \cite{HS}, an estimate for $\mathrm{mod}\Gamma^{\leq \delta}$ is given when the degeneration of the domain is done by vertical shrinking. We need an estimate for more general degeneration of the domain where not only the vertical direction is shrinking but also the shape of $f(x)$ and $g(x)$ is changing in the process.

Note that each $\gamma\in \Gamma^{\leq\delta}$ lies inside the region $D_{\delta}$, used in the proof  of Lemma \ref{lem:>d}. Define
$$
m_{\delta}(x)=m(x):=\min_{a,b\in [x-\delta ,x+\delta ]} [f(a)-g(b)] 
$$
for $x\in [x_1,x_2]$. Equivalently,  
$m_{\delta} (x)=  \min_{a \in [x-\delta ,x+\delta ]} f(a)  
-\max_{b\in [x-\delta ,x+\delta ]} g(b)$.  The quantity  
$m_{\delta}(x)$ is the height of  the tallest rectangle between the graphs of $f$ and $g$ whose vertical sides are contained in $x-\delta$ and $x+\delta$.
For fixed $\delta >0$, $m_{\delta} (x)$ is a continuous function. 

\begin{lemma}
\label{lem:allowable}
The metric $\rho$ defined by $\rho (z)=\frac{1}{m(\Pi (z))}$ for all $z$ between the graphs of $f(x)$ and $g(x)$ with $\Pi (z)\in [x_1-\delta ,x_2+\delta ]$, and by $\rho (z)=0$ elsewhere is allowable for the curve family $\Gamma^{\leq\delta}$.
\end{lemma}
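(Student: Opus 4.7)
The plan is to verify the allowability condition $\int_\gamma \rho\,|dz| \geq 1$ directly for every $\gamma \in \Gamma^{\leq \delta}$. The key insight is that the number $m(x)$ is, by design, a lower bound on the ``width between the graphs'' that is still valid uniformly over any $\delta$-window, and the diameter hypothesis on the projection of $\gamma$ will let us couple this uniform bound to the actual vertical drop of $\gamma$ from one graph to the other.

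Fix $\gamma:[0,1] \to \mathcal{R}$ in $\Gamma^{\leq\delta}$ and let $I := \{\Pi(\gamma(t)) : t \in [0,1]\}$ be its $x$-projection. Since $I$ is an interval of length at most $\delta$ containing $\Pi(\gamma(0)) \in [x_1,x_2]$, we have $I \subset [x_1-\delta, x_2+\delta]$, so $\rho$ is strictly positive along the entire curve. Writing the endpoints as $\gamma(0)=(a,f(a))$ and $\gamma(1)=(b,g(b))$ for some $a,b \in I$ (the other orientation is symmetric), observe that because $a,b \in I$ and $\mathrm{diam}(I)\leq \delta$, for \emph{every} $x \in I$ both $a$ and $b$ lie in $[x-\delta, x+\delta]$. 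By the definition of $m=m_\delta$, this forces the pointwise inequality
\[
m(x) \;=\; \min_{a' \in [x-\delta,x+\delta]} f(a') \;-\; \max_{b' \in [x-\delta,x+\delta]} g(b') \;\leq\; f(a)-g(b) \qquad \text{for all } x \in I.
\]

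Combining this with $|\gamma'(t)| \geq |y'(t)|$, where $y(t) = \mathrm{Im}\,\gamma(t)$, and with the elementary bound $\int_0^1 |y'(t)|\,dt \geq |y(1)-y(0)| = f(a)-g(b)$ gives
\[
\int_\gamma \rho\,|dz| \;\geq\; \int_0^1 \frac{|y'(t)|}{m(\Pi(\gamma(t)))}\,dt \;\geq\; \frac{1}{f(a)-g(b)}\int_0^1 |y'(t)|\,dt \;\geq\; 1,
\]
which is the required inequality.

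There is no substantial obstacle; the only step with any content is the pointwise bound $m(x) \leq f(a)-g(b)$, which depends crucially on the double use of the diameter hypothesis $|I| \leq \delta$ to place $a$ and $b$ inside the $\delta$-window around every $x \in I$. Everything else is bookkeeping with $|\gamma'| \geq |y'|$ and the triangle inequality.
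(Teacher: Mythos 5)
Your proof is correct, and it is the same approach as the paper's: exploit the fact that the whole projection of $\gamma$ fits inside a single $\delta$-window to bound $m(\Pi(\gamma(t)))$ from above by a fixed quantity along the entire curve, and then compare with the length of $\gamma$. The only cosmetic difference is that you run the comparison through the vertical displacement $f(a)-g(b)$ of the endpoints, whereas the paper fixes a single $t$, observes that $\gamma$ crosses the rectangle $I_\delta\times[\max_{I_\delta}g,\min_{I_\delta}f]$, and concludes that the Euclidean length of $\gamma$ is at least $m(\Pi(\gamma(t)))$ (so $\rho\geq 1/\ell_{\mathrm{eucl}}(\gamma)$ on $\gamma$); your version spells out the final integration step that the paper leaves implicit. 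Both are fine.
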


\begin{proof}
Let $\gamma\in\Gamma^{\leq\delta}$. Fix $z=\gamma (t)$ for some $t\in [0,1]$ and denote by 
$I_{\delta}=[\Pi (z)-\delta , \Pi (z)+\delta ]$ 
the closed interval centered at $\Pi (z)$.  Then $\gamma$ connects the top and bottom of the rectangle $I_{\delta }\times [\max_{b\in I_{\delta}}g(b),\min_{a\in I_{\delta}} f(a)]$. Thus the Euclidean length of $\gamma$ is at least $m(\Pi (z))$.
\end{proof}

We use the above lemma to find an effective estimate for $\mathrm{mod}\Gamma^{\leq \delta}$. It turns out that the estimate is, up to a positive multiplicative constant, equal to the modulus of the vertical arcs connecting the two graphs.

 For each $\delta >0$ and each pair $(f,g)$ we set, 
\begin{align}\label{deviation}
c_{\delta}:=\inf_{x} \frac{m_{\delta} (x)}{[f(x)-g(x)]}.
\end{align}

Since $\frac{m_{\delta} (x)}{[f(x)-g(x)]}$ is continuous on 
$[x_1,x_2]$  it is easy to see that $0<c_{\delta}\leq 1$. Moreover, $c_{\delta}\to1$ as $\delta\to0$.
 Geometrically,   $\frac{m_{\delta} (x)}{[f(x)-g(x)]}$ measures  how far the region above $[x-\delta,x+\delta]$ and 
 between the graphs of $f$ and $g$  is from being a rectangle. 
 Thus $c_{\delta}$ is the largest deviation of an inscribed  rectangle  of width 
 $2\delta$ with sides parallel to the coordinate axes is from having height $(f(x)-g(x))$ for any $x \in \mathbb{R}$.  We  call  $c_{\delta}$  the 
 {\it $\delta$\--rectangle deviation} between  $f$ and $g$.

\begin{lemma}
\label{lem:mod} For any $\delta >0$,
$$
\int_{x_1}^{x_2}\frac{1}{f(x)-g(x)}dx
\leq \mathrm{mod}\Gamma^{\leq \delta}
\leq \frac{1}{c_{\delta}^2}\int_{x_1-\delta}^{x_2+\delta}\frac{1}{f(x)-g(x)}dx.
$$
\end{lemma}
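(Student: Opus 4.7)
The plan is to establish each inequality separately, using the two explicit tools already at our disposal: Lemma~\ref{lem:vertical-modulus} for the lower bound and Lemma~\ref{lem:allowable} for the upper bound.

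\textbf{Lower bound.} First I would observe that the family of vertical segments
\[
\Gamma_{\mathrm{vert}} = \bigl\{ \gamma_x : t \mapsto x + i\bigl((1-t)g(x) + tf(x)\bigr),\ x \in [x_1,x_2] \bigr\}
\]
is contained in $\Gamma^{\leq\delta}$, since for each such curve the projection $\Pi(\gamma_x([0,1])) = \{x\}$ is a singleton, so it is an interval of length $0 \leq \delta$. By the monotonicity property of modulus (Lemma~\ref{lemma:mod-properties}, Property 1), $\mathrm{mod}\,\Gamma^{\leq\delta} \geq \mathrm{mod}\,\Gamma_{\mathrm{vert}}$. Now by Lemma~\ref{lem:vertical-modulus} applied with $E=[x_1,x_2]$ and $\ell_E(x) = f(x)-g(x)$, we obtain
\[
\mathrm{mod}\,\Gamma_{\mathrm{vert}} = \int_{x_1}^{x_2} \frac{dx}{f(x)-g(x)},
\]
which gives the left-hand inequality.

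\textbf{Upper bound.} For the right-hand inequality I would use the admissible metric $\rho(z) = 1/m_\delta(\Pi(z))$ on $D_\delta$ (zero elsewhere), which is allowable for $\Gamma^{\leq\delta}$ by Lemma~\ref{lem:allowable}. Then by the definition of the modulus,
\[
\mathrm{mod}\,\Gamma^{\leq\delta} \leq \iint_{D_\delta} \rho(z)^2\,dx\,dy = \int_{x_1-\delta}^{x_2+\delta}\int_{g(x)}^{f(x)} \frac{dy\,dx}{m_\delta(x)^2} = \int_{x_1-\delta}^{x_2+\delta} \frac{f(x)-g(x)}{m_\delta(x)^2}\,dx.
\]
By the definition of the $\delta$-rectangle deviation $c_\delta$ (equation~\eqref{deviation}), we have $m_\delta(x) \geq c_\delta\bigl(f(x)-g(x)\bigr)$ pointwise, so $1/m_\delta(x)^2 \leq 1/\bigl(c_\delta^2(f(x)-g(x))^2\bigr)$. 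Substituting, the integrand simplifies to $1/\bigl(c_\delta^2(f(x)-g(x))\bigr)$, yielding the stated upper bound.

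\textbf{Anticipated obstacle.} There is no serious obstacle here, since the hard work has been done in Lemmas~\ref{lem:vertical-modulus} and~\ref{lem:allowable}; the only care needed is verifying the containment $\Gamma_{\mathrm{vert}} \subset \Gamma^{\leq\delta}$ (which requires the convention that a vertical segment counts as a curve with projection of length zero) and tracking the correct domain of integration $[x_1-\delta,\, x_2+\delta]$ coming from the support of the allowable metric $\rho$. The rest is a direct computation of $\iint \rho^2$ in Fubini form followed by the pointwise bound $m_\delta \geq c_\delta (f-g)$.
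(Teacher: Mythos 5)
Your proof is correct and is essentially identical to the paper's: both use the vertical subfamily together with Lemma~\ref{lem:vertical-modulus} for the lower bound, and the allowable metric $\rho=1/m_\delta\circ\Pi$ from Lemma~\ref{lem:allowable} followed by the pointwise bound $m_\delta(x)\geq c_\delta\,(f(x)-g(x))$ for the upper bound. Your note about tracking the domain of integration $[x_1-\delta,x_2+\delta]$ is exactly the point where the factor of $3$ later enters in Theorem~\ref{thm: family modulus comparison}, so it is worth having spelled out.
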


\begin{proof}
Recall that $\rho (z)=1/m_{\delta} (\Pi (z))$ is allowable and we have
$$
\mathrm{mod}\Gamma^{\leq\delta}\leq\int_{x_1-\delta}^{x_2+\delta}\int _{g(x)}^{f(x)}\frac{1}{m_{\delta}(x)^2}dydx=\int_{x_1-\delta}^{x_2+\delta}\frac{f(x)-g(x)}{m_{\delta}(x)^2}dx\leq 
\frac{1}{c_{\delta}^2}\int_{x_1-\delta}^{x_2+\delta}\frac{1}{f(x)-g(x)}dx.
$$
The family $\Gamma^{v}$ of vertical segments connecting the graph of $f(x)$ to graph of $g(x)$ above the interval $[x_1,x_2]$ is a subfamily of $\Gamma^{\leq\delta}$ so that $\mathrm{mod}\Gamma^{v}\leq\mathrm{mod}\Gamma^{\leq\delta}$. Lemma \ref{lem:vertical-modulus} gives the left-hand  inequality.
\end{proof}

\begin{theorem} \label{thm: family modulus comparison}
For any  $\delta >0$,
\begin{equation} \label{eq: family modulus comparison}
\mathrm{mod} \Gamma^{v}  \leq \mathrm{mod} \Gamma  \leq 
\frac{3}{c^{2}_{\delta}}     \mathrm{mod} \Gamma^{v} + \frac{A}{\delta^2},
\end{equation}
where $A$ is equal to the Euclidean area between the graphs of $f$ and $g$ above the interval $[x_1,x_2]$. 
\end{theorem}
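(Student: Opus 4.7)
The plan is to prove the two inequalities in (\ref{eq: family modulus comparison}) separately, the left one by monotonicity and the right one by splitting $\Gamma$ according to the horizontal span of its curves.

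For the lower bound, I would observe that every vertical segment in $\Gamma^v$ is itself a curve in $\mathcal{R}$ connecting the graph of $g$ to the graph of $f$ whose base lies in $[x_1,x_2]$, so $\Gamma^v \subseteq \Gamma$. The monotonicity property of modulus (Lemma \ref{lemma:mod-properties}, property 1) then gives $\mathrm{mod}\Gamma^v \leq \mathrm{mod}\Gamma$.

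For the upper bound, I would pick any $\delta>0$ (with $\delta<x_2-x_1$, the case covered by the hypothesis), write $\Gamma = \Gamma^{\leq\delta}\cup\Gamma^{>\delta}$, and apply subadditivity:
\begin{equation*}
\mathrm{mod}\Gamma \;\leq\; \mathrm{mod}\Gamma^{\leq\delta} + \mathrm{mod}\Gamma^{>\delta}.
\end{equation*}
Lemma \ref{lem:>d} immediately bounds the second term by $A/\delta^2$. For the first term, Lemma \ref{lem:mod} gives
\begin{equation*}
\mathrm{mod}\Gamma^{\leq\delta} \;\leq\; \frac{1}{c_\delta^{2}}\int_{x_1-\delta}^{x_2+\delta}\frac{dx}{f(x)-g(x)}.
\end{equation*}
Now the key step is to relate the right-hand side to $\mathrm{mod}\Gamma^{v}$, which by Lemma \ref{lem:vertical-modulus} equals $\int_{x_1}^{x_2}\frac{dx}{f(x)-g(x)}$. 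This is where the periodicity of $f$ and $g$ enters: writing $T=x_2-x_1$ and using $f(x\pm T)=f(x)$, $g(x\pm T)=g(x)$, the substitutions $u=x+T$ and $u=x-T$ yield
\begin{equation*}
\int_{x_1-\delta}^{x_1}\frac{dx}{f-g}=\int_{x_2-\delta}^{x_2}\frac{du}{f-g},\qquad \int_{x_2}^{x_2+\delta}\frac{dx}{f-g}=\int_{x_1}^{x_1+\delta}\frac{du}{f-g},
\end{equation*}
both of which are bounded by $\int_{x_1}^{x_2}\frac{dx}{f-g}$ since $\delta<T$. Summing the three pieces gives
\begin{equation*}
\int_{x_1-\delta}^{x_2+\delta}\frac{dx}{f(x)-g(x)} \;\leq\; 3\int_{x_1}^{x_2}\frac{dx}{f(x)-g(x)} \;=\; 3\,\mathrm{mod}\Gamma^{v},
\end{equation*}
so $\mathrm{mod}\Gamma^{\leq\delta}\leq \frac{3}{c_\delta^{2}}\mathrm{mod}\Gamma^{v}$, and combining with the $A/\delta^2$ bound completes the proof.

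The main conceptual obstacle is recognizing that one should not estimate $\mathrm{mod}\Gamma$ by a single metric but rather split the family based on horizontal displacement: the ``long'' curves (those with projection exceeding $\delta$) must be handled by a crude area bound because the allowable-metric argument of Lemma \ref{lem:allowable} relies on the presence of a short rectangle under the curve, which only exists for curves in $\Gamma^{\leq\delta}$. The remaining technical point is the bookkeeping that periodicity produces at most a factor $3$ when enlarging the base interval by $\delta$ on each side; if the assumption $\delta<x_2-x_1$ is dropped, a slightly different constant would be needed, but under the standing hypothesis this is the sharp elementary bound.
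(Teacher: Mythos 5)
Your proof is correct and follows essentially the same route as the paper's: monotonicity for the lower bound, the subadditive split $\Gamma = \Gamma^{\leq\delta}\cup\Gamma^{>\delta}$ with Lemma \ref{lem:>d} for the long curves, Lemma \ref{lem:mod} for the short ones, and periodicity to contract the enlarged interval $[x_1-\delta,x_2+\delta]$ down to $[x_1,x_2]$ at the cost of a factor $3$, then Lemma \ref{lem:vertical-modulus} to identify the integral with $\mathrm{mod}\Gamma^v$. Your explicit substitutions make precise the periodicity step the paper only sketches, and your remark that the hypothesis $\delta<x_2-x_1$ is needed for the factor $3$ correctly flags a minor imprecision in the theorem's stated "for any $\delta>0$."
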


\begin{proof}
The left-hand side of  (\ref{eq: family modulus comparison}) follows from  $\Gamma^{v}  \subset  \Gamma$ and the monotonicity of modulus. 

By Lemma \ref{lem:>d} we have $\mathrm{mod} \Gamma^{>\delta}  \leq \frac{A}{\delta^{2}}.$
Next note that by  Lemma  \ref{lem:mod}, 

\begin{equation} \label{eq:inequality for modulus}
\mathrm{mod} \Gamma^{\leq \delta}
\leq
\frac{1}{c_{\delta}^2}
\int_{x_1-\delta}^{x_2+\delta}\frac{1}{f(x)-g (x)}dx
\leq  
\frac{3}{c_{\delta}^2} \int_{x_1}^{x_2}\frac{1}{f(x)-g (x)}dx,
\end{equation}
where the second inequality  above follows from breaking the integral  over the intervals,
$[x_1-\delta,x_1]$,  $[x_1,x_2]$, $[x_1-\delta,x_1]$ and using the periodicity of $f$ and $g$. Now by the 
Beurling's criterion, Lemma \ref{lem:vertical-modulus},
the  last integral is $\mathrm{mod} \Gamma^{v}$. 

Finally, using the fact that
 $\mathrm{mod} \Gamma \leq 
  \mathrm{mod} \Gamma^{\leq \delta}
+ \mathrm{mod} \Gamma^{>\delta}$ yields the right-hand side of (\ref{eq: family modulus comparison}).
 \end{proof}

\subsection{Modulus of degenerating families of curves}
\label{sec:deg-fam}

 Fix $\ell_0 \geq 0$ and $[x_1,x_2]\subset\mathbb{R}$.  Consider a  setting  where we have a family of  continuous periodic function pairs  $\{(f_\ell, g_\ell)\}$  all having the same period $x_2-x_1$  and depending  on a positive real parameter $\ell \geq \ell_0$.  In  later applications of the results in this section, the family of periodic functions depends on the lengths of simple closed geodesics and as such  we use the subscript 
 $\ell$ as a common notation for the length. 
 
\begin{definition}
A family of continuous periodic function pairs 
$\{(f_\ell, g_\ell)\}_{\ell \geq \ell_0}$ as above is {\it  simply degenerate} if 
\begin{enumerate}
\item $f_{\ell}(x)>g_{\ell}(x)$ for  all $x \in \mathbb{R}$ and  
$\ell \geq \ell_0$,
\item  the graphs of $(f_\ell, g_\ell)$  pointwise go  to 0 as $\ell \rightarrow \infty$ and  

\item the area bounded by their graphs above the interval $[x_1,x_2]$ is at most  1,  for all $\ell$.  
 \end{enumerate}
 \end{definition}
 The choice of   $1$ in condition (3)  is not crucial, and serves merely as a matter of convenience, as long as it is finite.
 
 \begin{remark}  
 \label{rem:mod-gamma_l}
Given a simply degenerate family $\{(f_{\ell},g_{\ell})\}$, let $\Gamma_{\ell}$ be the curve family 
connecting the	 part of the graph of $f_{\ell}$ over $[x_1,x_2]$ to the graph of $g_{\ell}$, i.e., $\Pi(\gamma(0))\in[x_1,x_2]$,  and let $\Gamma^{v}_{\ell}$
be the vertical subfamily of $\Gamma_{\ell}$. We observe that as $\ell\to\infty$ we have
$$\mathrm{mod} \Gamma_{\ell}\geq\mathrm{mod} \Gamma_{\ell}^v\to\infty.$$ 
Since the first inequality follows from the monotonicity of modulus it is enough to show that $\mathrm{mod} \Gamma_{\ell}^v\to\infty$. For that let 
$M_{\ell}=\mathrm{max}\{ |f_\ell (x)-g_\ell (x)| : x \in [x_1,x_2]\},$ and note that  $M_{\ell} \rightarrow 0$.  Therefore, by Lemma \ref{lem:vertical-modulus} we have
$\mathrm{mod} \Gamma_{\ell}^v \geq M_{\ell}^{-1}{|x_2-x_1|}\to\infty,$ as desired.

%

 \end{remark}
 

Next we formulate a condition that implies that the modulus of the vertical curve family $\Gamma_{\ell}^v$ is comparable to the modulus of the full family $\Gamma_{\ell}$,  as $\ell$ goes to infinity. 

Recall, that $c_{\delta, \ell}$ is the $\delta$\--rectangle deviation between $f_{\ell}$ and $g_{\ell}$ as in (\ref{deviation}).

\begin{corollary} \label{cor: families comparable}
Suppose $\{(f_\ell, g_\ell)\}_{l\geq l_0}$ is a  simply degenerating  family. 
If there exists a  positive real-valued function 
$\delta=\delta({\ell})$ bounded above by the period $x_2-x_1$ so that, 
\begin{enumerate}
\item d=$\inf_{\ell \geq \ell_0}  \left[ \left[\delta(\ell)\right]  \right]^2  \mathrm{mod} \Gamma^{v}_{\ell}]
>0$,
\item c= $\inf_{\ell \geq \ell_0}  \left( c_{\delta(\ell), \ell} \right)>0$,
\end{enumerate}
then  for all $\ell \geq \ell_0$
$$1\leq \frac{\mathrm{mod} \Gamma_{\ell} }{  \mathrm{mod} \Gamma^{v}_{\ell}}\leq \frac{3}{c^2}+\frac{3}{d} .$$ 
\end{corollary}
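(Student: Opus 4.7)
The plan is to derive the corollary directly from Theorem \ref{thm: family modulus comparison} by specializing the parameter to the given function $\delta(\ell)$, dividing the resulting inequality through by $\mathrm{mod}\Gamma^{v}_{\ell}$, and then using the two uniform lower bounds supplied by hypotheses (1) and (2). The left inequality is the easier half and is essentially free: since $\Gamma^{v}_{\ell} \subset \Gamma_{\ell}$, the monotonicity of modulus (Lemma \ref{lemma:mod-properties}(1)) gives $\mathrm{mod}\Gamma^{v}_{\ell} \leq \mathrm{mod}\Gamma_{\ell}$, so the ratio is at least $1$.

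For the upper bound, I would apply Theorem \ref{thm: family modulus comparison} to the pair $(f_{\ell}, g_{\ell})$ with the choice $\delta := \delta(\ell)$, which is legitimate because $\delta(\ell) \in (0, x_2-x_1]$ by hypothesis. This yields
$$
\mathrm{mod}\Gamma_{\ell} \;\leq\; \frac{3}{c_{\delta(\ell),\ell}^{2}}\,\mathrm{mod}\Gamma^{v}_{\ell} \;+\; \frac{A_{\ell}}{\delta(\ell)^{2}},
$$
where $A_{\ell}$ is the Euclidean area between the graphs. Dividing by $\mathrm{mod}\Gamma^{v}_{\ell}$ produces two terms. The first is bounded by $3/c^{2}$ by hypothesis (2), and the denominator $\delta(\ell)^{2}\,\mathrm{mod}\Gamma^{v}_{\ell}$ of the second term is at least $d$ by hypothesis (1).

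The one step requiring a small amount of care, and which I expect to be the only subtlety, is pinning down the constant in the $3/d$ term from the area $A_{\ell}$. The simply-degenerate hypothesis bounds the area between the graphs over a single period $[x_1,x_2]$ by $1$, whereas the proof of Theorem \ref{thm: family modulus comparison} (traced back through Lemma \ref{lem:>d}) in fact invokes the area above the enlarged interval $[x_1-\delta(\ell),\, x_2+\delta(\ell)]$. Since $\delta(\ell) \leq x_2-x_1$, this interval has length at most $3(x_2-x_1)$ and therefore meets at most three consecutive fundamental periods of $f_{\ell}$ and $g_{\ell}$; by periodicity the relevant area satisfies $A_{\ell} \leq 3$. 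Putting the pieces together gives
$$
\frac{\mathrm{mod}\Gamma_{\ell}}{\mathrm{mod}\Gamma^{v}_{\ell}} \;\leq\; \frac{3}{c_{\delta(\ell),\ell}^{2}} + \frac{A_{\ell}}{\delta(\ell)^{2}\,\mathrm{mod}\Gamma^{v}_{\ell}} \;\leq\; \frac{3}{c^{2}} + \frac{3}{d},
$$
which is exactly the claimed bound. No additional machinery is needed; the deep content is already contained in Theorem \ref{thm: family modulus comparison}, and this corollary amounts to uniform bookkeeping over the parameter $\ell$.
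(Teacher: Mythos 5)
Your proposal is correct and follows essentially the same route as the paper: specialize Theorem \ref{thm: family modulus comparison} to $\delta = \delta(\ell)$, observe by periodicity that the area between the graphs over $[x_1-\delta(\ell), x_2+\delta(\ell)]$ is at most $3$ since $\delta(\ell)\le x_2-x_1$ and the area over one period is at most $1$, then divide by $\mathrm{mod}\,\Gamma^v_\ell$ and invoke hypotheses (1) and (2). The care you take in tracing the area constant back through Lemma \ref{lem:>d} to the enlarged interval is exactly the step the paper's proof also flags, so there is no gap.
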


\begin{proof}
For $\ell \geq \ell_0$, plugging  
$\delta=\delta({\ell})$ into inequality   (\ref{eq: family modulus comparison}) of  Theorem \ref{thm: family modulus comparison}, noting that the area between the graphs and above the interval $[x_1-\delta ,x_2+\delta ]$ is at most $3$, and dividing  by $\mathrm{mod} \Gamma^{v}_{\ell}$  
  we obtain,
\begin{equation}\label{ineq:mod-comparison}
1 
\leq \frac{\mathrm{mod} \Gamma_{\ell}}{\mathrm{mod} \Gamma^{v}_{\ell}} 
\leq \frac{3}{c^{2}_{\delta(\ell), \ell}}+\frac{3}{\left[\delta(\ell)\right]^{2}  \mathrm{mod} \Gamma^{v}_{\ell} }.
\end{equation}
The right-hand  side, by our assumptions,  is bounded above by 
$\frac{3}{c^{2}}+\frac{3}{d}$ and we are done.
\end{proof}

\begin{remark}  
There are simply degenerate families of functions where we must allow	 that
 $\delta({\ell})\to 0$  as $\ell \rightarrow \infty$ in order to be able to apply Corollary \ref{cor: families comparable}.
\end{remark}



\begin{example}
\label{ex:non-example}
We next give an example to show that item  (2) in Corollary \ref{cor: families comparable} is necessary for (\ref{ineq:mod-comparison}) to hold.
That is, a judicious choice of $\delta({\ell})$ satisfying the hypotheses of Corollary \ref{cor: families comparable}
does not always exist. Namely we give a simply degenerating family for which  the ratio of $\mathrm{mod} (\Gamma_{\ell})$  to  $\mathrm{mod} (\Gamma^{v}_{\ell})$ goes to infinity as $\ell\to\infty$. 



We first start with a computation involving a degenerating family of generalized quadrilaterals as $\epsilon\to 0$. Given  $\epsilon\in (0,1)$, let $\Omega_{\epsilon}$ be the domain obtained from the rectangle $[0,1]\times [0,\epsilon ]$ in $\mathbb{C}$ by removing  the closed segments $L_k=\{ \frac{k}{N_{\epsilon}}+ti:\epsilon^2\leq t\leq \epsilon\}$ for $k=2,\ldots ,N_{\epsilon}-1$, where $N_{\epsilon}>\frac{1}{2(\epsilon -\epsilon^2)}$ (see the grey region in Figure \ref{fig:sharpness}).


\begin{figure}[h]
\includegraphics[scale=1.5]{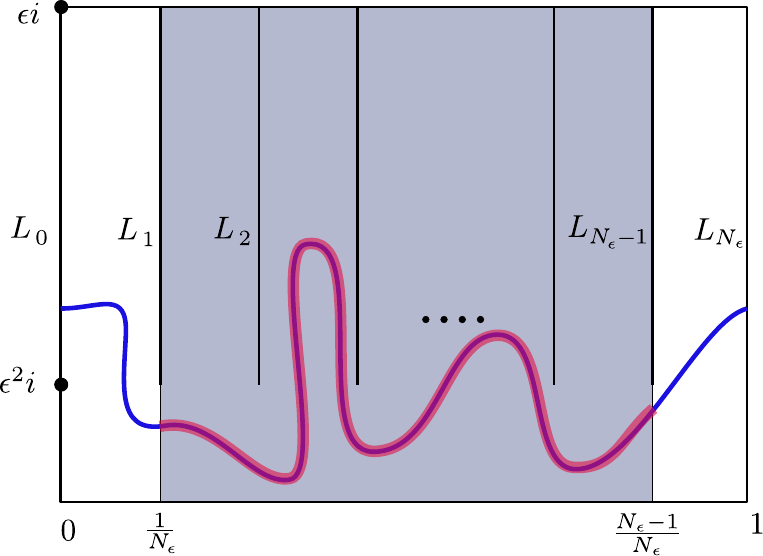}
\caption{The gray domain $\Omega_{\epsilon}'$ is a subset of $\Omega_{\epsilon}$ whose vertical sides contain $L_1$ and $L_{N_{\epsilon}-1}$. Any curve in $G_{\epsilon}$ contains a curve in $G_{\epsilon}'$.}\label{fig:sharpness} 
\end{figure}

The domain $\Omega_{\epsilon}$ has two vertical sides and the complement of the vertical sides of the boundary of  $\Omega_{\epsilon}$ has two components: the top and the bottom. The bottom component is the interval  $[0,1]$ on the real axis, and the top  component is the union of $[0,1]\times\{\epsilon\}$ and the segments $L_k$.

Let $\Gamma_{\epsilon}$ be the family of arcs in $\Omega_{\epsilon}$ connecting top to bottom.
Let $\Gamma_{\epsilon}^{v}$ be the curves in $\Gamma_{\epsilon}$ that are vertical. 

Let $G_{\epsilon}$ be the family of curves in $\Omega_{\epsilon}$ connecting the vertical sides. Let $\Omega_{\epsilon}'=\Omega_{\epsilon}\cap [\frac{1}{N_{\epsilon}},\frac{N_{\epsilon}-1}{N_{\epsilon}}]\times [0,\epsilon ]$. Let $G_{\epsilon}'$ be the family of curves in $\Omega_{\epsilon}'$ that connect
$\{\frac{1}{N_{\epsilon}}+ti: 0\leq t\leq \epsilon^2\}$ and $\{\frac{N_{\epsilon}-1}{N_{\epsilon}}+ti: 0\leq t\leq \epsilon^2\}$ as in Figure \ref{fig:sharpness}.

Note that $G_{\epsilon}'<G_{\epsilon}$ hence $\mathrm{mod} G_{\epsilon}'\geq\mathrm{mod} G_{\epsilon}=\frac{1}{\mathrm{mod} \Gamma_{\epsilon}}$. By \cite[Lemma 5.4]{HS:PLMS} if $N_{\epsilon}=[\frac{1}{\epsilon^2}]$ we have
$
\mathrm{mod} G_{\epsilon}'\leq {C}{\epsilon^2}
$
for some $C>0$. 
Therefore
$
\frac{\mathrm{mod}\Gamma_{\epsilon}^{v}}{\mathrm{mod} \Gamma_{\epsilon}}\leq {C}{\epsilon}\to 0
$
as $\epsilon\to 0$.

The top side of the domain $\Omega_{\epsilon}$ can be approximated by the graph of a continuous function such that $
\frac{\mathrm{mod}\Gamma_{\epsilon}^{v}}{\mathrm{mod} \Gamma_{\epsilon}}\leq {2C}{\epsilon}$ and we have the same conclusion for a simply degenerate family of functions.
\end{example}

\begin{remark}
The construction in Example \ref{ex:non-example} gives  regions not bounded by graphs of continuous functions which do not satisfy item 2) in  Corollary \ref{cor: families comparable}. We point out that similarly constructed regions which are not bounded by graphs of continuous functions may satisfy the assumption of  Corollary \ref{cor: families comparable} and one can prove that  Corollary \ref{cor: families comparable} applies to these more general regions although we do not pursue this here. \end{remark}


\section{Modulus of half-collars}
\label{sec:non-standard-collar}

Let $X$ be a Riemann surface endowed with its conformal hyperbolic metric and  $\alpha$  a simple closed geodesic on $X$.
A {\it collar} about  $\alpha$ is an annular  (doubly-connected) open neighborhood of $\alpha$. A {\it half-collar} about $\alpha$ is an annular neighborhood with 
 $\alpha$  being one of  its  boundary components. In this section we will compute the extremal distances between the two boundary components of standard and non-standard collars around simple closed geodesics using the results from Section \ref{sec:between_graphs}.

\subsection{General collars}
The most direct way for computing the extremal distance between the two boundary components of a collar is to use Lemma \ref{lem:cut}. 
The fixed point set of the reflection symmetry of a general collar is not easily identified. For this reason we lift the collar to the universal covering and make use of a curve family in the universal covering which is related to the curve family connecting the two boundaries of the collar on the surface $X$.

Let $\alpha$ be a simple closed geodesic on a conformally hyperbolic Riemann surface $X$ of length $\ell$ and let $R$ be a collar or half-collar about $\alpha$. Fix the universal covering of $X$ to be the upper half-plane $\HH$ such that the positive $y$-axis covers $\alpha$. Let $\tilde{R}$ be the image under $z(w)=\frac{1}{\ell}\log w$ of the component of the pre-image of $R$ in $\HH$ that contains the positive $y$-axis. Note that $\tilde{R}$ is a universal cover of $R$ with a covering translation $h(z)=z+1$ such that $\tilde{R}/<h>=R$, where $<h>$ is the cyclic group generated by $h$. 

\begin{definition}
We will call $\tilde{R}$ the {\it universal cover of $R$ in  logarithmic coordinates}. 
\end{definition}

The domain $\tilde{R}$  lies between graphs of two functions $f,g:\mathbb{R}\to\mathbb{R}$ such that $f(x+1 )=f(x)$, $g(x+1 )=g(x)$ and $ f(x)> g(x)>0$ for all $x\in\mathbb{R}$ (see Figure \ref{fig:log coordinates}).

\begin{lemma}
\label{lem:lift}
Given a collar or half-collar $R$ about a simple closed geodesic $\alpha$ on a Riemann surface $X$, let $\tilde{R}$ be  universal cover of $R$ in  logarithmic coordinates and  $I$ a fundamental interval for the action of $<h>$ on one boundary component of $\tilde{R}$. Denote by $\Gamma_R$ the curve family in $R$ that connects the two boundary components of $R$. 
Consider the curve family $\Gamma$ in $\tilde{R}$ starting in $I$ and ending at  the other boundary component. Then
$$\frac{1}{3}\mathrm{mod}\Gamma -\frac{2}{3}{A}\leq\mathrm{mod} \Gamma_R\leq \mathrm{mod} \Gamma ,$$
where $A$ is equal to the Euclidean area of the part of $\tilde{R}$ over the interval $I$.
 \end{lemma}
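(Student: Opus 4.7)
The plan is to prove the two inequalities separately, both exploiting the fact that in the logarithmic coordinates the quotient $\pi : \tilde R \to R$ is a local isometry of the Euclidean metric (which gives the same moduli as the conformal hyperbolic metric by conformal invariance), and that every curve in $\Gamma$ projects under $\pi$ to a curve in $\Gamma_R$, since the two graph components of $\partial\tilde R$ project onto the two boundary components of $R$.

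For the easier direction $\mathrm{mod}\,\Gamma_R \leq \mathrm{mod}\,\Gamma$, I would symmetrize an allowable metric for $\Gamma$ over the $h$-orbits. Given $\tilde\rho$ allowable for $\Gamma$ of finite mass, I define $\rho$ on $R$ by
\[
\rho(w)^{2} := \sum_{k \in \mathbb{Z}} \tilde\rho(h^{k}(z))^{2},
\]
where $z$ is any $\pi$-preimage of $w$; the sum is $h$-invariant, hence well-defined. Allowability follows by lifting any $\gamma_R \in \Gamma_R$ to a $\gamma \in \Gamma$ starting in $I$ and using $\rho\circ\pi \geq \tilde\rho$ pointwise, so $\int_{\gamma_R}\rho \geq \int_\gamma \tilde\rho \geq 1$. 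Tonelli's theorem together with the translation invariance of Lebesgue measure gives $\int_R \rho^{2} = \int_{\tilde R}\tilde\rho^{2}$, and taking the infimum over $\tilde\rho$ yields the inequality.

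For the harder bound $\mathrm{mod}\,\Gamma \leq 3\,\mathrm{mod}\,\Gamma_R + 2A$, I would take an extremal metric $\rho_R$ for $\Gamma_R$ with $\int_R \rho_R^{2} = \mathrm{mod}\,\Gamma_R$ and lift to the $h$-periodic metric $\tilde\rho := \rho_R\circ\pi$ on $\tilde R$. Although $\tilde\rho$ is already allowable for $\Gamma$ (since $\pi\circ\gamma \in \Gamma_R$), it has infinite mass, so I localize it to three consecutive fundamental domains and add an indicator barrier outside $F$. Letting $F$ be the fundamental domain over $I$ and $F_{[-1,1]} := (F-1)\cup F\cup(F+1)$, I set
\[
\tilde\rho' := \max\bigl(\tilde\rho\cdot\chi_{F_{[-1,1]}},\ \chi_{(F-1)\cup(F+1)}\bigr),
\]
and use the pointwise bound $\max(a,b)^{2} \leq a^{2}+b^{2}$ together with the $h$-periodicity of $\tilde\rho$ to obtain $\int_{\tilde R}(\tilde\rho')^{2} \leq 3\int_F \tilde\rho^{2} + 2A = 3\,\mathrm{mod}\,\Gamma_R + 2A$.

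The main technical step I expect to require care is the verification that $\tilde\rho'$ is allowable for $\Gamma$. For $\gamma\in\Gamma$ staying in $F_{[-1,1]}$, one has $\int_\gamma \tilde\rho' \geq \int_\gamma \tilde\rho\cdot\chi_{F_{[-1,1]}} = \int_{\pi\circ\gamma}\rho_R \geq 1$. For $\gamma$ exiting $F_{[-1,1]}$, the fact that $\gamma$ starts with $x$-coordinate in $[x_1,x_1+1]$ and exits the $x$-range $[x_1-1,x_1+2]$, combined with the intermediate value theorem, forces $\gamma$ to horizontally traverse an entire strip $F-1$ or $F+1$; its Euclidean length there is therefore at least $1$, so $\int_\gamma \chi_{(F-1)\cup(F+1)} \geq 1$. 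Together with $\int_\gamma \tilde\rho' \geq \max\!\left(\int_\gamma\tilde\rho\cdot\chi_{F_{[-1,1]}},\ \int_\gamma\chi_{(F-1)\cup(F+1)}\right)$, allowability follows, and rearranging the mass bound produces the stated inequality.
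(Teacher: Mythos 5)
Your proof is correct and follows essentially the same route as the paper: the upper bound uses the $\ell^{2}$\--symmetrization $\rho^{2}=\sum_{k}\tilde\rho(h^{k}\cdot)^{2}$, and the lower bound uses the identical localized metric (equal to the lifted metric on the central fundamental domain, $\max(\,\cdot\,,1)$ on the two adjacent ones, and $0$ elsewhere), with the same three-fundamental-domain counting and the same $\max(a,b)^{2}\le a^{2}+b^{2}$ trick yielding the constants $3$ and $2A$. The only cosmetic difference is that you fix an extremal metric $\rho_R$ while the paper runs the argument for an arbitrary allowable metric and takes an infimum at the end; the substance is unchanged.
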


\begin{proof}
The domain $\tilde{R}$  lies between graphs of two functions $f,g:\mathbb{R}\to\mathbb{R}$ such that $f(x+1 )=f(x)$, $g(x+1 )=g(x)$ and $ f(x)> g(x)$ for all $x\in\mathbb{R}$. Without loss of generality we assume that $I$ lies on the graph of $f$. Let $\Omega$ be the set of points in $\tilde{R}$ below ${I}$. Then $\Omega$ is a fundamental set for the action of $<h>$. 
 
 Assume that $\rho (z)|dz|$ is an allowable metric for the family $\Gamma$.  We define a metric $\bar{\rho}(z)|dz|$ for $z\in\Omega$ by
 $$
 \bar{\rho}(z)=\sqrt{\sum_{k=-\infty}^{\infty}[\rho (z+k )]^2}.
 $$
 and denote its projection to $R$ by $\bar{\rho}$ again. Since $\iint_{\tilde{R}
 }\rho^2(z)dxdy=\iint_{\Omega}\bar{\rho}^2(z)dxdy$ we have that the series defining $\bar{\rho}$ converges almost everywhere when $\iint_{\tilde{R}
 }\rho^2(z)dxdy<\infty$.
 
 Let $\gamma\in\Gamma_R$ and we compute its $\bar{\rho}$-length. Consider the lift $\tilde{\gamma}$ of $\gamma$ that starts on ${I}$ and ends on the graph of $g$. Then $\tilde{\gamma}\in\Gamma$. Lift $\bar{\rho}$ to $\widetilde{\bar{\rho}}$. Then the $\bar{\rho}$-length of $\gamma$ is equal to $\widetilde{\bar{\rho}}$-length of $\tilde{\gamma}$. 
 
 We divide $\tilde{{\gamma}}$ into arcs $\{\tilde{{\gamma}}_k\}_k$ that lie in different translates $\Omega_k:=h^k(\Omega )$ of $\Omega$ by elements of the  group $<h>$. On each $\Omega_k$, we have
 $$
\widetilde{\bar{\rho}} (z)\geq \rho (z).
$$
Therefore $l_{{\bar{\rho}}} (\gamma )= l_{\widetilde{\bar{\rho}}}(\tilde{\gamma})\geq l_{\rho}(\tilde{\gamma})\geq 1$ and the $\bar{\rho}$ metric is allowable for $\Gamma_R$. Since
$$
\iint_R\bar{\rho}^2(z)dxdy=\sum_{k=-\infty}^{\infty}\iint_{\Omega_k} \rho^2(z)dxdy=\iint_{\tilde{R}}\rho^2(z)dxdy
$$
we have that
$$
\mathrm{mod}\Gamma_R\leq \mathrm{mod}\Gamma .
$$ 

Assume that $\bar{\rho }(z)|dz|$ is an allowable metric for the family $\Gamma_R$. 
Let $\Omega_{1}$ be the subdomain of $\tilde{R}$ which contains $\Omega$ and  points whose $x$-coordinate differ by at most $1$ from the  $x$-coordinate of a point in $\Omega$.  
We take a lift $\rho (z)|dz|$ of $\bar{\rho}(z)|dz|$ to the region $\Omega_{1}$. Then we define a metric $\rho_{1}(z)|dz|$ on the universal covering by
\[
\rho_{1}(z)= \left\{
\begin{array}{ll}
\rho (z) &  ,\mathrm{ for }\ z\in\Omega\\
\max\{ \rho (z),1\} &   ,\mathrm{ for }\ z\in\Omega_{1}\setminus\Omega\\
0 &  ,\mathrm{ otherwise}\\
\end{array}\right. \]

The metric $\rho_{1}(z)|dz|$ is allowable for the family $\Gamma$. This is because any curve  $\gamma\in\Gamma$ is either completely contained in  $\Omega_{1}$ or connects the vertical sides of one of the components in
$\Omega_{1} - \Omega$. In the first case, 
we have $\ell_{\rho_{1}}(\gamma) \geq  
\ell_{\rho}(\gamma) =  \ell_{\bar {\rho}}(\pi(\gamma)) \geq 1,$ 
where $\pi : \tilde{R}  \rightarrow  R$  is the covering map. 
In the second case, $\ell_{\rho_{1}}(\gamma) \geq
\int_{\gamma \bigcap \{\Omega -\Omega_{1}\}}
|dz|\geq  1. $

We have
$$
\iint_R\bar{\rho}^{2}(z)dxdy=\iint_{\Omega}\rho^{2} (z)dxdy\geq\frac{1}{3}\iint_{\Omega_{1}}\rho^{2} (z)dxdy
\geq \frac{1}{3}\iint_{\Omega_{1}}\rho^{2}_{1}(z)dxdy-\frac{2}{3}{A}
$$
where the last inequality follows by 
$\rho^{2} _{1} \leq \rho^{2} +1$ on 
$\Omega_{1} -\Omega$ and $A$ is the area of $\Omega$. 
Taking the infimum over all allowable $\bar{\rho}$ we obtain
$$
 \mathrm{mod}\Gamma_R\geq \frac{1}{3} \mathrm{mod}\Gamma -\frac{2}{3}{A}.
 $$
\end{proof}

From now on we use subscript $\ell$ to emphasize the dependence on the length $\ell$ of the closed geodesic $\alpha$. Let $f_{\ell}:\mathbb{R}\to\mathbb{R}$ and $g_{\ell}:\mathbb{R}\to\mathbb{R}$ be the functions whose graphs are the upper and lower boundaries of the universal covering $\tilde{R}$. Note that $f_{\ell}(x+1)=f_{\ell}(x)$ and $g_{\ell}(x+1)=g_{\ell}(x)$.
The above lemma compares the modulus of the curve family $\Gamma_R$ connecting the two boundary components of $R$ to the modulus of a curve family $\Gamma_\ell$ in the universal covering $\tilde{R}$ in the logarithmic coordinates which connects a fundamental interval on the graph of $f_\ell$ to the graph of $g_\ell$ inside $\tilde{R}$. Since the modulus of $\Gamma_\ell$ cannot be directly computed, we compare it to the modulus of a family of vertical curves in the universal covering.
We give the estimate under the assumption that the length $\ell$ of a closed geodesic $\alpha$ is bounded below by some fixed $\ell_0>0$.

 Denote by $\Gamma^{v}_{\ell}$ the vertical family of curves in $\tilde{R}$ between these  graphs and above $[-\frac{1}{2},\frac{1}{2}]$.  Recall that  
$c_{\delta(\ell), \ell}$ measures, up to scale $\delta(\ell)$,  how far the area between the graphs are  from being a rectangle.  See section \ref{sec:between_graphs} above Corollary \ref{cor: families comparable} for the precise definition. 
In the following corollary we put together Corollary \ref{cor: families comparable} and  Lemma \ref{lem:lift}  to derive a criteria for the $\mathrm{mod} \Gamma_R$ to be comparable to   
 $\mathrm{mod}  \Gamma^{v}_{\ell}$.

Here and in what follows the notation $$a\asymp b$$ means that there is a constant $1\leq C<\infty$ such that $C^{-1}\leq a/b \leq C$.

\begin{corollary}
\label{cor:collar_ext}
 For each $\ell \geq \ell_0$, let $R$ be a  collar or half-collar about a geodesic of length $\ell$ and let $\Gamma_R$ be the curve family connecting the two boundary components of $R$.  Let $\{(f_\ell, g_\ell)\}_{\ell\geq \ell_0}$ be  the lifts of the boundary components of $R$ to the universal covering $\tilde{R}$ in  logarithmic coordinates such that $\{(f_\ell, g_\ell)\}_{\ell\geq \ell_0}$ is a simply degenerate family.
 If there exists a  positive real-valued function 
$\delta=\delta({\ell})$ so that, 
\begin{enumerate}
\item $d=\inf_{\ell \geq \ell_0}  {\delta(\ell)^2}{ \mathrm{mod} \Gamma^{v}_{\ell}}>0$,
\item $c=\inf_{\ell \geq \ell_0}  \left( c_{\delta(\ell), \ell} \right)>0$,
\end{enumerate}
then   
$${\mathrm{mod} \Gamma_R  }\asymp {  \mathrm{mod}\Gamma^{v}_{\ell}}$$
when $\ell\to\infty$. The bound on the constant of the above comparison depends on $c$ and $d$ and it goes to infinity when either of them goes to zero.
\end{corollary}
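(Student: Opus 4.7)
The plan is to combine Lemma \ref{lem:lift} (which relates $\mathrm{mod}\Gamma_R$ to the corresponding modulus $\mathrm{mod}\Gamma_\ell$ in the logarithmic universal cover $\tilde{R}$) with Corollary \ref{cor: families comparable} (which compares $\mathrm{mod}\Gamma_\ell$ to the vertical modulus $\mathrm{mod}\Gamma_\ell^v$ under hypotheses (1)–(2)). First I would invoke Lemma \ref{lem:lift} with $I=[-\tfrac{1}{2},\tfrac{1}{2}]$, obtaining
\[
\tfrac{1}{3}\mathrm{mod}\Gamma_\ell - \tfrac{2}{3}A_\ell \;\le\; \mathrm{mod}\Gamma_R \;\le\; \mathrm{mod}\Gamma_\ell,
\]
where $A_\ell$ is the Euclidean area of $\tilde{R}$ over $I$. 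Since $\{(f_\ell,g_\ell)\}$ is assumed to be simply degenerate, property (3) of the definition gives $A_\ell \le 1$ uniformly in $\ell$.

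Next I would feed the same simply degenerate family and the same choice of $\delta(\ell)$ into Corollary \ref{cor: families comparable}; hypotheses (1) and (2) of the present corollary are precisely what Corollary \ref{cor: families comparable} requires, and it yields
\[
\mathrm{mod}\Gamma_\ell^v \;\le\; \mathrm{mod}\Gamma_\ell \;\le\; \left(\tfrac{3}{c^{2}}+\tfrac{3}{d}\right)\mathrm{mod}\Gamma_\ell^v.
\]
Chaining these two displays immediately gives the upper bound
\[
\mathrm{mod}\Gamma_R \;\le\; \left(\tfrac{3}{c^{2}}+\tfrac{3}{d}\right)\mathrm{mod}\Gamma_\ell^v,
\]
and the lower bound
\[
\mathrm{mod}\Gamma_R \;\ge\; \tfrac{1}{3}\mathrm{mod}\Gamma_\ell^v - \tfrac{2}{3}.
\]

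The main subtlety, and the only place where asymptotics in $\ell$ enter, is the \emph{additive} constant $-\tfrac{2}{3}$ inherited from Lemma \ref{lem:lift}: this must be converted into a multiplicative comparison. For this I would use Remark \ref{rem:mod-gamma_l}, which asserts that $\mathrm{mod}\Gamma_\ell^v \to \infty$ as $\ell\to\infty$ (this is exactly why the simply degenerate hypothesis is made, since the maximum vertical gap $M_\ell\to 0$). Hence for all sufficiently large $\ell$ one has $\mathrm{mod}\Gamma_\ell^v \ge 4$, say, and so $\tfrac{1}{3}\mathrm{mod}\Gamma_\ell^v - \tfrac{2}{3} \ge \tfrac{1}{6}\mathrm{mod}\Gamma_\ell^v$. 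Combining with the upper bound produces
\[
\tfrac{1}{6}\mathrm{mod}\Gamma_\ell^v \;\le\; \mathrm{mod}\Gamma_R \;\le\; \left(\tfrac{3}{c^{2}}+\tfrac{3}{d}\right)\mathrm{mod}\Gamma_\ell^v
\]
for all large enough $\ell$, which is the desired comparability $\mathrm{mod}\Gamma_R \asymp \mathrm{mod}\Gamma_\ell^v$. The implicit multiplicative constants depend only on $c$ and $d$ and manifestly blow up as either tends to zero, as asserted. There is no serious obstacle beyond this bookkeeping step; the real work has been done in Lemma \ref{lem:lift} and Corollary \ref{cor: families comparable}.
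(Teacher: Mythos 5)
Your proposal is correct and follows essentially the same approach as the paper's proof: invoke Lemma \ref{lem:lift} (with $A \le 1$ from simple degeneracy), apply Corollary \ref{cor: families comparable} to both sides, then use $\mathrm{mod}\Gamma^v_\ell\to\infty$ (Remark \ref{rem:mod-gamma_l}) to absorb the additive constant $-\tfrac{2}{3}$ into a multiplicative one for large $\ell$. The only thing the paper adds that you omit is a one-line remark that comparability on the remaining compact range $\ell_0\le\ell\le\ell_1$ follows by continuity, but since the conclusion is phrased as an asymptotic ``when $\ell\to\infty$,'' your version is sufficient.
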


\begin{proof} Since 
$\{(f_\ell, g_\ell)\}_{\ell\geq \ell_0}$ is a simply degenerating family, Remark \ref{rem:mod-gamma_l} shows that $\mathrm{mod}\Gamma^{v}_{\ell}\to \infty$ as $\ell\to\infty$.

Since $A\leq 1$, Lemma \ref{lem:lift}  gives,

\begin{equation} \label{eq: equation 1}
\frac{1}{3}\left[\mathrm{mod}\Gamma_{\ell}-
{2}{}\right]
 \leq \mathrm{mod}\Gamma_R \leq \mathrm{mod}\Gamma_{\ell}  
\end{equation}

Corollary \ref{cor: families comparable} applied to both sides of 
equation (\ref{eq: equation 1}) gives us

\begin{equation} 
\frac{1}{3}\left[\mathrm{mod}\Gamma_{\ell}^v-
2\right]
 \leq \mathrm{mod}\Gamma_R \leq
 (\frac{3}{c^2}+\frac{3}{d} ) \mathrm{mod}\Gamma_{\ell}^v 
\end{equation}

and hence
$$ \frac{1}{3}\left[1-
\frac{2}{ \mathrm{mod}\Gamma_{\ell}^v}\right]
\leq 
\frac{\mathrm{mod} \Gamma_R}{\mathrm{mod}\Gamma_{\ell}^v}
\leq  
\frac{3}{c^2}+\frac{3}{d} 
$$

Since  $\mathrm{mod}\Gamma^{v}_{\ell}\to \infty$ as $\ell\to\infty$, we have $ \frac{1}{3}\left[1-
\frac{2}{ \mathrm{mod}\Gamma_{\ell}^v}\right]
\geq\frac{1}{6}$ for all $\ell\geq \ell_1>0$ with $\ell_1$ large enough and 
the result follows for $\ell\geq \ell_1$. The result follows for $\ell_0\leq \ell\leq\ell _1$ by  continuity.
\end{proof}

\begin{remark}
The family  $\Gamma^{v}_{\ell}$ corresponds to  the set of geodesics between boundary components of $R$  which are orthogonal to the  core geodesic of $R$. In particular,
 the modulus of  these orthogonals is the same as the modulus of $\Gamma_{\ell}^v$. Hence, Corollary \ref{cor:collar_ext} could be rephrased purely  in  hyperbolic terms on the  collar.

\end{remark}

\subsection{Standard collars}
The   {\it standard collar} about  $\alpha$  is the set of points   a distance less than $r (\frac{\ell (\alpha)}{2})$  from $\alpha$, where
\begin{align}\label{def:r}
r(x):=\sinh^{-1}\left(\frac{1}{\sinh x}\right)
\end{align}

  The standard collar is bounded by  two  equidistant curves. It is well-known that a standard collar always exists and disjoint simple closed geodesics have disjoint standard collars, see \cite{Buser}. The {\it standard half-collar} consists of the points on one side of the standard collar. Note that by (\ref{def:r}) for large $x$ we have the following asymptotics 
\begin{align}\label{asymptotics:r-big}
\begin{split}
r(x) &\asymp {1/e^{x}}, \quad \mbox{ as }  x\to\infty.
\end{split}
\end{align}
 On the other hand, since $\sinh^{-1}(t) = \ln(t+\sqrt{t^2+1})$, from (\ref{def:r}) we have 
\begin{align}\label{asymptotics:r-small}
\begin{split}
r(x)=\ln\left(\frac{1+\cosh x}{\sinh x}\right) = \ln\left(\frac{2}{x}\right)+o(x), \quad \mbox{ as } x\to 0.
\end{split}
\end{align}

The extremal length of the curve family $\Gamma_R$ connecting the two boundary components of the standard full  collar neighborhood $R$  about $\alpha$ was  computed by Maskit, see \cite{Maskit}. For the convenience of the reader, we give the computation below.

\begin{lemma}
\label{lem:collar}
 Let $R$ be the standard half-collar about a simple closed geodesic of length $\ell$. Then
\begin{align}\label{eqn:Maskit}
\lambda (\Gamma_R)=\frac{1}{\ell}\arctan \Big{[}\frac{1}{\sinh \frac{\ell}{2}}\Big{]}.
\end{align}
\end{lemma}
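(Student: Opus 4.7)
The plan is to uniformize the half-collar $R$ explicitly via the logarithmic coordinate change introduced in Section~\ref{sec:non-standard-collar}, which converts $R$ into a flat cylinder whose extremal length is then read off directly.

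First I would lift $R$ to the universal cover $\HH$ so that $\alpha$ is covered by the positive imaginary axis, with deck transformation $T\colon z\mapsto e^{\ell}z$. Recall the standard fact that the hyperbolic distance from a point $re^{i\phi}\in\HH$ to the imaginary axis is $d$ with $\sinh d=|\cot\phi|$; equivalently, equidistant curves from $\alpha$ lift to Euclidean rays from the origin. Hence the non-geodesic boundary of $R$ lifts to a ray $\{\arg z=\phi_0\}$ satisfying
\[
\cot\phi_0=\sinh\bigl(r(\ell/2)\bigr)=\frac{1}{\sinh(\ell/2)},
\]
where the second equality uses \eqref{def:r}. Equivalently, $\tan\phi_0=\sinh(\ell/2)$.

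Next I would apply the conformal map $w=\frac{1}{\ell}\log z$. This maps the lift $\tilde R$ onto the horizontal strip
\[
S=\Bigl\{w\in\CC:\tfrac{\phi_0}{\ell}<\mathrm{Im}(w)<\tfrac{\pi}{2\ell}\Bigr\},
\]
and conjugates the covering transformation $T$ to $w\mapsto w+1$. Therefore $R$ is conformally equivalent to the flat cylinder $S/\ZZ$ of circumference $1$ and height $h=(\pi/2-\phi_0)/\ell$. A direct length-area computation (or the standard modulus formula for a flat cylinder, cf.\ Lemma~\ref{lem:vertical-modulus}) shows that the family of curves crossing this cylinder from one boundary circle to the other has modulus $1/h$, so its extremal length equals $h$. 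Conformal invariance of extremal length yields
\[
\lambda(\Gamma_R)=\frac{\pi/2-\phi_0}{\ell}.
\]

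Finally, since $\tan\phi_0=\sinh(\ell/2)>0$, the identity $\arctan(x)+\arctan(1/x)=\pi/2$ for $x>0$ gives $\pi/2-\phi_0=\arctan\!\bigl(1/\sinh(\ell/2)\bigr)$, producing formula \eqref{eqn:Maskit}. The only potentially delicate step is bookkeeping with the hyperbolic distance formula from a geodesic and the correct identification of $\phi_0$, but both are routine once the coordinate change is set up.
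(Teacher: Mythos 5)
Your proof is correct and takes essentially the same route as the paper: both lift $R$ to the upper half-plane with $\alpha$ along the imaginary axis, identify the outer boundary as a Euclidean ray, and read off the extremal length from an explicit uniformization, with the same elementary identity $\arctan(x)+\arctan(1/x)=\pi/2$ finishing the calculation. The only difference is presentational: you apply $w=\tfrac{1}{\ell}\log z$ directly to the full cone $\{\phi_0<\arg z<\pi/2\}$ and quotient by $\ZZ$ to get a flat cylinder of circumference $1$ and height $(\pi/2-\phi_0)/\ell$, so the modulus is immediate; the paper instead restricts to a fundamental-domain annular sector, uses the equality $\lambda(\Gamma_R)=\lambda(\tilde\Gamma_R)$ (implicitly the slit-annulus fact from Lemma~\ref{lem:cut}), and cites V\"ais\"al\"a's formula $\theta/\ln(y_2/y_1)$, which is itself derived from the same logarithmic change of variables. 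Your version bypasses the cutting step, which is a mild streamlining rather than a different argument.
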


\begin{proof}
By the  collar lemma  the geodesic $\alpha$ of length $\ell$ has a one-sided collar of length $\rho =\sinh^{-1}\frac{1}{\sinh \frac{\ell}{2}}$,  see for instance \cite[page 94]{Buser}.
We lift the collar $R$ to the upper half-plane so that $\alpha$ is lifted to the geodesic with endpoints $0$ and $\infty$. One lift $\tilde{R}$ of $R$ is between two hyperbolic geodesics orthogonal to the $y$-axis,  intersecting the imaginary axis at points $iy_1$ and $i y_2$, with $0<y_1<y_2$, and between a Euclidean ray from $0$ subtending angle $\theta$ with the $y$-axis. Thus, $\tilde{R}$ is an annular sector 
$$\tilde{R}=\{re^{i\phi} : \phi\in(\pi/2-\theta,\pi/2), r\in(y_1,y_2)\}.$$
Therefore $\lambda(\Gamma_R) = \lambda(\tilde{\Gamma}_R)$, where $\tilde{\Gamma}_R$ is the family of curves connecting the rays $\{ re^{i\phi}:  \phi = \pi/2, r>0 \} $ and $\{ re^{i\phi}:  \phi = \pi/2-\theta, r>0 \}$ in the quadrilateral $\tilde{R}$. The standard extremal length formula for curves in an annular sector, cf.  \cite[Remark 7.7]{Vaisala}, then gives us
\begin{align*}
\lambda(\tilde{\Gamma}_R) =\frac{\theta}{\ln\frac{y_2}{y_1}} = \frac{\theta}{\ell}.
\end{align*}
Finally, the last equality implies  (\ref{eqn:Maskit}), 
since $\tan \theta=\sinh\rho ={1}/{\sinh \frac{\ell}{2}},$
cf.  \cite[p. 162]{Beardon}.
%
\end{proof}


We note here that from (\ref{eqn:Maskit}) we have the following asymptotic behavior for the extremal distance $\lambda (R):=\lambda (\Gamma_R)$ (see Definition \ref{def:extremal-distance}) between the boundary components of the standard collar $R$ about a simple closed geodesic of length $\ell$:

\begin{align}\label{asymp:standard}
\begin{split}
\lambda (R)&\asymp \frac{1}{\ell e^{\ell/2}}, \quad\mbox{ as } \ell\to\infty,\\
\lambda (R)&\asymp \frac{1}{\ell}, \quad \mbox{ as } \ell\to0.
\end{split}
\end{align}

\subsection{Nonstandard half-collars}
\label{sec:nonstandard}

One of the main objects in our study is what we call a {\it nonstandard half-collar}. Let $P$ be a geodesic pair of pants with three boundary geodesics $\alpha$, $\alpha_1$ and $\alpha_2$.  Fix a boundary geodesic $\alpha$ of $P$ and  an orthogeodesic   $\gamma$ from 
$\alpha$ to one of the other boundary geodesics $\alpha_1$ of $P$.  Let $b$ be the endpoint of $\gamma$ located on the other geodesic $\alpha_1$. Since every geodesic pair of pants has a unique decomposition into two right angled hexagons, it follows that $b$ lies on two such identical right angled hexagons. On each such hexagon we drop  a perpendicular from $b$  to the other simple orthogeodesic emanating from $\alpha$ (see Figure \ref{fig:collars}).  
 Then the union of these two  perpendiculars is a piecewise geodesic loop $\beta$ with a non-smooth point at $b$, and the annular domain bounded by   $\alpha$ and $\beta$ is what we call the  {\it nonstandard half-collar} $R_{\alpha ,\gamma}$ about $\alpha$.  
 
 When a geodesic pair of pants $P$ has a puncture $\alpha_1$, then $\gamma$ is a geodesic ray orthogonal to $\alpha$ that converges to the puncture. In this case the geodesic loop $\beta$ becomes a bi-infinite geodesic which converges to the puncture in both directions and is orthogonal to the orthogeodesic between $\alpha$ and the third boundary component of $P$ (see the right side of Figure \ref{fig:comparing-collars}). Again $R_{\alpha ,\gamma }$ is the annular domain between $\alpha$ and $\beta$.



%

There are three parameters associated with a nonstandard half-collar: the length $\ell (\alpha )$ of  $\alpha$, the length $\ell (\gamma )$ of the orthogeodesic $\gamma$, and the length $\ell (\eta )$ of the geodesic segment $\eta$ from the boundary geodesic $\alpha$ to the geodesic loop $\beta$. 
Using basic hyperbolic geometry, the three quantities are related by (see  \cite[Theorem 2.3.1 (iv)]{Buser})
\begin{equation}
\label{eq: 3 quantities}
\tanh \ell (\eta ) =\frac{\tanh \ell (\gamma )}{\cosh \ell (\alpha )/2}.
\end{equation}
So the nonstandard half-collar 
can be parametrized (determined) by the length of the boundary geodesic $\ell (\alpha )$ and the length $\ell (\gamma )$ with the constraint 
$\ell (\gamma )> r (\frac{\ell (\alpha )}{2})$ coming from the collar lemma.

When $\ell (\gamma )=\infty$, that is when the orthogeodesic goes out a cusp, the nonstandard half-collar contains the standard half-collar. On the other hand, for  $\ell (\gamma)<\infty$, using the quadrilateral formula in hyperbolic geometry, one can show that  
\begin{equation}
\label{eq:eta-alpha}
\ell (\alpha ) \leq 2r(\ell (\eta )).
\end{equation} 
Therefore, in this case, it is not hard to see that neither collar contains the other, see Figure  \ref{fig:comparing-collars}.
 Nevertheless, using the above notation  we have the following result.
 

 \begin{theorem}
\label{thm:half-collar}
Let $R_{\alpha ,\gamma}$ be the  nonstandard  half-collar about a boundary geodesic $\alpha$ of length $\ell (\alpha ) > 1$ on a pair of pants. Then
\begin{align}\label{non-standard:main}
\lambda (R_{\alpha ,\gamma}) \asymp \ell (\eta), 
\end{align}
where $\ell (\eta )$  is given by equation  (\ref{eq: 3 quantities}).

\end{theorem}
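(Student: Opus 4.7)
The plan is to apply Corollary~\ref{cor:collar_ext} to reduce $\mathrm{mod}\Gamma_R$ to the modulus $\mathrm{mod}\Gamma^v_\ell$ of the vertical family in the logarithmic universal cover, and then to evaluate $\mathrm{mod}\Gamma^v_\ell$ explicitly by a change of variables to arc length along $\beta$.

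First I would set up coordinates. Realize $\alpha$ as the positive $y$-axis in $\HH$, with deck translation $w\mapsto e^{\ell}w$ (where $\ell=\ell(\alpha)$), and lift $R_{\alpha,\gamma}$ to a region $\tilde R$ on one side. In logarithmic coordinates $z=(1/\ell)\log w$ the translation becomes $z\mapsto z+1$ and the lift of $\alpha$ is the horizontal line $y=\pi/(2\ell)$. The lift $\tilde\beta$ of $\beta$ is a periodic, piecewise-smooth chain of geodesic semicircles (smooth through each lift of the closest point $c$, with kinks at the lifts of the corner $b$), so $\tilde R$ lies between the constant graph $f_\ell\equiv\pi/(2\ell)$ and the graph $g_\ell(x)=\theta(x)/\ell$, where $\theta(x)\in(0,\pi/2)$ is the angular $w$-coordinate on $\tilde\beta$ at horizontal position $x$. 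The standard hyperbolic identity $\cosh d(w,\alpha)=1/\sin(\arg w)$ for $w\in\HH$ then yields the key formula
\[
f_\ell(x)-g_\ell(x)=\frac{1}{\ell}\left(\frac{\pi}{2}-\theta(x)\right)=\frac{1}{\ell}\arctan\bigl(\sinh d(x)\bigr),
\]
where $d(x)$ denotes the hyperbolic distance from the point of $\beta$ at position $x$ to $\alpha$; in particular $d(x)$ attains its minimum $\ell(\eta)$ at (the lift of) $c$.

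Second, I would check that $\{(f_\ell,g_\ell)\}$ is a simply degenerating family ($f_\ell>g_\ell$ strict; $f_\ell,g_\ell\to 0$ since both are bounded by $\pi/(2\ell)$; and area over one period bounded by $\pi/(2\ell)$) and apply Corollary~\ref{cor:collar_ext}. Taking $\delta(\ell)=c_0/\ell$ for a small constant $c_0$, the quadratic local behavior $d(x)-\ell(\eta)\asymp \ell(\eta)\ell^2(x-x_0)^2$ near the smooth maximum (derived below from $\sinh d(s)=\sinh\ell(\eta)\cosh s$) implies $c_{\delta(\ell),\ell}$ is bounded below uniformly; meanwhile the crude lower bound from evaluating the integrand at its peak on an interval of length $1/\ell$ gives $\mathrm{mod}\Gamma^v_\ell\gtrsim 1/\ell(\eta)$, so $\delta(\ell)^2\mathrm{mod}\Gamma^v_\ell\to\infty$ for $\ell(\eta)\lesssim e^{-\ell/2}$ (which holds under the hypothesis $\ell>1$ in view of \eqref{eq:eta-alpha}). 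Hence the corollary yields $\mathrm{mod}\Gamma_R\asymp\mathrm{mod}\Gamma^v_\ell$.

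Third, I would compute $\mathrm{mod}\Gamma^v_\ell=\int_{-1/2}^{1/2}\ell\,dx/\arctan(\sinh d(x))$ by changing variables to the hyperbolic arc length $s$ along $\beta$ measured from $c$. Since the relevant segment of $\tilde\beta$ is a geodesic perpendicular to the common perpendicular $\eta$ at $c$, the classical identity for distances between two geodesics gives $\sinh d(s)=\sinh\ell(\eta)\cosh s$; and a direct computation on the explicit $w$-coordinate representation of the semicircle produces $x(s)=\tfrac12-\tfrac{1}{\ell}\operatorname{artanh}(\tanh s/\cosh\ell(\eta))$, whence $\ell\,|dx/ds|=\cosh\ell(\eta)/\cosh^2 d(s)$. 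Substituting,
\[
\mathrm{mod}\Gamma^v_\ell=2\cosh\ell(\eta)\int_0^{L/2}\frac{ds}{\cosh^2 d(s)\,\arctan\bigl(\sinh d(s)\bigr)}.
\]
Splitting at the value $s^\ast$ with $\sinh d(s^\ast)=1$, the range $[0,s^\ast]$ contributes the dominant $\asymp 1/\sinh\ell(\eta)\asymp 1/\ell(\eta)$ (using $\arctan(\sinh d)\approx\sinh\ell(\eta)\cosh s$ and $\int_0^\infty\operatorname{sech}s\,ds=\pi/2$), while the tail $[s^\ast,L/2]$ contributes $O(1)$ by the rapid decay of $1/\cosh^2 d(s)$. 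Since $\cosh\ell(\eta)\asymp1$, we conclude $\mathrm{mod}\Gamma^v_\ell\asymp1/\ell(\eta)$, and therefore $\lambda(R_{\alpha,\gamma})=1/\mathrm{mod}\Gamma_R\asymp\ell(\eta)$.

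The main obstacle is verifying the $\delta$-rectangle deviation hypothesis of Corollary~\ref{cor:collar_ext}, which requires careful local control of $g_\ell$ near both its smooth maximum (at the lift of $c$) and its kinks (at the lifts of $b$); the quadratic expansion of $d(x)$ above is the key ingredient. A secondary technicality arises in the cusp case $\ell(\gamma)=\infty$, when the semicircles of $\tilde\beta$ meet at parabolic fixed points on $\partial\HH$ and $g_\ell$ vanishes at its kinks; this case can be treated either by approximating from $\ell(\gamma)<\infty$ and passing to the limit, or by a direct adaptation of the estimates above.
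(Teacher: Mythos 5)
Your overall strategy is sound, and the computational claims check out: the identity $f_\ell(x)-g_\ell(x)=\tfrac{1}{\ell}\arctan(\sinh d(x))$ (a pleasing generalization of Maskit's formula in Lemma~\ref{lem:collar}), the relation $\sinh d(s)=\sinh\ell(\eta)\cosh s$, the Jacobian $\ell\,|dx/ds|=\cosh\ell(\eta)/\cosh^2 d(s)$, and the closed-form estimate $\mathrm{mod}\,\Gamma^v_\ell\asymp 1/\sinh\ell(\eta)\asymp 1/\ell(\eta)$ are all correct. But your route differs from the paper's in a way that shifts the technical burden.

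The paper does not apply Corollary~\ref{cor:collar_ext} to $(f_\ell,g_\ell)$ directly. It uses $\tfrac{\pi}{2}-\cos^{-1}t\geq t$ to replace $g_\ell$ by a piecewise-exponential surrogate $h_\ell(x)=\tfrac{\pi}{2\ell}-\tfrac{1}{2\ell}e^{|x|\ell-r(\ell(\eta))}\geq g_\ell(x)$, applies Corollary~\ref{cor:collar_ext} to the sub-annulus $R'$ between $f_\ell$ and $h_\ell$, and invokes $\lambda(R_{\alpha,\gamma})\geq\lambda(R')$; condition (2) of the corollary is immediate for the exponential profile, giving $c_{\delta,\ell}\geq e^{-1}$ with $\delta=1/\ell$. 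The matching upper bound on $\lambda(R_{\alpha,\gamma})$ is then obtained separately from the monotonicity $\mathrm{mod}\,\Gamma_R\geq\mathrm{mod}\,\Gamma^v_\ell$ together with the reverse inequality $\tfrac{\pi}{2}-\cos^{-1}t\leq c_0 t$. This surrogate trick exists precisely so that the $\delta$-rectangle deviation never has to be checked for the true $g_\ell$.

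You instead apply Corollary~\ref{cor:collar_ext} once, to $(f_\ell,g_\ell)$ itself. What this buys you is a cleaner, more geometric evaluation of $\mathrm{mod}\,\Gamma^v_\ell$ via the arc-length substitution; what it costs you is having to establish $\inf_\ell c_{\delta(\ell),\ell}>0$ for the actual $g_\ell$. You flag this as the main obstacle, but the sketch you give (the quadratic expansion of $d(x)$) handles only the neighborhood of the smooth maximum at the foot of $\eta$. A complete argument must also control the ratio $u(x+y)/u(x)$ (for $u=f_\ell-g_\ell$, $|y|\leq\delta$) near the kinks, where $\sinh d$ can be large or infinite (cusp case) and the Lipschitz constant of $\ln\sinh d$ blows up, and in the intermediate range. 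This can be done — a convenient structure is: since $\arctan$ is concave, $\arctan(c\,t)\geq c\arctan(t)$ for $0<c\leq1$, so it suffices to bound $\sinh d(x+y)/\sinh d(x)$ from below, which follows from the explicit formula $\sinh d(x)=\cosh(x\ell)/\sqrt{\cosh^2 r(\ell(\eta))-\cosh^2(x\ell)}$ away from the kinks, and from the boundedness of $\arctan$ near them — but the resulting casework is no shorter than the paper's two-sided treatment. So: a genuinely different and workable execution, with a real but fillable gap in the condition-(2) verification that you yourself identified.
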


\begin{remark}
Using (\ref{eq: 3 quantities}), the estimate (\ref{non-standard:main}) can be rephrased in terms of the lengths $\ell (\alpha )$ and $\ell (\gamma)$ rather than $\ell(\eta)$, see Corollary \ref{cor1} below. In particular, in contrast to (\ref{asymp:standard}) we have 
\begin{align}
\lambda(R_{\alpha,\gamma}) \asymp \frac{1}{e^{\ell(\alpha)/2}},
\end{align} 
as $\ell(\alpha)\to\infty$,
 for $\ell(\gamma)>1$.
\end{remark}
 

%
%
%
%

\begin{proof}

Consider a  lift  of  $R_{\alpha ,\gamma}$ to  $\mathbb{H}$ such that the lift of $\alpha$ is on the imaginary axis, and the lift of $\beta$ lies on the semicircle $C$,  as indicated in the right top part of Figure \ref{fig:lift}. The semicircle $C$  lies on the circle 
 \begin{equation}
 |w-\cosh r(\ell (\eta ))|=\sinh r(\ell (\eta )).
 \end{equation}
The shaded region in the right top part of Figure \ref{fig:lift} is the fundamental domain for the action of the covering transformations.
%

%
%
The conformal map $z=\frac{1}{\ell (\alpha )} \log w$ sends  the shaded region in Figure \ref{fig:lift} onto a region bounded by the graphs of functions
$$f_{\ell (\alpha )}(x)=\frac{\pi}{2\ell (\alpha )},$$
$$g_{\ell (\alpha )}(x)=\frac{1}{\ell (\alpha )}\cos^{-1}\frac{\cosh x\ell (\alpha )}{\cosh r(\ell (\eta ))}$$
and vertical lines $x=-1/2$ and $x=1/2$.

The inequality $\frac{\pi}{2}-\cos^{-1}t\geq t$ gives
$$
g_{\ell (\alpha )}(x)\leq \frac{\pi}{2\ell (\alpha )}-\frac{1}{\ell (\alpha )}\frac{\cosh x\ell (\alpha )}{\cosh r(\ell (\eta ))}\leq \frac{\pi}{2\ell (\alpha )}-\frac{1}{2\ell (\alpha )}\frac{e^{|x|\ell (\alpha )}}{e^{r(\ell (\eta ))}}=:h_{\ell (\alpha )}(x).
$$

Set  $\delta (\ell (\alpha ) )=\frac{1}{\ell (\alpha )}$. Let $R_{\alpha ,\gamma}'$ be the sub-annulus of $R_{\alpha ,\gamma}$ whose lifted boundary components are the graphs of the functions $f_{\ell (\alpha )}(x)$ and $h_{\ell (\alpha )}(x)$. We first estimate $\lambda (R_{\alpha ,\gamma}')$ using Corollary \ref{cor:collar_ext}. For $\delta_2 \in [-\delta, \delta]$ we have,
\begin{align}\begin{split}
m(x)& =  \frac{\pi}{2\ell (\alpha )}-h_{\ell (\alpha )}(x+\delta_2)
= \frac{e^{-r(\ell (\eta ))}}{2\ell (\alpha )}  e^{|x+\delta_2 | \ell (\alpha )}\\
&\geq  \frac{e^{-r(\ell (\eta ))}}{2\ell (\alpha )}  e^{(|x|-|\delta_2| ) \ell (\alpha )}
 =\frac{e^{-r(\ell (\eta ))}}{2\ell (\alpha )}  e^{|x|\ell (\alpha )}  e^{-|\delta_2|  \ell (\alpha )} \\ 
 &\geq  \frac{e^{-r(\ell (\eta ))}}{2\ell (\alpha )}  e^{|x|\ell (\alpha )}  e^{-1}
=(f_{\ell (\alpha )}(x)-h_{\ell (\alpha )}(x)) e^{-1}.
\end{split}\end{align}
This gives $c\geq e^{-1}$. Let $\Gamma_{\alpha ,\gamma}^{'v}$ be the vertical family above $[-\frac{1}{2},\frac{1}{2}]$ between the graphs of $f_{\ell (\alpha )}(x)$ and $h_{\ell (\alpha )}(x)$. Then we have
\begin{align}
\begin{split}
\mathrm{mod}\Gamma_{\alpha ,\gamma}^{'v}
&=\int_{-1/2}^{1/2}\frac{1}{f_{\ell (\alpha )}(x)-h_{\ell (\alpha )}(x)}dx\\
&= \int_{-1/2}^{1/2}\frac{2\ell (\alpha )}{e^{-r(\ell (\eta ))}e^{|x|\ell (\alpha )}}dx\\
&=4(1-e^{-\frac{1}{2}\ell (\alpha )})e^{r(\ell (\eta ))} >  4(1-e^{-\frac{1}{2}\ell (\alpha )})e^{\frac{\ell (\alpha )}{2}}
\end{split}
\end{align}
This implies that $b=\inf_{\ell (\alpha )\geq 1}  {\delta(\ell (\alpha ))^2}{ \mathrm{mod} \Gamma^{'v}_{\alpha ,\gamma}}>0$ and the conditions of Corollary \ref{cor:collar_ext} are met for the family $\Gamma_{\alpha ,\gamma}^{'v}$. Therefore
$$
\lambda (R^{\prime}_{\alpha ,\gamma})\asymp \frac{1}{\mathrm{mod}\Gamma_{\ell (\alpha )}^{'v}}\asymp e^{-r(\ell (\eta ))}.
$$

Since $\lambda (R_{\alpha ,\gamma})\geq  \lambda (R_{\alpha ,\gamma}')$, we have
$\lambda (R_{\alpha ,\gamma}) \gtrsim  e^{-r(\ell (\eta ))}$.

For the upper bound  for  $\lambda (R_{\alpha ,\gamma})$ we need to show that $\mathrm{mod} \Gamma_{R_{\alpha ,\gamma}} \geq c_1e^{r(\ell (\eta ))}$ for some constant $c_1>0$ where $\Gamma_{R_{\alpha ,\gamma}}$ is the family of curves connecting the boundary components of $R_{\alpha ,\gamma}$. Let $\Gamma_{R_{\alpha ,\gamma}}^\perp$ be the geodesic arcs connecting two boundary components of $R_{\alpha ,\gamma}$ that start orthogonal to the boundary geodesic $\alpha$. The family $\Gamma_{R_{\alpha ,\gamma}}^\perp$ is in a one to one correspondence with the family $\Gamma_{\alpha ,\gamma}^v$ of the vertical segments above the interval $[-\frac{1}{2},\frac{1}{2}]$ connecting the graphs of  $f_{\ell (\alpha )}(x)$ and $g_{\ell (\alpha )}(x)$. Then we have
$$
\mathrm{mod}\Gamma_{R_{\alpha ,\gamma}}\geq\mathrm{mod}\Gamma_{R_{\alpha ,\gamma}}^\perp=\mathrm{mod}\Gamma_{{\alpha ,\gamma}}^v.
$$
Note that  there exists $c_0>0$ such that $\pi /2-\cos^{-1} t\leq c_0t$ for $0\leq t\leq 1$. Then for $-1/2\leq x\leq 1/2$ we have
$$
g_{\ell (\alpha )}(x)=\frac{1}{\ell (\alpha )}\cos^{-1}\frac{\cosh |x|\ell (\alpha )}{\cosh r(\ell (\eta ))}\geq\frac{\pi}{2\ell (\alpha )}-\frac{c_0}{\ell (\alpha )}\frac{\cosh |x|\ell (\alpha )}{\cosh r(\ell (\eta ))}
$$
and
$$
f_{\ell (\alpha )}(x)-g_{\ell (\alpha )}(x)\leq\frac{c_0}{\ell (\alpha )}\frac{\cosh |x|\ell (\alpha )}{\cosh r(\ell (\eta ))}\leq\frac{2c_0}{\ell (\alpha )}\frac{e^{|x|\ell (\alpha )}}{e^{r(\ell (\eta ))}}.
$$
By Lemma \ref{lem:vertical-modulus} we have
$$
\mathrm{mod}\Gamma^v_{\alpha ,\gamma}=\int_{-\frac{1}{2}}^{\frac{1}{2}}\frac{dx}{f_{\ell (\alpha )}(x)-g_{\ell (\alpha )}(x)}\geq \frac{e^{r(\ell (\eta ))}}{2c_0}\int_{-\frac{1}{2}}^{\frac{1}{2}}\ell (\alpha )e^{-|x|\ell (\alpha )}dx\geq c_1e^{r(\ell (\eta ))}
$$
for some $c_1>0$. This finishes the proof.
\end{proof}

\begin{remark}
A key point in Theorem \ref{thm:half-collar} is the fact that the asymptotics of $\lambda(R_{\alpha,\gamma})$ is sharp, up to a bounded multiplicative constant. This is a consequence of comparing the extremal length of \emph{all} curves connecting the graphs of $f_{\ell(\alpha)}(x)$ and $g_{\ell(\alpha)}(x)$ over $[-1/2,1/2]$ to the modulus of the \emph{vertical subfamily}, using the theory developed in Section \ref{sec:between_graphs}. Much easier (and not sharp) estimates  can be obtained by comparing to families of curves in rectangles. For instance, $\lambda(R_{\alpha,\gamma})$ can be bounded below by considering the family of curves connecting $f_{\ell(\alpha)}(x)=\pi/(2\ell(\alpha))$ and 
$$y=\max_{(-\frac{1}{2},\frac{1}{2})} g_{\ell(\alpha)}(x) = g_{\ell}(0) = \frac{1}{\ell (\alpha )}\cos^{-1}\frac{1}{\cosh r(\ell (\eta ))}.$$

The extremal length of the latter family is estimated by (if say $\ell(\gamma)>1$)
\begin{align}\label{asymp:weak}
f_\ell(x)-g_{\ell}(x)&\asymp\frac{\pi}{2\ell(\alpha)}-\frac{1}{\ell(\alpha)} \left[\frac{\pi}{2} - \frac{1}{\cosh r(\ell(\eta))}\right]
\asymp \frac{e^{-r(\ell(\eta))}}{\ell(\alpha)}\asymp \frac{e^{-\ell(\alpha)}}{\ell(\alpha)},
\end{align}
which is much weaker than (\ref{non-standard:main}) or (\ref{asymp:ns2}).
\end{remark}

\begin{remark}
The universal covering $\tilde{R}$ in the logarithmic coordinates agrees with the lift of the nonstandard collar to the strip model of the hyperbolic plane (see  \cite[Example 7.9]{BeardonMinda}). In fact, the expression for $g_{\ell}(x)$ is an explicit formula for a geodesic in the strip model up to a linear map.
\end{remark}

\begin{corollary}\label{cor1}
\label{cor:asymp}
 Let $R_{\alpha ,\gamma }$ be the  nonstandard half-collar  as in Figure \ref{fig:comparing-collars}.  Then, for $\ell (\alpha )\to \infty$, we have 
\begin{align}\label{asymp:ns2}
\lambda 
(R_{\alpha ,\gamma}) \asymp 
\begin{cases}
 \ell (\gamma )e^{-\ell (\alpha )/2},  & \mbox{ if } 0<\ell (\gamma )<1,\\
 e^{-\ell (\alpha )/2}, &\mbox{ if }  \ell (\gamma )\geq 1.
\end{cases}
\end{align}


For $\ell (\alpha )\leq 1$ we have
\begin{align}\label{asymp:small}
\lambda (R_{\alpha ,\gamma })\gtrsim \frac{1}{\ell (\alpha )}.
\end{align}

 In particular, $\ell (\alpha )$ stays bounded between two positive constants if and only if $\lambda(R_{\alpha ,\gamma})$ stays bounded between two other positive constants. 
\end{corollary}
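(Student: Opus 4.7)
The plan is to derive the corollary from Theorem~\ref{thm:half-collar} combined with elementary asymptotic analysis of the identity $\tanh\ell(\eta) = \tanh\ell(\gamma)/\cosh(\ell(\alpha)/2)$ from (\ref{eq: 3 quantities}). The large-$\ell(\alpha)$ and small-$\ell(\alpha)$ regimes require slightly different arguments, since Theorem~\ref{thm:half-collar} only applies when $\ell(\alpha) > 1$.

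For (\ref{asymp:ns2}), I would apply Theorem~\ref{thm:half-collar} directly to get $\lambda(R_{\alpha,\gamma})\asymp \ell(\eta)$. As $\ell(\alpha)\to\infty$ we have $\cosh(\ell(\alpha)/2)\asymp e^{\ell(\alpha)/2}\to\infty$, so the identity (\ref{eq: 3 quantities}) forces $\tanh\ell(\eta)\to 0$; since $\tanh x\asymp x$ near $0$, we then get
$$\ell(\eta)\asymp \tanh\ell(\eta)\asymp \tanh\ell(\gamma)\cdot e^{-\ell(\alpha)/2}.$$
Splitting into cases on $\ell(\gamma)$ gives (\ref{asymp:ns2}): for $0<\ell(\gamma)<1$ one uses $\tanh\ell(\gamma)\asymp \ell(\gamma)$, and for $\ell(\gamma)\ge 1$ one uses $\tanh\ell(\gamma)\in[\tanh 1, 1]$.

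For (\ref{asymp:small}), I would show that $R_{\alpha,\gamma}$ contains a standard half-collar of some fixed positive width $w_0$ whenever $\ell(\alpha)\le 1$. The collar lemma gives $\ell(\gamma)\ge r(\ell(\alpha)/2)$, and the elementary identity $\tanh(r(x))=1/\cosh(x)$ yields
$$\tanh\ell(\eta) \;=\; \frac{\tanh\ell(\gamma)}{\cosh(\ell(\alpha)/2)} \;\ge\; \frac{1}{\cosh^{2}(\ell(\alpha)/2)} \;\ge\; \frac{1}{\cosh^{2}(1/2)},$$
so $\ell(\eta)\ge w_0:=\tanh^{-1}(\cosh^{-2}(1/2))>0$. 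Moreover, (\ref{eq: 3 quantities}) gives $\ell(\eta)\le \ell(\gamma)$, and since the only non-smooth point of $\beta$ is $b\in\alpha_1$ at distance exactly $\ell(\gamma)$ from $\alpha$, the minimum distance from $\alpha$ to $\beta$ inside $P$ is precisely $\ell(\eta)\ge w_0$. Consequently the standard half-collar $R_{w_0}$ of hyperbolic width $w_0$ about $\alpha$ is contained in $R_{\alpha,\gamma}$. A direct Fermi-coordinate computation (or equivalently Maskit's formula (\ref{asymp:standard}) for width-$w_0$ rather than width-$r(\ell(\alpha)/2)$ collars) gives $\lambda(R_{w_0})=2\arctan(\tanh(w_0/2))/\ell(\alpha)\asymp 1/\ell(\alpha)$, and then monotonicity of extremal distance under inclusion of annuli yields (\ref{asymp:small}).

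The main technical delicacy is justifying that $\ell(\eta)$ realizes the minimum distance from $\alpha$ to $\beta$; this is what drives the containment of a positive-width standard half-collar in the small-$\ell(\alpha)$ regime and should follow from the inequality $\ell(\eta)\le\ell(\gamma)$ together with the analysis of $\beta$ at its unique non-smooth point $b$. Once (\ref{asymp:ns2}) and (\ref{asymp:small}) are in hand, the concluding equivalence is immediate: $\lambda(R_{\alpha,\gamma})\to 0$ as $\ell(\alpha)\to\infty$ by (\ref{asymp:ns2}) and $\lambda(R_{\alpha,\gamma})\to\infty$ as $\ell(\alpha)\to 0$ by (\ref{asymp:small}), so $\ell(\alpha)$ and $\lambda(R_{\alpha,\gamma})$ are simultaneously bounded between two positive constants (with the implicit understanding that $\ell(\gamma)$ is also kept bounded below, as is natural in the contexts where this corollary is applied).
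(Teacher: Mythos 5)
Your treatment of (\ref{asymp:ns2}) --- apply Theorem \ref{thm:half-collar} and expand (\ref{eq: 3 quantities}) asymptotically --- is exactly the paper's argument, and the concluding bi-boundedness statement is obtained the same way. For (\ref{asymp:small}) you take a genuinely different route: the paper invokes the inscribed-rectangle lower bound (\ref{asymp:weak}), $\lambda(R_{\alpha,\gamma})\gtrsim e^{-r(\ell(\eta))}/\ell(\alpha)$, and then uses that $r(\ell(\eta))$ stays bounded for $\ell(\alpha)\le 1$ because the collar-lemma constraint $\ell(\gamma)>r(\ell(\alpha)/2)\ge r(1/2)$ bounds $\ell(\eta)$ away from zero; you instead inscribe a standard half-collar of fixed width $w_0=\tanh^{-1}(\cosh^{-2}(1/2))$ in $R_{\alpha,\gamma}$ and quote the Maskit-type formula from Lemma \ref{lem:collar}. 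Both routes rest on the same hyperbolic inputs (the collar-lemma bound on $\ell(\gamma)$ and the identity $\tanh r(x)=1/\cosh x$) and yield $\lambda\gtrsim 1/\ell(\alpha)$; your version is more geometric and self-contained, while the paper's re-uses the log-coordinate estimates already in hand.

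The step you flag but do not supply --- that $\ell(\eta)$ is the minimum distance from $\alpha$ to $\beta$, hence that the width-$w_0$ half-collar actually embeds in $R_{\alpha,\gamma}$ --- is a real gap in the write-up, and $\ell(\eta)\le\ell(\gamma)$ alone does not close it. The claim is true, but the right reason is that $\eta$ is the common perpendicular between $\alpha$ and $\beta$: by construction $\eta$ lies on the orthogeodesic from $\alpha$ to $\alpha_2$, meets $\alpha$ orthogonally at its foot, and meets $\beta$ orthogonally at the smooth point where the two geodesic arcs of $\beta$ join that orthogeodesic; consequently $\eta$ realizes the distance from $\alpha$ to each of those arcs, while the corner $b$ lies at the strictly larger distance $\ell(\gamma)>\ell(\eta)$. (Equivalently, in the lift used in the proof of Theorem \ref{thm:half-collar}, the semicircle $|w-\cosh r(\ell(\eta))|=\sinh r(\ell(\eta))$ carrying the lift of $\beta$ has its closest point to the imaginary axis at Euclidean norm $1$, at hyperbolic distance exactly $\ell(\eta)$ from it.) With that step filled in, your proof is correct and gives a sound alternative to the paper's treatment of (\ref{asymp:small}).
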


\begin{proof}
By (\ref{eq: 3 quantities}), for fixed $\ell(\gamma)$, we have that   $\ell(\alpha)\to \infty$  if and only if $\ell(\eta)\to0$. Therefore, combining (\ref{non-standard:main}) with (\ref{asymptotics:r-small}) and (\ref{eq: 3 quantities}) we obtain, respectively, 
\begin{align}
\lambda(R_{\alpha,\gamma})
 & \asymp \ell(\eta) \asymp \frac{\tanh(\ell(\gamma))}{e^{\ell(\alpha)/2}},
\end{align}
as $\ell(\alpha)\to\infty$, which implies (\ref{asymp:ns2}) by using standard estimates for $\tanh(\ell(\eta))$.

The estimate (\ref{asymp:small}) follows from  (\ref{asymp:weak}) since $\ell(\alpha)<1$. Finally, the last statement follows immediately from (\ref{asymp:ns2}) and (\ref{asymp:small}).
\end{proof}



\begin{corollary}
\label{cor:r(d)tol}
Let $P$ be a pairs of pants with boundary lengths $\ell (\alpha ),\ell (\alpha_1 ),\ell (\alpha_2 )$ with $\ell (\alpha )>0$ and $\ell (\alpha_1 )<2\coth^{-1}(\cosh 1)$. Then the nonstandard half-collar $R_{\alpha ,\gamma}$ about the geodesic $\alpha$ satisfies
$$
\lambda({R_{\alpha ,\gamma}})\asymp e^{-\ell (\alpha )/2},
$$
as $\ell(\alpha)\to\infty$.
\end{corollary}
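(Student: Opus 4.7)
The plan is to reduce the statement to the second case of Corollary \ref{cor:asymp} (equation (\ref{asymp:ns2})) by showing that the hypothesis on $\ell(\alpha_1)$ forces the orthogeodesic length $\ell(\gamma)$ to be bounded below by $1$. Once this is established, the asymptotic $\lambda(R_{\alpha,\gamma}) \asymp e^{-\ell(\alpha)/2}$ as $\ell(\alpha)\to\infty$ follows immediately.

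First, I would invoke the collar lemma in the pair of pants $P$. Since $\alpha$ and $\alpha_1$ are disjoint simple closed geodesics, their standard collars of widths $r(\ell(\alpha)/2)$ and $r(\ell(\alpha_1)/2)$ respectively are disjoint and embedded. The orthogeodesic $\gamma$ connecting $\alpha$ and $\alpha_1$ realizes the distance between these two geodesics, hence
\[
\ell(\gamma) \;\geq\; r(\ell(\alpha)/2) + r(\ell(\alpha_1)/2) \;\geq\; r(\ell(\alpha_1)/2) \;=\; \sinh^{-1}\!\left(\tfrac{1}{\sinh(\ell(\alpha_1)/2)}\right),
\]
where $r$ is the function defined in (\ref{def:r}).

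Next I would translate the condition $\ell(\alpha_1) < 2\coth^{-1}(\cosh 1)$ into an explicit inequality. Setting $x = \ell(\alpha_1)/2$, the relation $\coth(x) = \cosh 1$ is equivalent to $\cosh^2 x = \cosh^2(1)\sinh^2 x$, which rearranges to $\sinh x = 1/\sinh 1$. Hence $\ell(\alpha_1) < 2\coth^{-1}(\cosh 1)$ is equivalent to $\sinh(\ell(\alpha_1)/2) < 1/\sinh 1$, so that
\[
\ell(\gamma) \;\geq\; \sinh^{-1}\!\left(\tfrac{1}{\sinh(\ell(\alpha_1)/2)}\right) \;>\; \sinh^{-1}(\sinh 1) \;=\; 1.
\]

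Finally, since $\ell(\gamma) > 1$, the second branch of (\ref{asymp:ns2}) in Corollary \ref{cor:asymp} applies and yields $\lambda(R_{\alpha,\gamma}) \asymp e^{-\ell(\alpha)/2}$ as $\ell(\alpha)\to\infty$, as desired. There is no real obstacle here; the proof is a direct calibration of constants, and the content of the corollary is essentially the observation that the threshold $2\coth^{-1}(\cosh 1)$ is precisely the value of $\ell(\alpha_1)$ that forces the collar-lemma lower bound on $\ell(\gamma)$ to reach $1$, the cutoff in Corollary \ref{cor:asymp}.
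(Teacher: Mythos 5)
Your proof is correct, but it takes a genuinely different route from the paper. The paper applies the right-angled hexagon formula \cite[Theorem 2.4.1]{Buser} to half of the pair of pants to get the exact identity
$\cosh \ell(\gamma) = \coth\frac{\ell(\alpha_1)}{2}\coth\frac{\ell(\alpha)}{2} + \frac{\cosh(\ell(\alpha_2)/2)}{\sinh(\ell(\alpha_1)/2)\sinh(\ell(\alpha)/2)}$,
drops the (positive) second term, and observes that the hypothesis on $\ell(\alpha_1)$ gives $\coth\frac{\ell(\alpha_1)}{2}\geq\cosh 1$ while $\coth\frac{\ell(\alpha)}{2}\geq 1$, so $\cosh\ell(\gamma)\geq\cosh 1$, i.e.\ $\ell(\gamma)\geq 1$. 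You instead invoke the collar lemma for disjointness of the two standard half-collars in $P$, discard the $\alpha$-collar term, and check that the hypothesis is exactly the condition $r(\ell(\alpha_1)/2)>1$, since $\coth x=\cosh 1\iff\sinh x=1/\sinh 1$. Both arguments are short and terminate at the same calibration point: the threshold $2\coth^{-1}(\cosh 1)$ is precisely where the collar-width bound $r(\ell(\alpha_1)/2)$ crosses $1$, which is also the limiting value of the hexagon lower bound as $\ell(\alpha)\to\infty$ (recall $\cosh r(x)=\coth x$, so $r(\ell(\alpha_1)/2)>1\iff\coth\frac{\ell(\alpha_1)}{2}>\cosh 1$). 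The paper's approach has the advantage of being an exact identity from which the bound falls out by dropping a term; yours is perhaps more geometric and avoids the hexagon trigonometry, at the small cost of invoking the collar lemma for surfaces with geodesic boundary (standard, but one more piece of machinery). Either is acceptable.
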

\begin{proof}
The hexagon formula, cf. \cite{Beardon}, applied to a half of the pair of pants with boundary lengths $\ell (\alpha ),\ell (\alpha_1 ),\ell (\alpha_2 )$ gives
\begin{align*}
\cosh \ell (\gamma )&=\coth\frac{\ell (\alpha_1 )}{2}\coth\frac{\ell (\alpha )}{2} +\frac{\cosh( \ell (\alpha_2 )/2)}{\sinh (\ell (\alpha_1 )/2)\sinh (\ell (\alpha )/2)}\\
&\geq \coth\frac{\ell (\alpha_1 )}{2}\coth\frac{\ell (\alpha )}{2} \\
&\geq \cosh 1
\end{align*}
where $\gamma$ is the orthogeodesic between $\alpha$ and $\alpha_1$. 

Then we get that $\ell (\gamma ) \geq 1$ which extends the same conclusion to this case by Corollary \ref{cor1}. 
\end{proof}


\section{Gluing nonstandard half-collars with a twist}\label{Section:gluing}

Let $P$ be a geodesic pair of pants with  boundary geodesic curve $\alpha$ and an orthogeodesic $\gamma$ from $\alpha$ to another boundary geodesic or a puncture $\alpha_1$ of $P$. Let $P'$ be another geodesic pair of pants with  boundary geodesic  $\alpha'$ and an orthogeodesic $\gamma'$ from $\alpha'$ to another boundary geodesic or a puncture $\alpha_1'$ of $P$. If $\ell (\alpha )=\ell (\alpha')$ then we can glue $\alpha$ to $\alpha'$ by an isometry such that the relative position of the marked points is given by the twist parameter $t(\alpha )\in (-\frac{1}{2},\frac{1}{2}]$ (see section \ref{sec: Contents, notation, and convention}). 

Let $R_{\alpha ,\gamma ,\gamma'}^{t (\alpha )}$ be the union of the two nonstandard half-collars $R_{\alpha ,\gamma}$ and $R_{\alpha',\gamma'}$ around $\alpha \equiv\alpha'$ with the twist parameter $t(\alpha )$. 
We give an estimate of the extremal distance between the two boundary components  of the annulus $R_{\alpha ,\gamma ,\gamma'}^{t (\alpha )}$. When the twist is not zero the extremal distance between the two boundary components  of $R_{\alpha ,\gamma ,\gamma'}^{t (\alpha )}$ is larger than the sum of the extremal distances between the boundary components of the two nonstandard half-collars $R_{\alpha ,\gamma}$ and $R_{\alpha',\gamma'}$. 

\begin{theorem}
\label{thm:gluings}
Let $R_{\alpha ,\gamma ,\gamma'}^{t (\alpha )}$ be an annulus obtained by isometrically  gluing two nonstandard half-collars along their boundary geodesics $\alpha$ and $\alpha^{\prime}$  of equal length   with  twist $-\frac{1}{2}\leq t(\alpha )< \frac{1}{2}$.
Given $\ell_0\geq 2$, there exists $C>0$
 such that for all $\ell (\alpha )\geq \ell_0$ we have
$$
\lambda(R_{\alpha ,\gamma ,\gamma'}^{t (\alpha )})\geq \frac{C}{\max\{ e^{r(\ell (\eta ))-|t(\alpha)|\frac{\ell (\alpha )}{2}},e^{r(\ell (\eta'))-|t(\alpha )|\frac{\ell (\alpha )}{2}}\}},
$$
where  $\eta$ and $\eta'$ are  orthogonals  on the respective nonstandard half-collars as in section \ref{sec:nonstandard}. 
\end{theorem}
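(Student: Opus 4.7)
The plan is to lift $R^{t(\alpha)}_{\alpha,\gamma,\gamma'}$ to its universal cover in the logarithmic coordinates $z=\frac{1}{\ell(\alpha)}\log w$ and then apply the toolkit developed in Sections \ref{sec:between_graphs}--\ref{sec:non-standard-collar}. Running the computation from the proof of Theorem \ref{thm:half-collar} separately on each of the two glued nonstandard half-collars identifies this cover as the strip region bounded above and below by the graphs of two $1$-periodic functions $F_\ell$ and $G_\ell$ given, on primary fundamental intervals, by
\begin{equation*}
G_\ell(x)=\tfrac{1}{\ell}\cos^{-1}\tfrac{\cosh(x\ell)}{\cosh r(\ell(\eta))}\ \ (x\in[-\tfrac12,\tfrac12]),\qquad F_\ell(x)=\tfrac{\pi}{\ell}-\tfrac{1}{\ell}\cos^{-1}\tfrac{\cosh((x-t)\ell)}{\cosh r(\ell(\eta'))}\ \ (x\in[t-\tfrac12,t+\tfrac12]),
\end{equation*}
with $\ell=\ell(\alpha)$, $t=t(\alpha)$, extended elsewhere by periodicity. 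The horizontal shift by $t$ inside $F_\ell$ encodes the twist, since sliding by hyperbolic distance $t\ell$ along $\alpha$ in $\mathbb{H}$ becomes the translation $z\mapsto z+t$ in log coordinates, so the peak of $G_\ell$ at $x=0$ and the nearest valley of $F_\ell$ at $x=t$ are misaligned precisely when $t\neq 0$.

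By Lemma \ref{lem:lift} we have $\mathrm{mod}\,\Gamma_{R^{t(\alpha)}_{\alpha,\gamma,\gamma'}}\leq \mathrm{mod}\,\Gamma$, where $\Gamma$ is the curve family in the cover connecting a fundamental interval of the top graph to the bottom graph, and Theorem \ref{thm: family modulus comparison} gives $\mathrm{mod}\,\Gamma\leq (3/c_{\delta,\ell}^{2})\,\mathrm{mod}\,\Gamma^v + A/\delta^{2}$, where $\Gamma^v$ is the vertical subfamily. I take $\delta(\ell)=1/\ell$. Since $\cosh(\delta\ell)$ is bounded, a direct inspection of the above formulas (exactly as in the proof of Theorem \ref{thm:half-collar}) shows that $F_\ell$ and $G_\ell$ each vary by only a bounded factor on windows of length $\delta$, forcing $c_{1/\ell,\ell}$ to be bounded below by a positive absolute constant uniformly in $\ell\geq \ell_0$. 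The Euclidean area between the graphs is at most $\pi/\ell$, so $A/\delta^{2}\leq \pi\ell$, which will be absorbed by the main term.

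To estimate $\mathrm{mod}\,\Gamma^v=\int_{-1/2}^{1/2}\frac{dx}{F_\ell-G_\ell}$, apply $\cos^{-1}u\leq \pi/2-u$ together with $\cosh(y)/\cosh(r)\geq e^{|y|-r}/2$ to obtain the pointwise lower bound
\begin{equation*}
F_\ell(x)-G_\ell(x)\geq \tfrac{1}{2\ell}\bigl(e^{d_t(x)\ell-r(\ell(\eta'))}+e^{d_0(x)\ell-r(\ell(\eta))}\bigr),
\end{equation*}
where $d_0(x),d_t(x)\in[0,\tfrac12]$ are the distances modulo $1$ from $x$ to $0$ and to $t$. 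Splitting $[-\tfrac12,\tfrac12]$ into the set $A_{*}=\{d_t\ell-r(\ell(\eta'))\geq d_0\ell-r(\ell(\eta))\}$ and its complement and using $1/(e^a+e^b)\leq e^{-\max(a,b)}$, the estimate reduces to bounding $\int_{A_{*}}e^{-d_t\ell}dx$ and $\int_{[-1/2,1/2]\setminus A_{*}}e^{-d_0\ell}dx$. A direct piecewise-linear computation (using that $d_0(x)+d_t(x)\geq |t|$ on $[-\tfrac12,\tfrac12]$, with equality precisely on the segment joining $0$ to $t$) shows that each of these integrals is at most $Ce^{-|t|\ell/2}/\ell$. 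Collecting terms gives
\begin{equation*}
\mathrm{mod}\,\Gamma^v\leq C'\bigl(e^{r(\ell(\eta))}+e^{r(\ell(\eta'))}\bigr)e^{-|t|\ell/2}\leq 2C'e^{\max\{r(\ell(\eta)),r(\ell(\eta'))\}-|t|\ell/2}.
\end{equation*}

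Since $r(\ell(\eta)),r(\ell(\eta'))\geq \ell/2$ and $|t|\leq\tfrac12$, for $\ell\geq \ell_0\geq 2$ sufficiently large the right-hand side dominates $A/\delta^{2}=\pi\ell$, so combining the three inequalities yields the desired lower bound on $\lambda(R^{t(\alpha)}_{\alpha,\gamma,\gamma'})$; the compact range $\ell_0\leq \ell\leq \ell_1$ is handled by continuity, absorbing the resulting constants into $C$. The main technical obstacle is the piecewise bookkeeping in the third step: because $F_\ell$ has \emph{two} valleys near $x=0$ within the fundamental domain (one at $x=t$ and one at $x=t-1$ brought into $[-\tfrac12,\tfrac12]$ by periodicity), the gap $F_\ell-G_\ell$ has two competing local minima, at $x\approx t/2$ and $x\approx (t-1)/2$; but since $|t|\leq\tfrac12$, the valley at $x=t$ is always the closer one and dominates, so the contribution $e^{-|t|\ell/2}$ (rather than $e^{-(1-|t|)\ell/2}$) controls both.
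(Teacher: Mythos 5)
Your architecture is the paper's: lift to log coordinates, bound $\mathrm{mod}\,\Gamma_{R}$ via Lemma~\ref{lem:lift} and Corollary~\ref{cor:collar_ext} by $\mathrm{mod}\,\Gamma^v$, and then estimate the vertical integral explicitly. But there is a genuine gap in the key step. You assert that \emph{both} $\int_{A_*}e^{-d_t\ell}dx$ and $\int_{[-1/2,1/2]\setminus A_*}e^{-d_0\ell}dx$ are $\leq Ce^{-|t|\ell/2}/\ell$. This is false whenever $|r(\ell(\eta))-r(\ell(\eta'))|>|t|\ell$, which is a genuine possibility (for instance $\ell(\gamma)$ large, $\ell(\gamma')$ small makes $r(\ell(\eta'))$ much larger than $r(\ell(\eta))$). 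In that case, since $|d_0(x)-d_t(x)|\leq |t|$ for all $x$, one of the two sets $A_*$, $A_*^c$ is essentially all of $[-1/2,1/2]$ and contains the point where the relevant distance vanishes, so the corresponding integral is $\asymp 1/\ell$ with no factor of $e^{-|t|\ell/2}$. Your chain of inequalities then only yields $\mathrm{mod}\,\Gamma^v\lesssim e^{\min\{r,r'\}}$ for that piece, not $e^{\min\{r,r'\}-|t|\ell/2}$.

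The conclusion can still be rescued, because in this imbalanced regime $e^{\min\{r,r'\}}\leq e^{\max\{r,r'\}-|t|\ell} \leq e^{\max\{r,r'\}-|t|\ell/2}$, but you must make this extra case explicit: your chain as written does not produce the target bound. The paper sidesteps the issue by \emph{not} splitting according to which exponential dominates. Instead it decomposes $[-1/2,1/2]$ into four explicit subintervals determined by the positions of $0$, $t/2$, and $t$, lower-bounds the gap on each subinterval by a \emph{single} term ($k_1$ or $k_2$), and evaluates the resulting geometric integrals (\ref{eq:first-int})--(\ref{eq:fourth-int}); each piece naturally carries its own $e^{r}$ or $e^{r'}$ weight together with the appropriate twist factor, so no uniform $e^{-|t|\ell/2}$ is ever demanded of both pieces. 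You should either carry out the missing case check or switch to the position-based split. A secondary gloss: the claim that $c_{1/\ell,\ell}\geq c>0$ ``exactly as in the proof of Theorem~\ref{thm:half-collar}'' overlooks that, because the twisted boundary's valley sits at $x=t\neq 0$, the window $[x-\delta,x+\delta]$ can straddle the endpoints $\pm\tfrac12$; the periodic wrap-around then requires the argument the paper supplies (the reflection $x_1=-(1+x+\delta_1)$), which is not present in the untwisted case.
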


\begin{remark} \label{rem:expansion}  By equation (\ref{eq: 3 quantities}) we have $r(\ell (\eta )) \asymp \frac{\ell (\alpha )}{2}-\ln^{-} \ell (\gamma )$, where $\ln^{-}x=\min\{\ln x ,0\}$. 
Then for $\min\{\ell (\gamma ),\ell (\gamma^{\prime})\} \leq 1$ the above estimate is equivalent to 
\begin{equation} \label{eq: lower bound} 
\lambda(R_{\alpha ,\gamma ,\gamma'}^{t (\alpha )})\geq C \min\{\ell (\gamma ),\ell (\gamma^{\prime})\}e^{-\left(\frac{1}{2}-\frac{|t(\alpha )|}{2}\right)\ell (\alpha )},
\end{equation}
and  for $\min\{\ell (\gamma ),\ell (\gamma^{\prime})\}\geq 1$,
\begin{equation} \label{eq: lower bound without min} 
\lambda(R_{\alpha ,\gamma ,\gamma'}^{t (\alpha )})\geq C e^{-\left(\frac{1}{2}-\frac{|t(\alpha )|}{2}\right)\ell (\alpha )}.
\end{equation}
\end{remark}

If the two nonstandard half-collars come from  tight pairs of pants we have $r(\ell (\eta ))=r(\ell (\eta'))=\ell (\alpha )/2$ and $\ell (\gamma )=\ell (\gamma^{\prime})=\infty$,  and  we immediately obtain

\begin{corollary}
\label{cor:twist1/2}
Let $R_{\alpha ,\gamma ,\gamma'}^{t (\alpha )}$ be the annular region obtained by gluing two nonstandard half-collars $R_{\alpha ,\gamma}$ and $R_{\alpha ',\gamma'}$ with twist $t(\alpha )$ and $\ell (\gamma )=\ell (\gamma' )=\infty$. If $\ell (\alpha ) \geq \ell_0\geq 2$ and $-\frac{1}{2}\leq t(\alpha )< \frac{1}{2}$ then, for the constant $C$ from Theorem \ref{thm:gluings},
$$
\lambda (R_{\alpha ,\gamma ,\gamma'}^{t (\alpha )} )\geq C e^{-(\frac{1}{2}-\frac{|t(\alpha )|}{2})\ell (\alpha )} .
$$
\end{corollary}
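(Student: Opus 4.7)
The plan is to derive the corollary by direct substitution into Theorem \ref{thm:gluings}, so the entire task reduces to showing that when $\ell(\gamma)=\ell(\gamma')=\infty$ the quantities $r(\ell(\eta))$ and $r(\ell(\eta'))$ appearing in the statement of Theorem \ref{thm:gluings} both equal $\ell(\alpha)/2$. Once this is verified, the bound in the corollary is exactly the bound from Theorem \ref{thm:gluings} with the numerator and denominator simplified.

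First I would invoke the relation (\ref{eq: 3 quantities}) with $\ell(\gamma)=\infty$: since $\tanh\ell(\gamma)=1$, we obtain
\[
\tanh\ell(\eta)=\frac{1}{\cosh(\ell(\alpha)/2)}.
\]
From the identity $1-\tanh^2 x = \operatorname{sech}^2 x$, a short calculation (squaring and using $\cosh^2-\sinh^2=1$) gives
\[
\sinh\ell(\eta)=\frac{1}{\sinh(\ell(\alpha)/2)},
\]
i.e., $\ell(\eta)=\sinh^{-1}\!\bigl(1/\sinh(\ell(\alpha)/2)\bigr)=r(\ell(\alpha)/2)$ by the definition (\ref{def:r}) of $r$.

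Next I would observe that the function $r$ is an involution: if $y=r(x)$ then $\sinh y=1/\sinh x$, so $\sinh x=1/\sinh y$ and hence $x=r(y)$. Applying this to $\ell(\eta)=r(\ell(\alpha)/2)$ yields
\[
r(\ell(\eta))=r\bigl(r(\ell(\alpha)/2)\bigr)=\frac{\ell(\alpha)}{2},
\]
and the identical argument for $\gamma'$ gives $r(\ell(\eta'))=\ell(\alpha)/2$.

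Finally, plugging these two equalities into the conclusion of Theorem \ref{thm:gluings}, the maximum in the denominator collapses to the single quantity $e^{\ell(\alpha)/2-|t(\alpha)|\ell(\alpha)/2}$, and we obtain
\[
\lambda(R_{\alpha,\gamma,\gamma'}^{t(\alpha)})\geq \frac{C}{e^{\,\ell(\alpha)/2-|t(\alpha)|\ell(\alpha)/2}}=Ce^{-\left(\frac{1}{2}-\frac{|t(\alpha)|}{2}\right)\ell(\alpha)},
\]
which is the desired estimate. There is no substantive obstacle here; the only point that demands care is the identity $r\circ r=\mathrm{id}$, which is immediate from the definition but is what makes the width $\ell(\eta)$ of the nonstandard half-collar match the standard collar width $r(\ell(\alpha)/2)$ precisely in the cusp case $\ell(\gamma)=\infty$.
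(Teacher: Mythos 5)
Your proof is correct and follows the same route the paper takes: the paper states that when the two half-collars come from tight pairs of pants ($\ell(\gamma)=\ell(\gamma')=\infty$) one has $r(\ell(\eta))=r(\ell(\eta'))=\ell(\alpha)/2$ and then plugs this directly into Theorem \ref{thm:gluings}. You have simply filled in the computation the paper leaves implicit, deducing $\sinh\ell(\eta)=1/\sinh(\ell(\alpha)/2)$ from (\ref{eq: 3 quantities}) and then invoking the involutive property of $r$; both the algebra and the observation that $r\circ r=\mathrm{id}$ are correct.
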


It is interesting to note that if two standard half-collars are glued together then the extremal distance between the boundary components does not increase with twist. In fact, the extremal distance between the boundary components of the glued standard collars is equal to twice the extremal distance between the boundary components of the half standard collar due to the fact that the glued collar is symmetric about the closed core geodesic.

\begin{proof}[Proof  of Theorem \ref{thm:gluings}]

The two nonstandard half-collars  have  corresponding parameters $\ell (\alpha )$, $\ell (\eta )$, $\ell (\gamma )$, 
and $\ell (\alpha ')=\ell (\alpha )$,  $\ell (\eta')$, and $\ell (\gamma')$, respectively. Note that the orthogeodesics $\gamma$ and $\gamma'$ do not meet at the geodesic $\alpha$ unless $t(\alpha )=0$. In fact, the distance between $\gamma$ and $\gamma'$ along the geodesic $\alpha$ is $|t(\alpha )|\ell (\alpha )$ (see Figure \ref{fig:glued-ns-collars-topview}). Denote by $\Gamma_{\beta,\beta'}$ the family of curves connecting  $\beta$ to  $\beta'$ in $R_{\alpha ,\gamma ,\gamma'}^{t (\alpha )}$, where $\beta$ and  $\beta'$  are the piecewise geodesic loops on the respective  half-collars. Note that $\lambda(R_{\alpha ,\gamma ,\gamma'}^{t (\alpha )})=1/\mathrm{mod}\Gamma_{\beta ,\beta'}$.



We extend $\gamma$ until it hits $\beta'$  and continue to call the extension $\gamma$. Cut $R_{\alpha ,\gamma ,\gamma'}^{t (\alpha )}$ along $\gamma$ and choose a lift $Q_{t(\alpha )}$ of the simply connected region $R_{\alpha ,\gamma ,\gamma'}^{t (\alpha )}-\gamma$ to the upper half-plane  so that  $\alpha$ is lifted to the segment $[e^{-\ell (\alpha )/2}i,e^{\ell (\alpha )/2}i]$ of the $y$-axis and the  two lifts of $\gamma$ are geodesic arcs orthogonal to the $y$-axis that pass through the points $e^{-\ell (\alpha )/2}i$ and $e^{\ell (\alpha )/2}i$ as in Figure \ref{fig:lift}.

The lift of the  geodesic loop $\beta$ of the left half-collar lies on a geodesic with endpoints $-e^{r(\ell (\eta ))}$ and $-e^{-r(\ell (\eta ))}$. Let $\tilde{\gamma}$ be a  lift of  $\gamma$ and denote by  $\tilde{b}$ the lift of  $b$,   where $\tilde{\gamma}$
  meets  the geodesic with endpoints $-e^{r(\ell (\eta ))}$ and $-e^{-r(\ell (\eta ))}$. The lift of the geodesic loop $\beta'$  of the right nonstandard half-collar that intersects the lifts of $\gamma$ lies on two geodesics with endpoints $e^{t(\alpha )\ell (\alpha)-\ell(\alpha )-r(\ell (\eta'))},e^{t(\alpha )\ell (\alpha)-\ell (\alpha )+r(\ell (\eta'))}$ and $e^{t(\alpha )\ell (\alpha )-r(\ell (\eta'))},e^{t(\alpha )\ell (\alpha )+r(\ell (\eta'))}$. The lift of $b'$ is the intersection of the two geodesics above (see Figure 
\ref{fig:lift}).

\begin{figure}[h]
\includegraphics[scale=1]{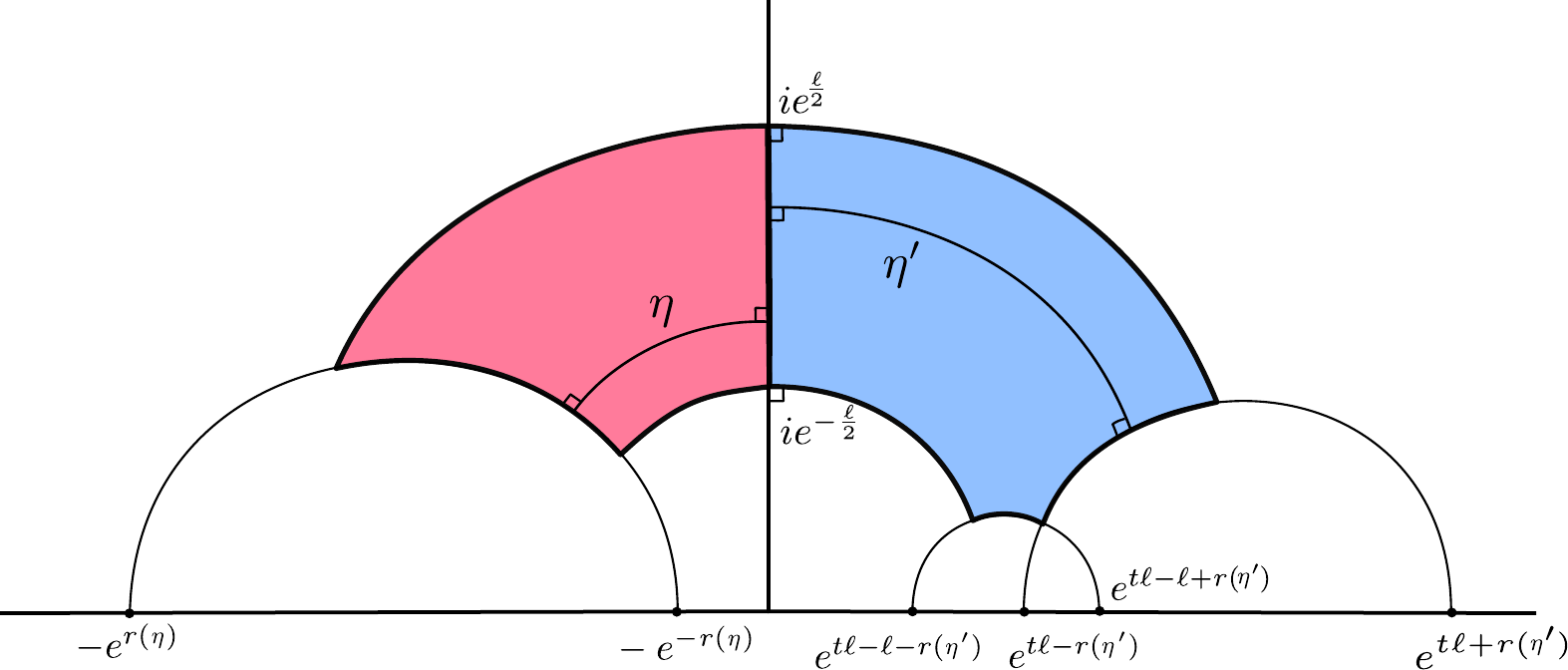}
\caption{The lift of the glued nonstandard collars. Here $\ell=\ell(\alpha )$.}\label{fig:lift}
\end{figure}


Let $\tau_{\ell (\alpha )}(w)=e^{\ell (\alpha )}w$ be the hyperbolic translation corresponding to the geodesic $\alpha$. Then $\tilde{A}_{t(\alpha )}=\cup_{k\in\mathbb{Z}}\tau_{\ell (\alpha )}^k(Q_{t(\alpha )})$ is a universal covering of $R_{\alpha ,\gamma ,\gamma'}^{t (\alpha )} $. We apply the map $z=\frac{1}{\ell (\alpha )}\log w$ to $\tilde{A}_{t(\alpha )}$. The image of the boundary component of $R_{\alpha ,\gamma ,\gamma'}^{t (\alpha )} $ that covers $\beta$ is the graph over $\mathbb{R}$ of a function $f_{t(\alpha )}(x)$ which is invariant under the translations $x\mapsto x+k$, for all $k\in\mathbb{Z}$, since $\tau_{\ell (\alpha )}(z)$ is conjugated by $z=\frac{1}{\ell (\alpha )}\log w$ to $x\mapsto x+1$. To find $f_{t(\alpha )}(x)$ on the interval $[-1/2,1/2]$, we find the $y$-coordinate of the image of the geodesic with the endpoints $-e^{-r(\ell (\eta ))}$ and $-e^{r(\ell (\eta ))}$. From the equation of the circle containing the geodesic
$$
|w+\cosh r(\ell (\eta ))|=\sinh r(\ell (\eta ))
$$
and by $w=e^{\ell (\alpha )z}$ with $z=x+iy$
we obtain 
\begin{equation*}
\begin{split}
f_{t(\alpha )}(x)=\frac{1}{\ell (\alpha )}\cos^{-1}\left(-\frac{\cosh \ell (\alpha ) x}{\cosh r(\ell (\eta ))}\right)=
\frac{1}{\ell (\alpha )} \left(\pi -\cos^{-1}\frac{\cosh \ell (\alpha ) x}{\cosh r(\ell (\eta ))}\right)\\
\geq\frac{1}{\ell (\alpha )}\left( \frac{\pi}{2}+\frac{e^{\ell (\alpha ) |x|}}{2e^{r(\ell (\eta ))}}\right)=:h_1(x)
\end{split}
\end{equation*}
for $-1/2\leq x\leq 1/2$, where we used the inequality $\frac{\pi}{2}-\cos^{-1} s\geq s$ for $0\leq s\leq 1$. Note that $\frac{\cosh \ell (\alpha ) x}{\cosh r(\ell (\eta ))}\leq 1$ for $x\in [-\frac{1}{2},\frac{1}{2}]$ by  (\ref{eq:eta-alpha}). The functions  $f_{t(\alpha )}$ and $h_1(x)$  extend to  $\Bbb R$ by invariance under 
$x \mapsto x+1$.

Similarly, the image in the $z$-plane of the boundary component of $\tilde{A}_{t(\alpha )}$ that covers the geodesic loop $\beta'$ is the graph of the function $g_{t(\alpha )}(x)$ obtained by taking the $y$-coordinate of the image of the geodesic with endpoints $e^{-r(\ell (\eta'))+t(\alpha )\ell (\alpha )}$ and $e^{r(\ell (\eta'))+t(\alpha )\ell (\alpha )}$ over the interval $[-1/2+t(\alpha ),1/2+t(\alpha )]$ and extending it by $g_{t(\alpha )}(x+k)=g_{t(\alpha )}(x)$ for all $k\in\mathbb{Z}$. We obtain
$$
g_{t(\alpha )}(x)=\frac{1}{\ell (\alpha )}\cos^{-1}\frac{\cosh \ell (\alpha ) (x-t(\alpha ))}{\cosh r(\ell(\eta'))}\leq \frac{1}{\ell (\alpha )}\left( \frac{\pi}{2}-\frac{e^{\ell (\alpha )|x-t(\alpha )|}}{2e^{r(\ell (\eta'))}}\right)=:h_2(x)
$$
for $-1/2+t(\alpha )\leq x\leq 1/2+t(\alpha )$  and extend $g_{t(\alpha )}(x)$ and $h_2(x)$ to  $\Bbb R$ by invariance under 
$x \mapsto x+1$. The region $Q_{t(\alpha )}$ is mapped to the region above the segment $[-1/2,1/2]$ and  between the graphs of $f_{t(\alpha )}(x)$ and $g_{t(\alpha )}(x)$ for $x\in [-1/2,1/2]$.  The graphs of $h_1(x)$ and $h_2(x)$ bound a subregion between the graphs of $f_{t(\alpha )}(x)$ and $g_{t(\alpha )}(x)$ that is invariant under the translation $x\mapsto x+1$ and  projects to a subring $R'$ of  $R_{\alpha ,\gamma ,\gamma'}^{t (\alpha )} $, see Figure \ref{fig:log coordinates}. Let $\Gamma_{R'}$ be the family of curves that connects the two boundary components of $R'$. Since each $\gamma\in\Gamma_{\beta ,\beta'}$ contains a subarc in $\Gamma_{R'}$ it follows that
$$
\mathrm{mod}\Gamma_{\beta ,\beta'}\leq\mathrm{mod}\Gamma_{R'}.
$$
Therefore we only need to estimate $\mathrm{mod}\Gamma_{R'}$ from  above.

\begin{figure}[h]
\includegraphics[scale=2]{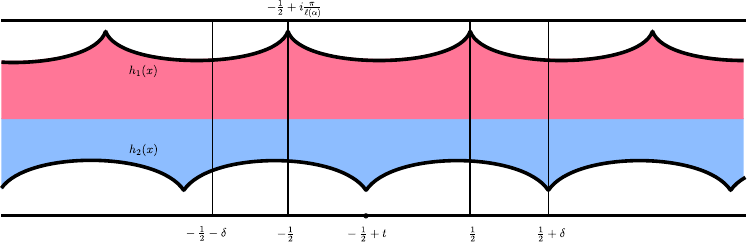}
\caption{The lift in log coordinates of the glued nonstandard collars.}\label{fig:log coordinates}
\end{figure}


Let $\Gamma_{R'}^v$ be the family of vertical lines connecting the graphs of $h_1(x)$ and $h_2(x)$ above $-\frac{1}{2}\leq x\leq\frac{1}{2}$. 
We show that the conditions of Corollary \ref{cor:collar_ext} are satisfied.  
Let $\delta =\delta (\ell (\alpha))=1/\ell (\alpha )$ and  define
$$
m_{\delta}(x)=\inf_{|\delta_1|,|\delta_2|\leq\delta}\left[h_1(x+\delta_1)-h_2(x+\delta_2)\right]
$$
for $x\in [-1/2 ,1/2 ]$. 

We estimate $m_{\delta}(x)$ from below similar to the proof of Theorem \ref{thm:half-collar}. 
Define $$k_1(x)=\frac{e^{\ell (\alpha ) |x|}}{2\ell (\alpha )e^{r(\ell (\eta ))}}\text{\ for\ } x\in [-\frac{1}{2},\frac{1}{2}] \text{\quad and \quad}
k_2(x)=\frac{e^{\ell (\alpha ) |x-t(\alpha )|}}{2\ell (\alpha ) e^{r(\ell (\eta'))}}\text{\ for\ } x\in [-\frac{1}{2}+t(\alpha ),\frac{1}{2}+t(\alpha )]$$
and extend both functions to $\Bbb R$ with period one.
Note that
$$h_1(x)-h_2(x)=k_1(x)+k_2(x).$$ 

Assume that  $|\delta_1|\leq\delta$ and $x,x+\delta_1\in [-1/2,1/2]$. Then we have
$$
k_1(x+\delta_1)=\frac{e^{\ell (\alpha ) |x+\delta_1|}}{2\ell (\alpha ) e^{r(\ell (\eta ))}}\geq e^{-\ell (\alpha )|\delta_1|}\frac{e^{\ell (\alpha ) |x|}}{2\ell (\alpha ) e^{r(\ell (\eta ))}}\geq e^{-1}k_1(x)$$
because $|\delta_1|\ell (\alpha )\leq\delta (\ell (\alpha )) \ell (\alpha )=1$.

Assume that $\ell (\alpha ) >2$, $x\in [-\frac{1}{2},\frac{1}{2}]$ and $x+\delta_1<-\frac{1}{2}$.  We refer the reader to 
Figure \ref{fig: periodicity}  in order to easily follow the arguments in this case. 

\begin{figure}[h] 
\includegraphics[scale=1.5]{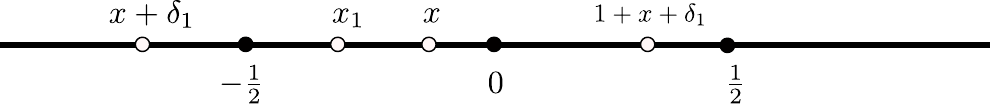}
\caption{Periodicity of the definition of $k_2(x)$.} \label{fig: periodicity}
\end{figure}

%

Then $|\delta_1|<\frac{1}{2}$ and $-\frac{1}{2}\leq x<0$.
Using the facts that $k_1(x)=k_1(-x)$ and $k_1(x+1)=k_1(x)$ we obtain
$$
k_1(x+\delta_1)=k_1(-(1+x+\delta_1)).
$$
Let $x_1=-(1+x+\delta_1)$. By $x+\delta_1<-\frac{1}{2}$ we immediately get $x_1+1/2>0$ which in turn gives
$$
|x-x_1|\leq |x+\frac{1}{2}|+|x_1+\frac{1}{2}|=x+\frac{1}{2}-\frac{1}{2}-x-\delta_1=-\delta_1\leq \delta.
$$
Then, by $x,x_1\in [-\frac{1}{2},\frac{1}{2}]$ and the above, we get
$$
k_1(x+\delta_1)=k_1(x_1)\geq e^{-1}k_1(x).
$$
For $x\in [-1/2,1/2]$ and $x+\delta_1>1/2$ we similarly obtain $k_1(x+\delta_1)\geq e^{-1}k_1(x)$. 

Finally, since $k_2(x)=k_1(x-t)$ we have that
$$
k_2(x+\delta_2)\geq e^{-1}k_2(x)
$$
for all $x$ and $|\delta_2|\leq\delta$.

Thus 
for $|\delta_1|,|\delta_2|\leq\delta$, and all $x$ we have
$$
h_1(x+\delta_1)-h_2(x+\delta_2)\geq e^{-1}[h_1(x)-h_2(x)]
$$
which implies 
$
\inf_{\ell (\alpha )\geq \ell_0} c_{\delta (\ell (\alpha ) ), \ell (\alpha )}\geq e^{-1}>0.
$ Thus condition (2) of Corollary \ref{cor:collar_ext} is satisfied.

To  verify  condition (1) in Corollary \ref{cor:collar_ext}  we find a lower bound on $\mathrm{mod}\Gamma_{R'}^v$. 
Assume first that $0\leq t\leq \frac{1}{2}$. Then $-\frac{1}{2}+t\leq 0$ and thus 
$[-\frac{1}{2}+t,\frac{1}{2}]\supset [0,\frac{1}{4}]$. For $x\in [\frac{1}{8},\frac{1}{4}]$, we have that
$$
k_1(x)\leq\frac{e^{\frac{\ell (\alpha )}{4}}}{2\ell (\alpha )e^{r(\ell (\eta ))}}\quad\mathrm{and}\quad k_2(x)\leq \frac{e^{\frac{3\ell (\alpha )}{8}}}{2\ell (\alpha ) e^{r(\ell (\eta'))}}.
$$
By setting $r^{*}:=\min\{ r(\ell (\eta )),r(\ell (\eta'))\}\geq\frac{\ell (\alpha )}{2}$ we obtain, for $x\in [\frac{1}{8},\frac{1}{4}]$,
$$
k_1(x)+k_2(x)\leq \frac{e^{\frac{3\ell (\alpha )}{8}}}{\ell (\alpha )e^{r^{*}}}
$$
which gives
$$
\mathrm{mod}\Gamma_{R'}^{v}=\int_{-\frac{1}{2}}^{\frac{1}{2}}\frac{dx}{k_1(x)+k_2(x)}\geq \int_{\frac{1}{8}}^{\frac{1}{4}}\frac{dx}{k_1(x)+k_2(x)}\geq \frac{1}{8}\ell (\alpha ) e^{\frac{3\ell (\alpha )}{8}}.
$$
Thus $\inf_{\ell (\alpha )\geq \ell_0}  \delta (\ell (\alpha ))^2 \mathrm{mod} \Gamma_{R'}^v\geq \inf_{\ell (\alpha )\geq \ell_0} \frac{e^{\frac{3\ell (\alpha )}{8}}}{8\ell (\alpha )} >0$ 
and the condition (1) in Corollary \ref{cor:collar_ext} is satisfied when $0\leq t(\alpha ) \leq \frac{1}{2}$. 

We prove the condition (1) in Corollary \ref{cor:collar_ext} under the assumption that $\frac{1}{2}\leq t(\alpha )\leq 1$ (which is equivalent to $-\frac{1}{2}\leq t(\alpha )\leq 0$). Note that $[-\frac{1}{2},-\frac{1}{2}+t(\alpha )]\supset [-\frac{1}{4},-\frac{1}{8}]$. For $x\in [-\frac{1}{4},-\frac{1}{8}]$ we have that $k_1(x)\leq \frac{e^{\frac{\ell (\alpha )}{4}}}{2\ell (\alpha )e^{r(\ell (\eta ))}}$ as before. On the other hand we have $k_2(x)=\frac{e^{\ell (\alpha ) |1+x-t(\alpha )|}}{2\ell (\alpha ) e^{r(\ell (\eta'))}}$ for $x\in [-\frac{1}{4},-\frac{1}{8}]\subset [-\frac{1}{2},-\frac{1}{2}+t(\alpha )]$. Since $-\frac{1}{4}\leq 1+x-t(\alpha )\leq \frac{3}{8}$ we obtain $k_2(x)\leq \frac{e^{\frac{3\ell (\alpha )}{8}}}{2\ell (\alpha ) e^{r(\ell (\eta'))}}.$ Then  condition (1) in Corollary \ref{cor:collar_ext} is satisfied as in the above paragraph. 

The lower estimate for $\mathrm{mod} \Gamma_{R'}^{v}$ obtained above is crude. The only purpose was to show that the condition (1) in Corollary \ref{cor:collar_ext} is satisfied. We now proceed to obtain a finer upper estimate in order to prove the desired inequality in the theorem.

Let $\Gamma_{R'}^{v}$ be the vertical family above $[-1/2,1/2]$ connecting the graphs of $h_1$ and $h_2$. 
We proceed to compute  $\mathrm{mod}\Gamma_{R'}^{v}=\int_{-1/2}^{1/2}\frac{dx}{h_1(x)-h_2(x)}.$ In order to facilitate the integration, we will assume that $0\leq t(\alpha )\leq 1$ with the understanding that the twist $t(\alpha )\in [1/2,1]$ gives the same surface as the twist $t(\alpha )-1\in [-1/2,0]$.
The integration over $x\in [-1/2,1/2]$ is divided into four intervals. For $-1/2\leq x\leq -1/2+t(\alpha )/2$ we have the inequality $h_1(x)-h_2(x)\geq \frac{1}{2\ell (\alpha )}\frac{e^{-\ell (\alpha ) x}}{e^{r(\ell (\eta ))}}$ which gives
\begin{equation}
\label{eq:first-int}
\int_{-\frac{1}{2}}^{-\frac{1}{2}+\frac{t(\alpha )}{2}}\frac{dx}{h_1(x)-h_2(x)}\leq 2e^{r(\ell (\eta ))}\int_{-\frac{1}{2}}^{-\frac{1}{2}+\frac{t(\alpha )}{2}}\ell (\alpha ) e^{\ell (\alpha )x}dx\leq 2e^{r(\ell (\eta ))-\frac{\ell (\alpha )}{2}+t(\alpha )\frac{\ell (\alpha )}{2}}.
\end{equation}

For $-1/2+t(\alpha )/2\leq x\leq -1/2+t(\alpha )$ we have the inequality $h_1(x)-h_2(x)\geq \frac{1}{2\ell (\alpha )}\frac{e^{\ell (\alpha ) (1+x-t(\alpha ))}}{e^{r(\ell (\eta'))}}$ which gives
\begin{equation}
\label{eq:second-int}
\begin{split}
\int_{-\frac{1}{2}+\frac{t(\alpha )}{2}}^{-\frac{1}{2}+t(\alpha )}\frac{dx}{h_1(x)-h_2(x)}\leq 2e^{r(\ell (\eta'))}e^{t(\alpha )\ell (\alpha ) }e^{-\ell (\alpha )}\int_{-\frac{1}{2}}^{-\frac{1}{2}+\frac{t(\alpha )}{2}}\ell (\alpha ) e^{-\ell (\alpha ) x}dx\\ \leq 2e^{r(\ell (\eta'))-\frac{\ell (\alpha )}{2}+t(\alpha )\frac{\ell (\alpha )}{2}}.
\end{split}
\end{equation}

For $-1/2+t(\alpha )\leq x\leq t(\alpha )/2$ we have the inequality $h_1(x)-h_2(x)\geq \frac{1}{2\ell (\alpha )}\frac{e^{\ell (\alpha ) (t(\alpha )-x)}}{e^{r(\ell (\eta'))}}$ which gives
\begin{equation}
\label{eq:third-int}
\int_{-\frac{1}{2}+t(\alpha )}^{\frac{t(\alpha )}{2}}\frac{dx}{h_1(x)-h_2(x)}\leq 2e^{r(\ell (\eta'))}e^{-t(\alpha )\ell (\alpha )}\int_{-\frac{1}{2}+t(\alpha )}^{\frac{t(\alpha )}{2}}\ell (\alpha ) e^{\ell (\alpha ) x}dx\leq 2e^{r(\ell (\eta'))-t(\alpha )\frac{\ell (\alpha )}{2}}.
\end{equation}

For $t(\alpha )/2\leq x\leq 1/2$ we have the inequality $h_1(x)-h_2(x)\geq \frac{1}{2\ell (\alpha )}\frac{e^{\ell (\alpha )x}}{e^{r(\ell (\eta ))}}$ which gives
\begin{equation}
\label{eq:fourth-int}
\int_{\frac{t(\alpha )}{2}}^{\frac{1}{2}}\frac{dx}{h_1(x)-h_2(x)}\leq 2e^{r(\ell (\eta ))}\int_{\frac{t(\alpha )}{2}}^{\frac{1}{2}}\ell (\alpha ) e^{-\ell (\alpha ) x}dx\leq 2e^{r(\ell (\eta ))-t(\alpha )\frac{\ell (\alpha )}{2}}.
\end{equation}

Putting together (\ref{eq:first-int}), (\ref{eq:second-int}), (\ref{eq:third-int}), and (\ref{eq:fourth-int}), we obtain 
$$
\mathrm{mod}\Gamma_{R'}^{v}\leq c\max\{ e^{r(\ell (\eta ))-t(\alpha )\frac{\ell (\alpha )}{2}}, e^{r(\ell (\eta ))-\frac{\ell (\alpha )}{2}+t(\alpha )\frac{\ell (\alpha )}{2}},e^{r(\ell (\eta'))-t(\alpha)\frac{\ell (\alpha )}{2}}, e^{r(\ell (\eta'))-\frac{\ell (\alpha )}{2}+t(\alpha )\frac{\ell (\alpha )}{2}}\}
$$
for a fixed $c>0$ and all $t(\alpha )\in [0,1]$. 

For $t(\alpha )\in [0,1/2]$ we have
$$
\mathrm{mod}\Gamma_{R'}^{v}\leq c\max\{ e^{r(\ell (\eta ))-t(\alpha )\frac{\ell (\alpha )}{2}}, e^{r(\ell (\eta'))-t(\alpha )\frac{\ell (\alpha )}{2}}\}
$$
and for $t(\alpha )\in [1/2,1]$ we have
$$
\mathrm{mod}\Gamma_{R'}^{v}\leq c\max\{ e^{r(\ell (\eta ))-\frac{\ell (\alpha )}{2}(1-t(\alpha ))}, e^{r(\ell (\eta'))-\frac{\ell (\alpha )}{2}(1-t(\alpha ))}\}
$$
and hence the theorem follows.
\end{proof}

Suppose $\alpha$ and $\beta$  are disjoint simple closed geodesics on a hyperbolic surface. By the collar lemma the standard collars about $\alpha$ and $\beta$ are disjoint.  On the other hand, it is possible that nonstandard collars overlap. Indeed, even  nonstandard half-collars can overlap as can be seen on a pair of pants. In fact, a  pair of pants can have at most two disjoint nonstandard half-collars. Given any two components of a pair of pants one can construct  nonstandard half-collars about each one by using the unique simple orthogeodesic from the boundary component to the third component. These half-collars are disjoint.  This is the key point in the following topological lemma.

\begin{lemma}  \label{lem:disjoint half-collars}
Let $Y$ be a geodesic subsurface of finite area which is not a pair of pants and has  non-empty  boundary.   Then for any choice of pants decomposition of  $Y$ there are choices of nonstandard half-collars about each  boundary component  that  are  pairwise disjoint.
\end{lemma}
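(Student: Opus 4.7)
My approach is to combine a topological case-analysis over the pants decomposition with the hyperbolic-geometric fact asserted immediately before the lemma: in any single pair of pants one can construct two disjoint nonstandard half-collars about any two chosen boundary components by letting each half-collar use the orthogeodesic going to the third boundary component. I will assume $Y$ is connected (otherwise I would argue componentwise, noting that no component of $Y$ can itself be a pair of pants).

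First, I would fix the given pants decomposition $\mathcal{P}$ of $Y$, and for each $P\in\mathcal{P}$ set $S_P:=\partial_0 P\cap\partial Y$. My first observation is that $|S_P|\leq 2$ for every $P\in\mathcal{P}$. Indeed, a boundary circle of $P$ not in $\partial Y$ is either a puncture of $Y$ or an interior pants curve of $\mathcal{P}$ glued to some other (or the same) pair of pants, so $|S_P|=3$ would mean every boundary of $P$ is a geodesic boundary of $Y$ glued to nothing, which by connectedness forces $Y=P$, contradicting the hypothesis.

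Next, I would specify the half-collars inside each $P$: if $|S_P|=0$ there is nothing to do; if $|S_P|=1$, say $S_P=\{\alpha\}$, I would pick any other boundary component $\alpha_1$ of $P$ and take $R_{\alpha,\gamma}$, where $\gamma$ is the simple orthogeodesic from $\alpha$ to $\alpha_1$; and if $|S_P|=2$, say $S_P=\{\alpha,\alpha'\}$, I would let $\alpha_2$ denote the third boundary of $P$ (necessarily $\alpha_2\notin\partial Y$ by the previous step) and take the two half-collars $R_{\alpha,\gamma}$ and $R_{\alpha',\gamma'}$, where $\gamma$ and $\gamma'$ are the orthogeodesics from $\alpha$, respectively $\alpha'$, to the common third boundary $\alpha_2$.

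Finally, for pairwise disjointness I would distinguish two cases. Two half-collars chosen in distinct pants $P\neq Q$ are disjoint because each is contained in the interior of its own pair of pants and $P\cap Q$ consists of at most a single common boundary geodesic. Two half-collars chosen in the same $P$ can arise only in the subcase $|S_P|=2$, and there disjointness is exactly the ``key point'' quoted just before the statement of the lemma. Thus the main obstacle is the single-pants disjointness, which is the geometric input I am taking from the preceding paragraph; the remainder of the argument is routine topological bookkeeping, and the hypothesis ``$Y$ is not a pair of pants'' enters solely to rule out $|S_P|=3$.
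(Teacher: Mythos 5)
Your proof is correct and takes essentially the same approach as the paper: for each pair of pants $P$ meeting $\partial Y$, observe that not all boundary components of $P$ can lie on $\partial Y$ (else $Y$ would itself be a pair of pants), and then direct the orthogeodesics defining the half-collars toward a common third boundary so that the two half-collars in a single $P$ are disjoint. You spell out the case analysis and the cross-pants disjointness more explicitly than the paper does, but the argument is the same.
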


\begin{proof} Consider a   pair of pants $P$ in the given decomposition  having at least one geodesic boundary in common with $Y$. Since there is at least one boundary component  of $P$, say $\beta$,   interior to $Y$ (otherwise $Y$ would be a pair of pants)  one can run a simple orthogeodesic from the  boundary component of $Y$ to $\beta$ thus constructing a nonstandard half-collar. As was observed above,  if $P$ contains  two nonstandard half-collars they are disjoint in $P$,  hence in $Y$. 
\end{proof}


\section{Modular tests for parabolicity}
\label{sec:type_problem}

Recall, from Subsection \ref{subsection:Ahlfors-criterion} of the introduction that a characterization of parabolic type Riemann surfaces can be given in terms of  extremal distance. Let $\{X_n\}$ be an exhaustion of $X$ by a family of relatively compact regions with piecewise analytic boundary such that $\overline{X}_n\subset X_{n+1}$. Denote by $\beta_n$ the boundary of $X_n$. 
Let $\lambda_{X_n-X_1}(\beta_1,\beta_n)$ be the extremal distance between $\beta_1$ and $\beta_n$ in $X_n-{X}_1$. We will use the following criterion for parabolicity, cf. \cite[page 229]{Ahlfors-Sario}.

\begin{proposition}[Modular test]\label{prop: parabolicity characterizaton}
The Riemann surface $X$ is parabolic if and only if $$\lambda_{X_n-X_1}(\beta_1,\beta_n)\to\infty\quad \mathrm{as }\quad n\to\infty .$$  
\end{proposition}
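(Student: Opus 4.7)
My plan is to identify the extremal distance $\lambda_{X_n - X_1}(\beta_1, \beta_n)$ with the reciprocal of the capacity of the condenser $(\beta_1, \beta_n)$ in $X_n - X_1$, and then invoke the classical fact that parabolicity of $X$ is equivalent to the vanishing of the capacity of the ideal boundary $\partial_\infty X$ relative to any compact set with nice boundary. For each $n$, let $u_n : X_n - X_1 \to [0,1]$ be the harmonic function solving the Dirichlet problem with boundary values $0$ on $\beta_1$ and $1$ on $\beta_n$, and set $D(u_n) := \iint_{X_n - X_1} |\nabla u_n|^2 \, dA$. The classical Ahlfors--Beurling duality for condensers (see \cite[Ch.~III]{Ahlfors-Sario}) states that
\[
\lambda_{X_n - X_1}(\beta_1, \beta_n) \;=\; \frac{1}{D(u_n)}.
\]
One direction is easy: $\rho = |\nabla u_n|/\sqrt{D(u_n)}$ is admissible for the curve family $\Gamma_n$ connecting $\beta_1$ to $\beta_n$, because $\int_\gamma |\nabla u_n|\,|dz| \geq |u_n(\gamma(1)) - u_n(\gamma(0))| = 1$ for any such $\gamma$; this gives $\mathrm{mod}(\Gamma_n) \leq D(u_n)$. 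The reverse inequality is verified via Beurling's criterion applied to the gradient-line foliation of $u_n$.

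Next, I would pass to the limit $n \to \infty$. Extending $u_n$ by $1$ on $X - X_n$ produces an admissible competitor for the Dirichlet problem associated with $u_{n+1}$, so the Dirichlet principle yields $D(u_{n+1}) \leq D(u_n)$. By Harnack's theorem $u_n$ converges locally uniformly and monotonically on $X - X_1$ to a bounded harmonic function $u_\infty \geq 0$ vanishing on $\beta_1$, and $D(u_n) \searrow D(u_\infty) =: \mathrm{cap}(X_1, \partial_\infty X)$, the capacity of the ideal boundary relative to $X_1$. To conclude, I invoke the classical dichotomy (\cite[Ch.~IV]{Ahlfors-Sario}): $X \in O_G$ if and only if this limit capacity vanishes, equivalently $u_\infty \equiv 0$. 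Combining these ingredients gives
\[
\lambda_{X_n - X_1}(\beta_1, \beta_n) \;=\; \frac{1}{D(u_n)} \;\nearrow\; \frac{1}{D(u_\infty)} \in (0, \infty],
\]
which diverges if and only if $D(u_\infty) = 0$, i.e.\ precisely when $X$ is parabolic.

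The main technical hurdle is the capacity--modulus duality itself. The trivial direction $\mathrm{mod}(\Gamma_n) \leq D(u_n)$ is not sufficient for either implication of the proposition: for the direction we actually use in the applications (``$\lambda_{X_n - X_1}(\beta_1, \beta_n) \to \infty$ implies $X$ parabolic'') one needs $D(u_n) \leq \mathrm{mod}(\Gamma_n)$, which is the non-trivial Ahlfors--Beurling extremality statement. Once this is in place, everything else reduces to standard potential theory on Riemann surfaces (Dirichlet principle, Harnack monotone convergence, and the Perron-family characterization of $O_G$). Since all three tools are classical and available on an arbitrary Riemann surface, no additional input is needed beyond what is already contained in \cite{Ahlfors-Sario,SarioNakai}.
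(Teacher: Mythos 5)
The paper does not prove Proposition~\ref{prop: parabolicity characterizaton} itself; it cites it as a classical result from Ahlfors--Sario (p.~229), attributed in the introduction to Nevanlinna. Your argument is a correct reconstruction of that standard proof: capacity--modulus (Ahlfors--Beurling) duality for the condenser $(\beta_1,\beta_n)$, monotonicity of $D(u_n)$ via the Dirichlet principle, Harnack monotone convergence to $u_\infty$, and the classical characterization of $O_G$ by vanishing of the limiting capacity. One small slip: you write $\rho=|\nabla u_n|/\sqrt{D(u_n)}$ but then assert $\int_\gamma\rho\,|dz|\ge 1$, which is the admissibility condition for the \emph{unnormalized} metric $\rho=|\nabla u_n|$; it is this choice that gives $\mathrm{mod}(\Gamma_n)\le\iint|\nabla u_n|^2=D(u_n)$ directly. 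With that correction the outline is sound, and it is precisely the proof the paper defers to in \cite{Ahlfors-Sario,SarioNakai}.
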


To simplify the application of the Modular test, given an exhaustion $\{ X_n\}$ we choose an open set $B_n\subset X_{n+1}- X_{n-1}$ that contains the boundary $\beta_n$ of $X_n$. The boundary of $B_n$ is divided into two sets $a_n$ and $b_n$ such that $a_n\subset X_n$ and $b_n\subset X_{n+1}- X_n$. The interiors of $B_{n-1}$ and $B_n$ are disjoint by the assumption. Denote by $\lambda_n$ the extremal distance between $a_n$ and $b_n$ in $B_n$. Then by the serial rule for  extremal length (see \cite[page 222]{Ahlfors-Sario})
$$
\lambda_{X_n-X_1}(\beta_1,\beta_n)\geq\sum_{k=1}^n\lambda_k
$$
and we obtain the following sufficient condition for parabolicity
\begin{proposition}[see \cite{Ahlfors-Sario}]
\label{prop:parabolic}
 If
$$
\sum_{n=1}^{\infty}\lambda_n=\infty 
$$
then the Riemann surface $X$ is of parabolic type.
\end{proposition}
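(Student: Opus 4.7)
The plan is to combine the Modular test (Proposition \ref{prop: parabolicity characterizaton}) with the serial rule for extremal length stated in Section \ref{sec:modulus}. Fix $n$ large and consider the family $\Gamma$ of curves connecting $\beta_1$ to $\beta_n$ inside $X_n-X_1$. The open sets $B_k$ for $k$ ranging over an appropriate subset of $\{1,\dots,n-1\}$ have pairwise disjoint interiors by hypothesis, and each such $B_k$ lies in $X_n-\overline{X_1}$ thanks to the containment $B_k\subset X_{k+1}-X_{k-1}$.

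First I would verify the topological decomposition: any curve $\gamma\in\Gamma$ contains a subarc belonging to the family $\Gamma_k$ of arcs inside $B_k$ joining $a_k$ to $b_k$. This is because $\gamma$ begins on $\beta_1\subset X_k$ (the $a_k$-side of $B_k$) and terminates on $\beta_n$, which lies outside $X_k$ (the $b_k$-side of $B_k$); so by continuity $\gamma$ must cross $B_k$, and its first entrance into $B_k$ is through a point of $a_k$ while its exit after that entrance occurs at a point of $b_k$. Hence the subfamily $\Gamma_k$ minorizes $\Gamma$ in the sense $\Gamma_k<\Gamma$.

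Next I would invoke the serial rule from Section \ref{sec:modulus}: since the $B_k$'s have disjoint supports and every curve in $\Gamma$ splits into pieces, one in each $\Gamma_k$, the extremal lengths are superadditive and
\[
\lambda_{X_n-X_1}(\beta_1,\beta_n)\;\geq\;\sum_{k}\lambda_k,
\]
with the sum over the valid indices. (If one prefers to avoid citing the serial rule as a black box, the same inequality follows from a direct Cauchy--Schwarz argument: take metrics $\rho_k$ admissible for $\Gamma_k$ with $\|\rho_k\|_2^2=\mathrm{mod}\,\Gamma_k$, form $\rho=\sum_k c_k\rho_k$ on the disjoint pieces, optimize the coefficients $c_k$, and use $\|\rho\|_2^2=\sum_k c_k^2\|\rho_k\|_2^2$.) Under the hypothesis $\sum_n\lambda_n=\infty$ the right-hand side tends to infinity as $n\to\infty$, so Proposition \ref{prop: parabolicity characterizaton} (the Modular test) finishes the proof.

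There is no real obstacle here; the content is essentially the bookkeeping verification that each curve in $\Gamma$ genuinely contributes a subarc to every separating slab $B_k$. That verification is forced by the boundary decomposition $\partial B_k=a_k\cup b_k$ with $a_k$ and $b_k$ sitting on opposite sides of $\beta_k$, together with the fact that $B_k$ was chosen to be a neighborhood of $\beta_k$ inside $X_{k+1}-X_{k-1}$.
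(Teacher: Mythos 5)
Your proof is correct and takes essentially the same route as the paper: both use the serial rule for extremal length, together with the pairwise disjointness of the slabs $B_k$ and the fact that each $\Gamma_k$ minorizes $\Gamma$, to obtain $\lambda_{X_n-X_1}(\beta_1,\beta_n)\geq\sum_k\lambda_k$, and then conclude by the modular test of Proposition \ref{prop: parabolicity characterizaton}. The only difference is that you spell out the topological verification that every curve in $\Gamma$ contains a subarc in each $\Gamma_k$ (and sketch a Cauchy--Schwarz alternative), whereas the paper leaves these to the cited serial rule in Ahlfors--Sario.
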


Assume that the  Riemann surface $X$ has punctures. Since punctures are points at infinity, any such exhaustion would have boundary components bounding  punctured discs  which would add  terms corresponding to the punctures. We remove this difficulty by showing that an exhaustion that contains full neighborhoods of punctures can be used to replace the compact exhaustion in Propositions 
 \ref{prop: parabolicity characterizaton} and \ref{prop:parabolic}. To this end,  since our applications are hyperbolic geometric in nature, we will work in the hyperbolic category. 

A {\it geodesic subsurface}  in $X$  is a subsurface with geodesic boundary. We are interested in finite area geodesic subsurfaces, namely, ones that have  finitely many cusps and finitely many closed geodesics on the boundary. The next proposition   allows us  to  relax the criteria from a compact exhaustion to an exhaustion by finite area geodesic subsurfaces. 

\begin{proposition} \label{prop:finite area parabolicity char.}
Let $X$ be an infinite type hyperbolic surface with 
$\{X_n\}$ an exhaustion by finite area geodesic subsurfaces. Then  $X$ is parabolic if and only if $$\lambda_{X_n-X_1}(\partial X_1,\partial X_n)\to\infty\quad \mathrm{as }\quad n\to\infty .$$  
\end{proposition}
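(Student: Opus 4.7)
The plan is to reduce Proposition \ref{prop:finite area parabolicity char.} to the classical modular test (Proposition \ref{prop: parabolicity characterizaton}) by filling in the cusps of $X$ and of the $X_n$'s. First I would construct the Riemann surface $\hat X$ obtained from $X$ by adjoining one interior point for each cusp of $X$. Because each cusp of $X$ has a neighborhood conformally equivalent to a punctured disk, filling in the puncture in each case produces a well-defined conformal structure on $\hat X$, and the set $P := \hat X \setminus X$ is at most countable (each cusp corresponds to a conjugacy class of parabolic elements in the uniformizing Fuchsian group). Moreover $P$ is closed in $\hat X$: any point of $X$ has a neighborhood disjoint from all cusps of $X$, since cusps admit pairwise disjoint horocyclic neighborhoods, so any accumulation point of $P$ in $\hat X$ must lie in $P$. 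In particular $P$ is a polar subset of $\hat X$.

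Next, for each $n$ I would set $\hat X_n$ to be $X_n$ with its finitely many cusps filled in using the corresponding points of $P$. Then $\hat X_n$ is a relatively compact subsurface of $\hat X$ whose boundary $\partial \hat X_n = \partial X_n$ is a disjoint union of finitely many simple closed geodesics (hence piecewise analytic), and the collection $\{\hat X_n\}$ forms an exhaustion of $\hat X$ with $\overline{\hat X_n}\subset \hat X_{n+1}$. Proposition \ref{prop: parabolicity characterizaton} applied to the exhaustion $\{\hat X_n\}$ of $\hat X$ then gives
\[
\hat X \text{ is parabolic} \iff \lambda_{\hat X_n-\hat X_1}(\partial \hat X_1,\partial \hat X_n)\to\infty.
\]

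To transfer this equivalence from $\hat X$ back to $X$, I will invoke two classical consequences of the polarity of $P$. First, since $P$ is polar (in particular of zero area), the inclusion $X_n-X_1 \hookrightarrow \hat X_n-\hat X_1$ does not change the extremal distance between the geodesic boundary components: admissible metrics on either side are identified, and path integrals are unaffected by the finitely many extra points of $P$ in $\hat X_n-\hat X_1$, so
\[
\lambda_{X_n-X_1}(\partial X_1,\partial X_n) = \lambda_{\hat X_n-\hat X_1}(\partial \hat X_1,\partial \hat X_n).
\]
Second, parabolicity is preserved under removing a closed polar set: a positive superharmonic function (in particular a Green's function, should one exist) on $X = \hat X \setminus P$ extends across the isolated points of $P$ by the removable singularity theorem for subharmonic functions, and restriction in the other direction is immediate, so $X$ supports a Green's function if and only if $\hat X$ does. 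Combining these two facts with the equivalence displayed above yields the proposition.

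The main obstacle I anticipate is in handling the case when $P$ is infinite: the removable singularity argument must be applied globally on $\hat X$ rather than point by point, which is why it is essential that $P$ be a closed (not merely locally discrete) polar subset of $\hat X$, as established in the first paragraph. Once that is in hand, the two removal lemmas and Proposition \ref{prop: parabolicity characterizaton} combine to finish the proof.
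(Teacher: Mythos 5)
Your proposal is correct, but it takes a genuinely different route from the paper. The paper's proof keeps working on $X$ itself: for each $n$ it excises from $X_n$ cusp neighborhoods of carefully chosen conformal depth (isomorphic to $\mathcal{C}_{p(n)n}$) to get a compact exhaustion $\{X_n'\}$, then uses the explicit extremal length estimate of Lemma \ref{lem: cusp neighborhoods} (which says the excised cusp annuli have extremal distance at least $n$) to show that $\lambda(\Gamma_n)$ and $\lambda(\Gamma_n')$ diverge simultaneously, via the inequality $\bigl|\,1/\lambda(\Gamma_n)-1/\lambda(\Gamma_n')\,\bigr| \leq 1/n$. This is an elementary, self-contained computation with the serial rule and monotonicity/subadditivity of modulus. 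Your approach instead passes to the filled-in surface $\hat X$, reduces to the classical modular test there, and appeals to two classical facts about closed polar sets: that they are negligible for modulus (so $\lambda_{X_n-X_1}=\lambda_{\hat X_n-\hat X_1}$) and that they are removable for Green's functions (so $X\in O_G$ iff $\hat X\in O_G$). That is a clean reduction and it works, but it outsources the substance of the argument to the removable-singularity machinery from classification theory, which the paper deliberately avoids by giving the direct estimate. One small point worth spelling out in your second removal lemma: to see that $\lambda_{X_n-X_1}=\lambda_{\hat X_n-\hat X_1}$ it is not quite enough to say that ``path integrals are unaffected,'' since a curve of $\hat\Gamma$ passing through a filled-in point $p\in P$ has no obvious subcurve in $\Gamma$; the correct justification is that the family of curves meeting the finite set $P\cap(\hat X_n-\hat X_1)$ has zero modulus, after which subadditivity gives $\mathrm{mod}\,\hat\Gamma \leq \mathrm{mod}\,\Gamma$. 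Finally, you should note that $\hat X$ need not be hyperbolic (e.g.\ $\hat X=\mathbb{C}$ when $X=\mathbb{C}\setminus\mathbb{Z}$), but this does no harm since Proposition \ref{prop: parabolicity characterizaton} applies to arbitrary open Riemann surfaces, and $\hat X$ is open (and non-compact, since $P$ is infinite and discrete).
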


A puncture in a hyperbolic surface has  a {\it standard cusp neighborhood}  of hyperbolic area one which we denote by 
$\mathcal{C}_0$. The set of paths in  $\mathcal{C}_0$ from $\partial \mathcal{C}_0$ that go out the cusp end have infinite extremal length. This leads to the fact that the  standard cusp contains a decreasing sequence of cusp neigborhoods, denoted $\{\mathcal{C}_R\}$,  so that the extremal distance $\lambda_{\mathcal{C}_R\setminus \mathcal{C}_0}(\partial \mathcal{C}_0,\partial \mathcal{C}_R)=R$. We  start with a lemma that builds on this fact. It is a straightforward application of  basic extremal length properties, we leave the proof to the reader. 

\begin{lemma} \label{lem: cusp neighborhoods}
 Suppose $Y$ is a geodesic subsurface with $p \geq 1$
punctures, and $K \subset Y$  a connected compact subset disjoint from the standard cusp neighborhoods of each puncture.
Given $R >0$,  denote by  $Y^{\prime}$  
the subsurface $Y$ with   deleted  cusp neighborhoods about each puncture where each of these neighborhoods is  isomorphic to  $\mathcal{C}_R$. Let $\Gamma'$ be the curve family in $Y'\setminus K$ which connects $\partial K$ to the boundary curves of the deleted neighborhoods of the punctures.
Then  $$\lambda (\Gamma') \geq \frac{R}{p}.$$
\end{lemma}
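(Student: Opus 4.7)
The plan is to decompose $\Gamma'$ according to which deleted cusp a curve terminates at, bound the extremal length of each piece from below by comparing with a cusp annulus, and then combine the estimates via subadditivity of the modulus.

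Label the punctures by $i=1,\dots,p$, and denote by $\mathcal{C}_{0,i}$ the standard cusp neighborhood at the $i$\--th puncture and by $\mathcal{C}_{R,i}\subset\mathcal{C}_{0,i}$ the cusp neighborhood cut out to form $Y'$, whose boundary satisfies $\lambda_{\mathcal{C}_{0,i}\setminus\mathcal{C}_{R,i}}(\partial\mathcal{C}_{0,i},\partial\mathcal{C}_{R,i})=R$. Every $\gamma\in\Gamma'$ terminates on $\partial\mathcal{C}_{R,i}$ for a unique $i$, which gives a decomposition $\Gamma'=\bigcup_{i=1}^{p}\Gamma_{i}'$. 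For each $i$ I would consider the annulus $A_{i}=\mathcal{C}_{0,i}\setminus\mathcal{C}_{R,i}$ and the family $\Gamma_{A_{i}}$ of arcs in $A_{i}$ joining its two boundary components, so that $\lambda(\Gamma_{A_{i}})=R$. Because $K$ is disjoint from $\mathcal{C}_{0,i}$ and $\gamma\subset Y'$ avoids the interior of $\mathcal{C}_{R,i}$, the subarc of each $\gamma\in\Gamma_{i}'$ traced out after its last intersection with $\partial\mathcal{C}_{0,i}$ lies in $A_{i}$ and joins $\partial\mathcal{C}_{0,i}$ to $\partial\mathcal{C}_{R,i}$. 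This shows $\Gamma_{A_{i}}<\Gamma_{i}'$ in the paper's minorization sense, and the overflowing property (Lemma \ref{lemma:mod-properties}, item 3) delivers
\[
\lambda(\Gamma_{i}')\ \geq\ \lambda(\Gamma_{A_{i}})\ =\ R.
\]

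To finish, I would apply the subadditivity inequality (\ref{eq:subadd-extremal-lenght}) to the decomposition $\Gamma'=\bigcup_{i=1}^{p}\Gamma_{i}'$ to obtain
\[
\lambda(\Gamma')\ \geq\ \frac{1}{\sum_{i=1}^{p}1/\lambda(\Gamma_{i}')}\ \geq\ \frac{1}{p/R}\ =\ \frac{R}{p}.
\]
I do not anticipate any serious obstacle: the argument is a direct application of the basic modulus properties recalled in Section \ref{sec:modulus}. The one point that requires a moment of care is the topological claim that the end of each $\gamma\in\Gamma_{i}'$ traverses the annulus $A_{i}$ from $\partial\mathcal{C}_{0,i}$ to $\partial\mathcal{C}_{R,i}$; this follows from the hypothesis $K\cap\mathcal{C}_{0,i}=\emptyset$ together with $Y'\cap\mathrm{int}(\mathcal{C}_{R,i})=\emptyset$, which forces $\gamma$ to enter $\mathcal{C}_{0,i}$ from outside yet stop short of $\mathcal{C}_{R,i}$.
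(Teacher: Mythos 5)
Your argument is correct and is essentially the paper's own argument, repackaged: the paper overflows $\Gamma'$ onto the single family $\Gamma'_R$ of traversals of the union of the annuli $A_i$, computes $\mathrm{mod}\Gamma'_R=p/R$ by the disjoint-support equality (Lemma \ref{lem:disjoint}), and concludes; you instead partition $\Gamma'$ by terminal cusp, overflow each piece onto $\Gamma_{A_i}$, and recombine via the subadditivity inequality (\ref{eq:subadd-extremal-lenght}). The two orderings of overflowing and additivity give the same bound, and your verification of the minorization $\Gamma_{A_i}<\Gamma_i'$ via the last crossing of $\partial\mathcal{C}_{0,i}$ is exactly the topological point the paper relies on.
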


\begin{proof}
Since $K$ is disjoint from the standard cusp neighborhood of each puncture of $Y$, the family of curves $\Gamma'$ overflows the family of curves $\Gamma'_R$ that connect the boundary of the union $U_0$ of the standard cusp neighborhoods of the punctures to the boundary of the union $U_R$ of the cusp neighborhoods isomorphic to $\mathcal{C}_R$ inside $U_0\setminus U_R$. Then we have  (see Lemma \ref{lemma:mod-properties}, Property 1.)
$$
\mathrm{mod}\Gamma'\leq\mathrm{mod}\Gamma'_R.
$$
Note that $\Gamma_R'$ is a disjoint union of $p$ families of curves connecting the two boundaries of the two neighborhoods of each puncture and the supports of the $p$ families  are disjoint. Since the modulus of each family is $1/R$, the additivity property gives (see Lemma \ref{lem:disjoint})
$$
\mathrm{mod}\Gamma_R'=\frac{p}{R}.
$$
Then
$$
\lambda (\Gamma')=\frac{1}{\mathrm{mod}\Gamma'}\geq \frac{1}{\mathrm{mod}\Gamma'_R}=\frac{R}{p}.
$$
\end{proof}

We are now ready to prove Proposition \ref{prop:finite area parabolicity char.}.
\begin{proof}   Clearly the proposition holds if there are no punctures, and it is easy to check that it holds in the presence of funnels in $X$. For this reason, we henceforth assume that $X$ has no funnels and at least one puncture. 

\begin{figure}[t]
\begin{center}
\includegraphics[scale=1.5]{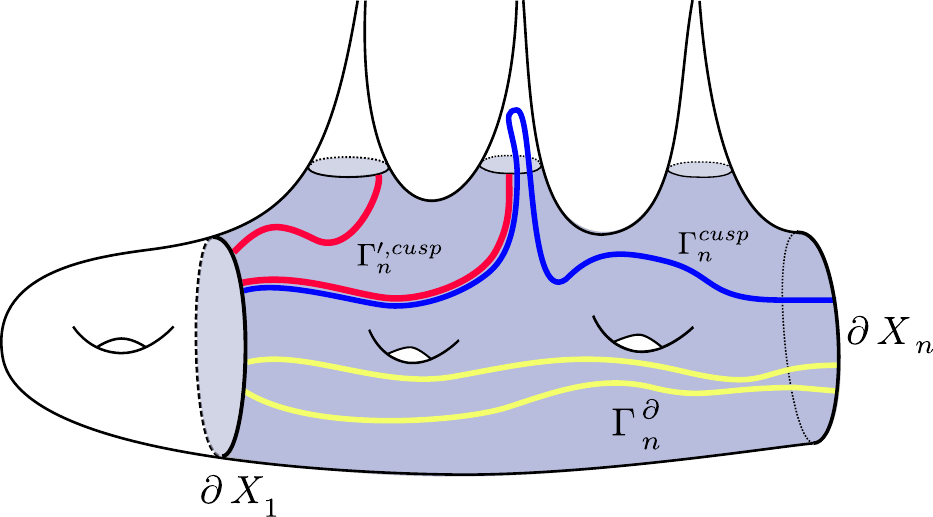}
\caption{Exhausting by finite area sub-surfaces. 
}\label{fig:exhaustion}
\end{center}
\end{figure}

Let $p(n)$ be the number of punctures in $X_n$. For each  puncture in $X_n$,  delete the cusp neighborhood isomorphic to $\mathcal{C}_{p(n)n}$ and denote the  excised domain by
$X_n^{\prime}$  (see Figure  \ref{fig:exhaustion}).  If there is no puncture in $X_n$, set $X_n^{\prime}=X_n$.  Clearly $\{X_n^{\prime}\}$  is a compact exhaustion of $X$.  As a matter of notational convenience  we assume $X_1=X_1^{\prime}$. We let  
\begin{align*}
\Gamma_n = \Gamma(\partial X_1,\partial X_n;X_n),
\Gamma_n' = \Gamma(\partial X_1,\partial X_n';X_n'),
\end{align*}
%
%
%
%
and would like to prove that $\lambda(\Gamma_n)$ and $\lambda(\Gamma_n')$ diverge simultaneously.

Let $\Gamma_n^{cusp}$ and $\Gamma_n^{\partial}$ denote the subfamilies of curves in $\Gamma_n$ that go through the cusp boundary of $X_n'$ and those that do not, respectively. Similarly we define $\Gamma_n^{\prime,cusp}$ and $\Gamma_n^{\prime,\partial}$. More precisely,

\begin{align*}
\Gamma^{cusp}&=\{\g\in \Gamma_n : \g\cap (\partial X_n' \setminus \partial X_n) \neq \emptyset\}, \quad  \Gamma_n^{\partial} = \Gamma_n\setminus \Gamma_{n}^{cusp}\\ 
\Gamma^{\prime,cusp}&=\{\g\in \Gamma_n' : \g\cap (\partial X_n' \setminus \partial X_n) \neq \emptyset\}, \quad \Gamma_n^{\prime,\partial}=\Gamma_n'\setminus \Gamma_{n}^{\prime,cusp}.
\end{align*}

By Lemma \ref{lem: cusp neighborhoods}, with $Y=X_n$, $K=X_1$, and $R=p(n)n$, we have 
\begin{align}\label{ineq:cusp-modulus}
\lambda(\Gamma_n^{\prime, cusp})\geq n.
\end{align}

Noting that $\Gamma_n^{\partial} = \Gamma_n^{\prime,\partial}$, and using monotonicity and subadditivity in Lemma \ref{lemma:mod-properties} we obtain  
\begin{align}\label{ineq:cusp-estimates}
\begin{split}
\frac{1}{\lambda (\Gamma_n^{\partial})} 
&\leq 
\frac{1}{\lambda (\Gamma_n)}
\leq
\frac{1}{\lambda (\Gamma_n^{\partial})}+\frac{1}{\lambda (\Gamma_n^{cusp})}, \\
\frac{1}{\lambda(\Gamma_n^{\prime, \partial})} 
&\leq 
\frac{1}{\lambda (\Gamma_n')}
\leq
\frac{1}{\lambda(\Gamma_n^{\prime, \partial})} + \frac{1}{\lambda (\Gamma_n^{\prime,cusp})}.
\end{split}
\end{align}
Since every curve in $\Gamma_n^{cusp}$ contains a subcurve in $\Gamma_n^{\prime,cusp}$, by the overflowing property of modulus we have $\lambda(\Gamma_n^{\prime,cusp})\leq \lambda(\Gamma_n^{cusp})$. By (\ref{ineq:cusp-modulus}) and (\ref{ineq:cusp-estimates}) we obtain
\begin{align*}
\left|\frac{1}{\lambda (\Gamma_n)} - \frac{1}{\lambda (\Gamma_n')} \right|\leq \frac{1}{\lambda(\Gamma_n^{\prime,cusp})}\leq\frac{1}{n}.
\end{align*}
Therefore, $\lambda (\Gamma_n) \rightarrow \infty$
if and only if  $\lambda (\Gamma_n^{\prime}) \rightarrow \infty$, thus completing the proof.
\end{proof}

We note  that the specific choice of the constant $R=p(n)n$ in the proof is not important.  

\begin{remark} 
 Proposition \ref{prop:finite area parabolicity char.} is part of a much more general phenomena in which a subsurface of the original hyperbolic surface has a compact exhaustion for which the extremal distance goes to infinity.

\end{remark}


\section{Applications to general infinite type  surfaces}
\label{sec:general-theorems}


Let $X$ be an infinite type Riemann surface  equipped with its conformal hyperbolic metric. Recall that a collar   is an annular neighborhood about  a  simple closed geodesic, and that a half-collar has the simple closed geodesic as a boundary component.

We fix an exhaustion $\{ X_n\}_n$ of $X$ by finite area geodesic subsurfaces  (that is, hyperbolic surfaces of finite area  with finitely many cusps and finitely many closed geodesics on their boundaries) such that $X_{n+1}$ contains the closure of $X_n$ in its interior. We denote by $\partial_0X_n$ the collection of geodesic boundary components of $X_n$ and recall that $\ell (\alpha)$ is the length of $\alpha\in\partial_0X_n$.   We first state an abstract theorem that holds for any set of disjoint collars about the $\alpha \in \partial_0X_n$. For a collar $R_{\alpha}$, the extremal distance between its two boundary components is denoted by $\lambda (R_{\alpha})$. We rephrase Proposition  \ref{prop:parabolic} in terms of collars.

\begin{proposition}
\label{abstract theorem on collars}
Let $X$ be an infinite type hyperbolic surface and  $\{X_n\}$  an exhaustion by finite area geodesic subsurfaces as above. Given a collection of disjoint collars  $\{R_{\alpha}\}_{\alpha \in \partial_0X_n}$  if
$$
\sum_{n=1}^{\infty} \frac{1}{\sum_{\alpha\in \partial_0 X_n}
\frac{1}{\lambda (R_\alpha)}}=\infty
$$ 
then $X$ is of parabolic  type.
\end{proposition}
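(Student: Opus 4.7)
The plan is to reduce to the modular test. By Proposition \ref{prop:finite area parabolicity char.}, it suffices to show that $\lambda_{X_n-X_1}(\partial X_1,\partial X_n)\to\infty$ as $n\to\infty$. I will construct annular barriers from the given collars and bound this extremal distance below using the serial rule together with the additivity of modulus on disjoint supports (Lemma \ref{lem:disjoint}).

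For each $k\geq 2$, set $B_k:=\bigsqcup_{\alpha\in\partial_0 X_k} R_\alpha$. Each $R_\alpha$ is an annular neighborhood of the boundary geodesic $\alpha\in\partial_0 X_k$, so it has two boundary components lying on opposite sides of $\alpha$; denote by $a_k$ the union of the boundary components on the $X_{k-1}$ side and by $b_k$ the union of those on the opposite side. Let $\Gamma_{B_k}$ be the curve family in $B_k$ joining $a_k$ to $b_k$; it decomposes as the disjoint union $\bigsqcup_\alpha \Gamma_{R_\alpha}$, where $\Gamma_{R_\alpha}$ connects the two boundary components of $R_\alpha$. Since the supports $R_\alpha$ are pairwise disjoint by hypothesis, Lemma \ref{lem:disjoint} gives
\begin{equation*}
\mathrm{mod}\,\Gamma_{B_k} \;=\; \sum_{\alpha\in\partial_0 X_k} \mathrm{mod}\,\Gamma_{R_\alpha} \;=\; \sum_{\alpha\in\partial_0 X_k}\frac{1}{\lambda(R_\alpha)},
\end{equation*}
so the extremal distance between $a_k$ and $b_k$ in $B_k$ equals
\begin{equation*}
\lambda_k \;:=\; \Bigl(\sum_{\alpha\in\partial_0 X_k}\tfrac{1}{\lambda(R_\alpha)}\Bigr)^{-1}.
\end{equation*}

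The global disjointness of the collection $\{R_\alpha\}$ (taken across all $n$) implies that the barriers $B_2,B_3,\ldots,B_{n-1}$ are mutually disjoint and contained in $X_n-X_1$. Moreover, since $\partial_0 X_k$ separates $X_{k-1}$ from the complement of $X_k$, each $B_k$ is a genuine annular barrier: any curve in $X_n-X_1$ connecting $\partial X_1$ to $\partial X_n$ enters $B_k$ on the $a_k$ side and exits on the $b_k$ side, i.e.\ it contains a subarc in $\Gamma_{B_k}$. The serial rule for extremal length (as used in the derivation of Proposition \ref{prop:parabolic}) therefore yields
\begin{equation*}
\lambda_{X_n-X_1}(\partial X_1,\partial X_n) \;\geq\; \sum_{k=2}^{n-1}\lambda_k \;=\; \sum_{k=2}^{n-1}\frac{1}{\sum_{\alpha\in\partial_0 X_k}1/\lambda(R_\alpha)}.
\end{equation*}
By the divergence hypothesis the right-hand side tends to infinity with $n$, and Proposition \ref{prop:finite area parabolicity char.} then gives that $X$ is of parabolic type.

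The only mildly subtle step is the topological verification underlying the serial rule: one must confirm that each $B_k$ lies entirely in $X_n-X_1$ and truly separates $\partial X_1$ from $\partial X_n$. Both properties are automatic consequences of the global disjointness of collars — the collars around $\partial_0 X_1$ and $\partial_0 X_n$ are themselves in the disjoint collection and confine the intermediate $B_k$'s between them — so with this geometric setup in hand the statement is essentially a repackaging of Proposition \ref{prop:parabolic} adapted to the annular decomposition provided by the collars.
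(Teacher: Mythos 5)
Your proof is correct and follows the same route the paper intends: the paper's own ``proof'' is just the remark that Proposition \ref{abstract theorem on collars} rephrases Proposition \ref{prop:parabolic} with $B_k$ taken to be the union of the collars around $\partial_0 X_k$, and you carry out that rephrasing carefully, including the correct use of Lemma \ref{lem:disjoint} to compute $\lambda_k$ and the observation that global disjointness of the collars forces $R_\alpha\subset X_{k+1}-\overline{X_{k-1}}$ (since a collar crossing $\partial X_{k\pm 1}$ would meet the collars around those boundary geodesics), so the barriers $B_2,\ldots,B_{n-1}$ lie in $X_n-X_1$ and the serial rule applies.
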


Since standard collars about disjoint closed geodesics are disjoint, Proposition  \ref{prop:parabolic} and Lemma \ref{lem:collar} together with  $\frac{1}{\ell}\arctan \Big{[}\frac{1}{\sinh \frac{\ell}{2}}\Big{]}\asymp 1/(\ell e^{\ell /2})$ give us
 
\begin{proposition}
\label{prop:collar-parabolic}
Let $X$ be an infinite type hyperbolic surface and  $\{X_n\}$  an exhaustion by finite area geodesic subsurfaces as above. If
$$
\sum_{n=1}^{\infty} \frac{1}{\sum_{\alpha\in \partial_0 X_n}\ell (\alpha )e^{\ell (\alpha )/2 }}=\infty
$$ 
then $X$ is of parabolic  type.
\end{proposition}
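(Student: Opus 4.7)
The plan is to invoke Proposition~\ref{abstract theorem on collars} directly, taking $R_\alpha$ to be the standard half-collar about each $\alpha \in \partial_0 X_n$. Disjointness of the collection $\{R_\alpha\}_{\alpha}$ (ranging over all boundary curves of all $X_n$ simultaneously) is automatic from the classical collar lemma: the boundary geodesics of the exhaustion form a pairwise disjoint family of simple closed geodesics on $X$, and their standard half-collars are therefore pairwise disjoint. This is what allows the modulus data of one boundary curve of $X_n$ to contribute to $\lambda_n$ in the sense of Proposition~\ref{prop:parabolic} without interference from the half-collars chosen for $\partial_0 X_{n\pm1}$.

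The next step is to convert the explicit formula of Lemma~\ref{lem:collar},
$$\lambda(R_\alpha) = \frac{1}{\ell(\alpha)}\arctan\!\left[\frac{1}{\sinh(\ell(\alpha)/2)}\right],$$
into the asymptotic comparison $1/\lambda(R_\alpha) \asymp \ell(\alpha)\, e^{\ell(\alpha)/2}$. This comparison holds uniformly for all $\ell(\alpha)>0$: when $\ell(\alpha)\to 0$, the arctangent tends to $\pi/2$ and $e^{\ell(\alpha)/2}\to 1$, so both quantities behave like $\ell(\alpha)$; when $\ell(\alpha)\to\infty$, we have $\arctan(1/\sinh(\ell/2))\asymp 1/\sinh(\ell/2)\asymp e^{-\ell/2}$, so $1/\lambda(R_\alpha)\asymp \ell(\alpha)\,e^{\ell(\alpha)/2}$; on any compact range of $\ell(\alpha)$ bounded away from $0$, both sides are comparable to positive constants. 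The comparison thus holds with universal constants independent of $\alpha$.

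Finally, since the comparison $\asymp$ is preserved under term-by-term summation and passage to reciprocals (up to multiplicative constants), divergence of
$$\sum_{n=1}^{\infty}\frac{1}{\sum_{\alpha\in\partial_0 X_n}\ell(\alpha)\,e^{\ell(\alpha)/2}}$$
is equivalent to divergence of
$$\sum_{n=1}^{\infty}\frac{1}{\sum_{\alpha\in\partial_0 X_n} 1/\lambda(R_\alpha)},$$
and Proposition~\ref{abstract theorem on collars} then delivers parabolicity of $X$. There is no substantive obstacle: the whole argument is a packaging of Proposition~\ref{abstract theorem on collars} via Maskit's formula, with the only minor point being the uniform verification of the asymptotic comparison across small, moderate, and large values of $\ell(\alpha)$.
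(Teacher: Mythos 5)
Your proposal is correct and takes essentially the same route as the paper: disjoint standard collars via the collar lemma, the explicit formula of Lemma~\ref{lem:collar}, the uniform comparison $\lambda(R_\alpha)\asymp \bigl(\ell(\alpha)\,e^{\ell(\alpha)/2}\bigr)^{-1}$ valid across all ranges of $\ell(\alpha)$, and Proposition~\ref{abstract theorem on collars} (which the paper invokes in its equivalent form, Proposition~\ref{prop:parabolic}). The one small imprecision is that Proposition~\ref{abstract theorem on collars} is phrased for two-sided collars whereas you work with half-collars, but since the extremal distance of the full standard collar is exactly twice that of the half-collar by its reflection symmetry, the asymptotic and the divergence criterion are unaffected.
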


The above proposition can be strengthened by using Corollary \ref{cor:asymp} when $\ell (\alpha )$ is large. Let $P$
be a single geodesic pair of pants with boundary geodesics $\alpha$, $\beta$ and $\beta'$, where $\beta'$ can degenerate to a puncture. Then the nonstandard half-collars around $\alpha$ and $\beta$ with the other boundaries having non-smooth point on $\beta'$ are disjoint.

Assume that $X_1$ contains at least two pairs of pants. It follows that each boundary geodesic of $X_1$ belongs to a pair of pants in $X_1$ such that at least one other boundary geodesic is in the interior of $X_1$. Thus the set of boundary geodesics of $X_1$ has a corresponding set of one-sided nonstandard collars in $X_1$ that are mutually disjoint by Lemma \ref{lem:disjoint half-collars}. The same property is true  for each boundary geodesic of $X_n$ because it belongs to a pair of pants in $X_n -  X_{n-1}$ that has at least one boundary geodesic in the interior of $X_n$. By Lemma \ref{lem:disjoint half-collars} all nonstandard half-collars around boundary geodesics of the exhaustion $\{ X_n\}_n$ are mutually disjoint.

Let $\alpha \in \partial_0 X_n$ and let $\lambda (R_{\alpha ,\gamma})$ be the extremal distance between the boundary components of the nonstandard half-collar $R_{\alpha ,\gamma}$ around $\alpha$ corresponding to orthogeodesic  $\gamma$.
Fix $\ell_0>0$ and $\gamma_0>0$. 
For $\ell (\gamma )<\gamma_0$ and $\ell (\alpha)>\ell_0$ we have
$1/\lambda (R_\alpha) \lesssim e^{\ell (\alpha)/2}/\ell (\gamma )$; for $\ell (\gamma )\geq\gamma_0$ and $\ell (\alpha)>\ell_0$ we have $1/\lambda(R_{\alpha}) \lesssim e^{\ell (\alpha)/2}$. For $\ell (\alpha )<\ell_0$ we have $1/\lambda(R_{\alpha}) \lesssim \frac{1}{\ell (\alpha )}$ by Lemma \ref{lem:collar}.
By setting
$$
\sigma (R_{\alpha ,\gamma })  :=\max \Big{\{} \frac{e^{\ell (\alpha)/2}}{\ell (\gamma )}, e^{\ell (\alpha)/2},\frac{1}{\ell (\alpha)}\Big{\}}
$$
we obtain

\begin{theorem}
\label{thm:general_parabolic}
Let $X$ be an infinite type hyperbolic surface with an exhaustion $\{X_n\}$. If
$$
\sum_{n=1}^{\infty}\frac{1}{\sum_{\alpha\in \partial_0 X_n} \sigma (R_{\alpha ,\gamma})}=\infty
$$
then $X$ is of parabolic  type.
\end{theorem}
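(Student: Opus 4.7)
The plan is to show that Theorem \ref{thm:general_parabolic} follows by assembling the three tools already developed: the abstract collar criterion (Proposition \ref{abstract theorem on collars}), the disjointness of nonstandard half-collars (Lemma \ref{lem:disjoint half-collars}), and the sharp modulus asymptotics for nonstandard half-collars (Corollary \ref{cor:asymp}).

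First I would fix, for each $n \geq 1$ and each $\alpha \in \partial_0 X_n$, a pair of pants in $X_n$ (or $X_{n+1}-X_{n-1}$) containing $\alpha$ whose other boundary curves include at least one geodesic in the interior of the exhaustion piece; this is possible because by hypothesis the exhaustion is by finite area geodesic subsurfaces and each boundary component $\alpha$ sits inside a pair of pants with a ``free'' curve. The orthogeodesic $\gamma$ from $\alpha$ to that interior curve (or puncture) determines a nonstandard half-collar $R_{\alpha,\gamma}$, and the discussion preceding the theorem (via Lemma \ref{lem:disjoint half-collars}) shows that the entire collection $\{R_{\alpha,\gamma}\}_{\alpha \in \partial_0 X_n, \, n \geq 1}$ consists of pairwise disjoint annuli.

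Next I would estimate $1/\lambda(R_{\alpha,\gamma})$ by $\sigma(R_{\alpha,\gamma})$, up to a uniform constant, splitting into cases governed by the thresholds $\ell_0$ and $\gamma_0$. For $\ell(\alpha) \geq \ell_0$, Corollary \ref{cor:asymp} (equation (\ref{asymp:ns2})) gives $\lambda(R_{\alpha,\gamma}) \asymp \ell(\gamma) e^{-\ell(\alpha)/2}$ when $\ell(\gamma) < \gamma_0$, and $\lambda(R_{\alpha,\gamma}) \asymp e^{-\ell(\alpha)/2}$ when $\ell(\gamma) \geq \gamma_0$; in these two regimes the reciprocal is bounded above by the first two terms in the definition of $\sigma$. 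For $\ell(\alpha) < \ell_0$, the estimate (\ref{asymp:small}) yields $\lambda(R_{\alpha,\gamma}) \gtrsim 1/\ell(\alpha)$, so $1/\lambda(R_{\alpha,\gamma}) \lesssim 1/\ell(\alpha)$, which matches the third term in $\sigma$. Consequently there is a constant $C>0$, independent of $\alpha$ and $n$, such that
\begin{equation*}
\frac{1}{\lambda(R_{\alpha,\gamma})} \leq C \,\sigma(R_{\alpha,\gamma}).
\end{equation*}

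Finally I would feed this into Proposition \ref{abstract theorem on collars}. Summing the pointwise bound over $\alpha \in \partial_0 X_n$ gives $\sum_{\alpha} 1/\lambda(R_{\alpha,\gamma}) \leq C \sum_\alpha \sigma(R_{\alpha,\gamma})$, hence
\begin{equation*}
\sum_{n=1}^\infty \frac{1}{\sum_{\alpha \in \partial_0 X_n} 1/\lambda(R_{\alpha,\gamma})} \;\geq\; \frac{1}{C} \sum_{n=1}^\infty \frac{1}{\sum_{\alpha \in \partial_0 X_n} \sigma(R_{\alpha,\gamma})} \;=\; \infty
\end{equation*}
by hypothesis. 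Proposition \ref{abstract theorem on collars}, applied to the disjoint collection $\{R_{\alpha,\gamma}\}$, then concludes that $X$ is of parabolic type. There is no genuine obstacle: the essential content of the theorem has been packaged into the two preceding tools, and the only task is to verify that the three regimes for $(\ell(\alpha), \ell(\gamma))$ are correctly assembled into the single majorant $\sigma$, which I would do by case analysis.
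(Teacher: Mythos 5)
Your proposal is correct and takes essentially the same route as the paper: disjointness of the nonstandard half-collars via Lemma \ref{lem:disjoint half-collars}, the case-by-case modulus bounds via Corollary \ref{cor:asymp}, and Proposition \ref{abstract theorem on collars} to conclude. One small algebraic slip: from $\lambda(R_{\alpha,\gamma}) \gtrsim 1/\ell(\alpha)$ you get $1/\lambda(R_{\alpha,\gamma}) \lesssim \ell(\alpha)$, not $\lesssim 1/\ell(\alpha)$; the desired bound $1/\lambda(R_{\alpha,\gamma}) \lesssim 1/\ell(\alpha)$ still holds because $\ell(\alpha) < \ell_0$ forces $\ell(\alpha) \leq \ell_0^2/\ell(\alpha)$, so the conclusion and the definition of $\sigma$ are unaffected.
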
  

We use $|\partial_0 X_n|$ to denote the number of components on the boundary of $X_n$.

\begin{corollary}
Let $X$ be an infinite type   hyperbolic surface with an exhaustion 
$\{X_n\}$. Let $\sigma_n=\sup_{\alpha\in \partial_0 X_n} \sigma (\lambda (R_{\alpha ,\gamma}))$ where $R_{\alpha ,\gamma}$ is the nonstandard half-collar around $\alpha$ in $X_n$.
If
\begin{equation}
\sum_{n=1}^{\infty} \frac{1}{|\partial_0 X_n|  \sigma_n}=\infty
\end{equation}
then $X$ is of parabolic  type.
\end{corollary}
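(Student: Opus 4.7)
The plan is to derive this corollary as an immediate consequence of Theorem \ref{thm:general_parabolic} by a trivial term-by-term comparison. The point is that the corollary merely replaces the sum $\sum_{\alpha\in \partial_0 X_n}\sigma(R_{\alpha,\gamma})$ appearing in Theorem \ref{thm:general_parabolic} by the (generally larger, hence weaker) quantity $|\partial_0 X_n|\cdot \sigma_n$, where $\sigma_n$ is the supremum of $\sigma(R_{\alpha,\gamma})$ over $\alpha\in\partial_0 X_n$. Consequently, there is no new geometric content; the proof should be one short paragraph.

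Concretely, I would proceed as follows. By the definition of $\sigma_n$, for every $\alpha\in\partial_0 X_n$ we have $\sigma(R_{\alpha,\gamma})\leq \sigma_n$, so
\[
\sum_{\alpha\in \partial_0 X_n}\sigma(R_{\alpha,\gamma}) \;\leq\; |\partial_0 X_n|\,\sigma_n.
\]
Taking reciprocals (the left-hand side is positive because each $\sigma(R_{\alpha,\gamma})>0$, and $|\partial_0 X_n|$ is finite since $X_n$ is a finite area geodesic subsurface) gives
\[
\frac{1}{\sum_{\alpha\in \partial_0 X_n}\sigma(R_{\alpha,\gamma})} \;\geq\; \frac{1}{|\partial_0 X_n|\,\sigma_n}.
\]
Summing over $n$, the hypothesis $\sum_n \frac{1}{|\partial_0 X_n|\,\sigma_n}=\infty$ immediately yields $\sum_n \frac{1}{\sum_{\alpha\in \partial_0 X_n}\sigma(R_{\alpha,\gamma})}=\infty$, and Theorem \ref{thm:general_parabolic} then implies that $X$ is of parabolic type.

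There is no real obstacle here: the only thing to verify is that the nonstandard half-collars $R_{\alpha,\gamma}$ used implicitly in $\sigma_n$ can be chosen to be pairwise disjoint so that Theorem \ref{thm:general_parabolic} genuinely applies, but this is exactly the content of Lemma \ref{lem:disjoint half-collars} as invoked in the paragraph preceding Theorem \ref{thm:general_parabolic}. Thus the corollary follows at once, and I would present it simply as ``Apply Theorem \ref{thm:general_parabolic} and the bound $\sum_{\alpha\in \partial_0 X_n}\sigma(R_{\alpha,\gamma})\leq |\partial_0 X_n|\,\sigma_n$.''
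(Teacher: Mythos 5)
Your proposal is correct and is exactly the intended argument: the paper presents this as an unproved corollary of Theorem \ref{thm:general_parabolic}, and the term-by-term bound $\sum_{\alpha\in\partial_0 X_n}\sigma(R_{\alpha,\gamma})\leq |\partial_0 X_n|\,\sigma_n$ followed by reciprocals is the only step needed. You are also right to flag that disjointness of the nonstandard half-collars (via Lemma \ref{lem:disjoint half-collars}) is what makes Theorem \ref{thm:general_parabolic} applicable in the first place; that hypothesis is inherited from the theorem and needs no further work here.
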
  

Theorem \ref{thm:general_parabolic} and its corollary are twist free results.  In order to include twists to achieve sufficient conditions for parabolicity we use nonstandard collars. When the nonstandard collar is a collar (that is, not a half-collar)  one has to make  some topological restrictions (either on the topology of the surface or on the exhaustion)  to ensure the nonstandard collars are disjoint.  

Suppose $\{X_n\}$ is an exhaustion of $X$ as discussed at the beginning of this section, and fix a pants decomposition of $X$ whose pants curves include the boundary curves of $\{X_n\}$. Each boundary geodesic $\alpha$ of $X_n$ is an interior geodesic of $X$ and hence it makes sense to talk about the twist $t(\alpha)$ of $\alpha$.

\begin{theorem}
\label{thm:general_parabolicwithtwist}
Let $X$ be an infinite type hyperbolic surface with an exhaustion $\{X_n\}$. Assume   each connected component $Y$ of $X_{n+1}-X_{n}$  is not a pair of pants and  there is a pants decomposition of $Y$ so that the distance from any boundary geodesic of $Y$ to the boundary component of the pair of pants that is interior is  uniformly bounded from below.  If
\begin{equation}\label{eq:general_parabolicwithtwist}
\sum_{n=1}^{\infty}\frac{1}{\sum_{\alpha\in \partial_0 X_n} 
e^{\left(1- |t (\alpha)|\right)\frac{\ell (\alpha)}{2}}}=\infty
\end{equation}
then $X$ is of parabolic  type.
\end{theorem}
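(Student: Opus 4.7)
The plan is to combine the modular test of Proposition \ref{abstract theorem on collars} with the twisted nonstandard collar estimate of Theorem \ref{thm:gluings}. The key construction is a family of pairwise disjoint nonstandard collars, one around each $\alpha \in \partial_0 X_n$ for every $n$, whose extremal distance is controlled in terms of both $\ell(\alpha)$ and $t(\alpha)$.

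First I would construct the collars. For each $k\geq 1$, apply Lemma \ref{lem:disjoint half-collars} separately to every connected component of $Y_k := X_k-X_{k-1}$: each such component is not a pair of pants by hypothesis, so the lemma yields pairwise disjoint nonstandard half-collars inside that component around every one of its boundary geodesics, with defining orthogeodesic running from the boundary geodesic to an interior (in the component) boundary of the adjacent pair of pants. Now for $\alpha \in \partial_0 X_n$, glue the half-collar sitting in $Y_n$ to the one sitting in $Y_{n+1}$ across $\alpha$; this produces the nonstandard collar $R^{t(\alpha)}_{\alpha,\gamma,\gamma'}$ bounded by the two resulting piecewise-geodesic loops. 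The entire family $\{R^{t(\alpha)}_{\alpha,\gamma,\gamma'}\}_{n,\alpha}$ is pairwise disjoint because any two half-collars used in the construction live either in distinct components of some $Y_k$ (trivially disjoint) or in the same component (disjoint by the lemma).

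Second I would bound each $\lambda(R^{t(\alpha)}_{\alpha,\gamma,\gamma'})$ from below. The uniform lower bound on orthogeodesic lengths in the hypothesis provides a constant $\g_0>0$ with $\min\{\ell(\gamma),\ell(\gamma')\} \geq \g_0$. Fix $\ell_0 \geq 2$. When $\ell(\alpha) \geq \ell_0$, Theorem \ref{thm:gluings} together with Remark \ref{rem:expansion} yields
\[
\lambda\bigl(R^{t(\alpha)}_{\alpha,\gamma,\gamma'}\bigr) \gtrsim e^{-(1-|t(\alpha)|)\ell(\alpha)/2},
\]
with implicit constant depending only on $\g_0$. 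When $\ell(\alpha) \leq \ell_0$, the serial rule applied to the two glued half-collars together with Corollary \ref{cor:asymp} gives $\lambda(R^{t(\alpha)}_{\alpha,\gamma,\gamma'}) \geq \lambda(R_{\alpha,\gamma})+\lambda(R_{\alpha',\gamma'})$, which is bounded below by a positive constant since $\ell(\alpha)$ then ranges in a bounded set; the desired inequality holds trivially on that range because $e^{-(1-|t(\alpha)|)\ell(\alpha)/2} \leq 1$. Inverting, $1/\lambda(R^{t(\alpha)}_{\alpha,\gamma,\gamma'}) \lesssim e^{(1-|t(\alpha)|)\ell(\alpha)/2}$ uniformly.

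Finally, summing this over $\alpha \in \partial_0 X_n$ and applying Proposition \ref{abstract theorem on collars} to the disjoint family constructed above shows that $X$ is parabolic whenever (\ref{eq:general_parabolicwithtwist}) diverges. The main obstacle is the simultaneous pairwise disjointness of all constructed collars across the whole exhaustion: this is exactly what the topological assumption on components of $X_{n+1}-X_n$ together with Lemma \ref{lem:disjoint half-collars} delivers, while the uniform geometric lower bound on $\ell(\gamma),\ell(\gamma')$ is precisely what is needed to extract the twist-dependent estimate from Theorem \ref{thm:gluings}.
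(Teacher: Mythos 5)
Your proposal is correct and follows essentially the same route as the paper's proof: apply Lemma~\ref{lem:disjoint half-collars} to the components of $X_{n+1}-X_n$ to produce a pairwise-disjoint family of nonstandard half-collars, glue across each boundary geodesic $\alpha$ to obtain the nonstandard collars $R^{t(\alpha)}_{\alpha,\gamma,\gamma'}$, invoke Theorem~\ref{thm:gluings} together with Remark~\ref{rem:expansion} (formula~(\ref{eq: lower bound without min})) for the twist-dependent lower bound on extremal distance, and feed the result into the modular test. The only place you go beyond the paper's sketch is the explicit treatment of the case $\ell(\alpha)\le\ell_0$ via the serial rule and Corollary~\ref{cor:asymp}, which is a welcome piece of rigor since Theorem~\ref{thm:gluings} is stated only for $\ell(\alpha)\ge\ell_0\ge2$.
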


\begin{proof} By using  Lemma \ref{lem:disjoint half-collars} on the given pants decomposition we can conclude that the boundary components of $X_n$ have  disjoint nonstandard half-collars. Since the boundary components of $X_{n+1}-X_{n}$ also have disjoint nonstandard  half-collars, they glue  together to allow us to  conclude that the   boundary geodesics of each $X_n$  have nonstandard collars that   are pairwise disjoint.  The assumption on  each connected component $Y$ implies that the nonstandard collars around the boundaries of $X_n$ are simultaneously disjoint for all $n$. 
Using the estimates for the extremal length of nonstandard collars coming from Theorem \ref{thm:gluings} and Remark  \ref{rem:expansion}, formula (\ref{eq: lower bound without min})  in terms of length and twist allows us to conclude the result.
\end{proof}

If we set 
\begin{align*}
L_n&=\text{max}\{\ell (\alpha): \alpha \in \partial_0 X_n\},\\
\tau_n&=\text{min}\{|t_n(\alpha)|: \alpha \in \partial_0 X_n\},
\end{align*} 
we obtain the following corollary.

\begin{corollary}Let $X$ be as in Theorem \ref{thm:general_parabolicwithtwist}.  Then $X$ is parabolic if
\begin{equation}\label{eqn:div-sum}
\sum_{n=1}^{\infty}\frac{1}{|\partial_0 X_n|
e^{(1- \tau_n)\frac{L_n}{2}}}=\infty.
\end{equation}
\label{cor:p_twist}
\end{corollary}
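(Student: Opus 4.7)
The plan is to derive this corollary as a direct consequence of Theorem \ref{thm:general_parabolicwithtwist} by a termwise comparison of the two series. Specifically, I will show that the divergence of the series in (\ref{eqn:div-sum}) implies the divergence of the series in (\ref{eq:general_parabolicwithtwist}), so that Theorem \ref{thm:general_parabolicwithtwist} applies and yields parabolicity. The topological and geometric hypotheses on the exhaustion $\{X_n\}$ of Theorem \ref{thm:general_parabolicwithtwist} are inherited from the hypothesis of the corollary.

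The key observation is the following pair of elementary estimates, valid for every $\alpha\in\partial_0 X_n$: by definition of $L_n$ and $\tau_n$ we have $\ell(\alpha)\leq L_n$ and $|t(\alpha)|\geq \tau_n$, so $1-|t(\alpha)|\leq 1-\tau_n$. Since both factors $(1-|t(\alpha)|)$ and $\ell(\alpha)$ are non-negative (recall $|t(\alpha)|\leq 1/2$), multiplying these inequalities gives
\[
(1-|t(\alpha)|)\,\tfrac{\ell(\alpha)}{2}\;\leq\;(1-\tau_n)\,\tfrac{L_n}{2},
\]
and exponentiating yields
\[
e^{(1-|t(\alpha)|)\ell(\alpha)/2}\;\leq\;e^{(1-\tau_n)L_n/2}.
\]

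Summing this inequality over the $|\partial_0 X_n|$ boundary components of $X_n$ gives
\[
\sum_{\alpha\in\partial_0 X_n}e^{(1-|t(\alpha)|)\ell(\alpha)/2}\;\leq\;|\partial_0 X_n|\,e^{(1-\tau_n)L_n/2},
\]
and taking reciprocals reverses the inequality, so
\[
\frac{1}{\sum_{\alpha\in\partial_0 X_n}e^{(1-|t(\alpha)|)\ell(\alpha)/2}}\;\geq\;\frac{1}{|\partial_0 X_n|\,e^{(1-\tau_n)L_n/2}}.
\]
Therefore, if (\ref{eqn:div-sum}) holds, then the series in (\ref{eq:general_parabolicwithtwist}) also diverges, and Theorem \ref{thm:general_parabolicwithtwist} concludes that $X$ is of parabolic type.

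There is no real obstacle here: the content of the corollary is a uniform (worst-case) version of the main theorem, obtained purely by the monotonicity of the exponential and termwise comparison of series. The only subtlety worth flagging is that one must verify that the product bound $(1-|t(\alpha)|)\ell(\alpha)\leq (1-\tau_n)L_n$ really does use only the signs of the two factors, which is why the parametrization of twists by $(-\tfrac12,\tfrac12]$ (making $1-|t(\alpha)|\geq \tfrac12>0$) matters implicitly.
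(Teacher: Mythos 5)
Your proof is correct and is the intended argument: the paper presents this as an immediate corollary of Theorem \ref{thm:general_parabolicwithtwist} without spelling out the details, and your termwise comparison via $\ell(\alpha)\leq L_n$, $1-|t(\alpha)|\leq 1-\tau_n$ (both non-negative) followed by summing and taking reciprocals is exactly what one must do. Your remark about positivity of $1-|t(\alpha)|$ is the right thing to check when multiplying the two inequalities.
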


If we assume that $|\partial_0 X_n| \asymp n^p$ for some $p\geq0$ and  $\tau_n \geq \tau > 0$ then by equation (\ref{eqn:div-sum}) $X$ would be parabolic if 
\begin{align*}
L_n\leq \frac{2}{1-\tau} \left[(1-p)\log n +\log\log n\right].
\end{align*}
In particular, we have the following two cases which will be useful in the discussion of abelian covers in Section  \ref{subsec: abelian covers}. Cases $(1)$ and $(2)$ correspond to abelian $\mathbb{Z}$ and $\mathbb{Z}^2$ covers.

\begin{example} We have the following sufficient conditions for parabolicity.
\begin{itemize}
\item[$(1)$] $|\partial_0 X_n| =O(1)$, $\tau_n \geq \tau > 0$, and $L_n\leq \frac{2}{1-\tau} \left[ \log n +\log\log n\right]$ 
\item[$(2)$] $|\partial_0 X_n| = O(n) $, $\tau_n \geq \tau > 0$, and $L_n\leq \frac{2}{1-\tau} \left[\log\log n\right]$ 
\end{itemize}
\end{example}

\begin{remark}
We note that increasing the  twist in equation (\ref{eq:general_parabolicwithtwist}) preserves divergence. 
More specifically, if $|t^{\prime}(\alpha)| \geq |t (\alpha)|$ for all 
$\alpha$, then parabolicity persists. 
\end{remark}

 The hypotheses of the next theorem are  somewhat different from the others in that we make an assumption about the existence of  half-collars with no relation to the length of the core geodesic. 
  
 \begin{theorem}\label{thm: collar width}
  Let $X$ be an infinite type hyperbolic surface with an exhaustion $\{X_n\}$ so that  $\partial X_n$ has one boundary component and  is contained in a   half-collar of width  $\epsilon_n$. Set $\ell_n = \ell(\partial X_n)$.  If 
 \begin{equation} \label{ }
\sum \frac{\arctan (\sinh \epsilon_n)}{\ell_n}=\infty.
\end{equation}
Then $X$ is parabolic. 
 \end{theorem}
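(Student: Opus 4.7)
The plan is to combine a minor generalization of Lemma \ref{lem:collar} with the Modular test (Proposition \ref{prop:finite area parabolicity char.}) and the serial rule for extremal length. The first step is to observe that the computation in the proof of Lemma \ref{lem:collar} extends verbatim to arbitrary widths: for a half-collar $R$ of width $\rho$ about a simple closed geodesic of length $\ell$, the extremal distance between its two boundary components equals $\arctan(\sinh \rho)/\ell$. Indeed, lifting $R$ to $\HH$ with the core geodesic on the positive imaginary axis, $R$ lifts to an annular sector of opening angle $\theta$ satisfying $\tan\theta = \sinh\rho$, and the standard annular-sector formula then gives extremal distance $\theta/\ell$.

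Next, for each $n$ I would fix a half-collar $B_n$ of width $\epsilon_n$ (or slightly shrunk, if required) about $\partial X_n$, arranged so that the family $\{B_n\}$ is pairwise disjoint and so that each $B_k$ separates $\partial X_1$ from $\partial X_n$ in $X_n \setminus X_1$ whenever $k<n$. The single-boundary-component hypothesis makes this clean: each complementary region $X_{n+1}\setminus X_n$ is a connected geodesic subsurface whose only boundary components are $\partial X_n$ and $\partial X_{n+1}$, so the two half-collars can be placed inside $X_{n+1}\setminus X_n$ on opposite sides and arranged not to overlap.

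By the first step, $\lambda(B_k)=\arctan(\sinh \epsilon_k)/\ell_k$. The serial rule (as used before Proposition \ref{prop:parabolic}) then gives
\begin{equation*}
\lambda_{X_n-X_1}(\partial X_1,\partial X_n)\;\geq\;\sum_{k=1}^{n-1} \lambda(B_k)\;=\;\sum_{k=1}^{n-1} \frac{\arctan(\sinh \epsilon_k)}{\ell_k},
\end{equation*}
and the divergence assumption forces $\lambda_{X_n-X_1}(\partial X_1,\partial X_n)\to \infty$. The Modular test (Proposition \ref{prop:finite area parabolicity char.}) then yields that $X$ is parabolic.

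The main technical point is the disjointness arrangement in step two: one must verify that any shrinking of widths used to achieve disjointness does not spoil the divergence of the relevant series. The natural way to ensure this is to interpret the theorem's hypothesis as providing half-collars that already embed disjointly inside the complementary regions $X_{n+1}\setminus X_n$, an interpretation consistent with the exhaustion structure since $\partial X_n$ is a single geodesic bounded away from $\partial X_{n\pm 1}$; alternatively, one can pass to sub-half-collars of half the width and observe that the hypothesis still yields a divergent lower bound via monotonicity of $\rho\mapsto\arctan(\sinh\rho)$.
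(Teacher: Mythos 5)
Your proposal matches the paper's proof: both extend the annular-sector computation of Lemma \ref{lem:collar} to a half-collar of arbitrary width $\rho$, obtaining extremal distance $\arctan(\sinh\rho)/\ell$, and then invoke the serial rule and modular test (packaged in the paper as Proposition \ref{abstract theorem on collars}, which applies directly since $\partial_0 X_n$ is a single curve). The disjointness of the half-collars, which you flag and then handle somewhat informally, is likewise left implicit in the paper's one-paragraph sketch.
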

 
 \begin{remark}
 The main application here is to  situations  where  the length of the core geodesic is  going  to infinity but the collar width is larger  than the standard collar width.  For example, this happens  if there is a lower bound on the width for all $n$ (see Theorem \ref{thm:top. covering group}).
\end{remark}
 
 \begin{proof}
 In  the proof of Lemma \ref{lem:collar}  using $\epsilon_n$ for the collar width instead of the standard collar width $r(\ell_n/2)$ yields that the curve family joining the  boundary components of the half-collar  has extremal length 
 $\frac{\arctan (\sinh \epsilon_n)}{\ell_n}$. The theorem now follows by applying Proposition  \ref{abstract theorem on collars}  to this  case.  The details are left to the reader. 
 \end{proof}
 

\section{Flute surfaces and pants decomposition}
\label{sec:flute}

In this section we apply  results of previous sections to obtain sufficient conditions for parabolicity for    tight flute surfaces. We say that $X$ is a \textit{tight flute surface} if it can be constructed by consecutively gluing a sequence of tight pairs of pants $P_n$ (see Figure \ref{fig:tight-flute}). More specifically, given a sequence $\ell_n>0$ of lengths  and a sequence $t_n\in[-1/2,1/2)$ of twists, $n\geq 1$, we define the corresponding tight flute surface 
$$X=X(\{\ell_n,t_n\})$$ as follows. 

%
%

Let $P_0$ be a tight pair of geodesic pants with two punctures and one  boundary geodesic $\alpha_1$ of length $\ell (\alpha_1)=\ell_1$. For $n\geq 1$, let $P_n$ be a tight pair of geodesic pants with one puncture and two  boundary geodesics $\alpha_n$ and $\alpha_{n+1}$ of lengths $\ell (\alpha_n)=\ell_n$ and $\ell (\alpha_{n+1})=\ell_{n+1}$.
In particular, $P_{n-1}$ and $P_{n}$ both have boundary geodesics of length $\ell_n$ which (abusing the notation slightly) will both be denoted by $\alpha_n$. Then we glue by an isometry $P_{n-1}$ to $P_n$ along $\alpha_{n}$ for $n\geq 1$. 

The choice in the gluing of $P_{n-1}$ and $P_n$ is given by a (relative or angle) \emph{twist parameter} $t_{n}\in [-1/2,1/2)$ (see Section \ref{sec: Contents, notation, and convention}). The \textit{absolute twist}  $|t_n|\ell_n$ is the shortest distance between the feet of the orthogeodesics $\gamma_{n-1}''$ and $\gamma_n'$  to $\alpha_n$ (see Section \ref{sec: Contents, notation, and convention} for the definition and Figure \ref{fig:tight-flute} for the notation). The surface $X$ obtained by these consecutive gluings is called a tight flute surface (see \cite{Basmajian}). We denote by $X(\{ \ell_n,t_n\})$ the resulting hyperbolic surface. The surface $X(\{ \ell_n,t_n\})$ may not be complete in which case its completion would contain geodesic half-planes and would not be of parabolic type.

One of the main applications of the general results of this paper is a  sufficient condition for parabolicity for flute surfaces. Specifically, from Corollary \ref{cor:twist1/2} and Proposition \ref{prop:parabolic} we obtain the following.

\begin{theorem}\label{thm:flute-twist}
A tight flute surface $X(\{\ell_n,t_n\})$, with $\ell_n\geq \ell_0>0$ is of parabolic  type  if
\begin{equation}\label{eqn:parbolicity}
\sum_{n=1}^{\infty}e^{-\left(1- |t_n|\right)\frac{\ell_n}{2}}=\infty.
\end{equation}
\end{theorem}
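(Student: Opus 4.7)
The plan is to apply the modular test for parabolicity to a natural exhaustion of the flute surface, with the key extremal distance bounds coming from Corollary \ref{cor:twist1/2}. I would take
$$X_n = P_0 \cup P_1 \cup \cdots \cup P_{n-1},$$
which is a finite area geodesic subsurface having exactly one geodesic boundary component, namely $\alpha_n$, so $\partial_0 X_n = \{\alpha_n\}$. For $n \geq 2$, around $\alpha_n$ I would form the nonstandard collar
$$R_n = R^{t_n}_{\alpha_n,\gamma_{n-1}'',\gamma_n'}$$
obtained by gluing the nonstandard half-collar in $P_{n-1}$ along the orthoray $\gamma_{n-1}''$ from $\alpha_n$ to the puncture of $P_{n-1}$ with the nonstandard half-collar in $P_n$ along the orthoray $\gamma_n'$ from $\alpha_n$ to the puncture of $P_n$. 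Clearly $R_n \subset P_{n-1} \cup P_n \subset X_{n+1}-X_{n-1}$.

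The critical geometric point is that the collars $\{R_n\}_{n\geq 2}$ are pairwise disjoint. Inside any single tight pair of pants $P_k$, the two nonstandard half-collars about its geodesic boundary components constructed via orthorays to the puncture are disjoint, by the remark preceding Lemma \ref{lem:disjoint half-collars}. Applying this inside $P_n$, the piece of $R_n$ lying in $P_n$ and the piece of $R_{n+1}$ lying in $P_n$ do not meet, while the remaining halves sit in different pants. Hence the $R_n$ are mutually disjoint.

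Next I would estimate $\lambda(R_n)$ from below. Since $\ell(\gamma_{n-1}'')=\ell(\gamma_n')=\infty$, Corollary \ref{cor:twist1/2} gives, whenever $\ell_n \geq 2$,
$$\lambda(R_n) \geq C\, e^{-(1-|t_n|)\ell_n/2}.$$
For $\ell_0 \leq \ell_n < 2$, I would use that a nonstandard half-collar whose orthogeodesic ends at a cusp contains the corresponding standard half-collar, so $R_n$ contains the entire standard collar about $\alpha_n$. By Lemma \ref{lem:collar} this standard collar has extremal distance $\tfrac{2}{\ell_n}\arctan\!\bigl(1/\sinh(\ell_n/2)\bigr)$, which is bounded below by a positive constant depending only on $\ell_0$ on $[\ell_0,2]$. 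Since $e^{-(1-|t_n|)\ell_n/2}\leq 1$ always, this yields the uniform lower bound $\lambda(R_n) \gtrsim e^{-(1-|t_n|)\ell_n/2}$ valid for every $n$, with implicit constant depending only on $\ell_0$.

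Finally I would conclude via Proposition \ref{prop:finite area parabolicity char.}. The serial rule applied to the disjoint annuli $R_2,R_3,\ldots,R_{n-1}$, each of which separates $\partial X_1$ from $\partial X_n$ inside $X_n - X_1$, gives
$$\lambda_{X_n-X_1}(\partial X_1,\partial X_n) \;\geq\; \sum_{k=2}^{n-1}\lambda(R_k) \;\gtrsim\; \sum_{k=2}^{n-1} e^{-(1-|t_k|)\ell_k/2},$$
and the right-hand side tends to infinity by hypothesis \eqref{eqn:parbolicity}. The modular test then shows $X$ is of parabolic type. The only points requiring care are verifying disjointness of the nonstandard collars (via the specific choice of orthorays to the cusps in each $P_k$) and patching the small-$\ell_n$ regime not directly covered by Corollary \ref{cor:twist1/2}; with these in place, the conclusion is a direct serial-rule computation.
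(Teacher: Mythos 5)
Your proposal follows essentially the same route as the paper's own proof: the same exhaustion $X_n = \bigcup_{i=0}^{n-1} P_i$, the same nonstandard collars $R^{t_n}_{\alpha_n,\gamma_{n-1}'',\gamma_n'}$ with the same disjointness observation, and the same appeal to Corollary~\ref{cor:twist1/2} followed by the serial rule / modular test. If anything you are slightly more careful than the paper in two minor places: you explicitly patch the range $\ell_0 \leq \ell_n < 2$ not directly covered by the hypotheses of Corollary~\ref{cor:twist1/2} (by nesting the standard collar inside the nonstandard one when the orthogeodesic runs to a cusp), and you explicitly invoke Proposition~\ref{prop:finite area parabolicity char.} since the $X_n$ have cusps rather than compact boundary. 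Both refinements are consistent with the paper's framework, so this is the same proof.
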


\begin{proof}
Let $R_{\alpha_{n},\gamma_{n}'}$ and $R_{\alpha_{n+1},\gamma_{n}''}$ be the nonstandard half-collars in $P_n$ about $\alpha_{n}$ and $\alpha_{n+1}$, where $\gamma_{n}'$ and $\gamma_{n}''$ are as in Figure \ref{fig:tight-flute}. The bi-infinite simple geodesic $\beta_n\subset P_n$ from the puncture to itself is the common boundary of these half-collars.  
Then $R_{\alpha_{n},\gamma_{n}'}$ and $R_{\alpha_{n+1},\gamma_{n}''}$ are disjoint and 
$R_{\alpha_{n},\gamma_{n}'}\cup \beta_n \cup R_{\alpha_{n+1},\gamma_{n}''} = P_n$. 

Let $R_{\alpha_{n},\gamma_{n-1}'',\gamma_n'}^{t(\alpha_n)}$ be the nonstandard collar around $\alpha_n$ in $P_{n-1}\cup P_{n}$. Thus $R_{\alpha_{n},\gamma_{n-1}'',\gamma_n'}^{t(\alpha_n)} = R_{\alpha_{n},\gamma_{n-1}''} \cup \alpha_n \cup R_{\alpha_{n},\gamma_n'}$. If  $X_n =\cup_{i=0}^{n-1} P_i$ then 
$R_{\alpha_{n},\gamma_{n-1}'',\gamma_n'}^{t(\alpha_n)}\subset X_{n+1}- X_{n-1}$. The boundary components of $R_{\alpha_{n},\gamma_{n-1}'',\gamma_n'}^{t(\alpha_n)}$ are $\beta_{n-1}\subset X_n$ and $\beta_n\subset X_{n+1}-X_n$. The interiors of $R_{\alpha_{n},\gamma_{n-1}'',\gamma_n'}^{t(\alpha_n)}$ and $R_{\alpha_{n-1},\gamma_{n-2}'',\gamma_{n-1}'}^{t(\alpha_{n-1})}$ are disjoint and Proposition \ref{prop:parabolic} applies.

If $\lambda_n $ is the extremal distance between the boundary components $\beta_{n-1}$ and $\beta_n$ of $R_{\alpha_{n},\gamma_{n-1}'',\gamma_n'}^{t(\alpha_n)}$   then by Corollary \ref{cor:twist1/2} we have 
$$\sum_n\lambda_n=\sum_n\lambda(R_{\alpha_{n},\gamma_{n-1}'',\gamma_n'}^{t(\alpha_n)}) \geq C \sum_n e^{-(1-|t_n|)\ell_n/2}.$$ Thus (\ref{eqn:parbolicity}) implies that $\sum_{n}\lambda_n=\infty$ and $X$ is of parabolic type by Proposition \ref{prop:parabolic}.
%
%
%
%
%
%
%
%
%
%
%
%
%
%
\end{proof}

\begin{remark}
Note that in the case of a flute surface $X$ even though $X_{n+1}-X_n$ is a pair of pants the nonstandard collars around the geodesics $\alpha_n$ are disjoint. This is a consequence of the fact that pairs of pants $\{ P_n\}$ are glued in a chain by attaching one boundary component of $P_{n+1}$ to the boundary component of $X_n$. 
Thus the  nonstandard collars have disjoint interior. Therefore we do not need to apply Lemma \ref{lem:disjoint half-collars} to flute surfaces.
\end{remark}

%
%

Since $|t_n|\in[0,1/2]$  we obtain the following ``twist-free" corollary of Theorem \ref{thm:flute-twist}.

\begin{corollary}
A tight flute surface $X(\{ \ell_n,t_n\})$ is of parabolic type,  independent of twists, if 
\begin{align}\label{eqn:zero-twists}
\sum_n e^{-\ell_n/2}=\infty.
\end{align}
\end{corollary}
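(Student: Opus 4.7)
The plan is to derive this corollary directly from Theorem \ref{thm:flute-twist} by a termwise comparison. Since $t_n \in [-\tfrac{1}{2}, \tfrac{1}{2}]$ and hence $|t_n| \in [0, \tfrac{1}{2}]$, we have $1 - |t_n| \leq 1$, which gives the pointwise inequality
$$
e^{-(1-|t_n|)\ell_n/2} \;\geq\; e^{-\ell_n/2}
$$
for every $n$. Consequently, if $\sum_n e^{-\ell_n/2}=\infty$ then $\sum_n e^{-(1-|t_n|)\ell_n/2}=\infty$ as well, regardless of the values of the twists. Provided that $\ell_n \geq \ell_0 > 0$ for some fixed $\ell_0$, we may then invoke Theorem \ref{thm:flute-twist} to conclude that $X(\{\ell_n,t_n\})$ is of parabolic type.

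It remains to handle the case where $\{\ell_n\}$ has no positive lower bound. In that case some subsequence $\ell_{n_k}$ is bounded above by a positive constant, and as noted in the introduction (and as follows, for instance, from the fact that the extremal distance across the standard collar of $\alpha_{n_k}$ is comparable to $1/\ell_{n_k}$, hence contributes unboundedly large summands in the serial-rule lower bound of Proposition \ref{prop:parabolic}), $X$ is automatically of parabolic type. Thus the hypothesis $\sum_n e^{-\ell_n/2}=\infty$ is not even needed in this regime, and the corollary holds in all cases.

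The argument is essentially routine once Theorem \ref{thm:flute-twist} is available; the only minor subtlety is the reduction to $\ell_n \geq \ell_0$, which is handled by the parabolicity of flute surfaces with bounded subsequences of lengths.
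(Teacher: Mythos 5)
Your argument is correct and matches the paper's (implicit) proof: the corollary follows from Theorem~\ref{thm:flute-twist} by the termwise comparison $e^{-(1-|t_n|)\ell_n/2}\geq e^{-\ell_n/2}$, which holds since $|t_n|\leq\tfrac12$. You go a bit further than the paper by explicitly addressing the case where $\{\ell_n\}$ has no positive lower bound --- a hypothesis Theorem~\ref{thm:flute-twist} needs but the corollary does not restate --- and you dispose of it correctly using the standard-collar estimate of Lemma~\ref{lem:collar} together with Proposition~\ref{prop:parabolic} (as also asserted in the introduction); the only tiny imprecision is that ``unboundedly large summands'' presupposes $\ell_{n_k}\to 0$, but this is guaranteed when $\inf\ell_n=0$, and in any case a bounded-above subsequence already gives a uniform positive lower bound on the corresponding collar extremal distances, so the serial-rule sum diverges either way.
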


In the case of zero-twist flute surfaces Theorem \ref{thm:flute-twist} gives a complete characterization of parabolicity. We denote by $\delta_n$ the orthogeodesic between $\alpha_n$ and $\alpha_{n+1}$ and by $\ell (\delta_n)$ its length.

\begin{theorem}[Zero-twist flutes]\label{thm:zero-twists}
 Let $X=X(\{\ell_n,0\})$ be a zero-twist tight flute surface. The following are equivalent 
 \begin{enumerate}
\item $X$ is of parabolic type,
\item $X$ is complete, 
\item $\sum_n \ell (\delta_n)=\infty$,
\item $\sum_{n=1}^{\infty}e^{-\ell_{n}/2}=\infty$.
 \end{enumerate}
 \end{theorem}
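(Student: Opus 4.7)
The plan is to close the cycle $(4) \Rightarrow (1) \Rightarrow (2) \Leftrightarrow (3) \Rightarrow (4)$, yielding all pairwise equivalences.

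For $(4) \Rightarrow (1)$, I would split into cases. If $\liminf_n \ell_n < \infty$, then an infinite subsequence satisfies $\ell_{n_k} \leq M$ for some $M$; by Lemma~\ref{lem:collar} their disjoint standard collars have extremal distance bounded below by a positive constant, so Proposition~\ref{prop:parabolic} applied to these collars alone yields parabolicity. Otherwise $\ell_n \to \infty$, and Theorem~\ref{thm:flute-twist} applies directly with all twists zero. For $(1) \Rightarrow (2)$, I would invoke the structural fact recalled in Section~\ref{sec: Contents, notation, and convention}: if $X$ is incomplete, its metric completion has bi-infinite geodesic boundary, and attaching geodesic half-planes produces a complete surface $\widehat{X} \supset X$ in which an open set of positive measure of geodesics escapes into the half-planes, so the ideal boundary of $X$ has positive capacity and $X$ cannot be of parabolic type. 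The equivalence $(2) \Leftrightarrow (3)$ is the classical characterization of completeness for zero-twist flute surfaces in terms of the orthogeodesic chain $\{\delta_n\}$ (see \cite{Basmajian,Basmajian-Saric}): finiteness of $\sum \ell(\delta_n)$ produces a Cauchy sequence escaping along the chain with no limit, while infiniteness precludes any such escape.

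The essential computation is $(3) \Rightarrow (4)$. I would apply the right-angled hexagon formula, as used in Corollary~\ref{cor:r(d)tol}, to the tight pair of pants $P_n$ (one cusp, boundary geodesics of lengths $\ell_n, \ell_{n+1}$) to obtain
\[
\cosh \ell(\delta_n) \;=\; \coth(\ell_n/2)\coth(\ell_{n+1}/2) + \frac{1}{\sinh(\ell_n/2)\sinh(\ell_{n+1}/2)} \;=\; \frac{\cosh(\ell_n/2)\cosh(\ell_{n+1}/2)+1}{\sinh(\ell_n/2)\sinh(\ell_{n+1}/2)}.
\]
Subtracting $1$ and using $\cosh a \cosh b - \sinh a \sinh b = \cosh(a-b)$ together with $\cosh x - 1 = 2\sinh^2(x/2)$ would reduce this to
\[
\sinh\bigl(\ell(\delta_n)/2\bigr) \;=\; \frac{\cosh\bigl((\ell_n - \ell_{n+1})/4\bigr)}{\sqrt{\sinh(\ell_n/2)\sinh(\ell_{n+1}/2)}}.
\]
A direct case analysis (distinguishing $|\ell_n - \ell_{n+1}|$ large versus small) would give the uniform comparison $\sinh(\ell(\delta_n)/2) \asymp e^{-\min(\ell_n,\ell_{n+1})/2}$, and hence $\ell(\delta_n) \asymp e^{-\ell_n/2} + e^{-\ell_{n+1}/2}$ whenever $\ell_n, \ell_{n+1}$ are bounded below. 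If $\liminf \ell_n < \infty$ then (4) holds trivially; otherwise the comparison gives $\sum \ell(\delta_n) \asymp \sum e^{-\ell_n/2}$ after reindexing, so (3) would force (4).

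The main obstacle I anticipate is the uniform trigonometric control underlying $\ell(\delta_n) \asymp e^{-\ell_n/2} + e^{-\ell_{n+1}/2}$: ensuring the implied constants do not degenerate when $\ell_n$ and $\ell_{n+1}$ are very different will require splitting the ratio $\cosh((\ell_n-\ell_{n+1})/4)/\sqrt{\sinh(\ell_n/2)\sinh(\ell_{n+1}/2)}$ into cases and carefully combining the resulting estimates. The remaining implications rest on earlier results in the paper and the standard background theory of incomplete hyperbolic surfaces.
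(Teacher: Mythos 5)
Your proof is correct and closes the same cycle of implications as the paper's proof of Theorem~\ref{thm:zero-twists}; the differences are cosmetic. You handle the case $\liminf_n \ell_n < \infty$ explicitly before invoking Theorem~\ref{thm:flute-twist} (which requires a uniform lower bound $\ell_n\geq\ell_0>0$), whereas the paper leaves this case implicit, and for $(3)\Leftrightarrow(4)$ you derive $\ell(\delta_n)\asymp e^{-\ell_n/2}+e^{-\ell_{n+1}/2}$ via the full right-angled hexagon identity rather than applying relation (\ref{eq: 3 quantities}) separately to the two subarcs of $\delta_n$ cut off by the bi-infinite geodesic $\beta_n$ --- both routes give the same asymptotic and hence the same equivalence of the sums.
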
 

\begin{proof} Since every incomplete surface carries a Green's function it is not parabolic, and therefore $(1)\Rightarrow(2)$. To show that $(2)\Rightarrow (3)$, note that if $t_n=0$ then the orthogeodesics $\delta_n$ between $\alpha_{n}$ and $\alpha_{n+1}$ connect to each other at their endpoints to form a geodesic ray of length $\sum_{n=1}^{\infty}\ell (\delta_n)$ that leaves every compact subset of $X$. Therefore if $\sum_{n=1}^{\infty}\ell (\delta_n)<\infty$ then $X$ is incomplete. 

By (\ref{eq: 3 quantities}) in Section \ref{sec:non-standard-collar} we have $\ell (\delta_n)\asymp e^{-\ell_n/2}$ if $\ell (\gamma )=\infty$ and $\ell_n\to\infty$ and hence $(3)\Leftrightarrow(4)$. Finally, by Theorem \ref{thm:flute-twist} we have $(4)\Rightarrow(1)$ since $t_n=0$.
\end{proof}

\begin{example}
Let $X=X(\{\ell_n,0\})$ be a zero-twist surface.
\begin{itemize}
\item[(1)] Suppose $\lim_{n\to\infty} \frac{\ell_n}{\ln n} = c\geq0$. Then $X$ is parabolic if $c\in[0,2)$, not parabolic if $c\in(2,\infty)$, and $X$ could be either parabolic or not parabolic if $c=2$, see e.g. $(3)$ below.
\item[(2)] Let $\ell_n=c\ln n$. Then $X$ is parabolic if and only if $c\in(0,2]$.
\item[(3)] Let $\ell_n=2\ln n +c\ln(\ln n)$. Then $X$ is parabolic if and only if $c\in[0,2]$.
\end{itemize} 
\end{example}


\subsection{Incomplete half-twist tight flutes}
A tight flute surface all of whose twists are $1/2$ is called a {\it half-twist} tight flute. 
By Theorem \ref{thm:flute-twist} we have that 
a half-twist surface $X(\{\ell_n,1/2\})$ is parabolic if 
\begin{align}\label{eqn:half-twists}
\sum_n e^{-\frac{\ell_n}{4}}=\infty.
\end{align}
Even though we do not know if (\ref{eqn:half-twists}) is equivalent  to parabolicity of $X$, in general we will show that this is the case under some mild assumptions on the lengths $\ell_n$, see Theorem \ref{thm:half-twists}. We also provide examples illustrating how our sufficient conditions for parabolicity and non-parabolicity (in fact incompleteness) complement each other, see Example \ref{example:parameters}.
%
%
%
To do this we first obtain a sufficient condition for $X$ to be incomplete and hence not of parabolic type.

As mentioned above every parabolic surface is necessarily complete. However, there are examples of complete flute surfaces which are not parabolic, cf., \cite{Kinjo}. Also,  in  \cite{Basmajian-Saric} it was shown that for any choice of lengths $\{ \ell_n\}$ there is a choice of twists $\{ t_n\}$ such that $X(\{ \ell_n,t_n\})$ is complete.
This choice is not constructive and given an explicit choice of twists $\{ t_n\}$ it is difficult to decide whether a surface is complete or incomplete.
Thus it is natural to look for conditions implying completeness or incompleteness in terms of the Fenchel-Nielsen parameters, $\{\ell_n\}$ and $\{t_n\}$.

One might expect that given a sequence  of lengths $\{\ell_n\}$  then the choice of largest possible twists, i.e., half-twists $|t_n|={1}/{2}$ is
``the best possible'' choice of twists for  $X=X(\{ \ell_n,t_n\})$ implying completeness (see 
Matsuzaki \cite{Matsuzaki-complete} for a related choice $t_n={1}/{4}$). In a sense, our next result identifies a class of surfaces for which this is not true.

\begin{figure}[t]
\begin{center}
\includegraphics[width=4.5in]{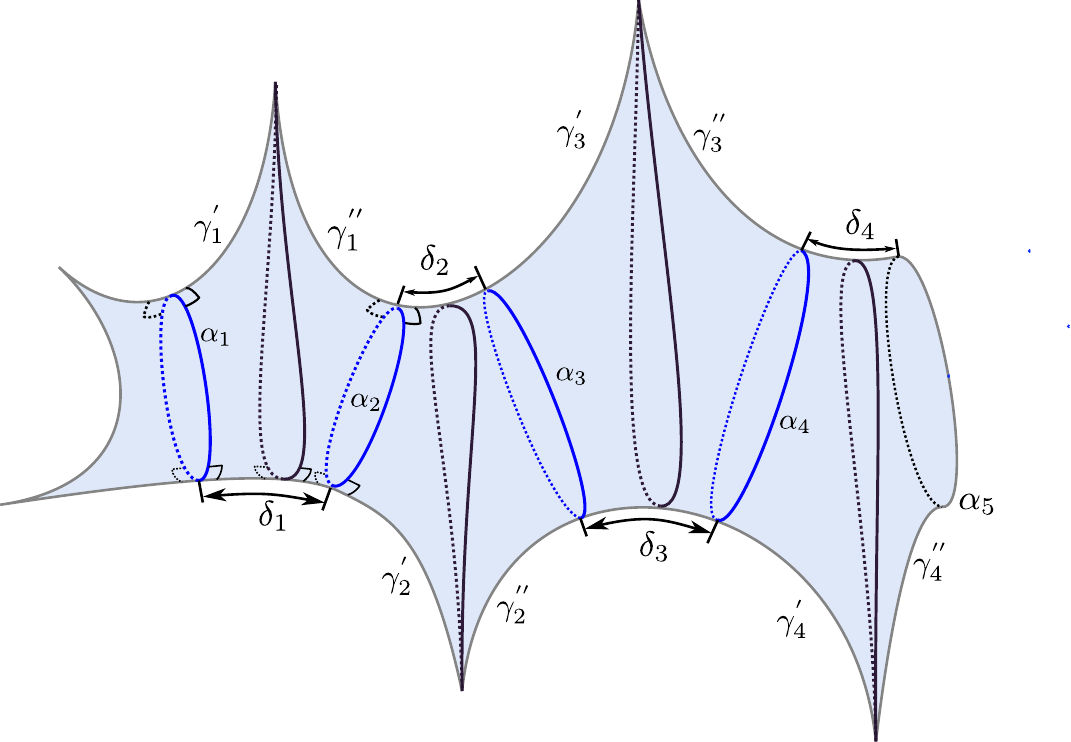}
\caption{A half-twist tight flute surface $X=X(\{\ell_n,1/2\})$. Note that $\ell_n$ and $d_n$ denote the lengths of $\alpha_n$ and the orthogeodesic $\delta_n$, respectively.}
\label{fig:half-twist}
\end{center}
\end{figure} 

\begin{theorem}
\label{thm:half-twist-incomplete}
A half-twist tight flute surface $X(\{ \ell_n,\frac{1}{2}\})$ is incomplete if   
\begin{align}\label{eqn:half-twist-incomplete}
\sum_n e^{-\frac{\sigma_n}{2}}<\infty,
\end{align}
where $\sigma_n = \ell_n-\ell_{n-1}+\cdots +(-1)^{n-1}\ell_1$.
\end{theorem}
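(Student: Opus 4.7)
\textit{Plan.} To prove incompleteness we construct a Cauchy sequence $(p_n)\subset X$ that does not converge in $X$; equivalently, a finite-length path in $X$ leaving every compact subset. We lift the pants $P_0,P_1,\ldots$ to lifts $\widetilde P_0,\widetilde P_1,\ldots$ in $\mathbb{H}^2$ sharing geodesic boundaries $\widetilde\alpha_n$, and follow the iterated effect of the half-twist gluings.

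\textit{Half-twist bookkeeping.} In any tight pair of pants, the feet on $\alpha_{n+1}$ of the orthogeodesics $\delta_n$ and $\gamma_n''$ (the latter going to the cusp of $P_n$) are antipodal on $\alpha_{n+1}$, and similarly for $\delta_{n+1}$ and $\gamma_{n+1}'$ on the $P_{n+1}$-side. Since $|t(\alpha_{n+1})|=1/2$ identifies the marked feet of $\gamma_n''$ and $\gamma_{n+1}'$, the feet of $\delta_n$ and $\delta_{n+1}$ on $\alpha_{n+1}$ are also antipodal, at hyperbolic distance $\ell_{n+1}/2$. Thus in the lift, $\widetilde\delta_{n+1}$ emanates from $\widetilde\alpha_{n+1}$ at a signed shift $\pm\ell_{n+1}/2$ from the endpoint of $\widetilde\delta_n$, the sign being forced to alternate because $\widetilde P_n$ and $\widetilde P_{n+1}$ lie on opposite sides of $\widetilde\alpha_{n+1}$, and the cusps which serve as references for the marked feet lie on opposite sides as well.

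\textit{Iteration via M\"obius normalization.} We normalize at each step by a M\"obius transformation sending $\widetilde\alpha_n$ to the imaginary axis and the starting foot of $\widetilde\delta_n$ to $i$. Then $\widetilde\delta_n$ becomes the unit semicircle, meeting $\widetilde\alpha_{n+1}$ at $e^{i\theta_n}$ with $\theta_n=2\arctan(e^{-d_n})$, while the half-twist shift along $\widetilde\alpha_{n+1}$ becomes the hyperbolic translation $z\mapsto e^{\pm\ell_{n+1}/2}z$ in the next normalization. Composing these transformations and tracking the cumulative scaling yields, after sign cancellation, a net multiplicative factor of $e^{\pm\sigma_n/2}$ where $\sigma_n=\ell_n-\ell_{n-1}+\cdots+(-1)^{n-1}\ell_1$; the hexagon-trigonometric corrections (multiplicative errors of the form $1+O(e^{-d_k})$ at each step) are harmless. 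Consequently, the hyperbolic distance between successive natural choices of $\widetilde p_n\in\widetilde\alpha_n$ is $\lesssim e^{-\sigma_n/2}$.

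\textit{Conclusion and obstacle.} Projecting to $X$ we obtain $d_X(p_n,p_{n+1})\lesssim e^{-\sigma_n/2}$, so the hypothesis $\sum_n e^{-\sigma_n/2}<\infty$ makes $(p_n)$ Cauchy in $X$; its lifts converge to an ideal point of $\partial\mathbb{H}^2$ that is not a fixed point of any element of the pants-gluing group, hence $(p_n)$ has no limit in $X$ and $X$ is incomplete. The principal difficulty is the sign-sensitive M\"obius computation: one must verify carefully that the orientation convention arising from crossing successive $\widetilde\alpha_k$'s produces the alternating sum $\sigma_n$ rather than an accumulated sum of like-signed terms, and that the lower-order hexagon errors $O(e^{-d_k})$ at each step do not destroy the exponential estimate $e^{-\sigma_n/2}$.
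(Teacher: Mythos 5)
Your overall plan --- construct a finite-length path that escapes every compact set, exploiting that the half-twist places the feet of $\delta_n$ and $\delta_{n+1}$ at antipodal points of $\alpha_{n+1}$ and thereby produces the alternating sums $\sigma_n$ --- is the same as the paper's, and your half-twist bookkeeping is correct. But the decisive step is precisely the inequality you assert and then defer. You claim $d_X(p_n,p_{n+1})\lesssim e^{-\sigma_n/2}$ for ``natural'' choices of $p_n\in\alpha_n$, yet you never specify the points $p_n$, never exhibit a path between them whose length you are bounding, and you explicitly concede in your final paragraph that the sign analysis which would produce the \emph{alternating} sum (rather than an all-positive sum) and the control of the multiplicative error terms remain to be done. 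A ``cumulative scaling factor'' extracted from a composition of M\"obius normalizations is not, by itself, a distance between two specified points of $X$; and the quantity $d_k$ in your $O(e^{-d_k})$ is undefined. The closing remark about the limit point not being a fixed point of the covering group is also an unnecessary detour: a finite-length path that escapes every compact set already exhibits incompleteness directly.

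The paper makes the estimate concrete by invoking the front-to-back orientation-reversing isometry of $X$, whose fixed-point locus is the union of the alternating concatenations $\gamma_1'\cup\gamma_1''\cup\delta_2\cup\gamma_3'\cup\cdots$ and $\delta_1\cup\gamma_2'\cup\gamma_2''\cup\delta_3\cup\cdots$, so one can work entirely in the front pentagon $\Sigma_n$ of each pair of pants. In $\Sigma_n$ one builds a Saccheri quadrilateral with base $\delta_n$ and equal legs of length $\sigma_n/2$ lying on $\alpha_n$ and $\alpha_{n+1}$ (the telescoping of leg lengths is exactly where your antipodal-foot observation is used), and the identity $\sinh(s_n/2)=\sinh(\ell(\delta_n)/2)\cosh(\sigma_n/2)$ combined with $\ell(\delta_n)\asymp e^{-\ell_n/2}+e^{-\ell_{n+1}/2}$ yields $s_n\asymp e^{-\sigma_{n+1}/2}+e^{-\sigma_{n-1}/2}$, with the signs and error terms automatically handled by the trigonometric formula. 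The concatenated summits form the escaping path, of total length $\sum_n s_n\asymp\sum_n e^{-\sigma_n/2}<\infty$. To complete your proof you must either carry out the M\"obius computation in full (tracking at each step which side of $\alpha_k$ the reference cusp lies on and proving the accumulated multiplicative errors stay bounded), or replace the sketch by this explicit quadrilateral construction; as written, the central estimate is stated but not proved.
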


Before proving Theorem \ref{thm:half-twist-incomplete} we discuss some of its consequences. An immediate corollary of Theorem \ref{thm:half-twist-incomplete} is that if a half-twist tight flute surface $X(\{ \ell_n,\frac{1}{2}\})$ is parabolic then
\begin{align}\label{eqn:half-twist-parabolic-necessary}
\sum_n e^{-\frac{\sigma_n}{2}}=\infty.
\end{align}
%
%

In view of the above it is natural to seek conditions on  $\{\ell_n\}$ so that  (\ref{eqn:half-twists}) and (\ref{eqn:half-twist-parabolic-necessary}) are equivalent (or not). First, note that if $\ell_n$ has a bounded subsequence then (\ref{eqn:half-twists}) holds and $X$ is parabolic. Therefore, to obtain a non-trivial condition for parabolicity we will always assume that $\ell_n\to\infty$. 


We say that $\{\ell_n\}$ is a \emph{concave sequence} if  and there is  non-decreasing concave function $f:[0,\infty)\to[0,\infty)$ such that $\ell_n=f(n)$ for $n\geq 0$. Equivalently, $\{\ell_n\}$ is concave if it is non-decreasing and for $n\geq 1$ the following holds:
\begin{align}\label{ineq:concave}
2 \ell_{n}\geq \ell_{n+1}+\ell_{n-1}.
\end{align}

For half-twist surfaces corresponding to concave sequences Theorem \ref{thm:half-twist-incomplete} gives the following characterization of parabolicity (analogous to Theorem \ref{thm:zero-twists} in the case of zero-twists).

\begin{theorem}\label{thm:half-twists}
Let $X=X(\{\ell_n,1/2\})$ such that $\{\ell_{n}\}$ is a concave sequence. Then the following are equivalent:
\begin{enumerate}
\item $X$ is parabolic,
\item $X$ is complete,
\item $\sum_n \sqrt{\ell (\delta_n)} =\infty$,
\item $\sum_n e^{-\ell_n/4} =\infty$.
\end{enumerate}
\end{theorem}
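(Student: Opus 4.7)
The plan is to prove the theorem via the cyclic chain $(4)\Rightarrow(1)\Rightarrow(2)\Rightarrow(4)$ together with the standalone equivalence $(3)\Leftrightarrow(4)$; the crux is a term-wise comparison between the alternating sum $\sigma_n$ from Theorem~\ref{thm:half-twist-incomplete} and $\ell_n/2$ under the concavity assumption.

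The implications $(1)\Rightarrow(2)$ and $(4)\Rightarrow(1)$ are quick. The first holds for any hyperbolic surface since an incomplete surface admits a Green's function and hence is not parabolic; the second is immediate from Theorem~\ref{thm:flute-twist} with $t_n=1/2$, as the divergence condition (\ref{eqn:parbolicity}) then reduces to $\sum e^{-\ell_n/4}=\infty$. For $(2)\Rightarrow(4)$ I argue the contrapositive: assume $\sum e^{-\ell_n/4}<\infty$; by Theorem~\ref{thm:half-twist-incomplete}, incompleteness of $X$ will follow once I show $\sum e^{-\sigma_n/2}<\infty$, and this in turn reduces to the term-wise comparison $e^{-\sigma_n/2}\asymp e^{-\ell_n/4}$. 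To establish the comparison, set $\Delta_k:=\ell_k-\ell_{k-1}$ with $\ell_0:=f(0)\geq 0$; concavity of $f$ becomes $\Delta_1\geq\Delta_2\geq\cdots\geq 0$. Pairing adjacent terms of the alternating sum gives
\begin{align*}
\sigma_{2m}   &= \Delta_{2m}+\Delta_{2m-2}+\cdots+\Delta_{2},\\
\sigma_{2m+1} &= \Delta_{2m+1}+\Delta_{2m-1}+\cdots+\Delta_{3}+\ell_1.
\end{align*}
The inequality $\Delta_{2j-1}\geq \Delta_{2j}$ yields the upper bound $\sigma_n\leq \ell_n/2$, while $\Delta_{2j}\geq \Delta_{2j+1}$ combined with the telescoping estimate $\sum_{j=1}^{m}(\Delta_{2j-1}-\Delta_{2j+1})=\Delta_1-\Delta_{2m+1}\leq \ell_1$ yields the matching lower bound $\sigma_n\geq \ell_n/2-\ell_1$. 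Hence $|\sigma_n-\ell_n/2|\leq \ell_1$, so $e^{-\sigma_n/2}\asymp e^{-\ell_n/4}$ with multiplicative constants depending only on $\ell_1$.

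For $(3)\Leftrightarrow(4)$, I apply the pants formula---obtained by letting the third boundary length tend to zero in the standard hexagon identity---to the tight pair of pants $P_n$ with boundary geodesics of lengths $\ell_n,\ell_{n+1}$ and a puncture:
\[
\cosh \ell(\delta_n) = \coth(\ell_n/2)\coth(\ell_{n+1}/2)+\frac{1}{\sinh(\ell_n/2)\sinh(\ell_{n+1}/2)}.
\]
If $\{\ell_n\}$ is bounded, then $\ell(\delta_n)$ stays bounded below, so both $\sum\sqrt{\ell(\delta_n)}$ and $\sum e^{-\ell_n/4}$ diverge; moreover $X$ is parabolic by the bounded-subsequence criterion recalled in the introduction, so all four conditions hold. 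Otherwise $\ell_n\to\infty$, and since concavity forces $\ell_{n+1}-\ell_n\leq \Delta_1\leq \ell_1$, expanding each factor above for large arguments gives $\cosh \ell(\delta_n)-1\asymp e^{-\ell_n}$, whence $\ell(\delta_n)\asymp e^{-\ell_n/2}$ and $\sqrt{\ell(\delta_n)}\asymp e^{-\ell_n/4}$, so the sums in (3) and (4) diverge simultaneously.

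The main obstacle is the estimate $|\sigma_n-\ell_n/2|\leq \ell_1$; although $\sigma_n$ is presented as an alternating sum vulnerable to cancellation, once it is regrouped into same-parity partial sums of successive differences $\Delta_k$, the monotonicity of $\Delta_k$ provided by concavity combined with a short telescoping identity makes the estimate essentially immediate, and this reduces the half-twist case to a direct reading of Theorems~\ref{thm:flute-twist} and~\ref{thm:half-twist-incomplete}.
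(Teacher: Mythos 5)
Your proposal matches the paper's argument in outline: the cyclic chain $(4)\Rightarrow(1)\Rightarrow(2)\Rightarrow(4)$, the hexagon formula for $(3)\Leftrightarrow(4)$, and the reduction of $(2)\Rightarrow(4)$ to the estimate $e^{-\sigma_n/2}\lesssim e^{-\ell_n/4}$ via Theorem~\ref{thm:half-twist-incomplete} are exactly what the paper does. Your bookkeeping with $\Delta_k=\ell_k-\ell_{k-1}$ is a somewhat cleaner regrouping than the paper's split into $\epsilon_n=\ell_{2n}-\ell_{2n-1}$ and $\eta_n=\ell_{2n+1}-\ell_{2n}$, and it yields the marginally sharper additive constant $\ell_1$ in place of $2\ell_2$; note that since $\sigma_n+\sigma_{n-1}=\ell_n$, one has $2\bigl|\sigma_n-\ell_n/2\bigr|=|\sigma_n-\sigma_{n-1}|$, so you are proving precisely the paper's intermediate inequality in a different guise.

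One slip in the intermediate step: the claimed upper bound $\sigma_n\leq\ell_n/2$ fails for odd $n$; already $\sigma_1=\ell_1>\ell_1/2$. What is true is that for even $n$ one has $\ell_n/2-\ell_1\leq\sigma_n\leq\ell_n/2$, while for odd $n$ the picture reflects to $\ell_n/2\leq\sigma_n\leq\ell_n/2+\ell_1$, which follows from $\sigma_{2m+1}-\ell_{2m+1}/2=\ell_{2m+1}/2-\sigma_{2m}$ together with the even-index estimate. In either parity the two-sided bound $|\sigma_n-\ell_n/2|\leq\ell_1$ holds, so the rest of your argument goes through unchanged.
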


\begin{proof}[Proof of Theorem \ref{thm:half-twists}] Just like in the proof of Theorem \ref{thm:zero-twists} the implication $(1)\Rightarrow(2)$ is true in general, $(3)\Leftrightarrow(4)$  holds because of $\ell (\delta_n)\asymp e^{-\ell_n/2}$, and $(4)\Rightarrow(1)$ follows from Theorem \ref{thm:flute-twist}.  Thus we only need to show the implication $(2)\Rightarrow(4)$. 
Equivalently, we will show that $X$ is incomplete if 
\begin{align}\label{eqn:concave-incomplete}
\sum_{n}e^{-\ell_n/4}<\infty.
\end{align}

Observe that from the definition of $\sigma_n$ we have for every $n\geq 1$ 
\begin{align}\label{eqn:sigmas}
\begin{split}
\sigma_n+\sigma_{n-1}&=\ell_n,\\
\sigma_{n+2}-\sigma_n&=\ell_{n+2}-\ell_{n-1}.
\end{split}
\end{align}

Let $\epsilon_n = \ell_{2n}-\ell_{2n-1}$, and $
\eta_n = \ell_{2n+1}-\ell_{2n}$ for $n\geq 1$. Since $\{\ell_n\}$ is non-decreasing we have that $\epsilon_n\geq 0$ and $\eta_n\geq 0$. Moreover, since $\{\ell_n\}$ is concave, rewriting  (\ref{ineq:concave}) we have $\ell_{n+1}-\ell_n \leq \ell_{n}-\ell_{n-1}$. In particular, for all $n\geq 1$ we have
\begin{align}\label{ineq:alternate}
\epsilon_{n+1}\leq \eta_n\leq \epsilon_{n}.
\end{align}
%
%
%
Using (\ref{ineq:alternate}) we obtain 
\begin{align*}
\sigma_{2k+1} & = \sum_{i=1}^k \eta_i+\ell_1 
\leq 
\sum_{i=1}^k \epsilon_i +\ell_1
=  \sigma_{2k} + \ell_1,\\
\sigma_{2k+1} & \geq \sum_{i=1}^{k-1} \eta_i+\ell_1 \geq  \sum_{i=1}^k \epsilon_i -\epsilon_1 +\ell_1 \geq
\sigma_{2k} - \ell_2.
\end{align*}
Thus $|\sigma_{2k+1}-\sigma_{2k}| \leq \max\{\ell_1,\ell_2\}\leq \ell_2$. Therefore, using the second line in (\ref{eqn:sigmas}) and the fact that $\ell_{2k+1}-\ell_{2k}$ is non-increasing, we obtain
\begin{align*}
|\sigma_{2k}-\sigma_{2k-1}| 
&\leq |\sigma_{2k+1}-\sigma_{2k}| +|\sigma_{2k+1}-\sigma_{2k-1}| \\
& \leq \ell_2 + \ell_{2k+1}-\ell_{2k-1}\\
&\leq 2\ell_2.
\end{align*}
In particular, $|\sigma_n - \sigma_{n-1}|\leq 2\ell_2$ for every $n\geq 1$. Since $\ell_n = \sigma_n+\sigma_{n-1}$ we obtain
\begin{align}\label{ineq:alternate1}
|2\sigma_n - \ell_n| =|\sigma_n-\sigma_{n-1}|
\leq 2\ell_2.
\end{align} 
From (\ref{ineq:alternate1}) it follows that  
$e^{-\sigma_n/2} \leq e^{\ell_2/2} e^{-\ell_n/4}.$
Thus by Theorem \ref{thm:half-twist-incomplete} $(\ref{eqn:concave-incomplete})$ implies $(\ref{eqn:half-twist-incomplete})$ and $X$ is incomplete, which completes the proof.
\end{proof}

 \begin{example}
Let $X=X(\{\ell_n,1/2\})$ be a half-twist tight flute surface such that $\{\ell_n\}$ is concave.
\begin{itemize}
\item[(1)] Suppose $\lim_{n\to\infty} \frac{\ell_n}{\ln n} = c\geq0$. Then $X$ is parabolic if $c\in[0,4)$, not parabolic if $c\in(4,\infty)$, and $X$ could be either parabolic or not parabolic if $c=4$, see e.g. $(3)$ below.
\item[(2)] Let $\ell_n=c\ln n$. Then $X$ is parabolic if and only if $c\in(0,4]$.
\item[(3)] Let $\ell_n=4\ln n +c\ln(\ln n)$. Then $X$ is parabolic if and only if $c\in[0,4]$.
\end{itemize} 
\end{example}

\begin{proof}[Proof of Theorem \ref{thm:half-twist-incomplete}]
Let $P_n$, for $n\geq 1$, be a tight pair of pants with two boundary geodesics $\alpha_{n}$ and $\alpha_{n+1}$ of lengths $\ell_{n}$ and $\ell_{n+1}$ and third boundary a puncture. Let $P_0$ be a tight pair of pants with one geodesic boundary $\alpha_1$ of length $\ell_1$ and two other boundaries being punctures. Denote by $\delta_n$ the orthogeodesic connecting $\alpha_{n}$ and $\alpha_{n+1}$ in $P_n$. Denote by $\gamma_n'$ the simple geodesic ray orthogonal to $\alpha_{n}$ ending in the puncture of $P_n$, and denote by $\gamma_n''$ the simple geodesic ray orthogonal to $\alpha_{n+1}$ ending in the puncture of $P_n$. Then $P_n$ has front to back hyperbolic symmetry with $\delta_n$ and $\gamma_n'\cup\gamma_n''$ as two arcs of fixed points of the symmetry.

 \begin{figure}[t]
\leavevmode \SetLabels
\L(.145*.5) $\frac{\ell_1}{2}$\\
\L(.23*.45) $\frac{\ell_1}{2}$\\
\L(.21*.57) $\frac{\ell_2-\ell_1}{2}$\\
\L(.36*.57) $\frac{\ell_2-\ell_1}{2}$\\
\L(.358*.36) $\frac{\ell_3-\ell_2+\ell_1}{2}$\\
\L(.485*.36) $\frac{\ell_3-\ell_2+\ell_1}{2}$\\
\L(.48*.60) $\frac{\ell_4-\ell_3+\ell_2-l_1}{2}$\\
\L(.61*.60) $\frac{\ell_4-\ell_3+\ell_2-\ell_1}{2}$\\
\L(.61*.32) $\frac{\ell_5-\ell_4+\ell_3-\ell_2+\ell_1}{2}$\\
\L(.77*.32) $\frac{\ell_5-\ell_4+\ell_3-\ell_2+\ell_1}{2}$\\
\L(.77*.65) $\frac{\ell_6-\ell_5+\ell_4-\ell_3+\ell_2-\ell_1}{2}$\\
\L(.2*.51) $s_1$\\
\L(.3*.47) $s_2$\\
\L(.42*.44) $s_3$\\
\L(.53*.44) $s_4$\\
\L(.68*.44) $s_5$\\
\endSetLabels
\begin{center}
\AffixLabels{\centerline{\epsfig{file =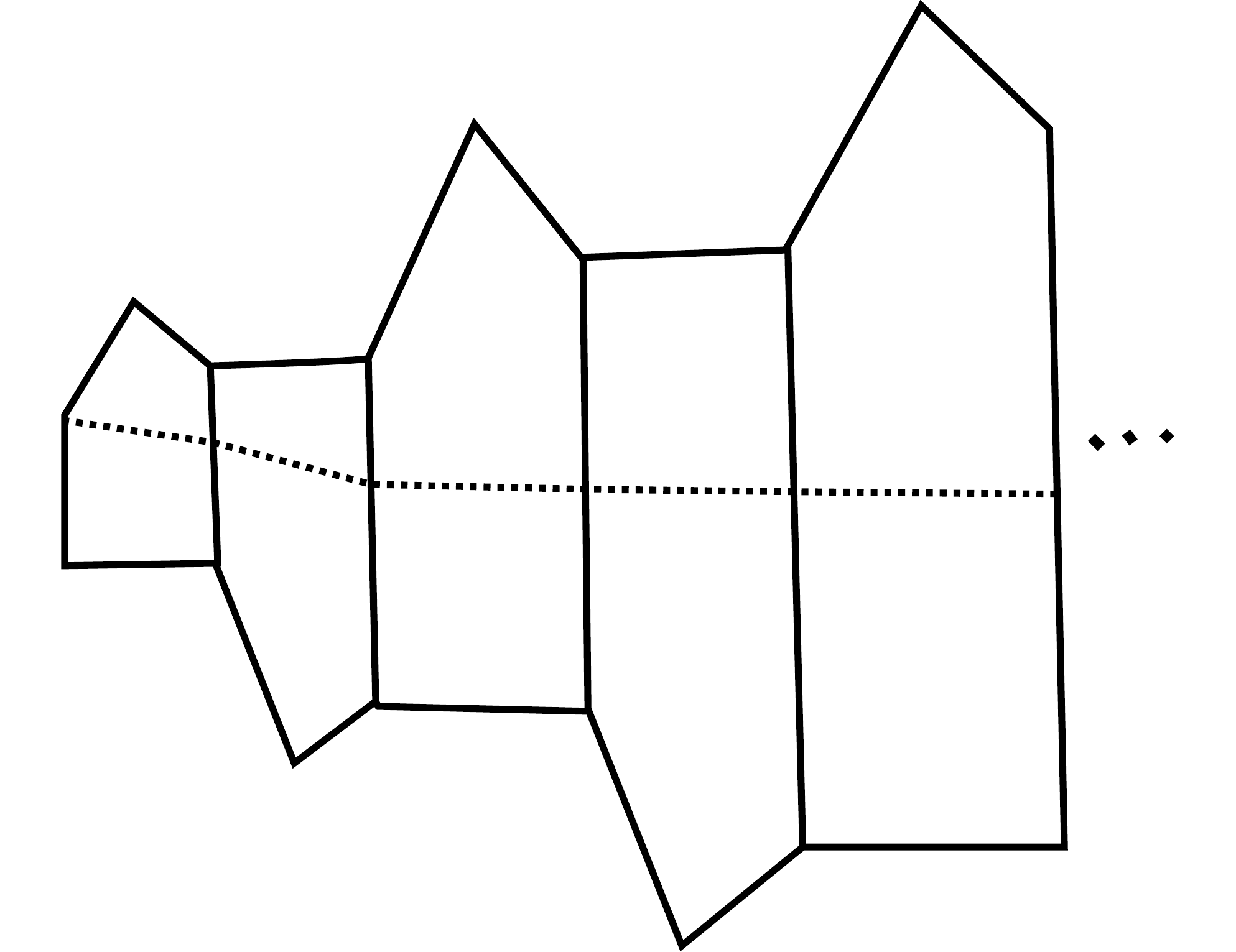,width=12.0cm,angle=0} }}
\vspace{-20pt}
\end{center}
\caption{The dotted path $p$ consisting of summits of Saccheri quadrilaterals.}\label{fig:Saccheri}
\end{figure} 

 We glue $\{ P_n\}_{n=0}^{\infty}$ along boundary geodesics of equal lengths with half-twists $t_n=\frac{1}{2}$ to obtain a half-twist tight flute surface $X$. The geodesic arc $\delta_n$ is continued by $\gamma_{n+1}'$ and they make a geodesic ray. Similarly $\gamma_n''\cup\delta_{n+1}$ is a geodesic ray. The surface $X$ has front to back orientation reversing hyperbolic symmetry with one arc of fixed point being $\gamma_1'\cup\gamma_1''\cup\delta_2\cup\gamma_3'\cup\gamma_3''\cup\delta_4\cup\cdots$ and the other arc of fixed points being $\delta_1\cup\gamma_2'\cup\gamma_2''\cup\delta_3\cup\gamma_4'\cup\gamma_4''\cup\cdots$ (see Figure \ref{fig:half-twist}).

We consider the front side of $X$ and construct a path $p$ (in the front side of $X$) of finite length starting from $\alpha_1$ and intersecting each $\alpha_n$. Such a path $p$ leaves every compact subset of $X$ and hence  it will follow that $X$ is incomplete. The front side of $X$ consists of pentagons $\Sigma_n$ which are the front sides of the pants $P_n$. In the pentagon $\Sigma_1$, we make a Saccheri quadrilateral with the base $\delta_1$, one side being half of $\alpha_1$ (which has length $\frac{1}{2}\ell_1$), the other side being a part of $\alpha_2$ of length $\frac{1}{2}\ell_2$ and denote the length of the summit by $s_1$. We construct a Saccheri quadrilateral in $\Sigma_2$ which has base $\delta_2$, one sides lying on $\alpha_2$ of length $\frac{1}{2}(\ell_2-\ell_1)$ and the other side on $\alpha_3$, and the summit whose length is $s_2$ shares a point in common with the summit in $\Sigma_1$. We continue this construction through all pentagons $\Sigma_n$ and the path $p$ is the concatenation of the summits of the Saccheri quadrilaterals (see Figure \ref{fig:Saccheri}).
 
 We first estimate the length of the base $\delta_n$ of the pentagon $\Sigma_n$. Draw an orthogonal to $\delta_n$ from the ideal vertex of $\Sigma_n$. Let $\delta_n^1$ and $\delta_n^2$ be the two arcs that $\delta_n$ is divided into by the orthogonal. Then by taking $\varphi =0$ in \cite[page 38, Theorem 2.3.1 (i)]{Buser} we have
$$
\sinh\ell (\delta_n^1)\sinh \frac{\ell_n}{2} =1, \ \ \ \ \sinh\ell (\delta_n^2)\sinh \frac{\ell_{n+1}}{2} =1.
$$
Since $\ell_n$ is large, we obtain $\ell (\delta_{n}^1)+\ell (\delta_{n}^2)=\ell (\delta_{n})\asymp e^{-\ell_{n}/2}+e^{-\ell_{n+1}/2}$.
 
Next we estimate the lengths $s_n$. Note that the two lengths of the sides of the Saccheri quadrilateral in $\Sigma_n$ are $\frac{1}{2}(\ell_{n}-\ell_{n-1}+\cdots +(-1)^{n-1}\ell_1)$ for $n\geq 2$. 
 We divide each Saccheri quadrilateral into two quadrilaterals by the common orthogonal to $s_n$ and $\delta_n$. The two new quadrilaterals have three right angles and 
 we have  (see \cite[page 38, Theorem 2.3.1 (v)]{Buser})
$$
\sinh\frac{1}{2}s_n=\sinh \frac{\ell ({\delta_n})}{2}\cosh \frac{\ell_{n}-\ell_{n-1}+\cdots +(-1)^{n-1}\ell_1}{2}.
$$
By $\cosh \frac{\ell_{n}-\ell_{n-1}+\cdots +(-1)^{n-1}\ell_1}{2}\asymp e^{\frac{\ell_{n}-\ell_{n-1}+\cdots +(-1)^{n-1}\ell_1}{2}}$ and $\ell (\delta_{n})\asymp e^{-\ell_{n}/2}+e^{-\ell_{n+1}/2}$, 
the above formula implies that
$$
s_n\asymp e^{\frac{-\ell_{n+1}+\ell_{n}-\ell_{n-1}+\cdots +(-1)^{n+1}\ell_1}{2}}+e^{\frac{-\ell_{n-1}+\cdots +(-1)^{n-1}\ell_1}{2}}.$$ Therefore the length of the path $p$, i.e. the sum $\sum_ns_n$, is finite and $X$ is incomplete.
\end{proof}

\begin{example}\label{example:parameters}
To illustrate the range of applicability  of Theorems  \ref{thm:flute-twist} and \ref{thm:half-twist-incomplete} we consider a two-parameter family of half-twist tight flute surfaces $X_{a,b}$ with $a,b>0$. See Figure \ref{fig:example-parameters}.

For every pair of points $0<a,b<\infty$   we will define a sequence $\Bell_{a,b}=\{\ell_n\}$ and consider the corresponding half-twist tight flute surface $X_{a,b}=X(\Bell_{a,b},1/2)$. The sequence $\Bell_{a,b}$ will be chosen in such a way that 
\begin{align}\label{example:parameter-sigmas}
\begin{split}
\sigma_{2n} &\simeq a \ln n,\\
\sigma_{2n+1}&\simeq b\ln n.
\end{split}
\end{align}
where as before $\sigma_n=\ell_n-\ell_{n-1}+\ldots+(-1)^{n-1}\ell_1$.
For instance, if we let $\ell_1 \in (0, a\ln 2)$ and for $n\geq 1$ define 
\begin{align}\label{example:lengths}\begin{split}
\ell_{2n} &= a \ln(n+1) + b \ln n,\\
\ell_{2n+1}&=(a+b) \ln(n+1),
\end{split}
\end{align} 
then a simple calculation shows that $\sigma_{2n+1}= b\ln(n+1)+\ell_1$ and $\sigma_{2n}= a\ln(n+1)-\ell_1,
$
and therefore (\ref{example:parameter-sigmas}) is satisfied. 

From (\ref{example:lengths}) we have that 
%
%
 $\ell_n\simeq (a+b) \ln n$. Therefore $e^{-\ell_n/4} \asymp   {n^{-(a+b)/4}},$
and by Theorem \ref{thm:flute-twist} $X_{a,b}$ is parabolic if the series $\sum_n n^{-(a+b)/4}$ diverges, or equivalently if $a+b\leq 4$, see the blue triangle in Figure \ref{fig:example-parameters}.

On the other hand, from (\ref{example:parameter-sigmas}) we have
\begin{align*}
\sum_{n} e^{-\sigma_n/2} 
&=\sum_k \left( \frac{1}{e^{\sigma_{2k}/2}} + \frac{1}{e^{\sigma_{2k+1}/2}} \right)
\asymp \sum_k \left( \frac{1}{k^{a/2}} + \frac{1}{k^{b/2}} \right)\\
&\asymp \sum_k \frac{1}{k^{\min(a,b)/2}},
\end{align*}

Thus, by Theorem \ref{thm:half-twist-incomplete}, $X_{a,b}$ is incomplete if $\min\{a,b\}>2$, see the interior of the unbounded shifted quadrant in Figure \ref{fig:example-parameters}.

In particular, for every $m>1$ the surface $X_{a,ma}$ is parabolic for $a\in[0,4/(m+1)]$, incomplete hence not parabolic for $a>2$, and the type is not known for $a\in(4/(m+1),2]$ (this corresponds to the line through the origin in Figure \ref{fig:example-parameters}). 
\end{example}

\begin{figure}[t]
\begin{center}
\includegraphics[width=2in]{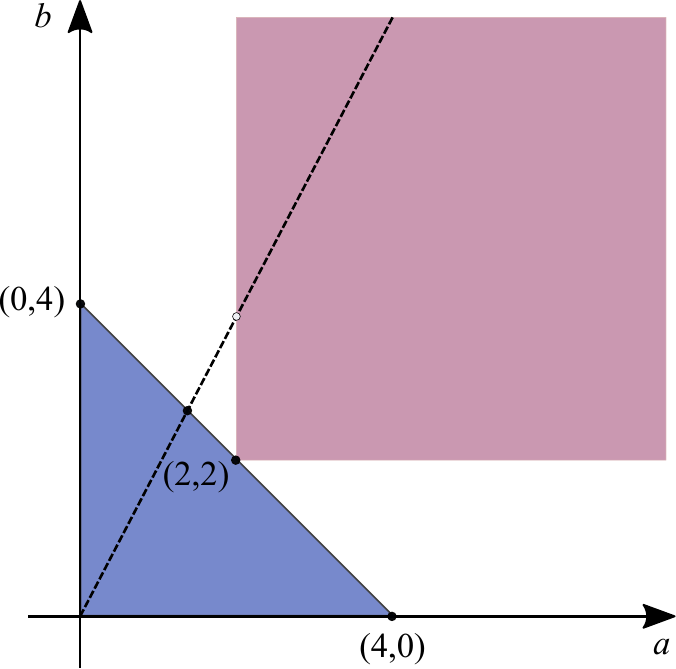}
\caption{The parameter space $(a,b)$ for the family of half-twist tight flute surfaces $X_{a,b}$. In the closure of the blue triangle surfaces are parabolic. In the interior of the pink unbounded region the surfaces are incomplete and hence not parabolic. In the white regions our results are inconclusive. If $m>1$ the surfaces $X_{a,ma}$ are parabolic for $a\in[0,4/(m+1)]$, hyperbolic for $a>2$, and are of unknown type for $a\in(4/(m+1),2]$.}
\label{fig:example-parameters}
\end{center}
\end{figure} 
%
%
%


\subsection{Bi-infinite tight flute surface}

\begin{figure}
\includegraphics[width=3in]{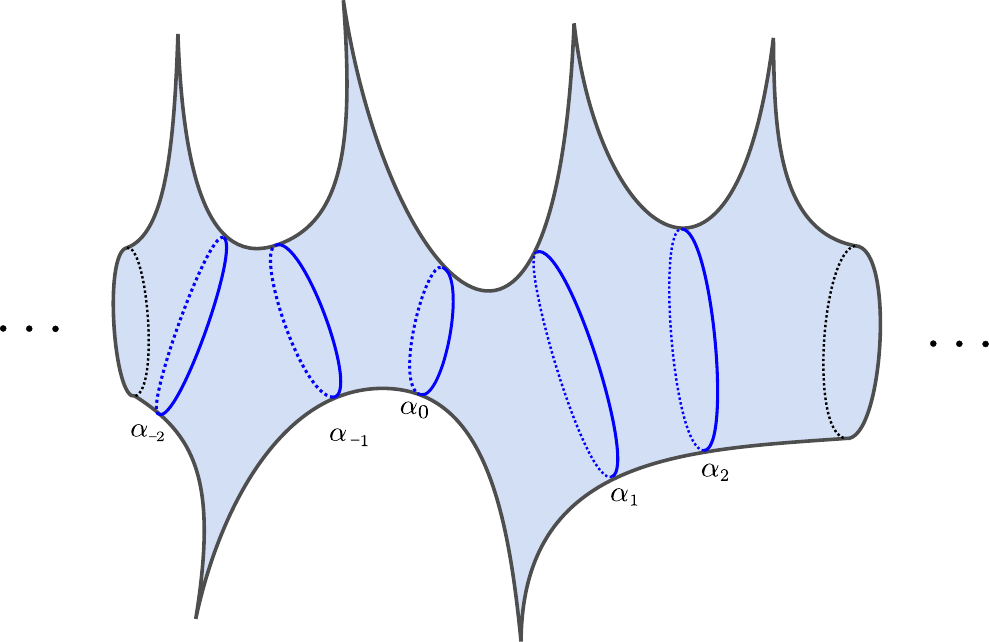}
\qquad
\caption{The bi-infinite tight flute.} 
\label{fig: bi-infinite tight flute}
\end{figure}

We refer to Figure \ref{fig: bi-infinite tight flute} which we call the bi-infinite flute surface.  All of its ends are planar and its space of ends 
is homeomorphic to the subset of the real line, $\{0\} \cup \{1\} \cup \{\frac{1}{n} \}_{n \in \Bbb N}\cup \{1-\frac{1}{n} \}_{n \in \Bbb N}$. The isolated ends are all cusps and by Theorem \ref{thm:general_parabolicwithtwist} we have. 

\begin{theorem}
\label{thm:biinfinite}
Let  $X$ be a bi-infinite tight flute as in  Figure \ref{fig: bi-infinite tight flute}.
 If
$$
\sum_{n=1}^{\infty} \frac{1}{e^{(1-|t_n|)\frac{\ell_n}{2}} +
e^{(1-|t_{-n}|)\frac{\ell_{-n}}{2}}}=\infty
$$
then $X$   is of parabolic type.
\end{theorem}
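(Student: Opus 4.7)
The strategy is a two-sided version of the argument used for Theorem \ref{thm:flute-twist}. Following Figure \ref{fig: bi-infinite tight flute}, label the interior pants curves by $\alpha_n$ for $n\in\mathbb{Z}\setminus\{0\}$, with lengths $\ell_n$ and twists $t_n$, and choose an exhaustion $\{X_n\}_{n\geq 1}$ by finite-area geodesic subsurfaces for which $\partial_0 X_n=\{\alpha_n,\alpha_{-n}\}$. Concretely, $X_n$ consists of a fixed central piece with $n-1$ further tight pairs of pants attached in a chain on each of the two sides.

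Around $\alpha_n$ and $\alpha_{-n}$, build nonstandard collars $R_n^{+}$ and $R_n^{-}$ inside $X_{n+1}\setminus X_{n-1}$ by gluing the two nonstandard half-collars coming from the two tight pairs of pants adjacent to each of these curves. Because the tight flute is a chain of pants (each intermediate tight pair of pants admits two disjoint nonstandard half-collars, one at each of its two boundary geodesics, cf.\ the discussion before Lemma \ref{lem:disjoint half-collars} and the remark following Theorem \ref{thm:flute-twist}), the collars $\{R_n^{+},R_n^{-}\}_{n\geq 1}$ are pairwise disjoint, even though the individual components of $X_{n+1}\setminus X_n$ are themselves pairs of pants. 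Corollary \ref{cor:twist1/2} then yields, for $\ell_n,\ell_{-n}\geq \ell_0\geq 2$,
\[
\lambda(R_n^{+})\gtrsim e^{-(1-|t_n|)\ell_n/2},\qquad \lambda(R_n^{-})\gtrsim e^{-(1-|t_{-n}|)\ell_{-n}/2};
\]
the finitely many terms where a length falls below $\ell_0$ only give a larger lower bound via Lemma \ref{lem:collar}.

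With these two disjoint collars at each level, Proposition \ref{abstract theorem on collars} (or equivalently Proposition \ref{prop:parabolic} applied to them) reduces parabolicity of $X$ to the divergence of
\[
\sum_{n=1}^{\infty}\frac{1}{\frac{1}{\lambda(R_n^{+})}+\frac{1}{\lambda(R_n^{-})}}.
\]
The estimates above give $\frac{1}{\lambda(R_n^{+})}+\frac{1}{\lambda(R_n^{-})}\lesssim e^{(1-|t_n|)\ell_n/2}+e^{(1-|t_{-n}|)\ell_{-n}/2}$, so the hypothesis of the theorem forces this series to diverge, and $X$ is of parabolic type.

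The one genuinely new point compared with Theorem \ref{thm:flute-twist} is the bookkeeping: one must attach two chains to a fixed central subsurface and confirm that the nonstandard collars constructed on the right side remain disjoint from those on the left side at every level of the exhaustion. This is the expected main obstacle, and it is handled exactly as in the one-sided case, since the two sides sit in disjoint half-neighborhoods of the central subsurface and each side is a linearly ordered chain of pants. Once this disjointness is in hand, the rest is a direct application of the nonstandard-collar estimates already proved in Section \ref{Section:gluing}.
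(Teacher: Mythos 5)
Your proof is correct and, modulo one small imprecision, follows the same machinery as the paper (nonstandard collars, Corollary \ref{cor:twist1/2}, and Proposition \ref{abstract theorem on collars}/Proposition \ref{prop:parabolic}). Interestingly, it is actually more careful than the paper's own one-line justification: the paper simply says "by Theorem \ref{thm:general_parabolicwithtwist}," but for a bi-infinite tight flute the connected components of $X_{n+1}\setminus X_n$ \emph{are} single pairs of pants, so the stated hypothesis of Theorem \ref{thm:general_parabolicwithtwist} is formally violated. Your argument bypasses this by verifying directly, exactly as in the proof of Theorem \ref{thm:flute-twist} and the remark following it, that the chain structure of the flute forces the nonstandard collars to be pairwise disjoint even though each new piece is a pair of pants; the two sides are kept disjoint since the central tight pair of pants carries two disjoint nonstandard half-collars. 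The only small slip is the phrase "the finitely many terms where a length falls below $\ell_0$": there could be infinitely many such indices, but this does no harm, since for those $n$ the nonstandard collar contains the standard one (as $\ell(\gamma)=\infty$), so $1/\lambda(R_n^{\pm})$ is bounded and the series diverges trivially. With that adjustment, the estimate $\frac{1}{\lambda(R_n^{+})}+\frac{1}{\lambda(R_n^{-})}\lesssim e^{(1-|t_n|)\ell_n/2}+e^{(1-|t_{-n}|)\ell_{-n}/2}$ holds uniformly in $n$, and the conclusion follows.
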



\section{A trip to the Menagerie: Applications to various topological types} 
\label{sec: A trip to the Managerie: Applications to various topological types}

 In order  to give the  reader  a sense of the scope of the applications of Theorem \ref{thm:general_parabolic}, we give a sampling   of our sufficient conditions  for the parabolicity  (ergodicity of the geodesic flow) theorem   applied to various topological  settings.  With the exception of subsection \ref{subsec: abelian covers} each subsection is devoted to a  particular  topological type (illustrated by a figure in the subsection) of an infinite type hyperbolic surface. The geodesic pants decomposition of a surface is given by the family of closed geodesics $\{\alpha_n\}$. The relative twist around $\alpha_n$ is denoted $t_n=t(\alpha_n)$, and the length of $\alpha_n$ is denoted by
 $\ell_n=\ell (\alpha_n)$. 
 Any other  notation in the figure should be self-explanatory.   Finally, in subsection \ref{subsec: abelian covers} we address the question of when a topological abelian cover of a compact surface is of parabolic type.

\subsection{Loch-Ness monster (Infinite genus and one non-planar  end)}

Let $X^{\infty}_1$ be as in  Figure \ref{fig:loch-ness}.
The surface $X^{\infty}_1$ is obtained from the tight flute surface by replacing each puncture with a closed geodesic  and attaching a finite genus to each closed geodesic(genus of the attached surfaces may vary and the supremum might be equal to infinity). The surface has one topological non-planar end and infinite genus. We note that if an infinite subsequence of $\{ \alpha_n\}_n$ have lengths bounded from  above then the  surface $X^{\infty}_1$ is of parabolic  type using the estimate for standard collars in Lemma \ref{lem:collar}. It is therefore of interest to assume that the lengths of $\alpha_n$ converge to infinity as $n$ increases.
Using Corollaries \ref{cor:r(d)tol} and \ref{cor:twist1/2} we obtain


\begin{theorem}
\label{thm:infinitegenus}
Let  $X^{\infty}_1$  be a hyperbolic Loch-Ness monster as in Figure \ref{fig:loch-ness}. Suppose there exists $M>0$  so that  $\ell(\beta_n) \leq M$, for all $n$. Then $X^{\infty}_1$ is of parabolic  type if
$$
  \sum_{n=1}^{\infty}  e^{-\left(1-|t(\alpha_n)| \right)
\frac{\ell (\alpha_n)}{2}} =\infty.
$$ 
\end{theorem}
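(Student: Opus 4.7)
The plan is to apply the sufficient condition in Proposition \ref{prop:parabolic} to the natural exhaustion $\{X_n\}_{n\geq 1}$ of $X^{\infty}_1$, where I would take $X_n$ to be the union of the pairs of pants $P_0,P_1,\ldots,P_{n-1}$ (each $P_i$ having boundary geodesics $\alpha_i,\alpha_{i+1},\beta_i$) together with the attached finite-genus subsurfaces glued along $\beta_0,\ldots,\beta_{n-1}$. Each $X_n$ is then a finite-area geodesic subsurface with a single boundary component, namely $\partial_0 X_n=\{\alpha_n\}$. Consequently, Proposition \ref{prop:parabolic} reduces to exhibiting a sequence of pairwise disjoint collars $\{R_n\}_{n\geq 1}$, with $R_n\subset X_{n+1}\setminus X_{n-1}$ surrounding $\alpha_n$, whose extremal distances satisfy $\sum_n \lambda(R_n)=\infty$.

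For each $n\geq 1$, let $\gamma_{n-1}''$ be the unique simple orthogeodesic in $P_{n-1}$ from $\alpha_n$ to $\beta_{n-1}$, and let $\gamma_n'$ be the analogous orthogeodesic in $P_n$ from $\alpha_n$ to $\beta_n$. I would then take $R_n$ to be the nonstandard collar $R_{\alpha_n,\gamma_{n-1}'',\gamma_n'}^{t(\alpha_n)}$ about $\alpha_n$ obtained by isometrically gluing the two nonstandard half-collars $R_{\alpha_n,\gamma_{n-1}''}\subset P_{n-1}$ and $R_{\alpha_n,\gamma_n'}\subset P_n$ along $\alpha_n$ with twist $t(\alpha_n)$; note that, as explained in the paragraph after the theorem statement, the twist is by convention measured exactly with respect to the feet of these two orthogeodesics. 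Applying Lemma \ref{lem:disjoint half-collars} inside each $P_n$, the nonstandard half-collars about $\alpha_n$ and $\alpha_{n+1}$ constructed from the orthogeodesics to $\beta_n$ have disjoint interiors; hence $R_n$ and $R_{n+1}$ are pairwise disjoint, as required.

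The central task is to establish the extremal-distance bound
\[
\lambda(R_n)\;\gtrsim\;e^{-(1-|t(\alpha_n)|)\ell(\alpha_n)/2},
\]
with implicit constant depending only on $M$. Once this is in place, the hypothesized divergence of $\sum_n e^{-(1-|t(\alpha_n)|)\ell(\alpha_n)/2}$ will give $\sum_n\lambda(R_n)=\infty$ and parabolicity will follow from Proposition \ref{prop:parabolic}. When $\ell(\alpha_n)$ stays bounded the bound is immediate from Corollary \ref{cor:asymp} (the collars are uniformly thick), so I only need to treat the case $\ell(\alpha_n)\to\infty$; there the estimate is precisely the conclusion of Theorem \ref{thm:gluings} together with Remark \ref{rem:expansion}, provided that $\ell(\gamma_n')$ and $\ell(\gamma_{n-1}'')$ are uniformly bounded below by a positive constant independent of $n$.

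The main obstacle, and the one place where the hypothesis $\ell(\beta_n)\leq M$ enters essentially, will be verifying exactly this uniform lower bound on the orthogeodesics. For this I would apply the hexagon formula inside $P_n$, as in the proof of Corollary \ref{cor:r(d)tol}, to obtain
\[
\cosh\ell(\gamma_n')\;\geq\;\coth\frac{\ell(\beta_n)}{2}\coth\frac{\ell(\alpha_n)}{2}\;\geq\;\coth(M/2)\;>\;1,
\]
whence $\ell(\gamma_n')\geq c_0:=\cosh^{-1}(\coth(M/2))>0$, uniformly in $n$, and similarly for $\gamma_{n-1}''$ in $P_{n-1}$. If $c_0<1$, the factor $\min\{\ell(\gamma_n'),\ell(\gamma_{n-1}'')\}$ appearing in (\ref{eq: lower bound}) is then bounded below by $c_0$ and can be absorbed into the $M$-dependent constant in the modulus bound, completing the argument.
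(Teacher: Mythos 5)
Your proposal is correct and fills in essentially the argument the paper intends, which is only sketched in the text (the paper simply writes that the theorem follows "using Corollaries \ref{cor:r(d)tol} and \ref{cor:twist1/2}"). In fact you are more careful than the paper on one point: Corollary \ref{cor:twist1/2} is stated for tight pairs of pants with $\ell(\gamma)=\ell(\gamma')=\infty$, which is not the situation in the Loch-Ness monster where the orthogeodesics run to the finite-length curves $\beta_n$. You correctly recognize that one must instead invoke Theorem \ref{thm:gluings} together with the form given in Remark \ref{rem:expansion} (in particular estimate (\ref{eq: lower bound}) when $\min\{\ell(\gamma_n'),\ell(\gamma_{n-1}'')\}<1$), and that the hypothesis $\ell(\beta_n)\le M$ is exactly what makes the implicit constant there uniform, via the hexagon inequality $\cosh\ell(\gamma_n')\ge\coth(\ell(\beta_n)/2)\coth(\ell(\alpha_n)/2)\ge\coth(M/2)>1$ — the same computation used in the proof of Corollary \ref{cor:r(d)tol}. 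One small slip: you cite Lemma \ref{lem:disjoint half-collars} to justify that the two nonstandard half-collars about $\alpha_n$ and $\alpha_{n+1}$ in $P_n$ are disjoint, but that lemma is stated for subsurfaces that are not pairs of pants; the disjointness you want is the elementary observation recorded in the paragraph immediately preceding Lemma \ref{lem:disjoint half-collars} (two nonstandard half-collars in one pair of pants, each built from the orthogeodesic to the remaining boundary component, are disjoint), and it is the same chain argument used for flute surfaces in the remark after Theorem \ref{thm:flute-twist}. With that citation corrected, the proof is complete.
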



\subsection{Ladder surface (Infinite genus and two non-planar  ends)}

We  denote by $X_2^{\infty}$ the infinite genus surface with two non-planar ends. Denote by $\alpha_n$, for $n\in\mathbb{Z}$, the geodesics which together with $\beta_n$ make a geodesic pants decomposition of the flute part of $X_2^{\infty}$.
Using C	orollaries \ref{cor:r(d)tol} and \ref{cor:twist1/2} and Theorem \ref{thm:general_parabolic} we obtain

\begin{figure}[h]
\begin{tabular}{c c}
\includegraphics[width=3.5in]{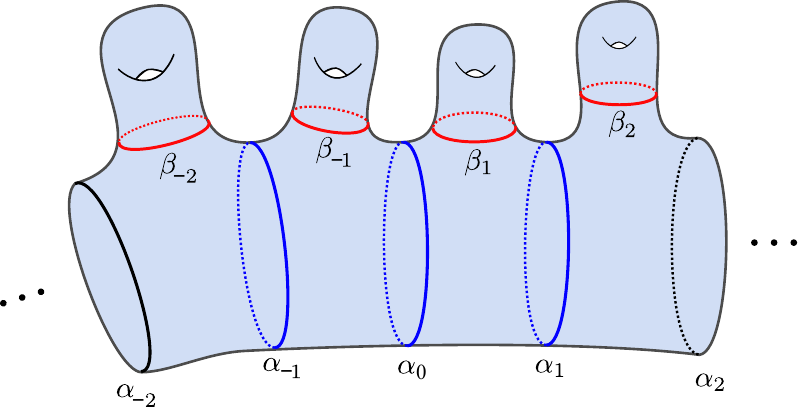}
\end{tabular}
%
%
%
%
\caption{Surface  $X^{\infty}_2$ (the infinite ladder) of infinite genus with  two non-planar  ends.}
\label{fig:loch-ness-bi-infinite}
\end{figure}

\begin{theorem}
\label{thm:biinfinite}
Let  $X^{\infty}_2$ be as in  Figure \ref{fig:loch-ness-bi-infinite}.
Assume there exists $M>0$ such that  $\ell(\beta_n) \leq M$,  for all $n$. Then $X^{\infty}_2$ is of parabolic  type if
\begin{equation} \label{eq:ladder surface}
\sum_{n=1}^{\infty} \frac{1}{e^{(1-|t(\alpha_n)|)\frac{\ell (\alpha_n)}{2}} +
e^{(1-|t(\alpha_{-n})|)\frac{\ell (\alpha_{-n})}{2}}}=\infty
\end{equation}

\end{theorem}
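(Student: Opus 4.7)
The plan is to exhaust $X^\infty_2$ by finite-area geodesic subsurfaces and apply Theorem \ref{thm:general_parabolicwithtwist}. First I would let $X_n$ be the geodesic subsurface bounded by $\alpha_n$ and $\alpha_{-n}$ that contains every handle cut off by $\beta_k$ for $|k|\leq n-1$. Then $\partial_0 X_n = \{\alpha_n, \alpha_{-n}\}$, and each connected component of $X_{n+1}- X_n$ (one on the ``right'' side, one on the ``left'') contains the pair of pants $P_n$ with boundaries $\{\alpha_n,\alpha_{n+1},\beta_n\}$ together with the handle attached along $\beta_n$ (respectively $P_{-n-1}$ and its handle on the left). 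In particular, each such component properly contains a pair of pants, hence is not itself a pair of pants---one of the hypotheses of Theorem~\ref{thm:general_parabolicwithtwist}.

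Next I would extend $\{\alpha_k, \beta_k\}_{k\in\mathbb{Z}}$ to a full geodesic pants decomposition of $X^\infty_2$ by subdividing each handle into pants in any convenient way. Since $\ell(\beta_n)\leq M$ for all $n$, Corollary~\ref{cor:r(d)tol} applies to each $P_n$ (if $M$ exceeds the required threshold, one replaces $\beta_n$ in the pants decomposition by a shorter pants curve inside its handle, which does not affect the hypothesis since the new boundary curve is again uniformly short). This provides a uniform positive lower bound on the lengths of the orthogeodesics in $P_n$ from $\alpha_n$ to the interior boundary $\beta_n$. By Lemma~\ref{lem:disjoint half-collars}, the boundary geodesics of each $X_n$ admit mutually disjoint nonstandard half-collars, and these glue consistently across consecutive levels of the exhaustion, so the ``distance between boundaries bounded below'' hypothesis of Theorem~\ref{thm:general_parabolicwithtwist} is met.

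With all the hypotheses verified, Theorem~\ref{thm:general_parabolicwithtwist} reduces parabolicity of $X_2^\infty$ to the divergence of
\[
\sum_{n=1}^{\infty}\frac{1}{\sum_{\alpha\in \partial_0 X_n} e^{(1-|t(\alpha)|)\ell(\alpha)/2}}
= \sum_{n=1}^{\infty} \frac{1}{e^{(1-|t(\alpha_n)|)\ell(\alpha_n)/2} + e^{(1-|t(\alpha_{-n})|)\ell(\alpha_{-n})/2}},
\]
which is precisely hypothesis~\eqref{eq:ladder surface}.

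The main obstacle, exactly as in the proof of Theorem~\ref{thm:infinitegenus}, is ensuring that the nonstandard collars around the $\alpha_{\pm n}$ are simultaneously disjoint despite the handles attached through the $\beta_k$, and that the extremal-length estimates for these collars survive the attachments. Both points are handled by converting the uniform bound $\ell(\beta_n)\leq M$ (via Corollary~\ref{cor:r(d)tol}) into a uniform lower bound on the length of the orthogeodesic from $\alpha_n$ to $\beta_n$ inside $P_n$; Theorem~\ref{thm:half-collar} then produces half-collars of extremal distance $\asymp e^{-\ell(\alpha_n)/2}$, and Corollary~\ref{cor:twist1/2} combines the two half-collars around $\alpha_n$ with the twist $t(\alpha_n)$ to yield the summand $e^{-(1-|t(\alpha_n)|)\ell(\alpha_n)/2}$ appearing in \eqref{eq:ladder surface}.
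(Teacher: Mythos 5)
Your approach mirrors the paper's. The paper tersely cites Corollaries \ref{cor:r(d)tol} and \ref{cor:twist1/2} together with the general criterion (the reference to Theorem \ref{thm:general_parabolic} there is evidently a slip for Theorem \ref{thm:general_parabolicwithtwist}, since twists appear in the conclusion), and you fill in essentially those steps: exhaust by symmetric truncations $X_n$ bounded by $\alpha_{\pm n}$, observe that each component of $X_{n+1}-X_n$ is a pair of pants with a handle attached and hence not itself a pair of pants, use the uniform bound $\ell(\beta_n)\le M$ to produce a uniform lower bound on the orthogeodesic lengths, and then invoke Theorem \ref{thm:general_parabolicwithtwist}.

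One parenthetical remark is wrong, though also unnecessary. You suggest that if $M$ exceeds the threshold $2\coth^{-1}(\cosh 1)$ from Corollary \ref{cor:r(d)tol}, one can replace $\beta_n$ by a shorter pants curve inside the handle. This does not work: the hyperbolic structure is fixed, so a shorter curve need not exist, and in any case a simple closed curve in the interior of the handle is separated from $\alpha_n$ and $\alpha_{n+1}$ by $\beta_n$ and therefore cannot bound a pair of pants together with them. The fix is also unneeded. What Theorem \ref{thm:general_parabolicwithtwist} requires is only a uniform positive lower bound on $\ell(\gamma_n)$, not $\ell(\gamma_n)\ge 1$. The hexagon computation inside the proof of Corollary \ref{cor:r(d)tol} already gives $\cosh\ell(\gamma_n)\ge\coth(\ell(\beta_n)/2)\ge\coth(M/2)$, so $\ell(\gamma_n)\ge\cosh^{-1}\bigl(\coth(M/2)\bigr)>0$ for every $n$ and every $M>0$; Remark \ref{rem:expansion} (and Theorem \ref{thm:gluings}) then absorb this lower bound into the implicit constant in the collar estimate. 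Delete the parenthetical and the argument is sound.
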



\subsection{The complement of the Cantor set}
Let $X_{\infty}$ be a genus zero surface whose space of topological ends is a Cantor set as in Figure  \ref{fig:cantor}. The surface 
$X_{\infty}$ is homeomorphic to the complement of a Cantor set on the Riemann sphere. 
The surface $X_{\infty}$ is obtained by gluing pairs of pants with boundary geodesics $\alpha_j^n$, for $n=1,2,\ldots$ and $j=1,2,\ldots ,2^{n+1}$, as follows. In step $n=1$, we glue two pairs of pants along a geodesic boundary $\alpha_1^1$. The obtained surface is a sphere minus four disks with four geodesic boundary curves $\alpha_j^1$, for $j=1,2,3,4$. In step $n$ the obtained surface has genus zero and $2^{n+1}$ boundary geodesics $\alpha_j^n$, for $n=1,2,\ldots$ and $k=1,2,\ldots ,2^{n+1}$. In the next step we glue a pairs of pants to each  $\alpha_j^n$ and obtain genus zero surface with $2^{n+2}$ boundary geodesics  $\alpha_{j}^{n+1}$ for $j=1,2,\ldots ,2^{n+2}$.
It is well-known that if the lengths of $\alpha_j^n$ are bounded from below then the surface $X_{\infty}$ is not of parabolic  type (\cite{McM}).  

\begin{figure}[h]
\includegraphics[width=2.5in]{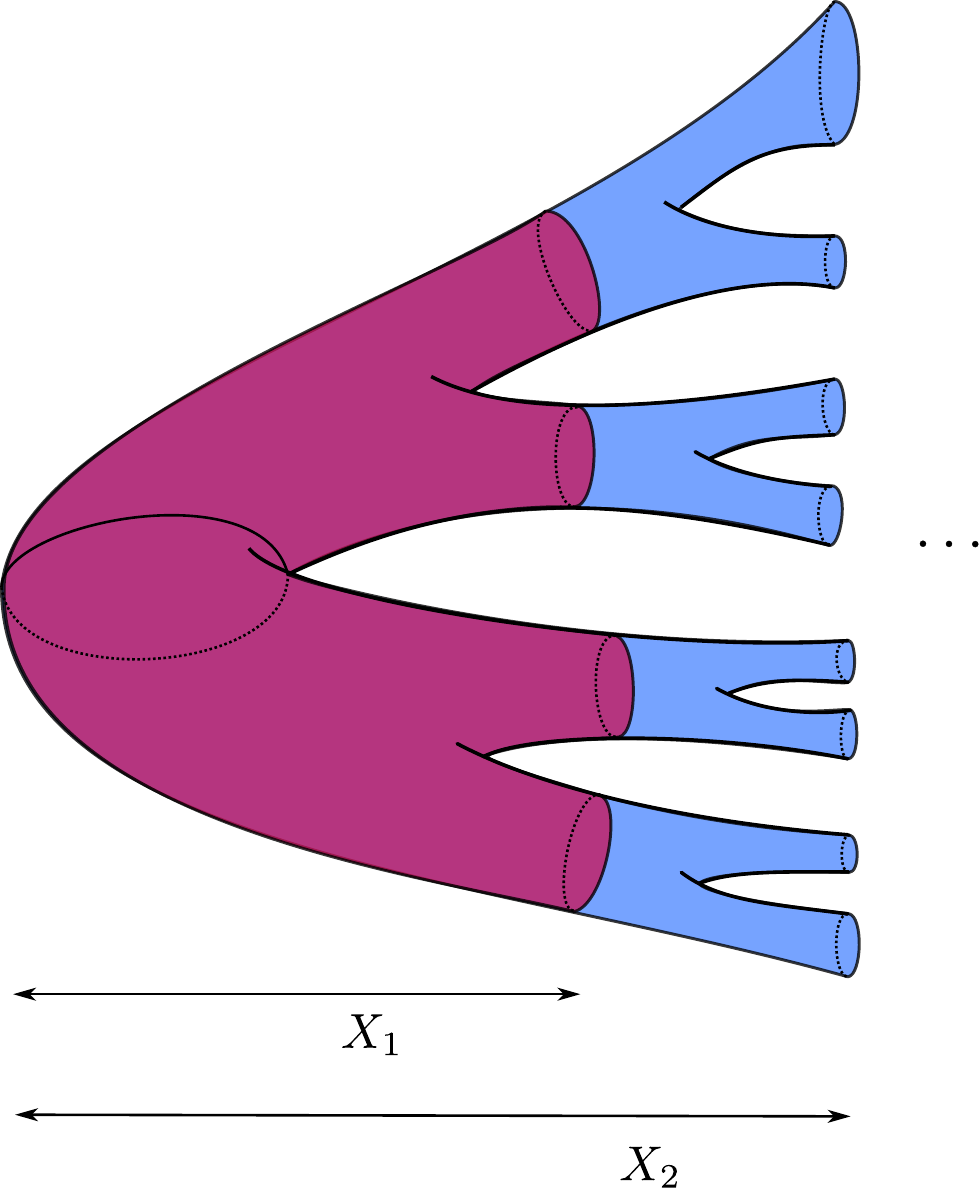}
\qquad
\caption{Complement of a Cantor set.} 
\label{fig:cantor}
\end{figure}
\begin{theorem}
\label{thm:cantor}

Consider the compact exhaustion of   the complement of the Cantor set  $X_{\infty}$ as in Figure \ref{fig:cantor}.  Then $X_{\infty}$ is of parabolic  type if for every $n\geq 1$ and all $\alpha \in \partial_0 X_n$ we have
   $$ \ell (\alpha) \leq C\frac{n}{2^n}.$$
\end{theorem}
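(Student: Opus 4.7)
The plan is to apply Proposition \ref{prop:collar-parabolic} directly, using standard half-collars about each geodesic in $\partial_0 X_n$. That proposition asserts that $X_\infty$ is of parabolic type provided
\[
\sum_{n=1}^{\infty} \frac{1}{\sum_{\alpha \in \partial_0 X_n} \ell(\alpha)\, e^{\ell(\alpha)/2}} = \infty,
\]
so the task reduces to estimating the inner sum under the hypothesis $\ell(\alpha) \leq Cn/2^n$. Since the $\partial_0 X_n$ consist of pairwise disjoint simple closed geodesics, the collar lemma guarantees that their standard (half-)collars are mutually disjoint, which is exactly the hypothesis under which Proposition \ref{prop:collar-parabolic} is derived from the serial rule.

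The hypothesis $\ell(\alpha) \leq Cn/2^n$ forces $\ell(\alpha) \to 0$ uniformly as $n \to \infty$. In particular there exists $n_0$ such that for all $n \geq n_0$ and all $\alpha \in \partial_0 X_n$ we have $e^{\ell(\alpha)/2} \leq 2$. Combined with the exponential growth $|\partial_0 X_n| = 2^{n+1}$ of the boundary, this gives, for $n \geq n_0$,
\[
\sum_{\alpha \in \partial_0 X_n} \ell(\alpha)\, e^{\ell(\alpha)/2} \leq 2 \cdot 2^{n+1} \cdot \frac{Cn}{2^n} = 4Cn.
\]
Hence the $n$-th term of the series in question is at least $1/(4Cn)$ for $n \geq n_0$, and the series diverges by comparison with the harmonic series. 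Proposition \ref{prop:collar-parabolic} then yields parabolicity of $X_\infty$.

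There is no substantive obstacle here: the conceptual content is simply that the exponential proliferation of boundary components (the factor $2^{n+1}$) is precisely compensated by the rapid decay $n/2^n$ of the lengths, leaving only a logarithmic divergence that is enough to trigger Nevanlinna's modular test. The choice of standard half-collars rather than nonstandard ones is dictated by the regime $\ell(\alpha) \to 0$, in which $\lambda(R_\alpha) \asymp 1/\ell(\alpha)$ by the asymptotics in (\ref{asymp:standard}); this is exactly the factor that appears in Proposition \ref{prop:collar-parabolic} after noting that $\ell e^{\ell/2} \asymp \ell$ as $\ell \to 0$.
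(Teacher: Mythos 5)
Your proof is correct and follows essentially the same route as the paper: the paper also uses the asymptotics $\lambda(R) \asymp 1/\ell(\alpha)$ for standard collars about small geodesics, combines this with the bound $\ell(\alpha) \lesssim n/2^n$ and the count $|\partial_0 X_n| \asymp 2^n$, and compares the resulting sum to the harmonic series before invoking Theorem \ref{thm:general_parabolic}. The only cosmetic difference is that you apply Proposition \ref{prop:collar-parabolic} directly, whereas the paper routes through Corollary \ref{cor:asymp}; both rest on the same collar estimates and the same divergent harmonic comparison.
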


\begin{proof}
Denote by $R$ the standard one-sided collar around the geodesic $\alpha \in \partial_0 X_n$. 
Since $\ell (\alpha) \leq C\frac{n}{2^n}$, Corollary \ref{cor:asymp} implies that
$1/\lambda (R) \leq C'\frac{n}{2^n}$.
Thus 
$$
\sum_{n=1}^{\infty}\frac{1}{\sum_{j=1}^{2^n}1/\lambda (R)}\geq\frac{1}{C'}\sum_{n=1}^{\infty}\frac{1}{2^nn/2^{n+1}}=\infty
$$
and the surface $X_{\infty}$ is of parabolic   type by Theorem \ref{thm:general_parabolic}.
\end{proof}

We remark that the same theorem holds if we replace 
$X_{\infty}$ with the blooming Cantor tree-i.e.,  a Riemann surface whose space of ends is a Cantor set and each end is accumulated by genus.

\subsection{Surfaces with  $|\partial_0 X_n|$ bounded}

Let $\{X_n\}$ be an exhaustion  of $X$ so that 
 the number of boundary components of $X_n$  is constant as for the surface   in Figure  \ref{fig: general surface}. Note that this implies that $X$ has finitely many non-isolated ends as well as finitely many 
 non-planar ends. Setting 
 $L_n=\text{max}\{\ell (\alpha): \alpha \in \partial_0 X_n\}$ and $\tau_n=\text{min}\{ |t(\alpha )|: \alpha \in \partial_0 X_n\}$,   an application of Corollary \ref{cor:p_twist}  yields,

\begin{theorem}   Let $\{X_n\}$ be an exhaustion of $X$
with a constant number of components and assume  for each $n$  that   the boundary geodesics in $\partial_0 X_n$ have the same twist $t_n$. 
If
\begin{equation} \label{eq:level n the same}
\sum_{n=1}^{\infty}
\frac{1}{ e^{\left(1- |t_n|\right)\frac{L_n}{2}}}=\infty, 
\end{equation}
then $X$ is of parabolic  type.

\end{theorem}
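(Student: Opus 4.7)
The plan is to deduce this theorem directly from Corollary \ref{cor:p_twist}, which in turn is a consequence of Theorem \ref{thm:general_parabolicwithtwist}. Let $k = |\partial_0 X_n|$ denote the common number of boundary components, which is constant in $n$ by hypothesis. Since all boundary geodesics in $\partial_0 X_n$ share the same twist parameter $t_n$, we immediately have $\tau_n = \min\{|t(\alpha)| : \alpha \in \partial_0 X_n\} = |t_n|$, while $L_n$ is defined as the maximum of the finitely many lengths $\ell(\alpha)$ with $\alpha \in \partial_0 X_n$.

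First I would estimate the inner sum appearing in condition (\ref{eq:general_parabolicwithtwist}). Because the twist is constant across $\partial_0 X_n$ and each of the $k$ summands is bounded above by the term corresponding to the longest boundary geodesic, we obtain
\begin{equation*}
\sum_{\alpha \in \partial_0 X_n} e^{(1-|t(\alpha)|)\ell(\alpha)/2} \;=\; \sum_{\alpha \in \partial_0 X_n} e^{(1-|t_n|)\ell(\alpha)/2} \;\leq\; k\, e^{(1-|t_n|)L_n/2}.
\end{equation*}
Inverting this inequality and summing over $n$ shows that the divergence of $\sum_n e^{-(1-|t_n|)L_n/2}$ assumed in the hypothesis forces the divergence of the series in (\ref{eq:general_parabolicwithtwist}), up to the harmless constant factor $k$.

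Having arranged this, the second step is to invoke Theorem \ref{thm:general_parabolicwithtwist} to conclude parabolicity. As with the other application-level theorems in Section \ref{sec: A trip to the Managerie: Applications to various topological types}, this requires that the standing technical hypotheses on $\{X_n\}$ are satisfied: each connected component of $X_{n+1} - X_n$ should not be a single pair of pants, and should admit a pants decomposition for which the distance from each boundary geodesic of the component to an interior pants curve is uniformly bounded from below. These conditions guarantee, via Lemma \ref{lem:disjoint half-collars}, that the nonstandard collars around the curves $\alpha \in \partial_0 X_n$ constructed in Section \ref{Section:gluing} are pairwise disjoint for all $n$ simultaneously, so that the serial rule applies. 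The main (and really only) point worth emphasizing is that in the setting of this theorem these topological conditions are taken as part of the standing assumptions on the exhaustion; no further geometric analysis is needed, since the per-component estimate has already been absorbed into $L_n$ and $t_n$.
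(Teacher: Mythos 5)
Your proposal is correct and takes essentially the same route as the paper: the authors deduce the theorem directly from Corollary \ref{cor:p_twist} by observing that $|\partial_0 X_n|$ being constant turns the factor $|\partial_0 X_n|$ into a harmless constant and that the common twist forces $\tau_n = |t_n|$. Your additional unpacking of Corollary \ref{cor:p_twist} back to condition (\ref{eq:general_parabolicwithtwist}) of Theorem \ref{thm:general_parabolicwithtwist}, and your remark that the standing technical hypotheses on the exhaustion are carried implicitly, match the paper's intent.
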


\begin{figure}[h]
\includegraphics[width=4in]{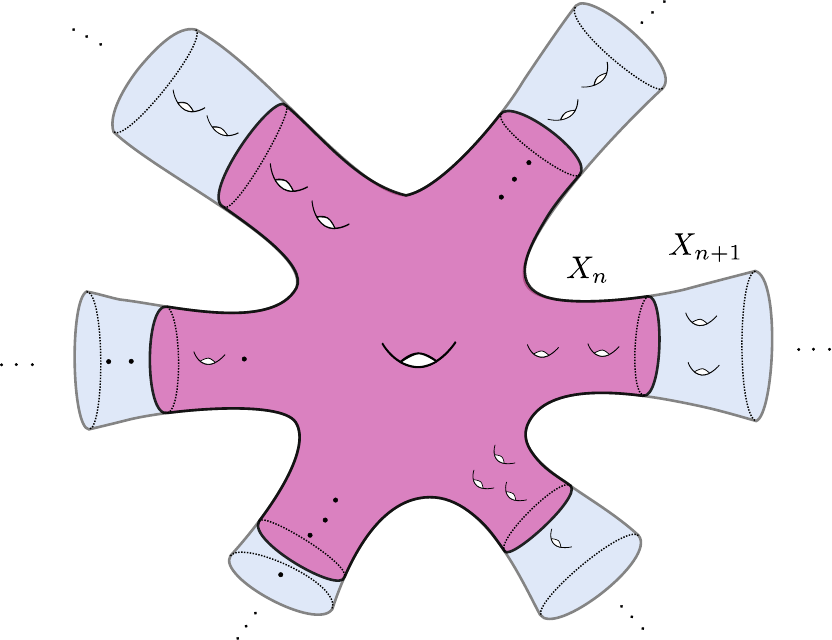}
\qquad
\caption{Surface with  a finiteness property.} 
\label{fig: general surface}
\end{figure}


\subsection{Abelian covers of compact surfaces}
\label{subsec: abelian covers}

All covers discussed in this section are regular covers. We consider a topological  abelian   cover $X$  of a compact Riemann surface $Y$, that is, a regular cover with a properly discontinuous  action by a torsion-free  abelian  group $G$ for which 
$X/G$ is topologically $Y$. 
Such a cover induces a compact exhaustion of $X$ in the following way: 
choose  a fundamental domain,  $P$,    for the action of $G$  on $X$ so that its closure  $\bar{P}$ is a compact subsurface whose boundary projects to the simple closed geodesics on $Y$ which induce the abelian covering. 
The translates of $\bar{P}$ by $G$ tile the surface $X$. Given  $P$,    define $|g|$ to be the least number of translates of 
$\bar{P}$ that a path traverses from $\bar{P}$
to $g\bar{P}$. Let 
$$X_n =\bigcup\{g\bar{P}: g \in G, |g| \leq n\} \subset X,$$
The $\{X_n\}$  form a compact exhaustion of $X$.  We say that this   exhaustion is  {\it induced by the  cover}.  We consider  a fixed pants decomposition of $Y$ whose lifted curves in $X$ together with 
 the curves in $\partial X_n$ form a pants decomposition of $X$. We call this a {\it pants decomposition induced by the cover}. If the  cover is given by an isometric action of $G$, we say it is a {\it geometric cover}. When $G$ is either $\mathbb{Z}$ or $\mathbb{Z}^2$,  Mori (\cite{Mo})  showed that a geometric cover   is parabolic.  Rees (\cite{Rees}) extended  these results in various directions as well as  to  higher dimensions and  both authors    (Mori and Rees)  independently showed  that if $G=\mathbb{Z}^r, \text{ for }
 r \geq 3$, a geometric cover  is not  parabolic. In particular,  these results show that in the case of   $\mathbb{Z}$ and $\mathbb{Z}^2$ covers, to achieve parabolicity  it is sufficient for the cover to be  given by an isometric action. In this subsection,  we use our methods  to  generalize this sufficiency  condition  for parabolicity.    Let $P$ be a  fundamental domain as above.

\begin{itemize}

  \item  {\bf $ G=\mathbb{Z}$, lifting a single curve:} 
In this  case  $\partial{P}\subset X$ is comprised of two simple closed curves that are lifts of  a non-separating simple closed curve on $Y$.  The compact subsurfaces $\{X_n\}$ of the exhaustion induced by the cover  have  two boundary curves for each $n$.  $X$ is topologically a ladder surface.   

\item {\bf $G=\mathbb{Z}^2$, lifting  a pair of disjoint curves:} 
In this case $\partial{P}\subset X$ is the union of four simple closed curves  and 
 the covering  is given by   two disjoint
non-separating  simple closed curves on $Y$ and the compact subsurfaces $\{X_n\}$ of the exhaustion induced by the cover  have at most $4n$   boundary curves  for each $n$.  $X$ is topologically a Loch-Ness monster.

 \item  {\bf $G=\mathbb{Z}^2$, lifting  a pair of intersecting curves:}
  In this case $\partial{P}\subset X$ is a topological rectangle and   the cover is given by  the lift of two intersecting non-separating simple closed curves on $Y$. A schematic picture of the cover is provided in Figure \ref{fig: z2 cover}, where each square represents a copy of $P$, $X_n$ is the $n$ by $n$ square centered at $P$,  and 
   $\partial X_n$   consists of the closed curve  $\alpha_n$.  $X$ is topologically  a Loch-Ness monster.  
 
 \end{itemize}
 
\begin{figure}[h]
\includegraphics[width=4in]{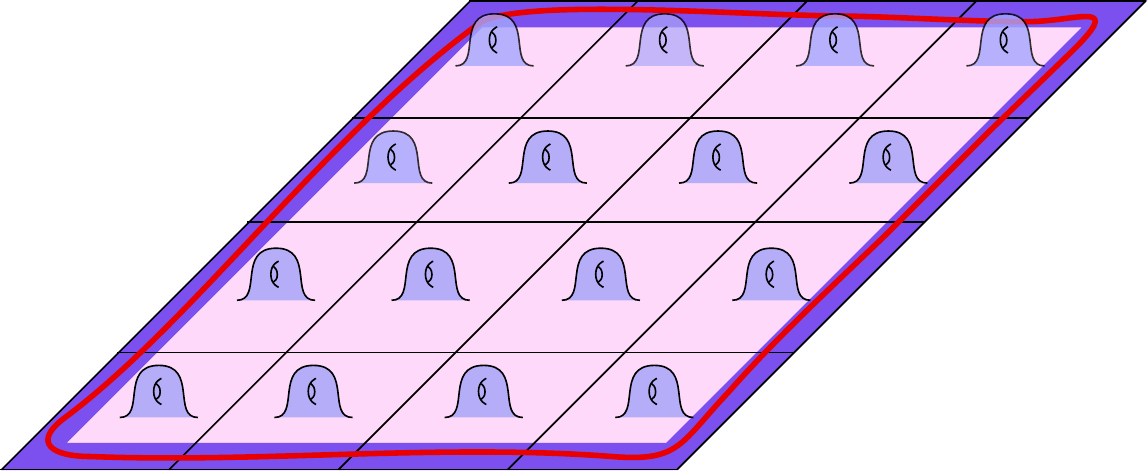}
\qquad
\caption{$\mathbb{Z}^2$-cover.} 
\label{fig: z2 cover}
\end{figure}


We have the  following theorem for topological 
abelian covers. Recall that $L_n=\text{max}\{\ell (\alpha): \alpha \in \partial_0 X_n\}$ and $\tau_n=\text{min}\{|t_n(\alpha)|: \alpha \in \partial_0 X_n\}$.

\begin{theorem} \label{thm:top. covering group}

Let $X$ be an infinite type hyperbolic surface  that  topologically  covers a  compact Riemann surface where the covering group $G$  is either $\mathbb{Z}$ or 
$\mathbb{Z}^2$. Let $\{X_n\}$ be the   compact exhaustion of $X$ induced by the cover.  Any of the following items are sufficient to imply that $X$ is of parabolic type. 
\begin{enumerate}
  \item If $G= \mathbb{Z}$ and  the cover is given by a single non-separating simple closed curve,  assume 
  $$ \label{eq:level n the same}
\sum_{n=1}^{\infty}
\frac{1}{ e^{\left(1- \tau_n\right)\frac{L_n}{2}}}=\infty.
$$

  \item If  $G=\mathbb{Z}^2$  and    the cover is given by two disjoint  non-separating simple closed curves,  assume
  
$$  \label{eq:n boundary components}
\sum_{n=1}^{\infty}\frac{1}
{ne^{\left(1- \tau_n\right)\frac{L_n}{2}}}=\infty .
$$

  \item If $G=\mathbb{Z}^2$ and  the cover is given by  two non-separating simple closed curve that intersect,  for each $n$ assume  that the geodesic in the homotopy class of  
  $\partial_0X_n$ has length $\ell_n$ and  a half-collar of width $\epsilon_n$ satisfying,
  
$$ \label{ }
\sum_{n=1}^{\infty} \frac{\arctan (\sinh \epsilon_n)}{\ell_n}=\infty.
$$

\end{enumerate}

\end{theorem}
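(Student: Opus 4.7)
The strategy is to reduce each of the three cases to a general sufficient condition for parabolicity already established in the paper. Cases (1) and (2) will be instances of Corollary \ref{cor:p_twist}, and case (3) will be a direct instance of Theorem \ref{thm: collar width}. The real content of the argument is therefore a careful topological analysis of the exhaustion $\{X_n\}$ induced by the cover, in order to count $|\partial_0 X_n|$ and to verify the uniformity hypotheses of Theorem \ref{thm:general_parabolicwithtwist}.

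First I would establish the following general setup. Since the pants decomposition on $X$ is induced by a finite pants decomposition of the compact base $Y$, the geodesic pairs of pants making up $X$ fall into finitely many $G$-orbits, and the same is true of boundary-to-interior distances within pants. Consequently the hypothesis of Theorem \ref{thm:general_parabolicwithtwist} (that no component of $X_{n+1}-X_n$ is a pair of pants and that the distance from each boundary geodesic of $Y$ to the interior pants curve is uniformly bounded below) is automatic after possibly coarsening the exhaustion by combining adjacent $X_n$'s. This is the only subtle point; it is essentially an exercise in using $G$-equivariance. With this in place, Corollary \ref{cor:p_twist} applies whenever we can bound $|\partial_0 X_n|$.

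Next, for each case I would carry out the boundary count. In case (1), $\partial P$ consists of two simple closed curves (the two lifts of the defining curve on $Y$), and $X_n$ is obtained by stacking $2n+1$ translates $g\bar P$ with $|g|\le n$ along the $\mathbb{Z}$-axis; since boundary curves in the interior are identified in pairs, only the two outermost curves survive, so $|\partial_0 X_n|=2$ for every $n$. In case (2), $\partial P$ is a disjoint union of four simple closed curves (two pairs of identified lifts), and $X_n$ is the union of $|g|\le n$ translates in $\mathbb{Z}^2$; the surviving boundary curves are exactly those attached to fundamental domains on the boundary of the $\ell^\infty$-ball of radius $n$ in $\mathbb{Z}^2$, giving $|\partial_0 X_n|\asymp n$. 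Plugging these bounds into Corollary \ref{cor:p_twist} yields the displayed series in (1) and (2), respectively.

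Case (3) is where the topology changes qualitatively and the argument must change with it. Because the two defining non-separating curves intersect on $Y$, the fundamental domain $P$ is a topological disk (a polygon) rather than an annulus; gluing together the translates $g\bar P$ with $|g|\le n$ produces a subsurface $X_n$ whose boundary $\partial X_n$ is a single simple closed curve. Its geodesic representative has length $\ell_n$, which generally grows with $n$, so the standard collar estimate is far too weak. The hypothesis supplies instead a half-collar of width $\epsilon_n$ about this single curve; this is exactly the setting of Theorem \ref{thm: collar width}, and applying it gives the third displayed series. The main obstacle across all three cases is verifying these topological claims about the combinatorics of $\partial X_n$ (particularly that case (3) yields a single boundary curve and cases (1)–(2) yield the stated linear or bounded counts); once this topology is in hand the analytic content is entirely contained in the earlier theorems.
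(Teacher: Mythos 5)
Your proposal is correct and takes essentially the same approach as the paper: count $|\partial_0 X_n|$ in each case ($2$, $\asymp n$, and $1$ respectively), then apply Theorem \ref{thm:general_parabolicwithtwist} (via Corollary \ref{cor:p_twist}) for cases (1)--(2) and Theorem \ref{thm: collar width} for case (3). You are somewhat more explicit than the paper about checking the uniformity and not-a-pair-of-pants hypotheses of Theorem \ref{thm:general_parabolicwithtwist} via $G$-equivariance, but the reduction and the boundary-count topology are the same.
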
 \label{thm: topological cover}

\begin{proof}  Note that on the boundary of $X_n$,  there are two  components  for each $n$ in the case of the cover corresponding to a single simple closed curve, at most $4n$ components if the cover corresponds to two disjoint curves, and one boundary component for each $n$ if it corresponds to two intersecting  curves. 
Items  (1) and (2) now follow from Theorem
\ref{thm:general_parabolicwithtwist},  and the fact that 
$X_n =\bigcup\{g\bar{P}: g \in G, |g| \leq n\}$ where the translate  $g\bar{P}$ is homeomorphic  to $\bar{P}$.
Item (3)  follows from Theorem \ref{thm: collar width}.
\end{proof}

As a special case we recover the results of Mori and Rees for $G=\mathbb{Z}$ or  $\mathbb{Z}^2$.

\begin{corollary}[\cite{Mo}, \cite{Rees}] 
\label{cor: Mori-Rees result}
Let $X$ be an infinite type  hyperbolic surface  that  geometrically   covers a  compact Riemann surface $Y$ where the covering group is either $\mathbb{Z}$ or 
$\mathbb{Z}^2$. Then $X$ is of parabolic type. 
\end{corollary}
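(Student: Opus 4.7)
The plan is to derive each of the three geometric cases of the corollary from the corresponding item of Theorem~\ref{thm:top. covering group}. The unifying principle is that in a geometric cover, every translate of a fundamental domain $\bar P$ is an isometric copy of a fixed compact region in $Y$, so local geometric quantities attached to the induced exhaustion $\{X_n\}$ are controlled by finitely many fixed numbers depending only on $Y$.

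First I would consider the geometric $\mathbb{Z}$-cover, which is determined by a single non-separating simple closed curve $\gamma\subset Y$. Every component of $\partial_0 X_n$ is then an isometric lift of $\gamma$, so $\ell(\alpha)=\ell(\gamma)$ for each $\alpha\in\partial_0 X_n$. Choosing the pants decomposition on $X$ as the lift of one on $Y$, the twists along these boundaries are lifts of the twist along $\gamma$ on $Y$ and hence take a single value. Consequently $L_n=L:=\ell(\gamma)$ and $\tau_n=\tau>0$ independently of $n$, while $|\partial_0 X_n|=2$. The $n$-th term of the series in Theorem~\ref{thm:top. covering group}(1) is therefore the positive constant $e^{-(1-\tau)L/2}$, and the series diverges.

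The same argument with minor modifications handles case~(2): for a geometric $\mathbb{Z}^2$-cover along two disjoint non-separating simple closed curves, each $\alpha\in\partial_0 X_n$ is an isometric lift of one of two fixed geodesics on $Y$, so $L_n\leq L$ and $\tau_n\geq\tau>0$ uniformly in $n$. Since $|\partial_0 X_n|\lesssim n$, the $n$-th term of the series in Theorem~\ref{thm:top. covering group}(2) is $\gtrsim c/n$, and $\sum c/n$ diverges.

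The only delicate case is~(3), the geometric $\mathbb{Z}^2$-cover along two intersecting simple closed geodesics, in which $\partial X_n$ is a single closed curve of length $\asymp n$. To apply Theorem~\ref{thm:top. covering group}(3) I would first show $\ell_n:=\ell(\alpha_n)\asymp n$ for the geodesic representative $\alpha_n$ of $\partial X_n$, which follows from the translation length formula for the $\mathbb{Z}^2$-action together with the fact that $\partial X_n$ represents a primitive element of the deck group. The main obstacle is then to produce a half-collar around $\alpha_n$ of width $\epsilon_n\geq\epsilon_0>0$ independent of $n$: this uses the positive uniform injectivity radius $\delta:=\mathrm{inj}(Y)>0$ of the geometric cover combined with the local tiling of $X$ by isometric translates of $\bar P$, and should yield $\epsilon_0$ comparable to $\delta$. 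Granting this uniform lower bound, $\arctan(\sinh\epsilon_n)\geq c_0>0$, and hence $\sum\arctan(\sinh\epsilon_n)/\ell_n\asymp\sum 1/n=\infty$, which completes the proof.
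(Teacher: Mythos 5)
Your proposal follows essentially the same route as the paper: cases (1) and (2) are dispatched by observing that all boundary lengths and twists of the induced exhaustion are constant (so the relevant series has bounded-below terms or terms $\gtrsim 1/n$), and case (3) reduces to showing $\alpha_n$ has a half-collar of width bounded below, exactly as the paper argues via the embedded collars of the lifted geodesics. One small caveat: your assertion $\tau_n=\tau>0$ is not justified (a geometric cover can perfectly well have zero twist along the lifted curves), but it is also unnecessary, since even with $\tau=0$ the $n$-th term $e^{-(1-\tau)L/2}$ is a positive constant and the series still diverges; you should simply drop the claim $\tau>0$.
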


\begin{proof} If the cover is associated to a single curve or two disjoint curves then the maximal  length, $L_n$,  of the boundary geodesics of $X_n$ is constant for all $n$, and hence the corollary follows from items (1) and (2)  of  Theorem \ref{thm:top. covering group}.

We next take up the case that $G=\mathbb{Z}^2$ and the cover is given by the intersection of two simple closed geodesics on $Y$,  say $\beta_1$ and 
$\beta_2$. Cutting $Y$ open along these geodesics gives us $P \subset X$ along with its $\mathbb{Z}^2$-translates depicted in Figure \ref{fig: z2 cover} where  the horizontal, resp. vertical, geodesics  project to $\beta_1$, resp. 
$\beta_2$. The fact  that  $\beta_1$ and $\beta_2$
have standard collars guarantees that there is an 
embedded $\epsilon$-neighborhood about the vertical and horizontal geodesics. In fact, the constant $\epsilon$ can be taken to be the smaller of the standard collar widths of $\beta_1$ and $\beta_2$.  

Note  that in this case $\partial X_n$ consists of one component, whose geodesic representative we call  
$\alpha_n$. The geodesic $\alpha_n$ for geometric and topological reasons lies  in the $n$ by $n$ square
determined by $X_n$, see Figure \ref{fig: z2 cover}), and together with 
$\partial X_n$ bounds a topological  annulus. Using the fact that $\partial X_n$
has an embedded $\epsilon$-neighborhood,  we may conclude  that $\alpha_n$ has a half-collar of width
$\epsilon$.  Finally,  noting that 
$\ell (\alpha_n) < 2n( \ell (\beta_1) + \ell (\beta_2))$,  in particular $\ell (\alpha_n)$ grows  at most linearly  in $n$, we apply   Theorem \ref{thm:top. covering group}, item (3) to conclude parabolicity.

\end{proof}

\begin{example}\label{ex: generalization of geometric cover} Suppose $X$ is an infinite type hyperbolic Riemann surface, and $\pi : X \rightarrow Y$ is a geometric   cover over the compact  Riemann surface $Y$ where the covering group is either $\mathbb{Z}$ or $\mathbb{Z}^2$.  Let 
$\{X_n\}$ be a compact exhaustion of $X$ induced by the cover and  fix  a pants decomposition of $X$ that includes the simple closed curves in $\bigcup \partial_0  X_n$. As we saw in Corollary  \ref{cor: Mori-Rees result}, $X$ is of parabolic type. 
On the other hand, by varying the parameters along just the curves in  the boundary of the exhaustion and applying 
Theorem \ref{thm:top. covering group} we can 
  obtain a large class $\{X^{\alpha} \}_{\alpha \in \mathcal{A}}$  of  hyperbolic Riemann surfaces of parabolic type with the following properties:
  \begin{itemize}
  \item each $X^{\alpha}$ has the same topology as $X$,
  \item $X^{\alpha}$ is  quasiconformally distinct from $X^{\beta}$  for any pair $\alpha \neq \beta$  (see   \cite{BasmajianKim}), 
  \item for each $\alpha$,  the  Fenchel-Nielsen parameters of $X^{\alpha}$ agree with the parameters of the geometric cover, $X$,  along  all the pants curves except the ones on the boundary of the compact exhaustion.
\end{itemize}. \end{example}

\begin{remark}
For rank $ r \geq 3$ the analogue of  the sufficient conditions in items (2) and (3) of Theorem \ref{thm: topological cover}  must  have  the boundary lengths of the  induced cover going to zero.
This is ostensibly because the  $n^{\text{th}}$  term of the series contains $\frac{1}{n^{r-1}}$.
This is underscored by the result mentioned earlier  (\cite{Mo}) or  \cite{Rees})  that  if $G=\mathbb{Z}^r, \text{ for }
 r \geq 3$, then  a geometric cover of the  closed Riemann surface  is not  parabolic.   On the other hand,  one does not  have to change  the hyperbolic structure  much to regain  parabolicity in the rank $r \geq 3$ case. This point is illustrated by the following corollary which follows from Theorem  \ref{thm:general_parabolicwithtwist}.  
\end{remark}

\begin{corollary} \label{cor:st collar-parabolic}
Suppose  $\pi : X \rightarrow Y$ is  a  $\mathbb{Z}^r$-covering of  the   compact Riemann surface $Y$ by   the  infinite type  hyperbolic surface $X$, where the covering is  given by 
$r \geq 3$ disjoint non-separating simple closed curves on $Y$.  
Assume that the compact exhaustion $\{X_n\}$ is induced by the cover and recall that $L_n=\text{max}\{\ell (\alpha): \alpha \in \partial_0 X_n\}$. If 
\begin{equation} \label{eq: coveringforbigrank}
\sum_{n=1}^{\infty}\frac{1}
{n^{r-1} L_n e^{\frac{L_n}{2}}}=\infty,
\end{equation}
then $X$ is of parabolic  type. 
\end{corollary}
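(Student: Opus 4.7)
The plan is to deduce Corollary \ref{cor:st collar-parabolic} directly from Proposition \ref{prop:collar-parabolic}, which provides a sufficient condition for parabolicity in terms of the \emph{standard} collar extremal distance estimate $\lambda(R_\alpha) \asymp 1/(\ell(\alpha) e^{\ell(\alpha)/2})$ from Lemma \ref{lem:collar}. The single step I will need to carry out is a combinatorial bound on the number of boundary components of the induced exhaustion, namely $|\partial_0 X_n| \lesssim n^{r-1}$; once this is in hand, replacing each $\ell(\alpha)$ by its maximum $L_n$ converts the sum in Proposition \ref{prop:collar-parabolic} into the series appearing in the corollary's hypothesis.

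\textbf{Step 1 --- Counting boundary components.} Let $\bar P\subset X$ be a fundamental domain for the $\mathbb{Z}^r$ action, chosen so that $\partial \bar P$ consists of the $2r$ simple closed geodesics obtained by cutting $Y$ open along the $r$ disjoint non-separating simple closed geodesics defining the covering; in particular $\bar P$ has exactly $2r$ boundary components. By definition of the induced exhaustion,
\[
X_n=\bigcup_{|g|\le n}g\bar P,
\]
so any boundary curve of $X_n$ must be a boundary curve of some tile $g\bar P$ with $|g|=n$ (otherwise it would be interior to $X_n$). Since each such tile contributes at most $2r$ boundary components, and $|\{g\in\mathbb{Z}^r:|g|=n\}|=O(n^{r-1})$ by standard lattice counting, there is a constant $C_r>0$ such that
\[
|\partial_0 X_n|\le C_r\, n^{r-1}\qquad\text{for all } n\ge 1.
\]

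\textbf{Step 2 --- Comparison of series and conclusion.} The function $x\mapsto xe^{x/2}$ is monotonically increasing on $[0,\infty)$, so bounding each $\ell(\alpha)$ by $L_n$ together with Step~1 gives
\[
\sum_{\alpha\in\partial_0 X_n}\ell(\alpha)e^{\ell(\alpha)/2}\le |\partial_0 X_n|\cdot L_n e^{L_n/2}\le C_r\, n^{r-1}L_n e^{L_n/2}.
\]
Consequently,
\[
\frac{1}{\sum_{\alpha\in\partial_0 X_n}\ell(\alpha)e^{\ell(\alpha)/2}}\ge \frac{1}{C_r}\cdot\frac{1}{n^{r-1}L_n e^{L_n/2}}.
\]
Summing in $n$, the divergence hypothesis (\ref{eq: coveringforbigrank}) of the corollary forces the series on the left-hand side to diverge as well. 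Proposition \ref{prop:collar-parabolic} then yields that $X$ is of parabolic type.

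\textbf{Main obstacle.} There is no serious analytic difficulty, since all of the extremal-length work has already been done in Lemma \ref{lem:collar} and Proposition \ref{prop:collar-parabolic}. The only point requiring care is the counting in Step~1, and in particular the identification of $\partial_0 X_n$ with boundary curves of tiles at combinatorial distance exactly $n$; this is immediate from the fact that the $\mathbb{Z}^r$-translates of $\bar P$ tile $X$ and share boundary curves precisely according to adjacency in the Cayley graph of $\mathbb{Z}^r$ (equipped with the generating set dual to the chosen $r$ curves on $Y$).
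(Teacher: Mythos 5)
Your proof is correct. The counting in Step 1 is right: the induced exhaustion of a $\mathbb{Z}^r$-cover along $r$ disjoint curves produces tiles $g\bar P$ each with $2r$ boundary curves, indexed by the $\ell^1$-metric on $\mathbb{Z}^r$, so $|\partial_0 X_n|$ grows like the size of the $\ell^1$-sphere of radius $n$, which is $O(n^{r-1})$; and Step 2 is an immediate monotonicity bound feeding into Proposition \ref{prop:collar-parabolic}.

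There is a small but noteworthy discrepancy with the paper's own stated source: the text claims the corollary ``follows from Theorem \ref{thm:general_parabolicwithtwist},'' which controls nonstandard glued collars and would produce a condition of the form $\sum_n \frac{1}{n^{r-1} e^{(1-\tau_n)L_n/2}}$. The hypothesis as actually written in the corollary, with the extra factor $L_n$ in the denominator, is instead exactly what emerges from the \emph{standard}-collar estimate of Lemma \ref{lem:collar} via Proposition \ref{prop:collar-parabolic}. Your route through Proposition \ref{prop:collar-parabolic} therefore matches the statement's precise form better than the paper's cited theorem, and it has the further advantage of avoiding the topological and separation hypotheses that Theorem \ref{thm:general_parabolicwithtwist} imposes on the exhaustion (no pair-of-pants components, a uniform lower bound on certain distances), which need not hold here when the lengths $\ell(\alpha)$ grow. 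In short: your derivation is the natural one for the corollary as stated, and arguably cleaner than the route the paper points to.
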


\begin{example}
Consider the  hypothesis of the above corollary and    lift  a pants decomposition of $Y$ whose pants curves include the $r$ non-separating curves associated to the cover. Lifting these curves  we obtain  a pants decomposition of $X$;
 it is not difficult to see that for each $n$, $\partial X_n$ is a pants curve in this decomposition.  Save the curves in $\partial X_n$ which we assume  
   satisfy  expression (\ref{eq: coveringforbigrank}),
  the Fenchel-Nielsen parameters of all  the other pants curves can be equal to  the Fenchel-Nielsen parameters of the corresponding curve in  $Y$. In this fashion we provide examples of parabolic surfaces that are  $\mathbb{Z}^r$-topological coverings for $r\geq 3$  of  a   compact Riemann surface  which are geometric coverings away from the $r$ non-separating curves associated to the cover. This stands in contrast to  the results of Mori and Rees which show that geometric covers are not parabolic. 

\end{example}

\end{document}